\documentclass[
	a4paper,
	11pt
]{amsart}

\usepackage[british]{babel}
\usepackage{enumitem}

\usepackage{times}
\usepackage[T1]{fontenc}

\usepackage[leqno]{amsmath}
\usepackage{amssymb}
\usepackage{amsthm}
\usepackage{amsaddr}
\usepackage{mathtools}
\usepackage{stmaryrd}
\usepackage[mathcal]{euscript}
\usepackage{tikz-cd}
\usetikzlibrary {shapes.arrows, arrows.meta, calc, intersections}
\tikzset{bg/.style={fill=gray, fill opacity=0.05}}
\definecolor{figcolor}{HTML}{3E4953}
\definecolor{accentcolor}{HTML}{53403E}
\def\rectcol{figcolor}  
\tikzset{bg/.style={fill=gray, fill opacity=0.05}}
\tikzset{textbg/.style={fill=white, fill opacity=.8, text opacity=1, inner sep=0pt}}

\usepackage[
backend=biber,
style=alphabetic,
sorting=nyt,
date=year,
giveninits=true,
useprefix=true,
doi=false,isbn=false,url=false,
maxnames=99
]{biblatex}
\bibliography{bibliography.bib}

\definecolor{LinkColor}{HTML}{697B8C}
\definecolor{CiteColor}{HTML}{8C6B69}
\definecolor{URLColor}{HTML}{898C69}
\usepackage[
	colorlinks=true,
	linkcolor=LinkColor,
	citecolor=CiteColor,
	urlcolor=URLColor
]{hyperref}
\usepackage{aliascnt}
\usepackage[capitalise,nameinlink,noabbrev]{cleveref}

\theoremstyle{plain}
\newtheorem{theorem}{Theorem}[section]
\newtheorem*{theorem*}{Theorem}
\crefname{theorem}{Theorem}{Theorems}

\newcommand{\newaliastheorem}[3]{%
	\newaliascnt{#1}{theorem}%
	\newtheorem{#1}[#1]{#2}%
	\aliascntresetthe{#1}%
	\crefname{#1}{#2}{#3}%
	\Crefname{#1}{#2}{#3}%
}

\newaliastheorem{proposition}{Proposition}{Propositions}
\newaliastheorem{corollary}{Corollary}{Corollaries}
\newaliastheorem{lemma}{Lemma}{Lemmas}
\newaliastheorem{conjecture}{Conjecture}{Conjectures}

\theoremstyle{definition}
\newaliastheorem{definition}{Definition}{Definitions}
\newaliastheorem{example}{Example}{Examples}
\newaliastheorem{counterexample}{Counterexample}{Counterexamples}
\newaliastheorem{construction}{Construction}{Constructions}
\newaliastheorem{intuition}{Intuition}{Intuitions}

\theoremstyle{remark}
\newaliastheorem{remark}{Remark}{Remarks}

\renewcommand{\leq}{\leqslant}
\renewcommand{\geq}{\geqslant}
\renewcommand{\epsilon}{\varepsilon}
\newcommand{\Tau}{\mathrm{T}}

\newcommand{\Opens}{\mathop{\mathcal{O}\mspace{-4mu}}}
\newcommand{\Powerset}{\mathop{\mathcal{P}\mspace{-1mu}}}

\newcommand{\Sl}{\mathop{\mathrm{S}\ell}\mspace{-1mu}}

\DeclareMathOperator{\id}{id}
\newcommand{\pr}{\mathrm{pr}}
\newcommand{\sqleq}{\sqsubseteq} 

\newcommand{\op}{{\mathrm{op}}}
\newcommand{\tensor}{\otimes}

\newcommand{\terminal}{{!}}
\newcommand{\caus}{\preccurlyeq}
\DeclareMathOperator{\pos}{\exists}

\newcommand{\tworightarrow}{\begin{tikzcd}[ampersand replacement=\&,cramped, column sep=1em]  \phantom{}\ar[r,two heads] \& \phantom{} \end{tikzcd}}
\let\rightarrowtail\relax\newcommand{\rightarrowtail}{\begin{tikzcd}[ampersand replacement=\&,cramped, column sep=1.2em]  \phantom{}\ar[r,tail] \& \phantom{} \end{tikzcd}}

\newcommand{\Leq}{\mathrel{\trianglelefteqslant}}
\newcommand{\up}{\mathop{\uparrow}\!}
\newcommand{\down}{\mathop{\downarrow}\!}

\newcommand{\preUp}{\scalebox{1.2}[1.05]{\rotatebox[origin=c]{90}{$\rightarrowtriangle$}}}
\newcommand{\preDown}{\scalebox{1.2}[1.05]{\rotatebox[origin=c]{-90}{$\rightarrowtriangle$}}}
\newcommand{\Up}{\text{\preUp}}
\newcommand{\Down}{\text{\preDown}}
\newcommand{\UpDown}{{\scriptscriptstyle\Up\mspace{-3mu}\Down}}
\newcommand{\Upsub}[1]{\Up_{\mspace{-4mu} #1}}
\newcommand{\Downsub}[1]{\Down_{\mspace{-2mu} #1}}

\newcommand{\arrowRscaled}[3]{%
	\tikz[baseline={#1*#3}, scale=#3]{%
		\begin{scope}[rotate=#2]
			\def\base{.13em}
			\def\tip{.47em}
			\def\half{.2em}
			
			\draw[
			line width=.5pt,
			line cap=round,
			line join=round
			] (-.47em,0) -- (\base,0);
			
			\path[fill, rounded corners=.4pt]
			(\base,\half) -- (\tip,0) -- (\base,-\half) -- cycle;
		\end{scope}
	}%
}
\newcommand{\arrowR}[2][-3pt]{%
	\mathchoice
	{\arrowRscaled{#1}{#2}{1}}      
	{\arrowRscaled{#1}{#2}{1}}      
	{\arrowRscaled{#1}{#2}{.7}}     
	{\arrowRscaled{#1}{#2}{.5}}     
}
\newcommand{\UpR}{\arrowR{90}}
\newcommand{\DownR}{\arrowR[-2.6pt]{-90}}

\newcommand{\uc}{\mathop{\vartriangle}\mspace{-2mu}}
\newcommand{\dc}{\mathop{\triangledown}\mspace{-2mu}}

\newcommand{\ucx}{\mathop{\underline{\mspace{-1.5mu}\uc}}}
\newcommand{\dcx}{\overline{\mspace{-1.5mu}\dc}}
\newcommand{\Uc}{\blacktriangle}
\newcommand{\Dc}{\blacktriangledown}
\newcommand{\Ucx}{\underline{\Uc\mspace{-.4mu}}}
\newcommand{\Dcx}{\overline{\Dc\mspace{-.4mu}}}
\DeclareRobustCommand{\ucdcTikz}[1]{%
	\mathord{%
		\tikz[
		baseline=-.05ex,
		line cap=round,
		line join=round,
		scale=#1,
		]{
			\draw[line width=.4pt]
			(0,0) -- (.48ex,1.30ex) -- (1.68ex,1.30ex) -- (1.20ex,0) -- cycle;
			\draw[line width=.4pt]
			(.48ex,1.30ex) -- (1.20ex,0);
		}%
	}%
}
\DeclareRobustCommand{\ucdcTEMP}{%
	\mathord{%
		\mathchoice
		{\ucdcTikz{1}}
		{\ucdcTikz{1}}
		{\ucdcTikz{.65}}
		{\ucdcTikz{.55}}
	}%
}
\newcommand{\ucdc}{{\ucdcTEMP}}

\DeclareRobustCommand{\ucdcxTikz}[3]{%
	\mathord{%
		\tikz[
		baseline=-.05ex,
		line cap=round,
		line join=round,
		scale=#1,
		]{
			\draw[line width=.4pt]
			(0,0) -- (.48ex,1.30ex) -- (1.68ex,1.30ex) -- (1.20ex,0) -- cycle;
			\draw[line width=.4pt]
			(.48ex,1.30ex) -- (1.20ex,0);
			\draw[line width=.35pt]
			(-.02ex,#2) -- (1.18ex,#2);
			\draw[line width=.35pt]
			(.52ex,#3) -- (1.71ex,#3);
		}%
	}%
}
\DeclareRobustCommand{\ucdcxTEMP}{%
	\mathord{%
		\mathchoice
		{\ucdcxTikz{1}{-.34ex}{1.64ex}}
		{\ucdcxTikz{1}{-.34ex}{1.64ex}}
		{\ucdcxTikz{.65}{-.34ex}{1.64ex}}
		{\ucdcxTikz{.55}{-.34ex}{1.64ex}}
	}%
}
\newcommand{\ucdcx}{{\ucdcxTEMP}}

\newcommand{\cat}[1]{\textnormal{\textbf{#1}}}
\newcommand{\Set}{\cat{Set}}
\newcommand{\Top}{\cat{Top}}
\newcommand{\Frm}{\cat{Frm}}
\newcommand{\Loc}{\cat{Loc}}
\newcommand{\ucdcFrm}{\ucdcTikz{1}\Frm}

\newcommand{\rLoc}{\cat{rLoc}}
\newcommand{\oLoc}{\cat{oLoc}}
\newcommand{\rocLoc}{\cat{rocLoc}}

\newcommand{\rTop}{\cat{rTop}}
\newcommand{\oTop}{\cat{oTop}}
\newcommand{\rocTop}{\cat{rocTop}}

\newcommand{\LeqLoc}{{\Leq}\cat{Loc}}
\newcommand{\para}{\rotatebox[origin=c]{45}{\scalebox{0.8}{$=$}}}
\newcommand{\pLeqLoc}{\LeqLoc_{\mspace{-1mu}\para}}
\newcommand{\pucdcFrm}{\ucdcFrm_\leq}

\DeclareMathOperator{\pt}{\mathsf{pt}}
\DeclareMathOperator{\loc}{\mathsf{L}}
\newcommand{\cone}{\mathsf{cone}}
\newcommand{\rel}{\mathsf{rel}}

\newcommand{\calB}{\mathcal{B}}
\newcommand{\calE}{\mathcal{E}}
\newcommand{\calM}{\mathcal{M}}

\makeatletter
\let\amsart@setauthors\@setauthors 
\makeatother
\makeatletter
\let\@titleblockemails\@empty
\let\@titleblockdate\@empty
\let\@date\@empty
\renewcommand{\email}[2][]{%
	\ifx\@empty\@titleblockemails
	\gdef\@titleblockemails{\href{mailto:#2}{\nolinkurl{#2}}}%
	\else
	\g@addto@macro\@titleblockemails{,\space\href{mailto:#2}{\nolinkurl{#2}}}%
	\fi
}
\renewcommand{\date}[1]{%
	\gdef\@titleblockdate{#1}%
	\global\let\@date\@empty
}
\def\@set@authors@addresses{%
	\amsart@setauthors
	\begingroup
	\par\vspace{0\baselineskip}
	\centering
	\def\author##1{}
	\def\\{\protect\linebreak}%
	\def\address##1##2{\par{\footnotesize\itshape ##2\par}}%
	\addresses
	\ifx\@empty\@titleblockemails\else
	\par{\footnotesize\@titleblockemails\par}%
	\fi
	\ifx\@empty\@titleblockdate\else
	\par\smallskip{\footnotesize\@titleblockdate\par}%
	\fi
	\endgroup
	\par
}
\makeatother

\title{Localic Relations with Open Cones}
\author{Nesta van der Schaaf}
\address{Université Paris-Saclay, CNRS, CentraleSupélec, ENS Paris-Saclay, Inria,\\ Laboratoire Méthodes Formelles, 91190, Gif-sur-Yvette, France
\\
\normalfont{\url{nesta.van-der-schaaf@inria.fr}}
\\
\normalfont{\url{https://nvds.site/}}
}
\date{2 September 2026}

\begin{document}
	\begin{abstract}
		Localic relations are relations internal to the category of locales, forming the point-free analogues of set-theoretic relations, and providing the general backdrop of localic order theory. This work studies \emph{open cone} localic relations, whose source and target maps are open, and provides a frame-theoretic description via point-free up and down closure operators, called~\emph{cones}.
		
		The cones arising from open cone localic relations form \emph{join-preserving} and \emph{parallel} pairs of maps on the underlying frame. Axiomatising this structure, a frame equipped with such a pair of cones is called a \emph{conic frame}. The main construction shows that, conversely, any conic frame induces a localic relation with open cones, whose cones are exactly the given ones. The main result is an adjunction with identity counit between the category of locales equipped with open cone localic relations, and the opposite of the category of conic frames. The unit gives a strongly dense inclusion of an open cone localic relation into the relation induced by its own cones.
		
		Fixed points of the adjunction are those relations recovered by their cones, and include kernel pairs of open maps and all weakly closed localic relations with open cones. As a special case we recover Kock's Godement theorem for locales. Moreover, for fixed points, internal reflexivity and transitivity are completely characterised in terms of the cones. 
	\end{abstract}
	\maketitle
	\setcounter{tocdepth}{1}
	\tableofcontents

\section{Introduction}
\label{section:introduction}
\emph{Localic relations} are relations internal to the category $\Loc$ of locales~(recalled in~\cref{section:frames}), and therefore serve as point-free analogues of set-theoretic relations. As such, they form the general backdrop of localic order theory, providing the foundations for point-free analogues of topological order theory~\cite{nachbin1965TopologyOrder}, as well as localic lattice theory~\cite{johnstone1985VietorisLocalesLocalic}.

Localic relations also form a natural starting point for (constructive) Stone-type dualities for spaces with additional structure~\cite{priestley1972OrderedTopologicalSpaces,esakia1974topologicalKripkemodels} or lattices with additional structure~\cite{jonsson1951BooleanAlgebrasOperators,petrovich1996DistributiveLatticesOperator}. For instance, localic partial orders are used by Townsend in~\cite{townsend1996preframeTechniquesConstructiveLocale,townsend1997LocalicPriestleyDuality} to provide a constructive Priestley duality for distributive lattices~(see also~\cite{korostenski2007LaxProperMaps}). Localic preorders specifically could provide a localic counterpart to other frame-theoretic treatments of ordered spaces such as biframes~\cite{banaschewski1983BiframesBispaces}, d-frames~\cite{jung2010BitopologicalNatureStone}, and the recently introduced ad-frames~\cite{goubault2026StoneDualityPreordered}. The present work grew out of similar considerations, in an attempt to refine the duality between ordered topological spaces and `ordered locales' in~\cite{heunen2024OrderedLocales}~(discussed more in~\cref{section:Egli-Milner ordered locales}). One of the general themes in duality theory is to find topological representations of algebraic structures. Here we approach the question from the localic side: admitting locales as the point-free counterparts of topological spaces, then structurally we can define localic relations and localic preorders as those internal to the category~$\Loc$. The question then becomes what these structures correspond to algebraically. This paper provides one such `externalisation' procedure, explained further below. 

Localic equivalence relations are important within locale theory itself for descent theory, and to describe quotients and proper maps~\cite{vermeulen1994ProperMapsLocales,plewe1997LocalicTriquotientMaps,manuell2023PresentingQuotientLocales}. For example, a sufficient condition for effectiveness of localic equivalence relations was found in~\cite{kock1989GodementTheoremLocales}, which we generalise in the present work~(\cref{section:strong density}). Subsequently, localic relations appear in topos theory~\cite{johnstone2002Elephant2}, where they are for instance used to represent \'etendues~(`locally localic' toposes) via certain sheaves of localic equivalence relations~\cite{kock1992EtendueEquivalenceRelation}. Since localic equivalence relations may be regarded as special types of localic groupoids, this also connects to the more general Joyal-Tierney representation theorem of Grothendieck toposes~\cite{joyal1984ExtensionGaloisTheory}, and the more recent work using quantale theory~\cite{resende2007EtaleGroupoidsQuantales,quijano2019EffectiveEquivalenceRelations}.

Connecting to logic, a localic relation could be seen as a point-free analogue of a Kripke frame, providing the relational semantics for modal logic~\cite{kripke1963SemanticalAnalysisModal,zach2025BoxesDiamonds}. Thus localic relations could provide a natural setting for intuitionistic~\cite{simpson1994ProofTheorySemanticsIntuitionistic}, (point-free) topological~\cite{aiello2003ReasoningSpaceModal}, or positive modal logics~\cite{dunn1995PositiveModalLogic,celani1997NewSemanticsPositive}. In turn, this might also connect to transition systems and model checking in computer science~\cite{baier2008PrinciplesModelChecking}. Finally, we mention that localic preorders could find application in the mathematical foundations of physics, in particular for localic or constructive treatments of Lorentzian causality theory~\cite{penrose1972TechniquesDifferentialTopology} or causal set theory~\cite{bombelli1987SpacetimeCausalSet}.

All that being said, working internally we lose the convenient point-wise description of relations in terms of pairs~$xRy$. Instead, a localic relation~$R$ on~$X$ is a sublocale~${R\rightarrowtail X\times X}$, and so to study them in detail we need to combine two concepts that can be technically difficult to handle: sublocales and product locales. 

The present work attempts to alleviate some of this difficulty by providing a frame-theoretic description of the class of localic relations whose source and target maps are open, called \emph{open cone relations}, in terms of localic up and down closure operators. While strong, the open cone assumption appears also in the literature on localic equivalence relations~\cite{kock1989GodementTheoremLocales}, and is often assumed more generally for localic groupoids~\cite{joyal1984ExtensionGaloisTheory,resende2007EtaleGroupoidsQuantales}.

Openness guarantees that these closure operators can be described by a pair of functions~${\Up,\Down\colon \Opens X\to \Opens X}$ on the underlying frame, which we generically call \emph{cones}. This generalises the cones~$\up,\down\colon \Powerset(S)\to \Powerset(S)$ on the powerset of a space~$S$ defined in the usual way:
\[
\up A = \{x\in S: \exists a\in A: aRx\}
\quad\text{and}\quad
\down A = \{x\in S: \exists a\in A: xRa\}.
\]

The assignment~${R\mapsto \Up,\Down}$ produces a pair of functions that are join-preserving and \emph{parallel}, generalising the notion of conjugate pairs from the theory of Boolean algebras with operators~\cite{jonsson1951BooleanAlgebrasOperators}. Axiomatising this structure, we get the main new notion: a \emph{conic frame}~$(L,\uc,\dc)$, consisting of a frame~$L$ equipped with a join-preserving parallel pair of functions~${\uc,\dc\colon L\to L}$.

Conversely, the main construction in this paper defines a localic relation~$R_\ucdc$ out of a conic frame structure~$\uc,\dc$. Spatially, a relation~$R$ can be straightforwardly recovered from its cones~$\up,\down$ and singletons, since~$x\in \down \{y\}$ iff~$xRy$. Generalising to point-free language, from~$\uc,\dc$ we define a binary relation~$\sim$ on the coproduct frame~$L\tensor L$ that emulates the behaviour of the frame congruence of a would-be relation, and then use the techniques from~\cite[\S III.11]{picado2012FramesLocalesTopology} and~\cite{moshier2017GeneratingSublocalesSubsets} to generate the localic relation~$R_\ucdc$ corresponding to the frame congruence generated by~$\sim$. This gives an assignment~${\uc,\dc\mapsto R_\ucdc}$ of a conic frame structure to an open cone localic relation, where~$R_\ucdc$ is the universal relation with the cones~$\uc,\dc$ in the sense that~$R\subseteq R_\ucdc$ iff~$\Up\sqleq \uc$ and~$\Down\sqleq \dc$.

Putting these constructions together, our main~\cref{theorem:adjunction} is an adjunction with identity counit
\[
\begin{tikzcd}[cramped,column sep=1.5cm]
	\rocLoc & {\ucdcFrm^\op}
	\arrow[""{name=0, anchor=center, inner sep=0}, "{\cone}", shift left=1.6, from=1-1, to=1-2]
	\arrow[""{name=1, anchor=center, inner sep=0}, "{\rel}", shift left=1.6, from=1-2, to=1-1]
	\arrow["\dashv"{anchor=center, rotate=-90}, draw=none, from=0, to=1]
\end{tikzcd}
\]
between the category~$\rocLoc$ of locales equipped with open cone relations, and the category~$\ucdcFrm$ of conic frames. In both categories, morphisms are suitable point-free analogues of monotone functions between sets. Thus, we emphasise that composition in~$\rocLoc$ is not relational composition (which for locales is not associative).

That the counit is the identity corresponds to the fact that~${\uc,\dc\mapsto R_\ucdc\mapsto \Up,\Down}$ indeed recovers~$\uc,\dc$. On the other hand we have an assignment~${R\mapsto \Up,\Down \mapsto R_\UpDown}$ and an inclusion~$R\subseteq R_\UpDown$, corresponding to the unit of the adjunction.
	
Recalling that fixed points of an adjunction are those objects in the respective categories that make the unit or counit an isomorphism, we see immediately that all conic frames are fixed points. On the other hand, an open cone localic relation~$R$ forms a fixed point iff~$R\cong R_\UpDown$. Even in the spatial setting there are counterexamples~(\ref{counterexample:relation on Sierpinski}). Nonetheless, we show that the relation~$R_\UpDown$ cannot deviate from~$R$ too much, since the canonical inclusion~$R\subseteq R_\UpDown$ is strongly dense, a notion of density introduced in~\cite{johnstone1989ConstructiveClosedSubgroup} that is constructively stronger than ordinary density. This has the immediate consequence that any~(weakly) closed localic relation with open cones is a fixed point. We show that the inclusion~$R\subseteq R_\UpDown$ generalises the strongly dense inclusion of a localic equivalence relation into its induced kernel pair, and so as a special case we obtain Kock's Godement theorem for locales~\cite{kock1989GodementTheoremLocales}.

Lastly, to be able to talk about localic preorders and localic equivalence relations, we study in~\cref{section:preorders} how reflexivity, transitivity and symmetry can be described in terms of cones. These axioms can be stated internally in terms of finite limits, and so make sense for any localic relation~$R$. For an open cone relation satisfying these properties, this implies the usual corresponding properties of the cones~$\Up,\Down$. The converse is not generally true~(\cref{counterexample:preorder via cones}). However, a fixed point~$R_\ucdc$ is reflexive/transitive iff~$\uc,\dc$ are inflationary/subidempotent, and it is symmetric iff~$\uc=\dc$. 

Connecting back to duality theory, in~\cite{schaaf2026LocalicEsakiaDuality} we build on Townsend's localic Priestley duality of distributive lattices~\cite{townsend1997LocalicPriestleyDuality} and use the recently introduced Heyting frames~\cite{bezhanishvili2023FrametheoreticPerspectiveEsakia} to describe a constructive, localic Esakia duality. For this, we define a one-sided version of conic frames~$(L,\dc)$ and generalise the adjunction~$\cone\dashv\rel$ with localic relations that have open source map.

\subsection*{Overview}
We start in~\cref{section:frames} with a recap of the necessary theory on frames and locales, in particular focusing in~\cref{section:sublocales} on the techniques from~\cite{picado2012FramesLocalesTopology,moshier2017GeneratingSublocalesSubsets} on generating sublocales. \cref{section:spatial intuition} then outlines the spatial intuition behind the two main constructions $R\mapsto \Up,\Down$ and $\uc,\dc\mapsto R_\ucdc$. The definitions and some basic examples on (open cone) localic relations are in~\cref{section:localic relations}, concluding with the assignment~$R\mapsto \Up,\Down$. On the frame-theoretic side, we introduce and study properties of parallelness in~\cref{section:parallelness} and give the main new definition of conic frames in~\cref{section:conic frames}, where we also prove most of the necessary technical results needed later. Putting those results together, in~\cref{section:localic relation induced by cones} we define the construction~$\uc,\dc\mapsto R_\ucdc$ and show that this relation again has open cones. This is all tied together in~\cref{section:adjunction} by proving that the two constructions in fact form part of a pair of adjoint functors~$\cone\dashv\rel$. The fixed points of this adjunction, including kernel pairs of open maps and weakly closed localic relations with open cones, are studied in~\cref{section:fixed points}. In~\cref{section:strong density} we show that the inclusion $R\subseteq R_\UpDown$ is strongly dense, from which in \cref{corollary:Kock} we reobtain Kock's Godement theorem~\cite{kock1989GodementTheoremLocales}. In~\cref{section:preorders} we study relational properties such as reflexivity and transitivity in terms of cones, and in~\cref{section:Egli-Milner ordered locales} we discuss the relation between conic frames and the previously studied notion of `ordered locale' from~\cite{heunen2024OrderedLocales}. Closing the paper with~\cref{section:discussion}, we provide final comments, discussion, and directions for future research.

\section{Frames and locales}
\label{section:frames}
In this section we recall the necessary background theory on frames and locales. For general textbook accounts we refer to~\cite{johnstone1982StoneSpaces,picado2012FramesLocalesTopology}. First, recall that a \emph{complete lattice}~$L$ is a partial order that has all meets~(greatest lower bounds) and joins~(least upper bounds). We denote the corresponding order of a lattice by~$\sqleq$, to be thought of as a generalised inclusion relation of regions. A \emph{frame} is a complete lattice~$L$ in which the infinite distributivity law holds:
\[
x \wedge \bigvee y_i = \bigvee x\wedge y_i.
\]
Morphisms of frames are functions $h\colon L\to M$ that preserve finite meets and all joins. This results in the category~$\Frm$ of frames. The category of \emph{locales} is defined as the opposite, namely~${\Loc:= \Frm^\op}$. While frames are algebraic objects, we interpret locales as \emph{spaces}. Thus, a \emph{locale} $X$ is formally defined by its underlying \emph{frame of opens} $\Opens X$, and a morphism of locales $f\colon X\to Y$ is just a morphism of frames going the other way, denoted by $f^{-1}\colon \Opens Y\to \Opens X$. Its right adjoint~(recalled in~\cref{section:galois adjunctions}) is denoted~$f^{-1}\dashv f_\ast$.

Denoting by~$\Top$ the category of topological spaces and continuous maps, \emph{(generalised) Stone duality} is the statement that there is an adjunction $\loc\dashv\pt$ between certain functors $\loc\colon \Top\to \Loc$ and $\pt\colon \Loc\to \Top$. Here the functor $\loc$ takes a topological space $S$ and produces the locale $\loc(S)$ whose underlying frame of opens is defined by the topology~$\Opens S\subseteq\Powerset(S)$ of~$S$. We refer to~\cite[\S IX.3]{maclane1994SheavesGeometryLogic} for details. A locale $X$ is called \emph{spatial} if it is a fixed point of this adjunction, meaning equivalently~${X=\loc(S)}$ for some~${S\in\Top}$.

The rest of this section recaps necessary definitions and results on sublocales, Galois adjunctions, and frame coproducts. We also need the following general concept. Just as in topology, a frame may be generated by a basis. A \emph{join-basis} for a frame $L$ is a subset $\calB\subseteq L$ such that every $x\in L$ can be written as the join over elements in $\calB$. 

\begin{lemma}
	\label{lemma:equality if contain same basics}
	Let $\calB$ be a join-basis for $L$, and take $x,y\in L$. If $b\sqleq x$ iff $b\sqleq y$ for all $b\in\calB$, then $x=y$.
\end{lemma}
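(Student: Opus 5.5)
The plan is to express both $x$ and $y$ as the join of the basic elements below them, and then observe that these two sets of basic elements coincide by hypothesis. Write
\[
\calB_x := \{b\in\calB : b\sqleq x\}
\quad\text{and}\quad
\calB_y := \{b\in\calB : b\sqleq y\}.
\]
The assumption that $b\sqleq x$ iff $b\sqleq y$ for every $b\in\calB$ says precisely that $\calB_x=\calB_y$.

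The key step is to show that $x=\bigvee\calB_x$, and likewise $y=\bigvee\calB_y$. Since $\calB$ is a join-basis, we can write $x=\bigvee_{i} b_i$ for some family $b_i\in\calB$. Each $b_i$ satisfies $b_i\sqleq x$, so it lies in $\calB_x$. This gives
\[
x=\bigvee_i b_i\sqleq\bigvee\calB_x .
\]
Conversely, every element of $\calB_x$ lies below $x$, so $\bigvee\calB_x\sqleq x$. Antisymmetry of $\sqleq$ then yields $x=\bigvee\calB_x$, and the same argument applied to $y$ gives $y=\bigvee\calB_y$.

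Combining the two, $x=\bigvee\calB_x=\bigvee\calB_y=y$. No distributivity is needed, so the argument works in any complete lattice with a join-basis. The only point requiring any care is the key step above: the family exhibiting $x$ as a join of basics need not be all of $\calB_x$, and the two inequalities handle exactly this.
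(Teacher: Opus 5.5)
Your proof is correct and follows the paper's argument: it writes $x=\bigvee\{b\in\calB : b\sqleq x\}$ and $y=\bigvee\{b\in\calB : b\sqleq y\}$, then notes that the two index sets coincide by hypothesis. The only difference is that you spell out why an element equals the join of \emph{all} basic elements below it, which the paper takes as immediate from the definition of a join-basis.
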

\begin{proof}
	We can write each element in $L$ as the join over the basis elements it contains, so $x=\bigvee\{b\in\calB:b\sqleq x\}= \bigvee\{b\in\calB: b\sqleq y\}=y$.
\end{proof}

\subsection{Sublocales}
\label{section:sublocales}
\emph{Sublocales} are the appropriate localic analogue of subspaces in point-set topology. There are several equivalent ways of presenting them, each with their own advantage in different contexts. We briefly describe the most important ones here, but leave full details and their precise translations to~\cite[\S III]{picado2012FramesLocalesTopology}. First recall the following basic terminology, which will also be used later in the paper.

\begin{definition}
	\label{definition:closure operator}
	A function $c\colon P\to P$ on a partially ordered set is called:
	\begin{itemize}
		\item \emph{inflationary} if $x\leq c(x)$ for all $x\in P$;
		\item \emph{subidempotent} if $c(c(x))\leq c(x)$ for all $x\in P$;
		\item \emph{idempotent} if $c(c(x))=c(x)$ for all $x\in P$;
		\item a \emph{closure operator} if it is monotone, inflationary and subidempotent.
	\end{itemize}
\end{definition}

Fix a frame $L$. A \emph{sublocale set} is a subset $S\subseteq L$ that is closed under all meets of~$L$, and closed under the Heyting implication in the sense that for all~$s\in S$ and~$x\in L$ we have~$x\to s\in S$. A \emph{frame congruence} is an equivalence relation~$\equiv$ on~$L$ such that ${\equiv}\subseteq L\times L$ is a subframe (closed under finite meets and all joins). A \emph{nucleus} is a closure operator $\nu\colon L\to L$ that preserves binary meets.

\begin{definition}
	\label{definition:sublocale}
	A \emph{sublocale} of a locale $X$ is equivalently one of the following:
	\begin{itemize}
		\item an (isomorphism class of an) extremal monomorphism $A\rightarrowtail X$;
		\item a sublocale set $S$ of $\Opens X$;
		\item a frame congruence $\equiv$ on $\Opens X$;
		\item a nucleus $\nu$ on $\Opens X$.
	\end{itemize}
\end{definition}

Inclusion of sublocales is denoted $\subseteq$, which is structurally just a subobject inclusion of extremal monomorphisms, but can be more concretely represented by literal subset inclusions of sublocale sets, or as reverse inclusion of frame congruences or nuclei. For fixed~$X$, this gives a lattice $\Sl(X)$ of sublocales, which is in fact a \emph{coframe}.

Since frames and locales are formally dual, sublocales are represented by quotients in the category of frames, and since frames are algebraic objects these are in turn described by frame congruences. It is possible to start with a basic relation~$\sim$ on the frame and then generate the smallest congruence containing it, but this may not be straightforward to calculate. Instead, we use the techniques from \cite[\S III.11]{picado2012FramesLocalesTopology} and~\cite{moshier2017GeneratingSublocalesSubsets} to construct the sublocale directly. Note here that~$\sim$ is just a subset~${\sim}\subseteq L\times L$, without any further assumptions.

\begin{definition}
	\label{definition:saturated}
	For a binary relation ${\sim}\subseteq L\times L$ on a frame, an element~$s\in L$ is called \emph{$\sim$-saturated} if:
	\[
	\forall a,b,c\in L: a \sim b \quad\implies\quad \left(a\wedge c \sqleq s \text{ iff } b\wedge c\sqleq s\right).
	\]
	The set of $\sim$-saturated elements is denoted $L/{\sim}$.
\end{definition}

\begin{remark}[{\cite[\S 11.2]{picado2012FramesLocalesTopology}}]
	The set of $\sim$-saturated elements $L/{\sim}$ is a sublocale set of $L$, and is thus itself a frame. Equivalently, it is the sublocale described by the frame congruence~$\langle\sim\rangle$ generated by~$\sim$:
	\[
	L/{\sim} = L/\langle \sim\rangle.
	\]
\end{remark}

\begin{remark}
	\label{remark:quotient map}
	It can be shown that there is a frame map
	\[
	\mu_\sim \colon L \longrightarrow L/{\sim};
	\qquad
	\mu_\sim(x)=\bigwedge\{s\in L/{\sim}: x\sqleq s\},
	\]
	taking an element in the frame and producing the smallest $\sim$-saturated element containing it, called the \emph{quotient map} of~$\sim$. Seen as a map on~$L$ itself, $\mu_\sim$ is equivalently the nucleus of the induced sublocale, so in particular it is inflationary and subidempotent, and the elements in~$L/{\sim}$ are just the fixed points of~$\mu_\sim$.
\end{remark}

The quotient map~$\mu_\sim$ indeed behaves like a quotient. A function $h$ defined on $L$ is said to \emph{equalise} $\sim$ if $x\sim y$ implies $h(x)=h(y)$.

\begin{theorem}[{\cite[\S 11.3.1]{picado2012FramesLocalesTopology}}]
	\label{theorem:frame quotient theorem}
	Let $\sim$ be a binary relation on a frame $L$. Then the function $\mu_\sim\colon L\to L/{\sim}$ is a frame map that equalises~$\sim$. If~$h\colon L\to M$ is a frame map equalising $\sim$, there exists a unique frame map $\bar h$ making the following diagram commute:
	\[
	\begin{tikzcd}[cramped, row sep = 1.5em]
		L & {L/{\sim}} \\
		& M.
		\arrow["{\mu_\sim}", from=1-1, to=1-2]
		\arrow["h"', from=1-1, to=2-2]
		\arrow["{\exists! \bar h}", dashed, from=1-2, to=2-2]
	\end{tikzcd}
	\]
\end{theorem}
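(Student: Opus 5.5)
We must show that $\mu_\sim$ is a frame map equalising $\sim$, and that any frame map $h\colon L\to M$ equalising $\sim$ factors uniquely as $h=\bar h\circ\mu_\sim$. The plan is to exploit that $L/{\sim}$ is a sublocale set with nucleus $\mu_\sim$, as recorded in the preceding remarks. Then $\mu_\sim$ is a frame map onto the frame $L/{\sim}$: it preserves binary meets and is inflationary and idempotent. Joins in $L/{\sim}$ are computed as $\mu_\sim(\bigvee s_i)$, from which $\mu_\sim(\bigvee x_i)=\bigvee^{L/\sim}\mu_\sim(x_i)$ follows by the standard nucleus argument.

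\emph{Step 1: $\mu_\sim$ equalises $\sim$.} Suppose $a\sim b$. Put $s=\mu_\sim(a)$, which is $\sim$-saturated, and take $c=\top$ in \cref{definition:saturated}. Since $a\wedge\top\sqleq s$, saturation gives $b\sqleq s$, so $\mu_\sim(b)\sqleq\mu_\sim(a)$. Symmetrically $\mu_\sim(a)\sqleq\mu_\sim(b)$.

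\emph{Step 2: factorisation.} Let $h\colon L\to M$ equalise $\sim$, and let $h_\ast$ be its right adjoint. The key claim is that for every $m\in M$ the element $h_\ast(m)$ is $\sim$-saturated. To check this, take $a\sim b$ and $c\in L$, and suppose $a\wedge c\sqleq h_\ast(m)$. Then
\[
h(b\wedge c)=h(b)\wedge h(c)=h(a)\wedge h(c)=h(a\wedge c)\sqleq m,
\]
so $b\wedge c\sqleq h_\ast(m)$; the converse direction is symmetric. Consequently $x\sqleq h_\ast h(x)$ together with saturation gives $\mu_\sim(x)\sqleq h_\ast h(x)$, hence $h(\mu_\sim(x))\sqleq h(x)$. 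Monotonicity and inflation give the reverse inequality, so $h\circ\mu_\sim=h$. We therefore define $\bar h$ as the restriction of $h$ to $L/{\sim}$.

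\emph{Step 3: $\bar h$ is a frame map, and uniqueness.} Finite meets in $L/{\sim}$ agree with those in $L$, since sublocale sets are closed under meets. Because $\top$ is saturated, $\bar h$ preserves them. For joins,
\[
\bar h\Bigl(\textstyle\bigvee^{L/\sim}s_i\Bigr)=h\Bigl(\mu_\sim\bigl(\textstyle\bigvee s_i\bigr)\Bigr)=h\Bigl(\textstyle\bigvee s_i\Bigr)=\textstyle\bigvee h(s_i),
\]
using $h\circ\mu_\sim=h$. Uniqueness holds because $\mu_\sim$ is surjective onto $L/{\sim}$, as it fixes saturated elements.

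The main obstacle is the saturation of $h_\ast(m)$ in Step 2. Once that is in place, everything else is routine nucleus bookkeeping.
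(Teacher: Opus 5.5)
Your proof is correct. The paper gives no proof of its own and cites Picado--Pultr, whose argument is essentially yours: the key point is that $h_\ast(m)$ is $\sim$-saturated, so $h_\ast[M]\subseteq L/{\sim}$, and then $h$ factors through the nucleus $\mu_\sim$ by restricting $h$ to $L/{\sim}$.
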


Lastly, in our setting the relation~$\sim$ will already define a meet-semilattice congruence with respect to some join-basis, and the following simplified saturation condition will come in handy.

\begin{proposition}[{\cite[\S 11.4]{picado2012FramesLocalesTopology}}]
	\label{proposition:saturated simplified when meet preserved}
	Let $\calB$ be a join-basis for $L$, and let~${{\sim}\subseteq L\times L}$ be such that for any $x,y\in L$ and $b\in\calB$ we have $x\sim y$ implies $(x\wedge b)\sim(y\wedge b)$. Then an element $s\in L$ is $\sim$-saturated iff
	\[
	x\sim y \quad\implies\quad \left(x\sqleq s \text{ iff } y\sqleq s\right).
	\]
\end{proposition}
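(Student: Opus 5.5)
Both directions of the equivalence will be checked directly against \cref{definition:saturated}, which asks that $a\wedge c\sqleq s$ iff $b\wedge c\sqleq s$ for all $c\in L$ whenever $a\sim b$.

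The forward direction is immediate. If $s$ is $\sim$-saturated and $x\sim y$, take $c=1$ in \cref{definition:saturated}. Since $x\wedge 1=x$ and $y\wedge 1=y$, this gives $x\sqleq s$ iff $y\sqleq s$.

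For the converse, assume $s$ satisfies the simplified condition, and let $a\sim b$ and $c\in L$ with $a\wedge c\sqleq s$. We must show $b\wedge c\sqleq s$; the other implication follows by the same argument with the roles of $a$ and $b$ exchanged.
\begin{itemize}
\item Since $\calB$ is a join-basis, write $c=\bigvee_i b_i$ with each $b_i\in\calB$.
\item By infinite distributivity, $a\wedge c=\bigvee_i (a\wedge b_i)$. Hence $a\wedge c\sqleq s$ holds exactly when $a\wedge b_i\sqleq s$ for every $i$, and the same holds with $b$ in place of $a$.
\item For each $i$, the hypothesis on $\sim$ gives $(a\wedge b_i)\sim(b\wedge b_i)$.
\item The simplified condition then yields $b\wedge b_i\sqleq s$ for every $i$.
\item Taking the join over $i$ gives $b\wedge c\sqleq s$.
\end{itemize}

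There is no real obstacle here. The only point that needs care is reducing an arbitrary $c$ to basic elements by distributivity, so that the compatibility assumption can be applied one basic element at a time. Implicit in this is that the hypothesis is only stated for $b\in\calB$, which is exactly why the decomposition of $c$ is needed.
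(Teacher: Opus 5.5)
Your proof is correct. The paper gives no proof of its own here and only cites Picado--Pultr, and your argument is the standard one behind that result: take $c=\top$ for the forward direction; for the converse, write $c=\bigvee_i b_i$ with $b_i$ basic, distribute, apply the compatibility hypothesis and the simplified condition to each $b_i$, and take the join. One small precision: the reverse implication needs no symmetry of $\sim$, which is not assumed. You still use $(a\wedge b_i)\sim(b\wedge b_i)$ and read the biconditional in the simplified condition from right to left.
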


\subsection{Galois adjunctions}
\label{section:galois adjunctions}
We briefly recall the necessary basics on \emph{(Galois) adjunctions}, and refer to~\cite[\S O-3]{gierz2003ContinuousLatticesDomains} or~\cite[\S AI.5]{picado2012FramesLocalesTopology} for details.

\begin{definition}
	A \emph{Galois adjunction} $f\dashv g$ consists of a pair of monotone functions~${f\colon P\to Q}$ and~${g\colon Q\to P}$ between posets satisfying:
	\[
	f(p)\leq q
	\quad\iff\quad
	p\leq g(q).
	\]
	The function $f$ is called the \emph{left adjoint}, and $g$ the \emph{right adjoint}.
\end{definition}

\begin{theorem}
	\label{theorem:adjoint iff preserves meets/joins}
	If $L,M$ are complete lattices, then a monotone function~${f\colon L\to M}$ is a left/right adjoint iff it preserves all joins/meets. Explicitly, if~$f\dashv g$ then
	\[
	f(x)= \bigwedge\{y\in M: x\leq g(y)\}
	\quad\text{and}\quad
	g(y)=\bigvee\{x\in L: f(x)\leq y\}.
	\]
\end{theorem}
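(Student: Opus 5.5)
We prove the two directions of the first claim and then derive the explicit formulas; everything follows from the defining equivalence $f(p)\leq q \iff p\leq g(q)$ together with completeness of $L$ and $M$.

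\emph{Adjoints preserve joins and meets.} Suppose $f\dashv g$. For any family $x_i$ in $L$ and any $y\in M$ we have
\[
f\bigl(\bigvee x_i\bigr)\leq y \iff \bigvee x_i\leq g(y) \iff \forall i\colon x_i\leq g(y) \iff \forall i\colon f(x_i)\leq y \iff \bigvee f(x_i)\leq y.
\]
Since the two elements have the same upper bounds, $f(\bigvee x_i)=\bigvee f(x_i)$. The argument for $g$ preserving meets is the order-dual computation.

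\emph{Join-preserving maps have right adjoints.} Conversely, suppose $f$ preserves all joins. Define
\[
g(y):=\bigvee\{x\in L: f(x)\leq y\}.
\]
This $g$ is monotone, since the indexing set grows with $y$. If $p\leq g(q)$, then by monotonicity and join preservation
\[
f(p)\leq f(g(q))=\bigvee\{f(x): f(x)\leq q\}\leq q.
\]
If instead $f(p)\leq q$, then $p$ belongs to the indexing set, so $p\leq g(q)$. Hence $f\dashv g$. Dually, if $g$ preserves all meets, then $f(x):=\bigwedge\{y: x\leq g(y)\}$ is a left adjoint; this is the same check with the order reversed.

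\emph{The explicit formulas.} It remains to show that for an arbitrary adjunction $f\dashv g$ the stated formulas hold. Using the adjunction equivalence, we get
\[
\{x: f(x)\leq y\}=\{x: x\leq g(y)\},
\]
and the join of this down-set is $g(y)$. Dually, $\{y: x\leq g(y)\}=\{y: f(x)\leq y\}$, and its meet is $f(x)$.

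The only step requiring any care is the construction of $g$ in the converse. There, the key inequality $f(g(q))\leq q$ is exactly where join preservation of $f$ is used, including the case of the empty join. All other steps are routine.
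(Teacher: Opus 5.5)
Your proof is correct and complete. It is the standard argument: you show that the two sides have the same upper bounds to get join preservation, construct $g(y)=\bigvee\{x: f(x)\leq y\}$ for the converse, and note that $\{x: f(x)\leq y\}$ is the principal down-set of $g(y)$ to get the formulas. The paper gives no proof of its own and instead cites Gierz et al.\ and Picado--Pultr, where essentially this same argument appears.
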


\begin{remark}
	A fact we will often use in proofs is that if there is an adjunction~${f\dashv g}$, then the function $g\circ f$ is a closure operator. In particular note that, $f(x)\leq f(x)$, which is always true, holds iff $x\leq g \circ f(x)$.
\end{remark}

\subsection{Coproducts of frames}
For the purposes of this work we only require binary coproducts, though of course $\Frm$ is cocomplete~\cite[\S IV.4]{picado2012FramesLocalesTopology}. If $L,M$ are frames then their \emph{coproduct} $L\tensor M$ is the frame freely generated by the symbols~$x\tensor y$, where $x\in L$ and $y\in M$, subject to the following equations~\cite{dowker1977SumsCategoryFrames,johnstone1991PreframePresentationsPresent,picado2015NotesProductLocales}:
\begin{gather*}
	(x_1\tensor y_1)\wedge (x_2\tensor y_2) = (x_1\wedge x_2)\tensor (y_1\wedge y_2),
	\\
	\bigvee (x_i\tensor y) = \left(\bigvee x_i\right)\tensor y,
	\quad\text{and}\quad
	\bigvee(x\tensor y_i)= x\tensor \left(\bigvee y_i\right).
\end{gather*}
Thus $L\tensor M$ has join-basis $\{x\tensor y: x\in L,y\in M\}$, whose elements are called \emph{(basic) rectangles}. Note that this basis is in fact a sub-meet-semilattice. This construction is analogous to how the product topology is defined on the Cartesian product of two topological spaces, but note that spatial and localic products may differ quite drastically (for example, the localic product $\loc(\mathbb{Q})\times\loc(\mathbb{Q})$ is no longer spatial~\cite[\S II.2.14]{johnstone1982StoneSpaces}).

We collect some structural results about binary frame coproducts from~\cite[\S IV.5.5]{picado2012FramesLocalesTopology} that will be used in future calculations.

\begin{lemma}
	\label{lemma:projections}\label{lemma:coproduct of frame maps}\label{lemma:pair of locale maps}
	We have:
	\begin{itemize}
		\item the coproduct injection maps $\iota_1\colon L\to L\tensor M$ and $\iota_2\colon M\to L\tensor M$ are
		\[
		\iota_1(x) = x\tensor \top
		\quad\text{and}\quad
		\iota_2(y)= \top\tensor y;
		\]
		
		\item for any frame maps $h,k$ the map $h\tensor k$ is given on rectangles by
		\[
		(h\tensor k)(x\tensor y) = h(x)\tensor k(y);
		\]
		
		\item the codiagonal $L\tensor L \to L$ is given on rectangles by
		\[
		x\tensor y \longmapsto x\wedge y;
		\]
		
		\item for frame maps $h\colon L\to N$ and $k\colon M\to N$ the copair ${[h,k]\colon L\tensor M \to N}$ is given on rectangles by
		\[
		[h,k](x\tensor y)= h(x)\wedge k(y).
		\]
	\end{itemize}
\end{lemma}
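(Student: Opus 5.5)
Each of the four items follows from the universal property of $L\tensor M$ as the coproduct in $\Frm$, together with its presentation by generators $x\tensor y$ and relations, which makes the rectangles a join-basis. The plan is to identify each map on generators and then use the fact that a frame map out of $L\tensor M$ is determined by its values on rectangles. Such values extend uniquely to a frame map exactly when they respect the defining relations (binary meets of rectangles, joins in each variable, and the top element $\top\tensor\top$ for preservation of the empty meet).

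\textbf{Injections.} The candidate $\iota_1(x)=x\tensor\top$ is a frame map.
\begin{itemize}
\item Finite meets: $(x_1\tensor\top)\wedge(x_2\tensor\top)=(x_1\wedge x_2)\tensor\top$, and $\top\tensor\top$ is the top element.
\item Joins: $\bigvee(x_i\tensor\top)=(\bigvee x_i)\tensor\top$ by the second defining equation.
\end{itemize}
To see that these are the coproduct injections, take frame maps $h\colon L\to N$ and $k\colon M\to N$. Define $[h,k]$ on generators by $x\tensor y\mapsto h(x)\wedge k(y)$. This assignment respects the relations:
\begin{itemize}
\item the meet relation, because $h$ and $k$ preserve meets;
\item the two join relations, by frame distributivity in $N$ together with join preservation of $h$ and $k$.
\end{itemize}
So it extends to a frame map. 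Since $y=\top$ gives $x\tensor\top\mapsto h(x)$, we get $[h,k]\circ\iota_1=h$, and similarly $[h,k]\circ\iota_2=k$. Uniqueness follows from $x\tensor y=\iota_1(x)\wedge\iota_2(y)$: any map with the required composites must send $x\tensor y$ to $h(x)\wedge k(y)$, and the rectangles form a join-basis. This proves the first and fourth items at once.

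\textbf{Remaining items.} Both are instances of the copair formula.
\begin{itemize}
\item $h\tensor k$ is by definition $[\iota_1\circ h,\ \iota_2\circ k]$. Hence $x\tensor y\mapsto (h(x)\tensor\top)\wedge(\top\tensor k(y))=h(x)\tensor k(y)$, using the meet relation.
\item The codiagonal is $[\id,\id]$, so $x\tensor y\mapsto x\wedge y$.
\end{itemize}

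The main obstacle is checking that the assignment on generators respects the relations, in particular the join relations, where infinite distributivity in the target frame is essential. Everything else is formal bookkeeping from the universal property and the join-basis of rectangles (compare \cref{lemma:equality if contain same basics}).
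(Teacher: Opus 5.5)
Your proof is correct. The paper gives no proof of this lemma: it imports the formulas from Picado--Pultr \cite[\S IV.5.5]{picado2012FramesLocalesTopology}. There, as in Johnstone's treatment, which the paper invokes later, $L\tensor M$ is built concretely as a frame of saturated down-sets of $L\times M$, and the maps are computed in that model.

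You instead argue purely from the presentation recalled just before the lemma. You check that $x\mapsto x\tensor\top$ and $y\mapsto\top\tensor y$ are frame maps. You define the copair on generators and verify the relations, with infinite distributivity in $N$ as the only non-formal step. Uniqueness comes from $x\tensor y=\iota_1(x)\wedge\iota_2(y)$. Finally, $h\tensor k=[\iota_1\circ h,\iota_2\circ k]$ and the codiagonal $[\id,\id]$ drop out as formal corollaries.

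Your route gives a short, uniform derivation in which the second and third items cost nothing. The price is that it presupposes the general existence and universal property of presented frames, which is normally proved by exactly such a down-set construction. The concrete model, conversely, describes the elements of $L\tensor M$ explicitly. The paper relies on that later, when it identifies $L\tensor M$ with the suplattice tensor product.

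One point in your favour: you include the nullary relation $\top\tensor\top=\top$. The paper's displayed list of equations omits it, but it is genuinely needed. Without it nothing forces $\iota_1(\top)=\top$, and the rectangles would not form a join-basis. Classically, for $L=M=\{0<1\}$, the displayed equations present the three-element chain (free on the single surviving generator $\top\tensor\top$) rather than $\{0<1\}$.
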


\section{Spatial intuition}
\label{section:spatial intuition}
In this more informal section we outline the spatial intuitions for the definitions and results in the later sections. As outlined in the~\nameref{section:introduction}, the main two constructions involve translating between relations $R$ and the corresponding cones~$\up,\down$. For relations on spaces this is a straightforward affair, as we spell out below. The main task is to translate this into a suitable point-free description. This is summarised in~\cref{intuition:cones from st,intuition:sim}.

A \emph{relation} on a topological space $S$ is just a subset~$R\subseteq S\times S$. We use the common notation of $xRy$ to denote $(x,y)\in R$. When needed, we assume $R$ is equipped with the subspace topology. To help intuition, we provide basic illustrations of the main concepts and constructions, where for simplicity a generic relation $R$ is drawn as the grayed out upper diagonal region:

\smallskip
\def\xmax{2.2} 
\def\ymax{2} 
\hfil
\begin{tikzpicture}[baseline={(0,0)}]
	\def\a{0.2*\xmax} 
	\def\b{0.7*\xmax} 
	
	\pgfmathsetmacro{\xmin}{-0.15*\xmax}
	\pgfmathsetmacro{\ymin}{-0.15*\ymax}
	\coordinate (D1) at (\xmin,\ymin);
	\coordinate (D2) at (\xmax,\ymax);
	
	\begin{scope}
		\clip (D1) -- (D2) -- (\xmax,\ymax) -- (\xmin,\ymax) -- cycle;
		\fill[bg] (\xmin,\ymin) rectangle (\xmax,\ymax);
	\end{scope}
	
	\pgfmathsetmacro{\m}{(\ymax-\ymin)/(\xmax-\xmin)}
	\pgfmathsetmacro{\c}{\ymin - \m*(\xmin)}
	\pgfmathsetmacro{\ya}{\m*\a + \c}
	\pgfmathsetmacro{\yb}{\m*\b + \c}
	
	\draw[gray] (0,-0.15*\ymax) -- (0,\ymax);
	\draw[gray] (-0.15*\xmax,0) -- (\xmax,0);
	\draw[dashed, gray, very thin] (D1) -- (D2);

	\node[black] at (\xmax*.25,\ymax*.7) {$R$};
\end{tikzpicture}\hfil

\begin{definition}
	Let $R$ be a relation on a space $S$. The \emph{(induced) cones} are the pair of functions~$\up,\down\colon \Powerset(S)\to \Powerset(S)$ defined on the powerset by
	\[
	\up A:=\{x\in S: \exists a\in A: a R x\}
	\quad\text{and}\quad
	\down A:= \{y\in S: \exists a\in A: y R a\}.
	\]
\end{definition}
We adopt the following visual intuition for regions $A\in\Powerset(S)$ and their cones. Note that, only for simplicity, the cones are drawn as if the underlying relation is a preorder (so that the cones contain $A$):

\smallskip
\hfil\begin{tikzpicture}[every node/.style={font=\scriptsize}]
	\def\w{1} 
	\def\h{1} 
	
	\coordinate (P1) at (-.6,.1);
	\coordinate (P2) at (0,-.2);
	\coordinate (P3) at (.8,-.1);
	\coordinate (P4) at (.4,.2);
	\coordinate (P5) at (-.1,.3);
	
	\path let \p1=(P1), \p3=(P3) in
	\pgfextra{
		\pgfmathsetlengthmacro{\xc}{0.5*(\x1+\x3)}\xdef\xc{\xc}
		\ifdim\y1>\y3 \xdef\ymax{\y1}\else \xdef\ymax{\y3}\fi
		\pgfmathsetlengthmacro{\ytop}{\ymax + \h cm}\xdef\ytop{\ytop}
		\pgfmathsetlengthmacro{\xL}{\xc - \w cm}\xdef\xL{\xL}
		\pgfmathsetlengthmacro{\xR}{\xc + \w cm}\xdef\xR{\xR}
	};
	
	\coordinate (TL) at (\xL,\ytop);
	\coordinate (TR) at (\xR,\ytop);
	
	\fill[figcolor, opacity=0.10]
	(P1) -- (TL) -- (TR) -- (P3) -- cycle;
	
	\draw[dotted, figcolor] (P1) -- (TL);
	\draw[dotted, figcolor] (P3) -- (TR);
	
	\fill[black!05, draw=black,thick]
	plot[smooth cycle, tension=0.9]
	coordinates {(P1) (P2) (P3) (P4) (P5)};
	
	\node at (0,0.05) {$A$};
	\node[figcolor] at (.1,.7) {$\up A$};
\end{tikzpicture}
\hfil\begin{tikzpicture}[every node/.style={font=\scriptsize}]
	\def\w{1} 
	\def\h{-1} 
	
	\coordinate (P1) at (-.6,.1);
	\coordinate (P2) at (0,-.2);
	\coordinate (P3) at (.8,-.1);
	\coordinate (P4) at (.2,.2);
	\coordinate (P5) at (-.1,.3);
	
	\path let \p1=(P1), \p3=(P3) in
	\pgfextra{
		\pgfmathsetlengthmacro{\xc}{0.5*(\x1+\x3)}\xdef\xc{\xc}
		\ifdim\y1>\y3 \xdef\ymax{\y1}\else \xdef\ymax{\y3}\fi
		\pgfmathsetlengthmacro{\ytop}{\ymax + \h cm}\xdef\ytop{\ytop}
		\pgfmathsetlengthmacro{\xL}{\xc - \w cm}\xdef\xL{\xL}
		\pgfmathsetlengthmacro{\xR}{\xc + \w cm}\xdef\xR{\xR}
	};
	
	\coordinate (TL) at (\xL,\ytop);
	\coordinate (TR) at (\xR,\ytop);
	
	\fill[accentcolor, opacity=0.10]
	(P1) -- (TL) -- (TR) -- (P3) -- cycle;
	
	\draw[dotted, accentcolor] (P1) -- (TL);
	\draw[dotted, accentcolor] (P3) -- (TR);
	
	\fill[black!05, draw=black,thick]
	plot[smooth cycle, tension=0.9]
	coordinates {(P1) (P2) (P3) (P4) (P5)};
	\node at (0,0.05) {$A$};
	\node[accentcolor] at (-.1,-.6) {$\down A$};
\end{tikzpicture}\hfil

Since the cones are defined in terms of an existential quantifier, we get the following important property.

\begin{proposition}
	\label{proposition:cones preserve joins in Set}
	If $R$ is a relation on a space then $\up,\down$ preserve all joins.
\end{proposition}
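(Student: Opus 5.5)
The plan is to verify directly from the definitions that $\up$ and $\down$ commute with arbitrary unions in $\Powerset(S)$, since joins in the powerset lattice are unions. Take an arbitrary family $\{A_i\}_{i\in I}$ of subsets of $S$. I will show $\up\bigcup_i A_i=\bigcup_i \up A_i$ by unfolding membership. An element $x$ lies in $\up\bigcup_i A_i$ iff there is some $a\in\bigcup_i A_i$ with $aRx$. That happens iff there are $i\in I$ and $a\in A_i$ with $aRx$, which in turn holds iff $x\in\up A_i$ for some $i$, that is, iff $x\in\bigcup_i\up A_i$.

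The key step is that the two existential quantifiers ``there is $i$'' and ``there is $a\in A_i$'' may be interchanged and merged. This is purely logical, so the argument goes through constructively as well. The argument for $\down$ is identical, with $aRx$ replaced by $xRa$. Alternatively, I will note that $\down$ is just the $\up$ of the converse relation $R^{\op}$, so it suffices to treat $\up$.

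As a special case, the empty family gives preservation of the bottom element. Indeed, $\up\emptyset=\emptyset$ because no $a\in\emptyset$ exists.

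A slicker route, which also connects to \cref{theorem:adjoint iff preserves meets/joins}, would be to exhibit a right adjoint. Define $\Box B=\{a\in S:\forall x\,(aRx\Rightarrow x\in B)\}$, and check that $\up A\subseteq B$ iff $A\subseteq \Box B$. Then $\up$ is a left adjoint and hence preserves all joins. I expect no real obstacle in either route. The only point requiring care is noting that the topology plays no role: the statement concerns $\Powerset(S)$, not $\Opens S$.
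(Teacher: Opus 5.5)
Your proposal is correct and follows the paper's proof: the paper also unfolds membership in $\up\bigcup A_i$, merges the two existential quantifiers over $i$ and over $a\in A_i$ to get $\bigcup\up A_i$, and treats $\down$ analogously. Your alternative via the right adjoint $\Box$ is also valid, but the direct argument is all that is needed.
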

\begin{proof}
	This follows simply as:
	\begin{align*}
	x\in \up\bigcup A_i
	&\iff 
	\exists a\in \bigcup A_i: a R x
	\\&\iff 
	\exists i\exists a\in A_i: a R x
	\\&\iff 
	\exists i: x\in \up A_i
	\\&\iff
	x\in \bigcup \up A_i.
	\qedhere
	\end{align*}
\end{proof}

Another important property of $\up,\down$ is that they are \emph{parallel}. The notion of parallelness on meet-semilattices will be formally introduced in~\cref{section:parallelness}, but is motivated as follows. Indeed, it is clear that the cones coming from a relation $R$ are not completely independent, but share a correspondence:
\[
x\in \down \{y\}
\quad\iff\quad
x R y
\quad\iff\quad
y\in \up\{x\}.
\]
Using singletons and join-preservation, specifying one of the cones fully determines the other. Of course, generally, singletons may not be available, and being parallel can be captured by the following more general condition:
\[
\up A \cap B \subseteq \up (A\cap \down B)
\qquad\text{and}\qquad
A\cap \down B \subseteq \down (\up A\cap B),
\]
for subsets~$A,B\subseteq S$. It is straightforward to see that plugging back in singletons and using join-preservation that~$\up,\down$ are parallel in this sense precisely when~${x\in \down\{y\}}$ iff~${y\in\up\{x\}}$. From this we immediately get the following.

\begin{lemma}
	There is a bijective correspondence between relations~$R$ on a set~$S$ and join-preserving parallel pairs $\uc,\dc$ on its powerset~$\Powerset(S)$.
\end{lemma}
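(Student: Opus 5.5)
The plan is to exhibit the two assignments explicitly and check that they are mutually inverse. In one direction, a relation $R\subseteq S\times S$ is sent to its induced cones $\up,\down$. By \cref{proposition:cones preserve joins in Set} these preserve all joins (the topology plays no role, so this holds on $\Powerset(S)$ for any set $S$). Parallelness is a direct element chase. If $x\in \up A\cap B$, pick $a\in A$ with $aRx$; then $x\in B$ gives $a\in \down B$, so $a\in A\cap\down B$ and hence $x\in\up(A\cap\down B)$. The other inclusion $A\cap\down B\subseteq \down(\up A\cap B)$ is proved symmetrically.

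In the other direction, a join-preserving parallel pair $\uc,\dc$ on $\Powerset(S)$ is sent to the relation
\[
R_{\uc,\dc}:=\{(x,y): x\in \dc\{y\}\}.
\]
The key step is to show that parallelness forces $x\in\dc\{y\}$ iff $y\in\uc\{x\}$. Suppose $y\in\uc\{x\}$. Then $y\in \uc\{x\}\cap\{y\}\subseteq \uc(\{x\}\cap\dc\{y\})$. If $x\notin\dc\{y\}$, the right-hand side is $\uc\emptyset=\emptyset$, since join preservation includes the empty join; this is a contradiction. The converse uses the second parallel inclusion applied to $A=\{x\}$, $B=\{y\}$ in the same way. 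I expect this small argument, and in particular the role of empty-join preservation, to be the only place needing care. Constructively, rather than arguing by contradiction, one should observe that $\uc(\{x\}\cap\dc\{y\})$ equals the join $\bigcup_{x\in\dc\{y\}}\uc\{x\}$ indexed by the proposition $x\in\dc\{y\}$, so membership of $y$ yields $x\in\dc\{y\}$ directly.

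Finally I would check that the two round trips are identities. Starting from $R$, we have $x\in\down\{y\}$ iff $xRy$ by definition, so the relation $R$ is recovered. Starting from $\uc,\dc$, join preservation gives $\dc A=\bigcup_{a\in A}\dc\{a\}=\{y: \exists a\in A: yR_{\uc,\dc}a\}$, which is the induced down cone. Likewise $\uc A=\bigcup_{a\in A}\uc\{a\}$, and by the key step $x\in\uc\{a\}$ iff $aR_{\uc,\dc}x$, so $\uc$ equals the induced up cone. Hence both composites are identities and the correspondence is bijective.
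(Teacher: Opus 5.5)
Your proposal is correct and follows essentially the paper's own argument: the paper observes that, using singletons and join-preservation, parallelness of $\uc,\dc$ on $\Powerset(S)$ amounts to $x\in\dc\{y\}$ iff $y\in\uc\{x\}$ (your key step), and that each cone is recovered as a union over singletons. You fill in the details the paper leaves as ``straightforward'', including the essential use of $\uc\emptyset=\dc\emptyset=\emptyset$.
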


\begin{intuition}
	\label{intuition:cones}
	Building on this insight, if we replace the powerset~$\Powerset(S)$ by some lattice~$L$, of which we think as modeling the regions in some space, then a would-be relation on that space can be modeled by a pair of join-preserving parallel cones~$\uc,\dc$ on~$L$. This intuition forms the basis of our main new notion: \emph{conic frames} in~\cref{definition:conic frame}.
\end{intuition}

So far we have ignored topological structure on the base space. The rest of this section outlines the spatial intuition behind how to translate between relations and cones while being susceptible to point-free generalisation.

\subsection{Relations to cones}
\label{section:relations to cones}
The first task is to translate the definition of~$\up,\down$ into point-free language. In this, the \emph{source} and \emph{target maps}~${s,t\colon R\to S}$ of the relation play a central role. Recall that on a pair~$xRy$ they are defined just as the projections~$s(x,y)=x$ and~$t(x,y)=y$. For arbitrary subsets~$A,B\subseteq S$ we then get:
\[
	s(A\times B\cap R)= A\cap \down B
	\quad\text{and}\quad
	t(A\times B\cap R)=\up A\cap B.
\]
Next, the preimages are calculated as
\[
	s^{-1}(A) = A\times S \cap R
	\quad\text{and}\quad
	t^{-1}(B) = S\times B\cap R.
\]
Combining these two formulas, we see that the cones~$\up,\down$ can be recovered from the source and target maps~$s,t$ as:
\[
	\up A = t s^{-1}(A)
	\qquad\text{and}\qquad
	\down B = st^{-1}(B).
\]
The visual intuition behind these formulas is pictured in the graph as follows:

\smallskip
\def\xmax{2.2} 
\def\ymax{2.2} 
\def\a{0.2*\xmax} 
\def\b{0.7*\xmax} 
\def\xmin{-0.3*\xmax}
\def\ymin{-0.15*\ymax}
\pgfmathsetmacro{\BIG}{5*max(\xmax-\xmin,\ymax-\ymin)}
\hfil
\begin{tikzpicture}[baseline={(0,0)}]
	\pgfmathsetmacro{\dmin}{max(\xmin,\ymin)}
	\pgfmathsetmacro{\dmax}{min(\xmax,\ymax)}
	\coordinate (D1) at (\dmin,\dmin);
	\coordinate (D2) at (\dmax,\dmax);
	
	\begin{scope}
		\path[use as bounding box] (\xmin,\ymin) rectangle (\xmax,\ymax);
		\clip (\xmin,\ymin) rectangle (\xmax,\ymax);
		\clip (\xmin-\BIG,\xmin-\BIG) -- (\xmax+\BIG,\xmax+\BIG)
		-- (\xmax+\BIG,\ymax+\BIG) -- (\xmin-\BIG,\ymax+\BIG) -- cycle;
		\fill[bg] (\xmin,\ymin) rectangle (\xmax,\ymax);
	\end{scope}
	
	\draw[gray] (0,\ymin) -- (0,\ymax);
	\draw[gray] (\xmin,0) -- (\xmax,0);
	\draw[dashed, gray, very thin] (D1) -- (D2);
	
	\draw[\rectcol!50, dotted] (\a,\ymin) -- (\a,\ymax);
	\draw[\rectcol!50, dotted] (\b,\ymin) -- (\b,\ymax);
	
	\draw[\rectcol, ultra thick] (\a,0) -- (\b,0)
	node[below=1pt, midway, fill=white, inner sep=1pt] {$A$};
\end{tikzpicture}
\hspace{-1ex}\begin{tikzpicture}[baseline={(0,0)}]
	\node at (0,0.35*\ymax) {$\longmapsto$};
\end{tikzpicture}~
\begin{tikzpicture}[baseline={(0,0)}]
	\pgfmathsetmacro{\dmin}{max(\xmin,\ymin)}
	\pgfmathsetmacro{\dmax}{min(\xmax,\ymax)}
	\coordinate (D1) at (\dmin,\dmin);
	\coordinate (D2) at (\dmax,\dmax);
	
	\begin{scope}
		\path[use as bounding box] (\xmin,\ymin) rectangle (\xmax,\ymax);
		\clip (\xmin,\ymin) rectangle (\xmax,\ymax);
		\clip (\xmin-\BIG,\xmin-\BIG) -- (\xmax+\BIG,\xmax+\BIG)
		-- (\xmax+\BIG,\ymax+\BIG) -- (\xmin-\BIG,\ymax+\BIG) -- cycle;
		\fill[bg] (\xmin,\ymin) rectangle (\xmax,\ymax);
	\end{scope}
	
	\pgfmathsetmacro{\ya}{\a}
	\pgfmathsetmacro{\yb}{\b}
	
	\draw[gray] (0,\ymin) -- (0,\ymax);
	\draw[gray] (\xmin,0) -- (\xmax,0);
	\draw[dashed, gray, very thin] (D1) -- (D2);
	
	\draw[\rectcol!50, dotted] (\a,\ymin) -- (\a,\ymax);
	\draw[\rectcol!50, dotted] (\b,\ymin) -- (\b,\ymax);
	
	\draw[\rectcol, ultra thick] (\a,0) -- (\b,0)
	node[below=1pt, midway, fill=white, inner sep=1pt] {$A$};
	
	\begin{scope}
		\clip (\xmin,\ymin) rectangle (\xmax,\ymax);
		\clip (\xmin-\BIG,\xmin-\BIG) -- (\xmax+\BIG,\xmax+\BIG)
		-- (\xmax+\BIG,\ymax+\BIG) -- (\xmin-\BIG,\ymax+\BIG) -- cycle;
		\fill[\rectcol!20] (\a,0) rectangle (\b,\ymax);
	\end{scope}
	
	\draw[\rectcol, thick] (\a,\ya) -- (\a,\ymax);
	\draw[\rectcol, thick] (\b,\yb) -- (\b,\ymax);
	\draw[\rectcol, thick] (\a,\ya) -- (\b,\yb);
	
	\node[\rectcol,textbg]
	at (\xmax*.7,\ymax*.31) {$s^{-1}(A)$};
\end{tikzpicture}
\hspace{1ex}\begin{tikzpicture}[baseline={(0,0)}]
	\node at (0,0.35*\ymax) {$\longmapsto$};
\end{tikzpicture}\hspace{1.5ex}
\begin{tikzpicture}[baseline={(0,0)}]
	\pgfmathsetmacro{\dmin}{max(\xmin,\ymin)}
	\pgfmathsetmacro{\dmax}{min(\xmax,\ymax)}
	\coordinate (D1) at (\dmin,\dmin);
	\coordinate (D2) at (\dmax,\dmax);
	
	\begin{scope}
		\path[use as bounding box] (\xmin,\ymin) rectangle (\xmax,\ymax);
		\clip (\xmin,\ymin) rectangle (\xmax,\ymax);
		\clip (\xmin-\BIG,\xmin-\BIG) -- (\xmax+\BIG,\xmax+\BIG)
		-- (\xmax+\BIG,\ymax+\BIG) -- (\xmin-\BIG,\ymax+\BIG) -- cycle;
		\fill[bg] (\xmin,\ymin) rectangle (\xmax,\ymax);
	\end{scope}
	
	\pgfmathsetmacro{\ya}{\a}
	\pgfmathsetmacro{\yb}{\b}
	
	\draw[gray] (0,\ymin) -- (0,\ymax);
	\draw[gray] (\xmin,0) -- (\xmax,0);
	\draw[dashed, gray, very thin] (D1) -- (D2);
	
	\draw[\rectcol!50, dotted] (\a,\ymin) -- (\a,\ymax);
	\draw[\rectcol!50, dotted] (\b,\ymin) -- (\b,\ymax);
	\draw[\rectcol!50, dotted] (\xmin,\a) -- (\xmax,\a);
	
	\draw[\rectcol, ultra thick] (\a,0) -- (\b,0)
	node[below=1pt, midway, fill=white, inner sep=1pt] {$A$};
	
	\begin{scope}
		\clip (\xmin,\ymin) rectangle (\xmax,\ymax);
		\clip (\xmin-\BIG,\xmin-\BIG) -- (\xmax+\BIG,\xmax+\BIG)
		-- (\xmax+\BIG,\ymax+\BIG) -- (\xmin-\BIG,\ymax+\BIG) -- cycle;
		\fill[\rectcol!20] (\a,0) rectangle (\b,\ymax);
	\end{scope}
	
	\draw[\rectcol, thick] (\a,\ya) -- (\a,\ymax);
	\draw[\rectcol, thick] (\b,\yb) -- (\b,\ymax);
	\draw[\rectcol, thick] (\a,\ya) -- (\b,\yb);
	
	\node[\rectcol, textbg]
	at (\xmax*.7,\ymax*.31) {$s^{-1}(A)$};
	
	\draw[accentcolor, ultra thick] (0,\a) -- (0,\ymax)
	node[above, midway, rotate=90] {$ts^{-1}(A)$};
\end{tikzpicture}
\hfil

\smallskip\smallskip
In the localic world we are restricted to the case where $A,B\subseteq S$ are open subsets, which the preimage maps~$s^{-1}$ and~$t^{-1}$ preserve, but the image maps of~$s,t$ may not. To handle this, we impose the following compatibility condition of the relation with the topology.

\begin{definition}
	\label{definition:open cones on space}
	A relation $R$ on a space $S$ is said to have \emph{open cones} if the maps~$\up,\down$ preserve open subsets.
\end{definition}

\begin{example}
	\label{example:open cones spaces}
	We recall some examples from~\cite[\S 2.2]{schaaf2024TowardsPointFreeSpacetimes} and elsewhere:
	\begin{itemize}
		\item Any relation on a discrete space has open cones;
		\item The diagonal relation $\Delta\subseteq S\times S$ of any space has open cones;
		\item Similarly, the codiagonal relation $S\times S\subseteq S\times S$ has open cones;
		\item Any open subset $R\subseteq S\times S$ is a relation with open cones, since product projections in $\Top$ are open. However, $R$ having open cones does not imply $R$ is open as a subset of $S\times S$. Indeed, the diagonal relation trivially has open cones, but is open as a subset iff $S$ has the discrete topology;
	
		\item The real line $\mathbb{R}$ with the Euclidean topology and standard ordering~$\leq$ has open cones. In fact, any linearly ordered topological space has open cones;
		
		\item More generally, if $(P,\leq)$ is a poset, its \emph{interval topology} is generated by the subbase containing the sets $\up x$ and $\down x$, for $x\in P$. With respect to this topology, $\leq$ has open cones. Further, if $P$ is bicontinuous~\cite{gierz2003ContinuousLatticesDomains}, then intervals with respect to its way-below relation $\ll$ define a basis for a topology, with respect to which both $\leq$ and $\ll$ have open cones;
		
		\item The \emph{causal relation} $\preccurlyeq$ of a smooth spacetime is a preorder that has open cones~\cite[Theorem~3.14]{schaaf2024TowardsPointFreeSpacetimes}. The \emph{chronology relation} $\ll$ similarly is a transitive relation with open cones (in fact, it makes arbitrary subsets open)~\cite{penrose1972TechniquesDifferentialTopology}. In~\cite{martin2006DomainSpacetimeIntervals} it was shown that for a certain class of spacetimes (the globally hyperbolic ones) this is actually a special case of the previous example of bicontinuous posets;
		
		\item Preordered spaces with open cones are called \emph{I-spaces} in~\cite{priestley1972OrderedTopologicalSpaces,minguzzi2013ConvexityQuasiuniformizability};
		
		\item (Bi-)Esakia spaces are the Priestley spaces with open cones~\cite{bezhanishvili2010BitopologicalDualityDistributive};
		
		\item If $\equiv$ is an equivalence relation on a space $S$ then the quotient map ${S\to S/{\equiv}}$ is open iff~$\equiv$ has open cones;
		
		\item If $G$ is a topological group and $A\subseteq G$ any subset, then the relation defined by~$xRy$ iff~$x^{-1} y \in A$ has open cones, since multiplication by any fixed element is a homeomorphism:
		\[
		\up U = UA = \bigcup_{a\in A} Ua
		\quad\text{and}\quad
		\down U = UA^{-1} = \bigcup_{a\in A} Ua^{-1};
		\]

		\item As a special case, if $(V,\leq)$ is a preordered topological vector space, then its order is determined by its positive cone $V^+=\{v\in V: 0\leq v\}$ as~$x\leq y$ iff~$y-x\in V^+$, and so $\leq$ has open cones.
	\end{itemize}
\end{example}

We always equip a relation $R$ with the subspace topology induced by $S\times S$, meaning the basic opens in $R$ are those of the form $U\times V\cap R$, for open $U,V\subseteq S$. Recall that a continuous function $g\colon S\to T$ between topological spaces is called \emph{open} if the set-theoretic image $g(U)$ is open in $T$ for ever open subset~${U\subseteq S}$. The open cone condition can then be translated to openness of the~source~and~target~maps.

\begin{lemma}
	\label{lemma:open cones iff s t open}
	A relation $R$ has open cones iff the source and target maps are open.
\end{lemma}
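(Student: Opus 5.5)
The plan is to prove both directions directly from the formulas recorded in the preceding discussion: the image formulas $s(A\times B\cap R)=A\cap\down B$ and $t(A\times B\cap R)=\up A\cap B$, and the description of $R$ with basic opens $U\times V\cap R$.

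For the direction ``open cones implies $s,t$ open'', I would first reduce to basic opens. Set-theoretic images preserve arbitrary unions, and every open subset of $R$ is a union of basic opens $U\times V\cap R$ with $U,V\subseteq S$ open. So it suffices to show that $s(U\times V\cap R)$ and $t(U\times V\cap R)$ are open. By the image formulas these are $U\cap\down V$ and $\up U\cap V$. If $R$ has open cones, both are finite intersections of opens, hence open.

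For the converse, suppose $s$ and $t$ are open maps and let $U\subseteq S$ be open. Then $s^{-1}(U)=U\times S\cap R$ is open in $R$, since $s$ is continuous (it is the restriction of a product projection). Applying $t$ and using the formula $\up U=ts^{-1}(U)$ shows that $\up U$ is open. Symmetrically, $\down V=st^{-1}(V)$ is open. Alternatively, one can take $B=S$ in the image formula to get $\up U=t(U\times S\cap R)$ directly.

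I expect no real obstacle here. The only point needing care is the reduction to basic opens in the first direction, that is, that images commute with unions and that the subspace topology on $R$ has the stated basis. It is also worth noting that the full cone maps act on arbitrary subsets, but open cones only asks that $\up,\down$ send opens to opens, which is exactly what both directions use.
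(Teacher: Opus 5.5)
Your proof is correct and follows the paper's argument essentially verbatim. One direction uses $\up U = ts^{-1}(U)$ and $\down V = st^{-1}(V)$. The other uses the image formulas $s(U\times V\cap R)=U\cap\down V$ and $t(U\times V\cap R)=\up U\cap V$ on basic opens, together with the fact that images preserve unions.
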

\begin{proof}
	From the formulas $\up U = ts^{-1}(U)$ and $\down U = st^{-1}(U)$ we see that if~$s,t$ are open, then~$\up,\down$ preserve opens. Conversely, we have basic opens in $R$ of the form~$U\times V \cap R$ for open $U,V\subseteq S$, and if $\up,\down$ preserve opens then $s(U\times V\cap R) = U\cap \down V$ and $t(U\times V\cap R) = \up U \cap V$ are open. Since image maps preserve unions, the maps~$s$ and~$t$ preserve arbitrary opens.
\end{proof}

\begin{intuition}
	\label{intuition:cones from st}
	In summary, to get cones $\up,\down\colon\Opens S\to \Opens S$ from a relation~$R$, we assume that its source and target maps~$s,t$ are open, and define
	\[
	\up,\down\colon\Opens S\longrightarrow\Opens S;
	\qquad
	\up U = t s^{-1}(U)
	\quad\text{and}\qquad
	\down V = st^{-1}(V).
	\]
	This is now susceptible to point-free generalisation, which is done in~\cref{section:localic relations with open cones}.
\end{intuition}

\subsection{Cones to relations}
\label{section:cones to relations}
The second task is to construct a relation $R$ from a given pair of cones $\up,\down$ on~$\Opens S$. Now this needs to be a sub\emph{locale}, not just a subset.

There are at least two ways to describe the spatial intuition behind a sublocale. First, any subset $A\subseteq S$ of a topological space can be equipped with the subspace topology $\{U\cap A:U\in\Opens S\}$, making the inclusion map $i\colon A\rightarrowtail S$ an extremal monomorphism in $\Top$. This hence defines a sublocale whose opens are of the form~$U\cap A$, for~$U\in \Opens S$. On the other hand, we can represent the induced sublocale via the frame congruence
\[
U\equiv_A V
\quad\iff\quad
U\cap A = V\cap A.
\]
In this form, opens are $\equiv_A$-equivalence classes, which are interpreted as containing those opens $U,V\subseteq S$ that coincide inside of $A$, but may differ elsewhere. This same intuition is also captured by the induced nucleus $\nu_A(U) = (U\cup (S\setminus A))^\circ$, representing the maximal element in the equivalence class, taking an open and `disregarding' everything outside of $A$. In this work both interpretations are convenient, but the focus is on the frame congruence presentation.

Now applying the same intuition to a relation~$R\subseteq S\times S$, we can represent the induced sublocale either via elements of the form $O\cap R$, or as $\equiv_R$-equivalence classes $[O]_R$:

\def\xmax{2.2} 
\def\ymax{2} 
\hfil
\begin{tikzpicture}[baseline={(0,0)}]
	\def\a{0.2*\xmax} 
	\def\b{0.7*\xmax} 
	
	\pgfmathsetmacro{\xmin}{-0.15*\xmax}
	\pgfmathsetmacro{\ymin}{-0.15*\ymax}
	\coordinate (D1) at (\xmin,\ymin);
	\coordinate (D2) at (\xmax,\ymax);
	
	\pgfmathsetmacro{\m}{(\ymax-\ymin)/(\xmax-\xmin)}
	\pgfmathsetmacro{\c}{\ymin - \m*(\xmin)}
	
	\def\diaglinepath{(D1) -- (D2)}%
	\def\upperdiagregionpath{(D1) -- (D2) -- (\xmax,\ymax) -- (\xmin,\ymax) -- cycle}%
	
	\begin{scope}
		\clip \upperdiagregionpath;
		\fill[bg] (\xmin,\ymin) rectangle (\xmax,\ymax);
	\end{scope}
	
	\draw[gray] (0,\ymin) -- (0,\ymax);
	\draw[gray] (\xmin,0) -- (\xmax,0);
	\draw[dashed, gray, very thin] \diaglinepath;
	
	\coordinate (P1) at (.4*\xmax,.7*\ymax);
	\coordinate (P2) at (.2*\xmax,.4*\ymax);
	\coordinate (P3) at (.5*\xmax,.2*\ymax);
	\coordinate (P4) at (.7*\xmax,.5*\ymax);
	
	\def\blobpath{%
		plot[smooth cycle, tension=0.9]
		coordinates {(P1) (P2) (P3) (P4)}%
	}
	
	\path[name path=blob] \blobpath;
	\path[name path=diag] \diaglinepath;
	
	\path[name intersections={of=blob and diag, by={I1,I2}}];
	
	\fill[figcolor!05, draw=figcolor] \blobpath;
	
	\begin{scope}
		\clip \upperdiagregionpath;
		
		\fill[accentcolor!20] \blobpath;
		
		\draw[accentcolor, thick] \blobpath;
	\end{scope}
	
	\draw[accentcolor, thick] (I1) -- (I2);
	
	\node at (\xmax*.3,\ymax*.82) {$O\cap R$};
	\node at (.8*\xmax,.3*\ymax) {$O$};
\end{tikzpicture}
\hfil
\begin{tikzpicture}[baseline={(0,0)}]
	\pgfmathsetmacro{\xmin}{-0.15*\xmax}
	\pgfmathsetmacro{\ymin}{-0.15*\ymax}
	\coordinate (D1) at (\xmin,\ymin);
	\coordinate (D2) at (\xmax,\ymax);
	
	\def\diaglinepath{(D1) -- (D2)}%
	\def\Rpath{(D1) -- (D2) -- (\xmax,\ymax) -- (\xmin,\ymax) -- cycle}%
	\def\Boxpath{(\xmin,\ymin) rectangle (\xmax,\ymax)}%
	
	\begin{scope}[even odd rule]
		\clip \Boxpath \Rpath;
		\fill[figcolor!20] \Boxpath;
	\end{scope}
	
	\begin{scope}
		\clip \Rpath;
		\fill[bg] \Boxpath;
	\end{scope}
	
	\draw[gray] (0,\ymin) -- (0,\ymax);
	\draw[gray] (\xmin,0) -- (\xmax,0);
	\draw[dashed, gray, very thin] \diaglinepath;
	
	\coordinate (P1) at (.4*\xmax,.7*\ymax);
	\coordinate (P2) at (.2*\xmax,.4*\ymax);
	\coordinate (P3) at (.5*\xmax,.2*\ymax);
	\coordinate (P4) at (.7*\xmax,.5*\ymax);
	
	\def\blobpath{%
		plot[smooth cycle, tension=0.9]
		coordinates {(P1) (P2) (P3) (P4)}%
	}
	
	\fill[figcolor!25, draw=figcolor, dotted] \blobpath;
	\node at (\xmax*.45,\ymax*.45) {$O$};
	\node at (.8*\xmax,.12*\ymax) {$[O]_R$};
	
	\path[name path=blob] \blobpath;
	\path[name path=diag]  \diaglinepath;
	\path[name intersections={of=blob and diag, by={I1,I2}}];
	
	\path let \p1=(I1), \p2=(I2) in
	\pgfextra{
		\ifdim\x1<\x2
		\global\coordinate (IL) at (I1);
		\global\coordinate (IR) at (I2);
		\else
		\global\coordinate (IL) at (I2);
		\global\coordinate (IR) at (I1);
		\fi
	};
	
	\begin{scope}
		\clip \Rpath;
		\draw[figcolor, thick] \blobpath;
	\end{scope}
	
	\draw[figcolor, thick] (D1) -- (IL);
	\draw[figcolor, thick] (IR) -- (D2);
\end{tikzpicture}
\hfil

\smallskip\smallskip\noindent

The idea is to construct a sublocale $R$ from $\up,\down$ using the techniques from~\cref{section:sublocales}, where the generating relation~$\sim$ is defined via the following important spatial intuition: for any~${A,B,C,D\in\Powerset(S)}$ we have
\[
A\times B\cap R = C\times D\cap R
\quad\iff\quad
\begin{array}{l}
	A\cap \down B = C\cap \down D,\\
	\up A\cap B = \up C\cap D.
\end{array}
\]
Here, the left-hand side describes the behaviour of the frame congruence $\equiv_R$ on the basic opens, which is now completely described by the condition on the right-hand side involving only the cones~$\up,\down$. It says that two rectangles are `the same' iff their projections under the source and target maps are equal. Here is a simple visual example for the target projection:

\def\xmax{2.7} 
\def\ymax{2.9} 
\def\a{0.2*\xmax} 
\def\b{0.7*\xmax} 
\pgfmathsetmacro{\rAxL}{0.35*\xmax}
\pgfmathsetmacro{\rAxR}{0.8*\xmax}
\pgfmathsetmacro{\rAyB}{0.10*\ymax}
\pgfmathsetmacro{\rAyT}{0.8*\ymax}
\pgfmathsetmacro{\rBxL}{0.35*\xmax}
\pgfmathsetmacro{\rBxR}{1.05*\xmax}
\pgfmathsetmacro{\rByB}{0.35*\ymax}
\pgfmathsetmacro{\rByT}{0.8*\ymax}
\smallskip\hfil
\begin{tikzpicture}[baseline={(0,0)}]
	\pgfmathsetmacro{\xmin}{-0.15*\xmax}
	\pgfmathsetmacro{\ymin}{-0.15*\ymax}
	\pgfmathsetmacro{\xdrawmax}{1.25*\xmax} 
	\pgfmathsetmacro{\ydrawmax}{\ymax}
	
	\pgfmathsetmacro{\xminbg}{\xmin - 0.1*\xmax} 
	\pgfmathsetmacro{\BIG}{20*max(\xmax,\ymax)}
	
	\coordinate (D1) at (\xmin,\ymin);
	\coordinate (D2) at (\xmax,\ymax);
	
	\pgfmathsetmacro{\m}{(\ymax-\ymin)/(\xmax-\xmin)}
	\pgfmathsetmacro{\c}{\ymin - \m*(\xmin)}
	
	\pgfmathsetmacro{\xL}{\xminbg-\BIG}
	\pgfmathsetmacro{\xR}{\xmax+\BIG}
	\pgfmathsetmacro{\yL}{\m*\xL + \c}
	\pgfmathsetmacro{\yR}{\m*\xR + \c}
	
	\begin{scope}
		\path[use as bounding box] (\xminbg,\ymin) rectangle (\xdrawmax,\ydrawmax);
		
		\clip (\xminbg,\ymin) rectangle (\xmax,\ymax);
		
		\clip (\xL,\yL) -- (\xR,\yR) -- (\xR,\ymax+\BIG) -- (\xL,\ymax+\BIG) -- cycle;
		
		\fill[bg] (\xminbg,\ymin) rectangle (\xmax,\ymax);
	\end{scope}
	
	\draw[gray] (0,\ymin) -- (0,\ydrawmax);
	\draw[gray] (\xminbg,0) -- (\xdrawmax,0); 
	\draw[dashed, gray, very thin] (D1) -- (D2);

	\draw[figcolor!50, dotted]    (\rAxL,\ymin) -- (\rAxL,\ydrawmax);
	\draw[figcolor!50, dotted]    (\rAxR,\ymin) -- (\rAxR,\ydrawmax);
	\draw[accentcolor!50, dotted] (\rBxL,\ymin) -- (\rBxL,\ydrawmax);
	\draw[accentcolor!50, dotted] (\rBxR,\ymin) -- (\rBxR,\ydrawmax);
	
	\draw[figcolor!50, dotted]    (\xminbg,\rAyB) -- (\xdrawmax,\rAyB);
	\draw[figcolor!50, dotted]    (\xminbg,\rAyT) -- (\xdrawmax,\rAyT);
	\draw[accentcolor!50, dotted] (\xminbg,\rByB) -- (\xdrawmax,\rByB);
	\draw[accentcolor!50, dotted] (\xminbg,\rByT) -- (\xdrawmax,\rByT);
	
	\fill[figcolor, opacity=0.15] (\rAxL,\rAyB) rectangle (\rAxR,\rAyT);
	\draw[figcolor, thick]        (\rAxL,\rAyB) rectangle (\rAxR,\rAyT);
	
	\fill[accentcolor, opacity=0.15] (\rBxL,\rByB) rectangle (\rBxR,\rByT);
	\draw[accentcolor, thick]        (\rBxL,\rByB) rectangle (\rBxR,\rByT);
	
	\draw[accentcolor!80, ultra thick,yshift=1pt] (\rBxL,0) -- (\rBxR,0)
	node[below=1pt, near end, inner sep=1pt] {$C$};
	\draw[figcolor!80, ultra thick,yshift=-1pt] (\rAxL,0) -- (\rAxR,0)
	node[below=.7pt, near start, inner sep=1pt] {$A$};
	\draw[figcolor!80, ultra thick,xshift=1pt] (0,\rAyB) -- (0,\rAyT)
	node[left=1pt, very near start, inner sep=1pt] {$B$};
	\draw[accentcolor!80, ultra thick,xshift=-1pt] (0,\rByB) -- (0,\rByT)
	node[left=1pt, near end, inner sep=1pt] {$D$};
\end{tikzpicture}
\hfil
\begin{tikzpicture}[baseline={(0,0)}]
	\pgfmathsetmacro{\xmin}{-0.15*\xmax}
	\pgfmathsetmacro{\ymin}{-0.15*\ymax}
	\pgfmathsetmacro{\xdrawmax}{1*\xmax} 
	\pgfmathsetmacro{\ydrawmax}{\ymax}
	
	\pgfmathsetmacro{\xminbg}{\xmin - 0.1*\xmax} 
	\pgfmathsetmacro{\BIG}{20*max(\xmax,\ymax)}
	
	\coordinate (D1) at (\xmin,\ymin);
	\coordinate (D2) at (\xmax,\ymax);
	
	\pgfmathsetmacro{\m}{(\ymax-\ymin)/(\xmax-\xmin)}
	\pgfmathsetmacro{\c}{\ymin - \m*(\xmin)}
	
	\pgfmathsetmacro{\xL}{\xminbg-\BIG}
	\pgfmathsetmacro{\xR}{\xmax+\BIG}
	\pgfmathsetmacro{\yL}{\m*\xL + \c}
	\pgfmathsetmacro{\yR}{\m*\xR + \c}
	
	\begin{scope}
		\path[use as bounding box] (\xminbg,\ymin) rectangle (\xdrawmax,\ydrawmax);
		\clip (\xminbg,\ymin) rectangle (\xmax,\ymax);
		\clip (\xL,\yL) -- (\xR,\yR) -- (\xR,\ymax+\BIG) -- (\xL,\ymax+\BIG) -- cycle;
		\fill[bg] (\xminbg,\ymin) rectangle (\xmax,\ymax);
	\end{scope}
	
	\draw[gray] (0,\ymin) -- (0,\ydrawmax);
	\draw[gray] (\xminbg,0) -- (\xdrawmax,0);
	\draw[dashed, gray, very thin] (D1) -- (D2);

	\draw[figcolor!50, dotted] (\rAxL,\ymin) -- (\rAxL,\ydrawmax);
	\draw[figcolor!50, dotted] (\rAxR,\ymin) -- (\rAxR,\ydrawmax);
	\draw[figcolor!50, dotted] (\xminbg,\rAyB) -- (\xdrawmax,\rAyB);
	\draw[figcolor!50, dotted] (\xminbg,\rAyT) -- (\xdrawmax,\rAyT);
	\pgfmathsetmacro{\YprojB}{max(\rAyB, \m*\rAxL + \c)}
	\pgfmathsetmacro{\YprojT}{\rAyT}
	
	\draw[figcolor!50, dotted] (\xminbg,\YprojB) -- (\xdrawmax,\YprojB);
	\draw[figcolor!50, dotted] (\xminbg,\YprojT) -- (\xdrawmax,\YprojT);
	
	\fill[figcolor, opacity=0.15] (\rAxL,\rAyB) rectangle (\rAxR,\rAyT);
	\draw[figcolor, thick]        (\rAxL,\rAyB) rectangle (\rAxR,\rAyT);
	
	\begin{scope}
		\clip (\rAxL,\rAyB) rectangle (\rAxR,\rAyT);
		\clip (\xL,\yL) -- (\xR,\yR) -- (\xR,\ymax+\BIG) -- (\xL,\ymax+\BIG) -- cycle;
		\fill[black, opacity=0.2] (\rAxL,\rAyB) rectangle (\rAxR,\rAyT);
	\end{scope}
	
	\pgfmathsetmacro{\ybandw}{.1} 
	\fill[black, opacity=0.8]
	(-\ybandw,\YprojB) rectangle (0,\YprojT)
	node[above, midway, rotate=90] {$\up A\cap B$};
	
	\draw[figcolor!90, ultra thick] (\rAxL,0) -- (\rAxR,0)
	node[below=.7pt, near start, inner sep=1pt] {$A$};
	\draw[figcolor!90, ultra thick] (0,\rAyB) -- (0,\rAyT)
	node[left=1pt, very near start, inner sep=1pt] {$B$};
\end{tikzpicture}
\hfil
\begin{tikzpicture}[baseline={(0,0)}]
	\pgfmathsetmacro{\xmin}{-0.15*\xmax}
	\pgfmathsetmacro{\ymin}{-0.15*\ymax}
	\pgfmathsetmacro{\xdrawmax}{1.3*\xmax} 
	\pgfmathsetmacro{\ydrawmax}{\ymax}
	
	\pgfmathsetmacro{\xminbg}{\xmin - 0.1*\xmax} 
	\pgfmathsetmacro{\BIG}{20*max(\xmax,\ymax)}
	
	\coordinate (D1) at (\xmin,\ymin);
	\coordinate (D2) at (\xmax,\ymax);
	
	\pgfmathsetmacro{\m}{(\ymax-\ymin)/(\xmax-\xmin)}
	\pgfmathsetmacro{\c}{\ymin - \m*(\xmin)}
	
	\pgfmathsetmacro{\xL}{\xminbg-\BIG}
	\pgfmathsetmacro{\xR}{\xmax+\BIG}
	\pgfmathsetmacro{\yL}{\m*\xL + \c}
	\pgfmathsetmacro{\yR}{\m*\xR + \c}
	
	\begin{scope}
		\path[use as bounding box] (\xminbg,\ymin) rectangle (\xdrawmax,\ydrawmax);
		\clip (\xminbg,\ymin) rectangle (\xmax,\ymax);
		\clip (\xL,\yL) -- (\xR,\yR) -- (\xR,\ymax+\BIG) -- (\xL,\ymax+\BIG) -- cycle;
		\fill[bg] (\xminbg,\ymin) rectangle (\xmax,\ymax);
	\end{scope}
	
	\draw[gray] (0,\ymin) -- (0,\ydrawmax);
	\draw[gray] (\xminbg,0) -- (\xdrawmax,0);
	\draw[dashed, gray, very thin] (D1) -- (D2);
	
	\draw[accentcolor!50, dotted] (\rBxL,\ymin) -- (\rBxL,\ydrawmax);
	\draw[accentcolor!50, dotted] (\rBxR,\ymin) -- (\rBxR,\ydrawmax);
	\draw[accentcolor!50, dotted] (\xminbg,\rByB) -- (\xdrawmax,\rByB);
	\draw[accentcolor!50, dotted] (\xminbg,\rByT) -- (\xdrawmax,\rByT);
	
	\pgfmathsetmacro{\YprojB}{max(\rByB, \m*\rBxL + \c)}
	\pgfmathsetmacro{\YprojT}{\rByT}
	
	\draw[accentcolor!50, dotted] (\xminbg,\YprojB) -- (\xdrawmax,\YprojB);
	\draw[accentcolor!50, dotted] (\xminbg,\YprojT) -- (\xdrawmax,\YprojT);
	
	\fill[accentcolor, opacity=0.15] (\rBxL,\rByB) rectangle (\rBxR,\rByT);
	\draw[accentcolor, thick]        (\rBxL,\rByB) rectangle (\rBxR,\rByT);
	
	\begin{scope}
		\clip (\rBxL,\rByB) rectangle (\rBxR,\rByT);
		\clip (\xL,\yL) -- (\xR,\yR) -- (\xR,\ymax+\BIG) -- (\xL,\ymax+\BIG) -- cycle;
		\fill[black, opacity=0.2] (\rBxL,\rByB) rectangle (\rBxR,\rByT);
	\end{scope}
	
	\pgfmathsetmacro{\ybandw}{.1}
	\fill[black, opacity=0.8]
	(-\ybandw,\YprojB) rectangle (0,\YprojT)
	node[above, midway, rotate=90] {$\up C\cap D$};
	
	\draw[accentcolor!90, ultra thick] (\rBxL,0) -- (\rBxR,0)
	node[below=.7pt, near start, inner sep=1pt] {$C$};
	\draw[accentcolor!90, ultra thick] (0,\rByB) -- (0,\rByT)
	node[below left=1pt, at start, inner sep=1pt] {$D$};
\end{tikzpicture}\hfil

\smallskip\smallskip\noindent
Therefore, without knowing the congruence~$\equiv_R$, we can emulate its behaviour on rectangles using only the cones $\up,\down$, and from this we generate a frame congruence, and hence a localic relation. 

\begin{intuition}
	\label{intuition:sim}
	Given a pair of cones $\up,\down\colon \Opens S\to \Opens S$, we construct a sublocale~$R$ as the $\sim$-saturated subsets of $\Opens S\tensor \Opens S$, where~$\sim$ is the relation defined by the spatial intuition
	\[
	A\tensor B \sim C\tensor D
	\quad\iff\quad
	\begin{array}{l}
		A\cap \down B = C\cap \down D,\\
		\up A\cap B = \up C\cap D.
	\end{array}
	\]
\end{intuition}

\subsection{Monotonicity}
To finish this section, we define the morphisms of spaces equipped with relations, which is just the usual notion of monotonicity, and give a suitable translation to point-free language.

\begin{definition}
	\label{definition:monotone function}
	A function $f\colon (X,R)\to (Y,Q)$ between sets equipped with relations is called \emph{monotone} if $x R y$ implies $f(x) Q f(y)$.
	
	We denote the category of topological spaces equipped with relations and continuous monotone functions between them by $\rTop$. The full subcategory where the relations have open cones is denoted $\rocTop$.
\end{definition}

Since the definition of monotonicity is stated in terms of points, we need to translate this to point-free language. The following lemma suffices for this.

\begin{lemma}
	\label{lemma:monotonicity in terms of cones}
	The following conditions for a function $f\colon (X,R)\to (Y,Q)$ between sets equipped with relations are equivalent:
	\begin{itemize}
		\item $f$ is monotone;
		\item $(f\times f)[R]\subseteq Q$;
		\item $f(\up A)\subseteq \up f(A)$ for all $A\in\Powerset(X)$;
		\item $f(\down A)\subseteq \down f(A)$ for all $A\in\Powerset(X)$;
		\item $\up f^{-1}(B)\subseteq f^{-1}(\up B)$ for all $B\in\Powerset(Y)$;
		\item $\down f^{-1}(B)\subseteq f^{-1}(\down B)$ for all $B\in\Powerset(Y)$.
	\end{itemize}
\end{lemma}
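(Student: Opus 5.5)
I will prove the equivalences by a cycle of implications, using only the pointwise definitions of $\up,\down$ and of preimages and images. The first two conditions are equivalent by unfolding: $(f\times f)[R]\subseteq Q$ says exactly that every pair $(x,y)\in R$ satisfies $(f(x),f(y))\in Q$, which is monotonicity.

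For the image conditions, first assume $f$ is monotone and take $y\in\up A$, so $aRy$ for some $a\in A$. Then $f(a)Qf(y)$ with $f(a)\in f(A)$, hence $f(y)\in\up f(A)$. Conversely, assume $f(\up A)\subseteq \up f(A)$ for all $A$. Given $xRy$, apply this to the singleton $A=\{x\}$: since $y\in\up\{x\}$, we get $f(y)\in\up\{f(x)\}$, i.e.\ $f(x)Qf(y)$. The argument for $\down$ is symmetric, applying the same reasoning to the opposite relations $R^\op$ and $Q^\op$, whose up-cones are the down-cones of $R,Q$.

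For the preimage conditions, I will show that they are equivalent to the corresponding image conditions through the adjunction $f[-]\dashv f^{-1}$ on powersets. Suppose $f(\up A)\subseteq\up f(A)$ for all $A$. Put $A=f^{-1}(B)$; then $f(A)\subseteq B$, and monotonicity of $\up$ gives $f(\up f^{-1}B)\subseteq \up B$. Transposing along the adjunction yields $\up f^{-1}(B)\subseteq f^{-1}(\up B)$. Conversely, suppose the preimage inclusion holds for all $B$. Take $B=f(A)$, so that $A\subseteq f^{-1}f(A)$. Then $\up A\subseteq \up f^{-1}(f A)\subseteq f^{-1}(\up fA)$, and transposing gives $f(\up A)\subseteq\up f(A)$. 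The $\down$ case is identical.

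No step is a real obstacle. The only point needing care is recognising that singletons recover the pointwise relation, which is available here because we work with arbitrary subsets $\Powerset(X)$ and not only opens. This is exactly why the preimage formulations, which involve no singletons, are the ones that transfer to the point-free setting.
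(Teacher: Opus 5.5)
Your proof is correct, but it is organised differently from the paper's. The paper sketches only the preimage conditions and proves each of them directly against monotonicity. For $x\in\down f^{-1}(B)$ it picks a witness $y\in f^{-1}(B)$ with $xRy$, and for the converse it tests the hypothesis on the singleton $B=\{f(y)\}$. The image conditions are dismissed as standard.

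You use singletons only once, to match monotonicity with the image conditions. You then derive the preimage conditions from the image ones by transposing along the Galois connection $f[-]\dashv f^{-1}$. This uses nothing but monotonicity of the cones and the inclusions $A\subseteq f^{-1}f(A)$ and $f(f^{-1}B)\subseteq B$. That second step is the mates correspondence for lax squares and involves no points. So it holds for any pair of monotone operators, and it carries over verbatim to an open locale map with $f_!\dashv f^{-1}$. There it gives $f_!\circ\Upsub{R}\sqleq\Upsub{Q}\circ f_!$ iff $\Upsub{R}\circ f^{-1}\sqleq f^{-1}\circ\Upsub{Q}$. The paper's route is shorter for the conditions it actually needs; yours isolates precisely where points enter.

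One small correction to your closing remark: the image conditions involve no singletons either, so that is not what distinguishes the two formulations. The preimage form is the one adopted for conic morphisms because $f^{-1}$ is defined on opens for every locale map. By contrast, a direct-image operator on opens, namely $f_!$, exists only when $f$ is open.
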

\begin{proof}
	This equivalence is relatively standard. Here we sketch only the proof for the preimage inclusions, since those are most relevant.
	
	If $f$ is monotone and $B\in \Powerset(Y)$, for $x\in \down f^{-1}(B)$ we get some $y\in f^{-1}(B)$ with $xRy$, and so $f(x)Qf(y)\in B$, which means $f(x)\in \down B$, and so~${x\in f^{-1}(\down B)}$.
	
	Conversely, if $\down f^{-1}(B)\subseteq f^{-1}(\down B)$ for all $B\in\Powerset(Y)$, when $xRy$ pick the singleton $B=\{f(y)\}$. Then $x\in \down f^{-1}(\{f(y)\})$ and hence $x\in f^{-1}(\down \{f(y)\})$, which just unpacks to~$f(x) Q f(y)$.
\end{proof}

\section{Localic relations}
\label{section:localic relations}
Localic relations are the localic analogue of relations on a set or space. Instead of subsets, they are defined in terms of sublocales. Besides losing the convenient point-wise manipulations on pairs~$xRy$, difficulties can be compounded due to the often technical nature of dealing with sublocales and products. This section gives the basic definitions of localic relations, leading to the category~$\rLoc$ of locales equipped with relations and monotone maps between them. After that, we define the class of localic relations with open cones, allowing us to define the localic cones~$R\mapsto \Up,\Down$. The resulting full subcategory of locales equipped with open cone localic relations is~$\rocLoc$. Throughout, we discuss several examples.

\begin{definition}
	A \emph{localic relation} on $X$ is a sublocale $r\colon R\rightarrowtail X\times X$.
\end{definition}

\begin{remark}
	Of course, a more general notion of localic relations between two different locales is possible, but not necessary for the present work. Some brief remarks are in~\cref{section:relations between different locales}.
\end{remark}

\begin{definition}
	\label{definition:source and target}
	The \emph{source} and \emph{target maps} of a localic relation $r\colon R\rightarrowtail X\times X$ are defined as the maps $s,t\colon R\to X$ given by $s:=\pr_1\circ r$ and $t:=\pr_2\circ r$.
	
	For~$U,V\in \Opens X$ we thus have~(\cref{lemma:pair of locale maps}):
	\begin{gather*}
	r^{-1}(U\tensor V) = s^{-1}(U)\wedge t^{-1}(V),\\
	s^{-1}(U) = r^{-1}(U\tensor \top)
	\qquad\text{and}\qquad
	t^{-1}(V) = r^{-1}(\top\tensor V).
	\end{gather*}
\end{definition}

\def\xmax{2.2} 
\def\ymax{2} 
\hfill
\begin{tikzpicture}[baseline={(0,0)}]
	\def\a{0.2*\xmax} 
	\def\b{0.7*\xmax} 
	
	\pgfmathsetmacro{\xmin}{-0.15*\xmax}
	\pgfmathsetmacro{\ymin}{-0.15*\ymax}
	\coordinate (D1) at (\xmin,\ymin);
	\coordinate (D2) at (\xmax,\ymax);
	
	\begin{scope}
		\clip (D1) -- (D2) -- (\xmax,\ymax) -- (\xmin,\ymax) -- cycle;
		\fill[bg] (\xmin,\ymin) rectangle (\xmax,\ymax);
	\end{scope}
	
	\pgfmathsetmacro{\m}{(\ymax-\ymin)/(\xmax-\xmin)}
	\pgfmathsetmacro{\c}{\ymin - \m*(\xmin)}
	\pgfmathsetmacro{\ya}{\m*\a + \c}
	\pgfmathsetmacro{\yb}{\m*\b + \c}
	
	\draw[gray] (0,-0.15*\ymax) -- (0,\ymax);
	\draw[gray] (-0.15*\xmax,0) -- (\xmax,0);
	\draw[dashed, gray, very thin] (D1) -- (D2);
	\draw[\rectcol!50, dotted] (\a,-.15*\ymax) -- (\a,\ymax);
	\draw[\rectcol!50, dotted] (\b,-.15*\ymax) -- (\b,\ymax);
	
	\draw[\rectcol, ultra thick] (\a,0) -- (\b,0)
	node[below=1pt, midway,fill=white, inner sep= 1pt] {$U$};
	
	\begin{scope}
		\clip (D1) -- (D2) -- (\xmax,\ymax) -- (\xmin,\ymax) -- cycle;
		\fill[\rectcol!20] (\a,0) rectangle (\b,\ymax);
	\end{scope}
	
	\draw[\rectcol, thick] (\a,\ya) -- (\a,\ymax);
	\draw[\rectcol, thick] (\b,\yb) -- (\b,\ymax);
	\draw[\rectcol, thick] (\a,\ya) -- (\b,\yb);
	
	\node[\rectcol, textbg] at (\xmax*.8,\ymax*.4) {$s^{-1}(U)$};
\end{tikzpicture}
\hfill
\begin{tikzpicture}[baseline={(0,0)}]
	\def\a{0.25*\ymax} 
	\def\b{0.8*\ymax} 
	
	\pgfmathsetmacro{\xmin}{-0.15*\xmax}
	\pgfmathsetmacro{\ymin}{-0.15*\ymax}
	\coordinate (D1) at (\xmin,\ymin);
	\coordinate (D2) at (\xmax,\ymax);
	\begin{scope}
		\clip (D1) -- (D2) -- (\xmax,\ymax) -- (\xmin,\ymax) -- cycle;
		\fill[bg] (\xmin,\ymin) rectangle (\xmax,\ymax);
	\end{scope}
	
	\pgfmathsetmacro{\m}{(\ymax-\ymin)/(\xmax-\xmin)}
	\pgfmathsetmacro{\c}{\ymin - \m*(\xmin)}
	
	\pgfmathsetmacro{\xa}{(\a-\c)/\m}
	\pgfmathsetmacro{\xb}{(\b-\c)/\m}
	
	\draw[gray] (0,\ymin) -- (0,\ymax);
	\draw[gray] (\xmin,0) -- (\xmax,0);
	\draw[dashed, gray, very thin] (D1) -- (D2);
	
	\draw[\rectcol!50, dotted] (\xmin,\a) -- (\xmax,\a);
	\draw[\rectcol!50, dotted] (\xmin,\b) -- (\xmax,\b);
	
	\begin{scope}
		\clip (D1) -- (D2) -- (\xmax,\ymax) -- (\xmin,\ymax) -- cycle;
		\fill[\rectcol!20] (\xmin,\a) rectangle (\xmax,\b);
	\end{scope}
	
	\draw[\rectcol, thick] (\xmin,\a) -- (\xa,\a);
	\draw[\rectcol, thick] (\xmin,\b) -- (\xb,\b);
	\draw[\rectcol, thick] (\xa,\a) -- (\xb,\b);
	
	\node[\rectcol,textbg] at (\xmax*.85,\ymax*.45) {$t^{-1}(V)$};
	
	\draw[\rectcol, ultra thick] (0,\a) -- (0,\b)
	node[midway, right=-0.05] {$V$};
\end{tikzpicture}
\hfill
\def\xmax{2.5} 
\def\ymax{2} 
\begin{tikzpicture}[baseline={(0,0)}]
	\def\aU{0.25*\xmax}  
	\def\bU{0.7*\xmax}  
	\def\aV{0.25*\ymax} 
	\def\bV{0.8*\ymax}  
	
	\pgfmathsetmacro{\xmin}{-0.15*\xmax}
	\pgfmathsetmacro{\ymin}{-0.15*\ymax}
	\coordinate (D1) at (\xmin,\ymin);
	\coordinate (D2) at (\xmax,\ymax);
	\begin{scope}
		\clip (D1) -- (D2) -- (\xmax,\ymax) -- (\xmin,\ymax) -- cycle;
		\fill[bg] (\xmin,\ymin) rectangle (\xmax,\ymax);
	\end{scope}
	
	\pgfmathsetmacro{\m}{(\ymax-\ymin)/(\xmax-\xmin)}
	\pgfmathsetmacro{\c}{\ymin - \m*(\xmin)}
	
	\pgfmathsetmacro{\yUa}{\m*\aU + \c}
	\pgfmathsetmacro{\yUb}{\m*\bU + \c}
	
	\pgfmathsetmacro{\xVa}{(\aV-\c)/\m}
	\pgfmathsetmacro{\xVb}{(\bV-\c)/\m}
	
	\draw[gray] (0,\ymin) -- (0,\ymax);
	\draw[gray] (\xmin,0) -- (\xmax,0);
	\draw[dashed, gray, very thin] (D1) -- (D2);
	
	\draw[\rectcol!50, dotted] (\aU,\ymin) -- (\aU,\ymax);
	\draw[\rectcol!50, dotted] (\bU,\ymin) -- (\bU,\ymax);
	\draw[\rectcol!50, dotted] (\xmin,\aV) -- (\xmax,\aV);
	\draw[\rectcol!50, dotted] (\xmin,\bV) -- (\xmax,\bV);
	
	\draw[\rectcol, ultra thick] (\aU,0) -- (\bU,0)
	node[below=1pt, midway, fill=white, inner sep=1pt] {$U$};
	\draw[\rectcol, ultra thick] (0,\aV) -- (0,\bV)
	node[midway, right=-.05] {$V$};
	
	
	\begin{scope}
		\clip (D1) -- (D2) -- (\xmax,\ymax) -- (\xmin,\ymax) -- cycle;
		\clip (\aU,\aV) rectangle (\bU,\bV);
		\fill[\rectcol!20] (\aU,\aV) rectangle (\bU,\bV);
	\end{scope}
	
	\begin{scope}
		\clip (D1) -- (D2) -- (\xmax,\ymax) -- (\xmin,\ymax) -- cycle;
		
		\draw[\rectcol, thick] (\aU,\aV) -- (\aU,\bV); 
		\draw[\rectcol, thick] (\aU,\bV) -- (\bU,\bV); 
		\draw[\rectcol, thick] (\bU,\aV) -- (\bU,\bV); 
		\draw[\rectcol, thick] (\aU,\aV) -- (\bU,\aV); 
	\end{scope}
	
	\begin{scope}
		\clip (\aU,\aV) rectangle (\bU,\bV);
		\draw[\rectcol, thick] (\aU,\yUa) -- (\bU,\yUb);
	\end{scope}
	
	\node[\rectcol, textbg]
	at (\xmax*1,\ymax*.42) {$r^{-1}(U\tensor V)$};
\end{tikzpicture}\hfill

\smallskip\smallskip\smallskip
We discuss some basic examples of localic relations, with more in~\cref{section:localic relations with open cones}.

\begin{example}
	\label{example:diagonal relation}
	The \emph{diagonal sublocale} is described in~\cite[\S IV.5.3]{picado2012FramesLocalesTopology}. Denote by ${\delta\colon X\rightarrowtail X\times X}$ its inclusion, the unique map such that $\pr_i\circ \delta=\id_X$. Then the underlying frame map is given on rectangles by~(\cref{lemma:coproduct of frame maps}):
	\[
	\delta^{-1}(U\tensor V) = U\wedge V.
	\]
\end{example}

\begin{example}
\label{example:kernel pair equivalence relation}
	For any locale map $q\colon X\to Q$, its \emph{kernel pair} is the localic equivalence relation ${R:=X\times_Q X\rightarrowtail X\times X}$ defined via the pullback of~$q$ along itself (see~\cref{definition:localic relation properties} on how to define equivalence relations in~$\Loc$). Its source and target maps~$s,t$ are the universal ones such that $q\circ s= q\circ t$. Equivalently, $R$ is the equaliser of~$q\circ \pr_1$ and~$q\circ \pr_2$, from which it follows that it is indeed a sublocale of~$X\times X$. The spatial intuition is that~$xRy$ iff~$q(x)=q(y)$.
	
	Any localic equivalence relation $R$ induces the coequaliser $q\colon X\tworightarrow X/R$ of its source and target maps, but~$R$ is in general not recovered as the kernel pair of this~$q$. There is only a sublocale inclusion~$R\rightarrowtail X\times_{X/R} X$. A localic relation~$R$ is called \emph{effective} when this is an isomorphism. We discuss this further in~\cref{example:open kernel pairs}.
\end{example}

\subsection{Monotonicity}
We define the category~$\rLoc$ of locales equipped with localic relations, and monotone maps between them. Monotonicity here is the suitable internalised version of~\cref{definition:monotone function}.

\begin{definition}
	A \emph{monotone map} $f\colon (X,R)\to (Y,Q)$ between locales equipped with localic relations is a locale map~${f\colon X\to Y}$ such that:
	\[
	\begin{tikzcd}[cramped]
		R & Q \\
		{X\times X} & {Y\times Y.}
		\arrow["{\exists \bar f}"', dashed, from=1-1, to=1-2]
		\arrow[tail, from=1-1, to=2-1]
		\arrow[tail, from=1-2, to=2-2]
		\arrow["{f\times f}"', from=2-1, to=2-2]
	\end{tikzcd}
	\]
	Note that such a locale map $\bar f$, if it exists, is necessarily unique. Denote the category of locales equipped with relations together with monotone maps by $\rLoc$.
\end{definition}

\begin{remark}
	\label{remark:monotone inclusion}
	Using the canonical (epimorphism, sublocale inclusion) factorisation system of~$\Loc$, it is straightforward to see that $f$ is monotone iff $R\subseteq (f\times f)^{-1}[Q]$, or equivalently~$(f\times f)[R]\subseteq Q$. Hence the internal definition aligns with the point-set intuition in~\cref{lemma:monotonicity in terms of cones}. From the latter inclusion it is also clear that $R\subseteq Q$ precisely when the identity map $\id_X\colon (X,R)\to (X,Q)$ is internally monotone. On a similar note, the following lemma will be useful.
\end{remark}

\begin{lemma}\label{lemma:monotone map modification}
	If $f\colon (X,R)\to (Y,Q)$ is monotone, and there are subobject inclusions $P\subseteq R$ and $Q\subseteq T$, then $f\colon (X,P)\to (Y,T)$ is also monotone.
\end{lemma}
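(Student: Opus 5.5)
The plan is to reduce everything to the characterisation of monotonicity in \cref{remark:monotone inclusion}: a locale map $f\colon X\to Y$ is monotone $(X,R)\to(Y,Q)$ iff $(f\times f)[R]\subseteq Q$, equivalently iff $R\subseteq (f\times f)^{-1}[Q]$. Since images and preimages of sublocales are monotone with respect to inclusion, the statement becomes a short chain of inclusions.

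First, from the hypothesis that $f\colon(X,R)\to(Y,Q)$ is monotone, I will take the inclusion $(f\times f)[R]\subseteq Q$. Next, since $P\subseteq R$ and the image operation $(f\times f)[-]\colon \Sl(X\times X)\to\Sl(Y\times Y)$ is monotone, I get $(f\times f)[P]\subseteq (f\times f)[R]$. This holds because the image is the left adjoint to the preimage on sublocale lattices, or, more concretely, because it is the epi–mono factorisation of a composite. Combining this with $Q\subseteq T$ gives $(f\times f)[P]\subseteq Q\subseteq T$, and \cref{remark:monotone inclusion} then yields that $f\colon (X,P)\to(Y,T)$ is monotone.

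Alternatively, I can argue directly with the defining square. The inclusions $P\subseteq R$ and $Q\subseteq T$ are given by sublocale embeddings $i\colon P\rightarrowtail R$ and $j\colon Q\rightarrowtail T$ that commute with the embeddings into $X\times X$ and $Y\times Y$. The composite $j\circ\bar f\circ i\colon P\to T$ then makes the square with $f\times f$ commute, which is exactly the required factorisation. Uniqueness is automatic, since $T\rightarrowtail Y\times Y$ is monic.

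No step is a genuine obstacle. The only point to check is that subobject inclusion between sublocales really is witnessed by a map over the ambient locale. This is true by definition of $\subseteq$ as subobject inclusion of extremal monomorphisms, so the diagrammatic argument needs nothing beyond composing the three maps.
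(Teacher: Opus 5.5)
Your proposal is correct. Your second, diagrammatic argument is exactly the paper's proof: it composes the inclusion of $P$ into $R$, the map $\bar f$, and the inclusion of $Q$ into $T$, then checks the square against $f\times f$, which the paper presents as a single diagram. Your primary route via \cref{remark:monotone inclusion} and monotonicity of $(f\times f)[-]$ is an equally valid repackaging of the same observation.
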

\begin{proof}
	This is evident from the following diagram:
	\[
	\begin{tikzcd}[cramped]
		P & R & Q & T \\
		& {X\times X} & {Y\times Y.}
		\arrow[dashed, tail, from=1-3, to=1-4]
		\arrow["t", tail, from=1-4, to=2-3]
		\arrow["q"', tail, from=1-3, to=2-3]
		\arrow["{\bar f}"', dashed, from=1-2, to=1-3]
		\arrow["r"', tail, from=1-2, to=2-2]
		\arrow["{f\times f}"', from=2-2, to=2-3]
		\arrow["p"', tail, from=1-1, to=2-2]
		\arrow[dashed, tail, from=1-1, to=1-2]
	\end{tikzcd}
	\qedhere
	\]
\end{proof}

Recall from~\cref{definition:monotone function} the category $\rTop$. For a space~$(S,R)$ equipped with a relation, we endow $R\subseteq S\times S$ with the subspace topology. Thus it is a topological space in its own right, and induces a locale $\loc(R)$. We get a sublocale inclusion $\loc(r)\colon \loc(R)\rightarrowtail \loc(S\times S)$. To make it a localic relation on $\loc(S)$, we in turn compose this with the canonical dense inclusion ${\pi_S\colon \loc(S\times S)\rightarrowtail \loc(S)\times \loc(S)}$. See~\cite[\S IV.5.4]{picado2012FramesLocalesTopology} for more details on~$\pi_S$. It is then straightforward to see using general categorical arguments (\cite[\S D]{schaaf2024TowardsPointFreeSpacetimes}) that we get the following.

\begin{proposition}
	\label{proposition:functor rTop to rLoc}
	The functor $\loc\colon \Top\to \Loc$ extends to a functor
	\begin{align*}
		\loc\colon\rTop &\longrightarrow \rLoc;
		\\
		(S,R)&\longmapsto (\loc(S),\loc(R));
		\\
		g&\longmapsto \loc(g).
	\end{align*}
\end{proposition}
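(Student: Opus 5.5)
The plan is to check separately that the assignment is well defined on objects and on morphisms, and that it respects identities and composition.

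On objects, $(S,R)$ is sent to the pair $(\loc(S),\loc(R))$. Here the relation is the composite $\pi_S\circ\loc(r)\colon \loc(R)\rightarrowtail \loc(S\times S)\rightarrowtail \loc(S)\times\loc(S)$. We need this composite to be a sublocale, i.e.\ an extremal monomorphism. Both factors are sublocale inclusions: $\loc(r)$ because $\loc$ sends subspace embeddings to sublocale inclusions (the frame map $\Opens(S\times S)\to\Opens R$, $O\mapsto O\cap R$, is surjective), and $\pi_S$ as recalled from~\cite[\S IV.5.4]{picado2012FramesLocalesTopology}. Sublocale inclusions compose, since surjective frame maps compose. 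So this gives a localic relation on $\loc(S)$.

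On morphisms, let $g\colon (S,R)\to(T,Q)$ be continuous and monotone. Set-theoretically $(g\times g)[R]\subseteq Q$, so $g\times g$ restricts to a continuous map $\bar g\colon R\to Q$ for the subspace topologies, with $q\circ\bar g=(g\times g)\circ r$. Applying $\loc$ gives $\loc(q)\circ\loc(\bar g)=\loc(g\times g)\circ\loc(r)$. It then suffices to show $\pi_T\circ\loc(g\times g)=(\loc(g)\times\loc(g))\circ\pi_S$. This is the naturality of the comparison maps $\pi$. By the universal property of the localic product, it reduces to checking that both sides agree after composing with the two product projections. The map $\pi_S$ is characterised by $\pr_i\circ\pi_S=\loc(p_i)$, where $p_i$ are the topological projections. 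Hence both composites equal $\loc(g\circ p_i)=\loc(p_i\circ(g\times g))$ by functoriality of $\loc$. Pasting the two commuting squares produces the required factorisation $\loc(\bar g)$ through $Q$, so $\loc(g)$ is monotone.

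Functoriality on identities and composites is inherited from $\loc\colon\Top\to\Loc$, since the underlying locale map of the image of $g$ is just $\loc(g)$. Monotonicity is a property rather than extra structure, because $\bar f$ is unique. The only step requiring real care is the naturality of $\pi$. It is handled by the projection argument above, together with the observation that $\pi_S$ is exactly the canonical comparison map into the product.
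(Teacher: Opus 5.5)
Your proof is correct and takes essentially the same route as the paper, which simply defers to ``general categorical arguments'' in \cite[\S D]{schaaf2024TowardsPointFreeSpacetimes}. Your proposal spells those arguments out concretely: $\loc$ sends subspace embeddings to sublocale inclusions, composing with the sublocale inclusion $\pi_S$ keeps you among sublocales, and the comparison maps $\pi$ are natural, which you check correctly against the product projections.
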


\subsection{Localic relations with open cones}
\label{section:localic relations with open cones}
In this section we define the class of localic relations with open cones, and show how to define the assignment~$R\mapsto \Up,\Down$ that will form the object part of the functor from open cone localic relations to conic frames.

To state the definition of open cones in localic language, we recall from~\cref{lemma:open cones iff s t open} that a relation on a space has open cones precisely when its source and target maps are open continuous functions, which has a direct localic analogue. Open maps of locales were first defined and studied in~\cite[\S V]{joyal1984ExtensionGaloisTheory}, recalled now.

\begin{definition}
	\label{definition:open map}
	A morphism of locales $f\colon X\to Y$ is called \emph{open} if there exists a left adjoint~${f_!\dashv f^{-1}}$ satisfying \emph{Frobenius reciprocity}:
	\[
	f_!\left(U\wedge f^{-1}(V)\right)=f_!(U)\wedge V.
	\]
\end{definition}
\begin{remark}
	Checking in with spatial intuition, it can be shown that a locale map is open precisely when its image preserves open sublocales. Or in turn, that the underlying frame map is a morphism of complete Heyting algebras~\cite[Proposition~V.1.1]{joyal1984ExtensionGaloisTheory}. For us, the above definition is most useful. For open $f\colon X\to Y$ it gives a well-behaved map $f_!\colon\Opens X\to \Opens Y$ serving as an image map of~$f$.
\end{remark}

\begin{definition}
	\label{definition:open cones}
	We say a relation $R$ on a locale $X$ has \emph{open cones} if the source and target maps $s,t\colon R\to X$ are open as locale maps. They are also called \emph{open cone relations}.
	
	Denote by $\rocLoc$ the full subcategory of $\rLoc$ consisting of locales equipped with an open cone relation.
\end{definition}

Explicitly, that $R$ has open cones means we get left adjoints~$s_!\dashv s^{-1}$ and~${t_!\dashv t^{-1}}$ that satisfy Frobenius reciprocity. Following the spatial~\cref{intuition:cones from st} that the cones may be expressed as ${\up U = ts^{-1}(U)}$ and $\down U= st^{-1}(U)$, openness allows us to write down a localic analogue.

\begin{definition}
	\label{definition:induced cones}
	Let $X$ be a locale. If $R$ is a relation on $X$ with open cones, we define its \emph{(induced) cones} as follows:
	\[
	\Up := t_!s^{-1}\colon \Opens X\longrightarrow\Opens X
	\quad\text{and}\quad
	\Down:= s_!t^{-1}\colon \Opens X\longrightarrow\Opens X.
	\]
	Denote the \emph{(induced) reduced cones} by
	\[
	\UpR,\DownR\colon \Opens X\times \Opens X\longrightarrow \Opens X;
	\quad
	\UpR(U,V):= \Up U\wedge V
	\text{ and  }
	\DownR(U,V):= U\wedge \Down V.
	\]
\end{definition}

\begin{remark}
	It is possible to define cones much more generally in the setting of relations internal to a category (with finite limits) equipped with an orthogonal factorisation system~$(\calE,\calM)$~\cite[\S D.5.1]{schaaf2024TowardsPointFreeSpacetimes}. In that case, the cones are morphisms on the $\calM$-subobjects. A localic relation $R$ on $X$ thus induces a more general notion of cone $\up,\down\colon \Sl(X)\to \Sl(X)$ on the coframe of sublocales. These can either be defined via relational composition, with $\up(-) = - \circ R$ and~$\down(-) = R\circ -$, or equivalently in terms of internal images and preimages as~$\up(-) = t[s^{-1}[-]]$ and~$\down(-) = s[t^{-1}[-]]$. From this it can be shown that if $R$ has open cones, then the cones $\Up,\Down$ from \cref{definition:induced cones} are just the restriction of $\up,\down$ to open sublocales. Whether a general localic relation can be recovered from its cones on sublocales is unknown. See however the remarks in~\cref{section:internal cones}.
\end{remark}

Continuing on from the basic examples of localic relations at the start of this section, we discuss some examples of relations with open cones.

\begin{example}
	\label{example:induced cones of diagonal}
	Recall the diagonal relation on $X$ from~\cref{example:diagonal relation}. It trivially has open cones since its source and target maps are $\id_X$, and the induced cones are just~${\Up = \Down = \id_{\Opens X}}$.
\end{example}

\begin{example}
	\label{example:space with oc gives locale with oc}
	Recall that any space $S$ equipped with a set-theoretic relation $R$ that has open cones (\cref{definition:open cones on space}) induces open source and target functions ${s,t\colon R\to S}$. Via the proof of~\cite[Proposition IX.5]{maclane1994SheavesGeometryLogic} we can see that the functor ${\loc\colon \Top\to \Loc}$ preserves open maps, and thus the induced source and target maps ${\loc(s),\loc(t)\colon \loc(R)\to \loc(S)}$ are again open. The induced cones then coincide with the spatial intuition from~\cref{section:spatial intuition}:
	\[
	\loc(s)_!\loc(t)^{-1}(U)= \down U,
	\quad\text{and}\quad
	\loc(t)_!\loc(s)^{-1}(V) = \up V. 
	\]
	Hence the functor ${\loc\colon \rTop\to \rLoc}$ from~\cref{proposition:functor rTop to rLoc} restricts to a functor ${\loc\colon \rocTop\to \rocLoc}$, and the spatial open cone relations from~\cref{example:open cones spaces} can be considered as localic examples as well.
\end{example}

\begin{example}
	\label{example:open sublocale in overt X has open cones}
	Recall that a locale $X$ is called \emph{overt} if the terminal map $X\to 1$ is open~\cite[\S V.3]{joyal1984ExtensionGaloisTheory}. For example, any spatial locale is overt. In classical logic, every locale is overt.
	
	In that case, any open sublocale $R$ of $X\times X$ has open cones. Namely, $R$ is open iff $r$ is an open locale map. That $X$ is overt implies the product projections $\pr_i\colon X\times X\to X$ are open, so $s=\pr_1\circ r$ and $t=\pr_2\circ r$ are open. In particular, if $X$ is overt then the top relation $X\times X$ has open cones.
	
	Conversely, if $X$ has a point $x\colon 1\to X$ and the relation $X\times X$ has open cones, then the pullback of $\pr_i$ along~$x$ gives the terminal map $X\to 1$, which must then be open~(see~\cref{lemma:open map pullback square} below), and so~$X$ is overt.
\end{example}

\begin{example}
	A \emph{uniformity}~$\calE$ on a locale~$X$ consists of a family of open relations $E\in\Opens (X\times X)$, called \emph{entourages}, that model an approximate equality relation on~$X$. Uniformities on overt locales are studied in~\cite{manuell2024UniformLocales}. In that case, by~\cref{example:open sublocale in overt X has open cones} all entourages have open cones. In particular the uniformly below relation can be rewritten as $U\triangleleft V$ iff $\exists E\in\calE: \Upsub{E}U\sqleq V$.
\end{example}

\begin{example}
	\label{example:open kernel pairs}
	We saw in~\cref{example:kernel pair equivalence relation} that any locale map $q\colon X\to Q$ induces a localic (equivalence) relation $X\times_Q X\rightarrowtail X\times X$ called its kernel pair. Suppose now that~$q$ is open. Using the fact that open maps of locales are preserved under pullback (recorded as \cref{lemma:open map pullback square} below), we see that the source and target maps of the kernel pair must be open, and hence $X\times_Q X$ defines an open cone localic relation. These are studied by Kock in~\cite{kock1989GodementTheoremLocales}, there called `$\alpha$-open' equivalence relations, which are just our localic equivalence relations with open cones. Equivalence relations on locales can be defined according to~\cref{definition:localic relation properties}.
	
	Starting with a general open cone localic equivalence relation~$R$, its quotient map~$q\colon X \tworightarrow X/R$ is an open epimorphism, and we get~${R\rightarrowtail X\times_{X/R} X}$ into its induced kernel pair. This is not always an isomorphism, but it is shown in~\cite[Corollary~2.3]{kock1989GodementTheoremLocales} that it is always strongly dense~(\cref{definition:strongly dense}). Kock then derives a Godement-type theorem, stating that any (weakly) closed localic equivalence relation with open cones is effective: $R\cong X\times_{X/R}X$. We show in~\cref{section:strong density} that Kock's Godement theorem follows from the more general strongly dense inclusion~$R\subseteq R_\UpDown$ of an open cone localic relation into the relation induced by its cones~(\cref{proposition:R in Rupdown}).
\end{example}

\begin{example}
	Kock and Moerdijk use sheaves of open cone localic equivalence relations to characterise \emph{\'etendues}, toposes that look locally like the topos of sheaves on a locale~\cite{kock1992EtendueEquivalenceRelation}. Thus localic equivalence relations provide presentations of \'etendues as
	quotient toposes.
\end{example}

\section{Parallelness}
\label{section:parallelness}\label{section:parallel cones}
As we have seen, relations on spaces and their properties can be studied in terms of their cones. But what classifies the cones that come from relations? The notion of \emph{parallel cones} was introduced in the setting of ordered locales in~\cite[\S 4.5]{schaaf2024TowardsPointFreeSpacetimes} and further used in~\cite{heunen2025CausalCoverageOrdered}, but appears also much earlier in the literature, see~\cref{remark:parallelness in the literature} for a small overview. It captures the idea that a pair of cones really comes from one relation, which can be visualised by saying the up and down cones must run `in parallel'. 

For this section we fix a meet-semilattice $(L,\wedge,\top)$. We generically refer to a pair of functions $\uc,\dc\colon L\to L$ as \emph{cones}. For now, we do not even assume that they are monotone. The cones are thought of as localic upward and downward closure operators, or \emph{future} and \emph{past} operators, respectively. For~$x\in L$ we write~${\uc(x)=\uc x}$ and~${\dc (x)= \dc x}$.

\begin{definition}
	\label{definition:parallel cones}
	A pair of cones $\uc,\dc \colon L\to L$ on a meet-semilattice is called \emph{parallel} if for all $x,y\in L$:
	\[
		\uc x\wedge y \sqleq \uc (x\wedge \dc y)
		\qquad\text{and}\qquad
		x \wedge \dc y \sqleq \dc (\uc x\wedge y).
	\]
	
	\hfil
	\begin{tikzpicture}[y=.75pt, x=.8pt, inner sep=0pt, outer sep=0pt]
		\def\topdrop{-8}
		\def\xregionpath{%
			(21.6, 25.5).. controls (30.1, 25.0) and (41.1, 21.8) .. (46.0, 28.4)..
			controls (49.2, 32.6) and (47.4, 40.7) .. (43.2, 43.9)..
			controls (39.1, 47.0) and (32.1, 47.6) .. (27.8, 42.5)..
			controls (20.8, 50.4) and (8.2, 48.2) .. (2.5, 42.0)..
			controls (-0.6, 38.6) and (0.5, 31.7) .. (3.5, 28.4)..
			controls (7.4, 24.1) and (14.8, 25.7) .. (20.5, 25.5)..
			controls (20.9, 25.5) and (21.3, 25.5) .. (21.6, 25.5) -- cycle%
		}
		\def\yregionpath{%
			(78.2, 116.4).. controls (75.5, 117.6) and (73.0, 119.4) .. (70.0, 119.7)..
			controls (65.8, 120.1) and (60.8, 120.4) .. (57.5, 117.8)..
			controls (54.1, 115.2) and (51.4, 110.2) .. (52.4, 106.0)..
			controls (53.1, 103.3) and (56.2, 101.4) .. (59.0, 100.6)..
			controls (62.1, 99.7) and (65.4, 102.2) .. (68.7, 102.1)..
			controls (70.1, 102.0) and (71.4, 101.8) .. (72.8, 101.4)..
			controls (79.0, 99.9) and (84.9, 96.1) .. (91.2, 96.6)..
			controls (94.7, 96.9) and (99.5, 97.6) .. (101.1, 100.8)..
			controls (103.1, 104.8) and (101.3, 111.1) .. (97.9, 114.0)..
			controls (93.6, 117.7) and (86.4, 114.2) .. (80.9, 115.5)..
			controls (80.0, 115.7) and (79.1, 116.0) .. (78.2, 116.4) -- cycle%
		}
		\def\ucconepath{%
			(1.4, 32.1) -- (-6.5, 53.9) -- (-6.7, 121.9) --
			(80.2, 121.9) -- (47.5, 31.9) -- cycle%
		}
		\def\dcconepath{%
			(53.3, 112.6) -- (78.2, 116.2) --
			(114.3, 16.9) -- (18.5, 17.0) -- cycle%
		}
		\def\uccutpath{(47.5, 31.9) -- (80.2, 121.9)}
		\def\dccutpath{(53.3, 112.6) -- (18.5, 17.0)}
		
		\path[name path=xregion] \xregionpath;
		\path[name path=yregion, shift={(0,\topdrop)}] \yregionpath;
		\path[name path=uccut] \uccutpath;
		\path[name path=dccut, shift={(0,\topdrop)}] \dccutpath;
		\path[name intersections={of=yregion and uccut, by={uy1,uy2}}];
		\path[name intersections={of=xregion and dccut, by={dx1,dx2}}];
		
		\begin{scope}[blend group=multiply]
			\path[fill=figcolor!10,line cap=butt,line join=miter,line width=1.0pt,miter limit=4.0] \ucconepath;
			\path[fill=accentcolor!10,line cap=butt,line join=miter,line width=1.0pt,miter limit=4.0,shift={(0,\topdrop)}] \dcconepath;
		\end{scope}
		
		\path[fill=black!05] \xregionpath;
		\path[fill=black!05,shift={(0,\topdrop)}] \yregionpath;
		\begin{scope}
			\path[clip,shift={(0,\topdrop)}] \dcconepath;
			\fill[accentcolor!20] \xregionpath;
		\end{scope}
		\begin{scope}
			\clip \ucconepath;
			\path[fill=figcolor!20,shift={(0,\topdrop)}] \yregionpath;
		\end{scope}
		\path[thick,draw=black] \xregionpath;
		\path[thick,draw=black,shift={(0,\topdrop)}] \yregionpath;
		\draw[thick,draw=black] (dx1) -- (dx2);
		\draw[thick,draw=black] (uy1) -- (uy2);
		
		\path[draw=figcolor,line cap=butt,line join=miter,line width=0.5pt,miter limit=4.0,dotted,shift={(0,\topdrop)}]
		\dccutpath;
		
		\path[draw=figcolor,line cap=butt,line join=miter,line width=0.5pt,miter limit=4.0,dotted,shift={(0,\topdrop)}]
		(101.3, 108.7) -- (134.6, 17.0);
		
		\path[draw=figcolor,line cap=butt,line join=miter,line width=0.5pt,miter limit=4.0,dotted,shift={(0,\topdrop)}]
		(78.2, 116.2) -- (114.3, 17.0);
		
		\path[draw=figcolor,line cap=butt,line join=miter,line width=0.5pt,miter limit=4.0,dotted]
		\uccutpath;
		
		\path[->,draw=black,line cap=butt,line join=miter,shift={(0,\topdrop)}]
		(134.9, 68.0).. controls (134.9, 68.0) and (132.2, 56.6) .. (127.6, 56.0).. 
		controls (124.4, 55.7) and (124.1, 62.3) .. (121.0, 62.9).. 
		controls (118.5, 63.3) and (116.6, 58.9) .. (114.1, 59.5).. 
		controls (110.9, 60.4) and (111.6, 66.8) .. (108.4, 67.5).. 
		controls (105.2, 68.1) and (103.2, 62.8) .. (99.9, 62.4).. 
		controls (96.7, 61.9) and (90.6, 64.9) .. (90.6, 64.9);
		
		\path[draw=figcolor,line cap=butt,line join=miter,line width=0.5pt,miter limit=4.0,dotted]
		(1.4, 32.1) -- (-6.5, 53.9);
		
		\node[anchor=south west,shift={(0,\topdrop)}] at (88, 101) {$y$};
		\node[anchor=south west] at (8, 33) {$x$};
		\node[text=figcolor,anchor=south west] at (5, 101) {$\uc x \wedge y$};
		\node[text=accentcolor,anchor=south west] at (52, 25) {$x\wedge \dc y$};
		\node[anchor=south west,shift={(0,\topdrop)}] at (118, 67) {$\dc (\uc x \wedge y)$};
	\end{tikzpicture}
	\hfil
\end{definition}

\begin{example}
	\label{example:cones from relation are parallel}
	If $R$ is a relation on a set $X$, then the cones $\up,\down$ form a join-preserving parallel pair on the powerset $\Powerset(X)$. Indeed, we already saw at the start of~\cref{section:spatial intuition} that have an even stronger correspondence:
	\[
	x\in \down \{y\}
	\quad\iff\quad
	x R y
	\quad\iff\quad
	y\in \up\{x\}.
	\]
	In fact, it can be shown using singletons that join-preserving cones $\uc,\dc$ on a powerset are parallel iff $x\in\dc\{y\}\iff y\in\uc\{x\}$, and so in turn relations on $X$ are in bijective correspondence with join-preserving parallel cones on $\Powerset(X)$.
\end{example}

\begin{remark}
	\label{remark:parallel implies in future iff in past}
	Suppose that the meet-semilattice also has a bottom element $\bot$. If~$\uc,\dc$ are parallel cones that preserve $\bot$, then it can be shown:
	\[
	x\wedge \dc y = \bot
	\qquad\iff\qquad
	\uc x\wedge y = \bot.
	\]
	The left figure depicts the visual intuition of this condition, while the right depicts its failure:
	
	\hfil
	\begin{tikzpicture}[y=.9pt, x=.9pt, inner sep=0pt, outer sep=0pt]
		\begin{scope}[blend group = multiply]
			\path[fill=figcolor!10,line cap=butt,line join=miter,line width=1.0pt,miter limit=4.0]
			(21.742, 36.868) -- (7.013, 62.38) -- (7.013, 93.686) -- (86.518, 93.561) --
			(48.876, 28.364) -- (34.703, 39.703) -- cycle;
			
			\path[fill=accentcolor!10,line cap=butt,line join=miter,line width=1.0pt,miter limit=4.0]
			(26.987, 22.695) -- (62.147, 83.594) -- (68.719, 68.049) --
			(92.819, 74.894) -- (108.404, 48.206) -- (108.404, 22.695) -- cycle;
		\end{scope}
		
		\path[draw=figcolor,fill=accentcolor!10,line width=1.0pt]
		(73.393, 70.829) -- (70.83, 66.387)..
		controls (76.175, 64.11) and (81.996, 59.781) .. (87.305, 61.85)..
		controls (91.269, 63.395) and (95.181, 68.615) .. (94.017, 72.707)..
		controls (92.977, 76.361) and (87.711, 79.017) .. (83.82, 77.796)..
		controls (79.968, 76.588) and (77.034, 71.5) .. (73.393, 70.829) -- cycle;
		
		\path[draw=figcolor,fill=figcolor!10,line width=1.0pt]
		(33.829, 34.545) -- (40.89, 46.775)..
		controls (39.164, 48.672) and (38.152, 52.263) .. (36.129, 52.369)..
		controls (35.92, 52.381) and (35.7, 52.354) .. (35.467, 52.285)..
		controls (32.949, 51.536) and (35.338, 46.725) .. (33.826, 44.577)..
		controls (31.1, 40.703) and (20.905, 41.763) .. (21.741, 37.101)..
		controls (22.427, 33.279) and (29.445, 35.922) .. (33.163, 34.797)..
		controls (33.383, 34.731) and (33.606, 34.634) .. (33.829, 34.545) -- cycle;
		
		\path[draw=figcolor,line cap=butt,line join=miter,line width=0.5pt,miter limit=4.0,dash pattern=on 0.5pt off 2.0pt]
		(62.147, 83.594) -- (26.987, 22.695);
		
		\path[draw=figcolor,line cap=butt,line join=miter,line width=0.5pt,miter limit=4.0,dash pattern=on 0.5pt off 2.0pt]
		(92.819, 74.894) -- (108.226, 48.206);
		
		\path[draw=figcolor,line cap=butt,line join=miter,line width=0.5pt,miter limit=4.0,dash pattern=on 0.5pt off 2.0pt]
		(21.742, 36.868) -- (7.013, 62.38);
		
		\path[draw=figcolor,line cap=butt,line join=miter,line width=0.5pt,miter limit=4.0,dash pattern=on 0.5pt off 2.0pt]
		(48.876, 28.364) -- (86.518, 93.561);
		
		\path[draw=figcolor,fill=figcolor!10,line width=1.0pt]
		(33.829, 34.545).. controls (39.086, 32.45) and (44.988, 24.573) .. (48.876, 28.364)..
		controls (50.992, 30.427) and (45.095, 33.766) .. (45.727, 36.653)..
		controls (46.324, 39.386) and (52.32, 39.807) .. (51.71, 42.537)..
		controls (50.944, 45.972) and (44.729, 44.164) .. (41.748, 46.034)..
		controls (41.44, 46.227) and (41.157, 46.482) .. (40.89, 46.775) -- cycle;
		
		\path[draw=figcolor,fill=figcolor!10,line width=1.0pt]
		(73.393, 70.829).. controls (72.801, 70.72) and (72.19, 70.723) .. (71.553, 70.884)..
		controls (66.408, 72.181) and (68.042, 86.14) .. (63.05, 84.341)..
		controls (59.399, 83.025) and (66.613, 76.074) .. (64.538, 72.794)..
		controls (63.816, 71.652) and (61.512, 73.676) .. (60.683, 71.54)..
		controls (59.7, 69.01) and (66.844, 67.97) .. (70.146, 66.672)..
		controls (70.372, 66.583) and (70.601, 66.484) .. (70.83, 66.387) -- cycle;
	\end{tikzpicture}
	\hfil
	\begin{tikzpicture}[y=.9pt, x=.9pt, inner sep=0pt, outer sep=0pt]
		\begin{scope}[blend group = multiply]
			\path[fill=accentcolor!10,line cap=butt,line join=miter,line width=1.0pt,miter limit=4.0]
			(26.987, 22.695) -- (62.147, 83.594) -- (68.719, 68.049) --
			(92.819, 74.894) -- (108.404, 48.206) -- (108.404, 22.695) -- cycle;
			
			\path[fill=figcolor!10,line cap=butt,line join=miter,line width=1.0pt,miter limit=4.0]
			(21.742, 37.101) -- (35.134, 37.101) -- (49.337, 29.375) --
			(49.337, 42.537) -- (51.711, 42.537) -- (51.711, 93.561) -- (21.742, 93.561) -- cycle;
		\end{scope}
		
		\path[draw=figcolor,line cap=butt,line join=miter,line width=0.5pt,miter limit=4.0,dash pattern=on 0.5pt off 2.0pt]
		(21.742, 37.101) -- (21.742, 93.561);
		
		\path[draw=figcolor,fill=accentcolor!10,line cap=butt,line join=miter,line width=1.0pt,miter limit=4.0]
		(64.538, 72.794).. controls (63.816, 71.652) and (61.512, 73.676) .. (60.683, 71.54)..
		controls (59.701, 69.01) and (66.844, 67.97) .. (70.146, 66.673)..
		controls (75.675, 64.499) and (81.769, 59.692) .. (87.305, 61.849)..
		controls (91.269, 63.394) and (95.18, 68.615) .. (94.016, 72.708)..
		controls (92.977, 76.361) and (87.711, 79.017) .. (83.82, 77.797)..
		controls (79.342, 76.392) and (76.105, 69.736) .. (71.554, 70.884)..
		controls (66.408, 72.181) and (68.042, 86.14) .. (63.05, 84.341)..
		controls (59.399, 83.025) and (66.613, 76.073) .. (64.538, 72.794) -- cycle;
		
		\path[draw=figcolor,fill=figcolor!10,line width=1.0pt]
		(33.829, 34.545) -- (40.89, 46.775)..
		controls (38.985, 48.868) and (37.95, 53.023) .. (35.467, 52.285)..
		controls (32.949, 51.536) and (35.338, 46.725) .. (33.826, 44.577)..
		controls (31.1, 40.703) and (20.905, 41.763) .. (21.741, 37.101)..
		controls (22.427, 33.279) and (29.445, 35.922) .. (33.163, 34.797)..
		controls (33.383, 34.731) and (33.606, 34.634) .. (33.829, 34.545) -- cycle;
		
		\path[draw=figcolor,line cap=butt,line join=miter,line width=0.5pt,miter limit=4.0,dash pattern=on 0.5pt off 2.0pt]
		(49.337, 29.375) -- (49.337, 39.703);
		
		\path[draw=figcolor,line cap=butt,line join=miter,line width=0.5pt,miter limit=4.0,dash pattern=on 0.5pt off 2.0pt]
		(51.711, 42.537) -- (51.711, 93.561);
		
		\path[draw=figcolor,line cap=butt,line join=miter,line width=0.5pt,miter limit=4.0,dash pattern=on 0.5pt off 2.0pt]
		(62.147, 83.594) -- (26.987, 22.695);
		
		\path[draw=figcolor,line cap=butt,line join=miter,line width=0.5pt,miter limit=4.0,dash pattern=on 0.5pt off 2.0pt]
		(92.819, 74.894) -- (108.226, 48.206);
		
		\path[draw=figcolor,fill=figcolor!10,line cap=butt,line join=miter,line width=1.0pt,miter limit=4.0]
		(33.829, 34.545).. controls (39.086, 32.45) and (44.988, 24.573) .. (48.876, 28.364)..
		controls (50.992, 30.427) and (45.095, 33.766) .. (45.727, 36.653)..
		controls (46.324, 39.386) and (52.32, 39.807) .. (51.71, 42.537)..
		controls (50.944, 45.972) and (44.729, 44.164) .. (41.748, 46.034)..
		controls (41.44, 46.227) and (41.157, 46.482) .. (40.89, 46.775) -- cycle;
	\end{tikzpicture}
\end{remark}

\begin{example}
	For any pair of angles $\alpha,\beta\in [0,\pi/2)$, the induced cones $\up_\alpha,\down_\beta$ on~$\mathbb{R}^n$ ($n\geq 2$) from Example 4.53 in~\cite{schaaf2024TowardsPointFreeSpacetimes} are parallel if and only if $\alpha=\beta$. If the angles are equal then the cones come from a relation (the causality relation in \emph{Minkowski space}), and so are parallel~(\cref{example:cones from relation are parallel}). If the angles are not equal then we easily find a contradiction to the parallelness condition in~\cref{remark:parallel implies in future iff in past}:
\end{example}
\hfil\begin{tikzpicture}[y=1pt, x=1pt, inner sep=0pt, outer sep=0pt]
	\begin{scope}[blend group = multiply]
		\path[fill=accentcolor!10,line cap=butt,line join=miter,line width=1.0pt,miter limit=4.0]
		(48.876, 79.388) -- (0.687, 61.848) -- (0.687, 39.703) -- (128.247, 39.776) --
		(128.247, 66.672) -- (82.121, 83.461) -- cycle;
		\path[fill=figcolor!10,line cap=butt,line join=miter,line width=1.0pt,miter limit=4.0]
		(31.869, 65.214) -- (94.023, 69.47) -- (101.27, 110.569) -- (23.871, 110.569) -- cycle;
	\end{scope}
	
	\path[draw=black,fill=black!05,thick]
	(31.869, 65.214).. controls (31.533, 61.824) and (34.36, 57.939) .. (37.538, 56.71)..
	controls (44.805, 53.901) and (52.47, 63.23) .. (60.215, 62.38)..
	controls (64.415, 61.919) and (67.333, 56.51) .. (71.554, 56.71)..
	controls (80.458, 57.133) and (93.624, 61.99) .. (94.231, 70.884)..
	controls (94.733, 78.246) and (84.376, 83.241) .. (77.223, 85.057)..
	controls (67.883, 87.428) and (57.673, 83.321) .. (48.876, 79.388)..
	controls (45.651, 77.945) and (32.113, 67.686) .. (31.869, 65.214) -- cycle;
	
	\path[draw=accentcolor,line cap=butt,line join=miter,line width=0.5pt,miter limit=4.0,dash pattern=on 0.5pt off 2.0pt]
	(94.023, 69.47) -- (101.27, 110.569);
	\path[draw=accentcolor,line cap=butt,line join=miter,line width=0.5pt,miter limit=4.0,dash pattern=on 0.5pt off 2.0pt]
	(31.869, 65.214) -- (23.871, 110.569);
	\path[draw=accentcolor,line cap=butt,line join=miter,line width=0.5pt,miter limit=4.0,dash pattern=on 0.5pt off 2.0pt]
	(48.876, 79.388) -- (0.687, 61.848);
	\path[draw=accentcolor,line cap=butt,line join=miter,line width=0.5pt,miter limit=4.0,dash pattern=on 0.5pt off 2.0pt]
	(82.121, 83.461) -- (128.247, 66.672);
	
	\path[draw=black]
	(94.231, 69.47) -- (128.247, 69.47);
	
	\path[draw=black]
	(97.301, 87.868) arc(79.895:-0.05:18.681) -- (94.023, 69.477) -- cycle;
	
	\node[anchor=south west]
	(text45) at (55.875, 69.211) {$U$};
	
	\node[text=figcolor,line cap=butt,line join=miter,line width=1.0pt,miter limit=4.0,anchor=south west]
	(text46) at (51.294, 92.662) {$\up_{\alpha} U$};
	
	\node[text=accentcolor,line cap=butt,line join=miter,line width=1.0pt,miter limit=4.0,anchor=south west]
	(text47) at (48.774, 41) {$\down_{\beta}U$};
	
	\node[anchor=south west]
	(text48) at (108.094, 83.461) {$\alpha$};
	
	\path[draw=black]
	(48.876, 79.388) -- (0.687, 79.388);
	
	\path[draw=black]
	(23.01, 69.992) arc(199.697:180.074:27.529 and 27.838) -- (48.928, 79.375) -- cycle;
	
	\node[anchor=south west]
	(text49) at (6.844, 67) {$\beta$};
\end{tikzpicture}\hfil
\begin{tikzpicture}[y=1pt, x=1pt, inner sep=0pt, outer sep=0pt]
	\begin{scope}[blend group = multiply]
		\path[fill=figcolor!10,line cap=butt,line join=miter,line width=1.0pt,miter limit=4.0]
		(62.768, 60.453) -- (53.932, 110.569) -- (104.499, 110.569) -- (95.502, 59.545) -- cycle;
		\path[fill=accentcolor!10,line cap=butt,line join=miter,line width=1.0pt,miter limit=4.0]
		(24.355, 104.008) -- (-13.486, 90.235) -- (-13.486, 42.537) --
		(128.247, 42.537) -- (128.247, 72.428) -- (46.042, 102.348) -- cycle;
	\end{scope}
	
	\path[draw=black]
	(95.842, 59.502) -- (128.247, 59.502);
	
	\path[draw=black]
	(98.184, 74.65) arc(80.426:-0.05000000000001137:15.355 and 15.353) -- (95.63, 59.51) -- cycle;
	
	\node[text=figcolor,line cap=butt,line join=miter,line width=1.0pt,miter limit=4.0,anchor=south west]
	(text46) at (69.182, 95) {$\up_{\alpha} V$};
	
	\node[text=accentcolor,line cap=butt,line join=miter,line width=1.0pt,miter limit=4.0,anchor=south west]
	(text47) at (9.247, 54.261) {$\down_{\beta}U$};
	
	\node[text=black,line cap=butt,line join=miter,line width=1.0pt,miter limit=4.0,anchor=south west]
	(text48) at (115, 64.735) {$\alpha$};
	
	\path[draw=black]
	(23.72, 103.757) -- (-13.486, 103.757);
	
	\path[draw=black]
	(0.687, 95.388) arc(199.697:180.074:24.519 and 24.793) -- (23.771, 103.744) -- cycle;
	
	\node[anchor=south west]
	(text49) at (-13.744, 92.5) {$\beta$};
	
	\path[draw=black,fill=black!05, thick]
	(34.703, 104.899).. controls (29.932, 105.569) and (24.462, 104.849) .. (20.53, 102.065)..
	controls (17.211, 99.715) and (12.62, 95.28) .. (14.299, 91.576)..
	controls (17.721, 84.027) and (30.659, 85.221) .. (38.747, 87.038)..
	controls (43.858, 88.186) and (51.879, 91.015) .. (51.489, 96.239)..
	controls (51.02, 102.517) and (40.938, 104.024) .. (34.703, 104.899) -- cycle;
	
	\path[draw=black,fill=black!05,thick]
	(81.187, 68.049).. controls (76.89, 67.366) and (72.037, 70.816) .. (68.156, 68.847)..
	controls (65.191, 67.343) and (62.24, 63.735) .. (62.768, 60.453)..
	controls (63.561, 55.532) and (69.459, 51.781) .. (74.388, 51.041)..
	controls (81.892, 49.915) and (93.835, 50.087) .. (95.502, 59.545)..
	controls (96.752, 66.635) and (98.31, 71.297) .. (94.231, 73.718)..
	controls (90.154, 76.138) and (80.222, 72.69) .. (81.187, 68.049)..
	controls (81.187, 68.049) and (81.187, 68.049) .. (81.187, 68.049) -- cycle;
	
	\path[draw=accentcolor,line cap=butt,line join=miter,line width=0.5pt,miter limit=4.0,dash pattern=on 0.5pt off 2.0pt]
	(24.355, 104.008) -- (-13.486, 90.235);
	\path[draw=accentcolor,line cap=butt,line join=miter,line width=0.5pt,miter limit=4.0,dash pattern=on 0.5pt off 2.0pt]
	(46.621, 102.137) -- (128.247, 72.428);
	\path[draw=accentcolor,line cap=butt,line join=miter,line width=0.5pt,miter limit=4.0,dash pattern=on 0.5pt off 2.0pt]
	(62.768, 60.453) -- (53.932, 110.569);
	\path[draw=accentcolor,line cap=butt,line join=miter,line width=0.5pt,miter limit=4.0,dash pattern=on 0.5pt off 2.0pt]
	(95.502, 59.545) -- (104.499, 110.569);
	
	\node[anchor=south west]
	(text57) at (20.53, 90.726) {$U$};
	
	\node[anchor=south west]
	(text57-5) at (71.796, 54.936) {$V$};
\end{tikzpicture}\hfil

\begin{remark}
	\label{remark:parallelness in the literature}
	Note the resemblance of parallelness to the Frobenius reciprocity condition (\cref{definition:open map}). We will see in~\cref{proposition:induced cones are parallel} that Frobenius reciprocity is indeed used to show that the induced cones~$\Up,\Down$ of an open cone localic relation are parallel. Some discussion on this is at the end of~\cref{section:internal cones}.
	
	Moreover, the property in~\cref{remark:parallel implies in future iff in past} appears already in the study of Boolean algebras with operators by J\'onsson and Tarski~\cite[Definition~1.11]{jonsson1951BooleanAlgebrasOperators}. There, a pair of functions~$\uc,\dc$ on a Boolean algebra are called \emph{conjugate} if it holds that~${\uc x\wedge y=\bot}$ iff~$x\wedge \dc y= \bot$. In the Boolean setting, $\uc,\dc$~(preserving~$\bot$) are conjugate iff they are parallel~\cite[Theorem~1.15]{jonsson1951BooleanAlgebrasOperators}, but in general parallelness is a stronger property. Thus parallelness can be seen as a positive generalisation of being conjugate. The use by J\'onnson and Tarski of the conjugate condition is essentially the same as the present motivation: to ensure that~$\uc,\dc$ indeed come from a single relation, and to give an algebraic way to talk about a relation and its opposite. Conjugate pairs are also used in modal logic, see for example~\cite{moller2006AlgebrasModalOperators} and~\cite[\S 3.3]{goldblatt2003MathematicalModalLogic}.
	
	Specifically in locale theory, the parallelness condition occurs in~\cite[\S 3]{manuell2023PresentingQuotientLocales} as an openness condition of localic quotients. More generally, the `modular law' of an allegory has the same form as the parallelness condition~\cite{freyd1990CategoriesAllegories}.	
\end{remark}

In~\cref{section:cones to relations}, describing the strategy of constructing a relation out of cones, we encountered terms of the form $\up A\cap B$ and $A\cap \down B$. Here we briefly introduce some notation and basic lemmas about these assignments, and how they relate to parallelness.

\begin{definition}
	\label{definition:reduced cones}
	For a pair of cones $\uc,\dc\colon L\to L$ on a meet-semilattice, define the \emph{reduced cones}:
	\[
	\Uc,\Dc\colon L\times L\longrightarrow L
	\quad\text{by}\quad
	\Uc(x,y):= \uc x\wedge y
	\quad\text{and}\quad
	\Dc(x,y):= x\wedge \dc y.
	\]
\end{definition}

\begin{remark}
	\label{remark:cone from reduced cone}
	Obviously the cones $\uc,\dc$ are recovered from the reduced ones by plugging in the top element of the semilattice:
	\[
		\uc(-)=\Uc(-,\top)
		\qquad\text{and}\qquad
		\dc(+)=\Dc(\top,+).
	\]
\end{remark}

\begin{lemma}
	\label{lemma:parallel in terms of reduced cones}
	The cones $\uc,\dc\colon L\to L$ are parallel iff for all $x,y\in L$:
	\[
	\Uc(x,y)\sqleq \uc \Dc(x,y)
	\qquad\text{and}\qquad
	\Dc(x,y)\sqleq \dc \Uc(x,y).
	\]
\end{lemma}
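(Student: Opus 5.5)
This is a direct unfolding of the definitions, so I would simply substitute \cref{definition:reduced cones} into both sides and compare with \cref{definition:parallel cones}. No monotonicity or join-preservation of the cones is needed.

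First, by definition we have $\Uc(x,y)=\uc x\wedge y$ and $\Dc(x,y)=x\wedge\dc y$ for all $x,y\in L$. Applying $\uc$ to the second of these gives $\uc\Dc(x,y)=\uc(x\wedge\dc y)$, and applying $\dc$ to the first gives $\dc\Uc(x,y)=\dc(\uc x\wedge y)$. Hence the two displayed inequalities in the lemma read, term by term,
\[
\uc x\wedge y\sqleq \uc(x\wedge \dc y)
\qquad\text{and}\qquad
x\wedge\dc y\sqleq \dc(\uc x\wedge y).
\]
These are literally the two conditions of \cref{definition:parallel cones}.

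Both directions of the equivalence then hold simultaneously, since the quantification over all $x,y\in L$ is the same on both sides. I expect no real obstacle here. The only care needed is to keep track of which argument of each reduced cone carries the cone, so that $\uc$ is applied to $\Dc(x,y)$ and $\dc$ is applied to $\Uc(x,y)$, and not the other way around.
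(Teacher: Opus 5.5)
Your proposal is correct and matches the paper, whose proof is just ``obvious from the definition''. Substituting $\Uc(x,y)=\uc x\wedge y$ and $\Dc(x,y)=x\wedge\dc y$ turns the two inequalities literally into the two parallelness conditions, exactly as you describe.
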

\begin{proof}
	Obvious from the definition.
\end{proof}

\begin{lemma}
	\label{lemma:parallel iff reduced cones idempotent}
	The cones $\uc,\dc\colon L\to L$ are parallel iff the reduced cones $\Uc,\Dc$ are \emph{idempotent} in the following sense:
	\[
	\Dc(\Dc(x,y),\Uc(x,y))=\Dc(x,y)
	\qquad\text{and}\qquad
	\Uc(\Dc(x,y),\Uc(x,y))=\Uc(x,y).
	\]
\end{lemma}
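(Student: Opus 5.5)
Both directions are a direct unfolding of the definitions of the reduced cones, using only the meet-semilattice structure of $L$; no monotonicity of $\uc,\dc$ is needed. The identities to examine are
\[
\Dc(\Dc(x,y),\Uc(x,y)) = (x\wedge \dc y)\wedge \dc(\uc x\wedge y)
\quad\text{and}\quad
\Uc(\Dc(x,y),\Uc(x,y)) = \uc(x\wedge\dc y)\wedge(\uc x\wedge y).
\]
So the first idempotence equation reads $x\wedge\dc y\wedge\dc\Uc(x,y)=\Dc(x,y)$, and the second reads $\uc\Dc(x,y)\wedge\Uc(x,y)=\Uc(x,y)$.

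For the forward direction, assume $\uc,\dc$ are parallel. By \cref{lemma:parallel in terms of reduced cones} we have $\Dc(x,y)\sqleq \dc\Uc(x,y)$. Taking meets, this gives $\Dc(x,y)\wedge\dc\Uc(x,y)=\Dc(x,y)$, which is exactly the first equation. Symmetrically, $\Uc(x,y)\sqleq\uc\Dc(x,y)$ gives $\uc\Dc(x,y)\wedge\Uc(x,y)=\Uc(x,y)$, which is the second equation.

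For the converse, assume both idempotence equations hold. The first says $\Dc(x,y)=\Dc(x,y)\wedge \dc\Uc(x,y)$. In a meet-semilattice, $a=a\wedge b$ is equivalent to $a\sqleq b$, so $\Dc(x,y)\sqleq\dc\Uc(x,y)$. Likewise the second equation yields $\Uc(x,y)\sqleq\uc\Dc(x,y)$. Then \cref{lemma:parallel in terms of reduced cones} gives parallelness.

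There is no real obstacle. The only point needing care is bookkeeping: one must note that $\Dc(a,b)=a\wedge\dc b$ with $a=\Dc(x,y)$ and $b=\Uc(x,y)$, so the outer meet includes the whole of $\Dc(x,y)$ and not just $x$. This is what makes each equation equivalent to a single inequality.
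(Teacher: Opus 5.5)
Your proposal is correct and follows essentially the same route as the paper: unfold $\Dc(\Dc(x,y),\Uc(x,y)) = \Dc(x,y)\wedge\dc\Uc(x,y)$ and $\Uc(\Dc(x,y),\Uc(x,y)) = \uc\Dc(x,y)\wedge\Uc(x,y)$, observe that each identity is equivalent to the corresponding inequality, and conclude via \cref{lemma:parallel in terms of reduced cones}. You simply spell out the step $a = a\wedge b \iff a\sqleq b$, which the paper leaves implicit.
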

\begin{proof}
	Abbreviate $\Dc=\Dc(x,y)$ and $\Uc=\Uc(x,y)$. Then $\Dc(\Dc,\Uc) = \Dc\wedge \dc \Uc$ and $\Uc(\Dc,\Uc)= \uc \Dc \wedge \Uc$, so by~\cref{lemma:parallel in terms of reduced cones} it is clear that the stated condition is equivalent to $\uc,\dc$ being parallel.
\end{proof}

\begin{remark}
	In~\cref{section:cones to relations,intuition:sim} we saw that the relation~$\sim$ will identify rectangles precisely when their source and target projections are equal. In other words, precisely when their projections under~$\Uc,\Dc$ are equal. \cref{lemma:parallel iff reduced cones idempotent} then says that a rectangle $x\tensor y$ will be canonically identified with its reduction~${\Dc(x,y)\tensor \Uc(x,y)}$, and this reduction is idempotent. Spatially, the reduction is of course just the intersection of~$x\tensor y$ with~$R$, so idempotence is just that of~$\cap$.
\end{remark}

\section{Conic frames}
\label{section:conic frames}
In this section we formally introduce and study the main new notion: \emph{conic frames}. We saw in~\cref{intuition:cones} that relations on sets are in bijective correspondence with join-preserving parallel pairs on the powerset. Moreover, we will show in~\cref{proposition:induced cones preserve joins,proposition:induced cones are parallel} below that any open cone localic relation induces such a pair of operators on its underlying frame. Axiomatising this intuition, we get the definition of a conic frame~$(L,\uc,\dc)$. The bulk of this section concerns the induced relation~$\sim$ on~$L\tensor L$ based on the spatial~\cref{intuition:sim}. We set up the technical theory needed to describe the resulting localic relation~$R_\ucdc$, in particular characterising parallelness in terms of~$\sim$ and the reduced cones~$\Uc,\Dc$, and showing how join-preservation of~$\uc,\dc$ will allow us to construct the left adjoints of the source and target maps of~$R_\ucdc$, while also giving necessary properties of the frame quotient map~$\mu_\sim$. The section ends with the definition of the category~$\ucdcFrm$ of conic frames.

\begin{definition}
	\label{definition:conic frame}
	A \emph{conic frame} $(L,\uc,\dc)$ is a frame $L$ equipped with join-preserving parallel cones $\uc,\dc\colon L\to L$.
\end{definition}

\begin{example}
	\label{example:conic frames basic examples}
	We discuss some basic examples:
	\begin{itemize}
		\item for any frame $L$ the triple $(L,\id_L,\id_L)$ is a conic frame; 
		
		\item for any frame $L$ the triple $(L,\bot,\bot)$ with the constant bottom cones is a conic frame;
		
		\item for any frame $L$ and $v,w\in L$ the maps $v\wedge -$ and $w\wedge -$ preserve joins by infinite distributivity, and they are parallel iff $v=w$. Thus $(L,v\wedge -,w\wedge -)$ is a conic frame iff $v=w$;
		
		\item for any conic frame $(L,\uc,\dc)$ its \emph{opposite} is the conic frame~$(L,\dc,\uc)$;
			
		\item for any open locale map $f\colon X\to X$ the triple $(\Opens X, f_!,f^{-1})$ is a conic frame, where parallelness follows from Frobenius reciprocity;
		
		\item for any space~$S$ with open cone relation~$R$ the tuple $(\Opens S,\up,\down\,)$ is a conic frame by~\cref{proposition:cones preserve joins in Set,example:cones from relation are parallel}.
	\end{itemize}
\end{example}

Most importantly, and generalising the last example, we show that the cones~$\Up,\Down$ coming from any open cone localic relation~$R$ on a locale~$X$ satisfy the axioms of a conic frame. Denote its source and target maps as usual by~$s,t$.

\begin{proposition}
	\label{proposition:induced cones preserve joins}
	The cones $\Up,\Down$ preserve all joins.
\end{proposition}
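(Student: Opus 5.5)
The plan is to write each cone as a composite of two join-preserving maps. By \cref{definition:induced cones} we have $\Up = t_!\circ s^{-1}$ and $\Down = s_!\circ t^{-1}$. Since a composite of join-preserving maps preserves joins, it suffices to check that each factor preserves all joins.

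For the preimage factors $s^{-1}$ and $t^{-1}$, I would simply note that they are frame homomorphisms $\Opens X\to\Opens R$. Indeed, $s,t\colon R\to X$ are locale maps, and by definition frame maps preserve all joins.

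For the image factors $t_!$ and $s_!$, I would use that $R$ has open cones (\cref{definition:open cones}). By \cref{definition:open map}, this means $s$ and $t$ are open locale maps, so they have left adjoints $s_!\dashv s^{-1}$ and $t_!\dashv t^{-1}$. By \cref{theorem:adjoint iff preserves meets/joins}, a left adjoint between complete lattices preserves all joins. Alternatively, the standard argument works directly: $t_!(\bigvee U_i)\sqleq W$ iff $\bigvee U_i\sqleq t^{-1}W$ iff $U_i\sqleq t^{-1}W$ for all $i$ iff $t_!U_i\sqleq W$ for all $i$, which says exactly that $t_!(\bigvee U_i)=\bigvee t_!U_i$.

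Combining the two steps gives
\[
\Up\left(\bigvee U_i\right)=t_!\left(\bigvee s^{-1}U_i\right)=\bigvee t_!s^{-1}U_i=\bigvee \Up U_i,
\]
and the argument for $\Down$ is the same with the roles of $s$ and $t$ swapped. This also covers the empty join, so $\Up\bot=\Down\bot=\bot$. There is no real obstacle here. The only point to keep in mind is that Frobenius reciprocity is not needed; it will only enter when proving parallelness (\cref{proposition:induced cones are parallel}).
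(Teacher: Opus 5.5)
Your proposal is correct and is essentially the paper's own proof: $\Up = t_!s^{-1}$ and $\Down = s_!t^{-1}$ are composites of the frame maps $s^{-1},t^{-1}$ with the left adjoints $t_!,s_!$, so both preserve all joins. You are also right that Frobenius reciprocity is not needed here; the paper uses it only for parallelness.
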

\begin{proof}
	This follows since $s^{-1},t^{-1}$ are frame maps and $s_!,t_!$ are left adjoints.
\end{proof}

\begin{proposition}
	\label{proposition:induced cones are parallel}
	The cones $\Up,\Down$ are parallel.
\end{proposition}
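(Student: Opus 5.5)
We must show $\Up U\wedge V\sqleq \Up(U\wedge\Down V)$ and $U\wedge\Down V\sqleq \Down(\Up U\wedge V)$ for all $U,V\in\Opens X$. By symmetry (swapping the roles of $s$ and $t$, which interchanges $\Up$ and $\Down$) it suffices to prove the first inequality. The plan is to use Frobenius reciprocity for both $s$ and $t$, together with the unit of the adjunctions $s_!\dashv s^{-1}$ and $t_!\dashv t^{-1}$.

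First, Frobenius reciprocity for $t$ gives
\[
\Up U\wedge V = t_!s^{-1}(U)\wedge V = t_!\bigl(s^{-1}(U)\wedge t^{-1}(V)\bigr).
\]
Similarly, Frobenius for $s$ gives $U\wedge \Down V = s_!\bigl(s^{-1}(U)\wedge t^{-1}(V)\bigr)$. Both reduced cones are therefore images of the same element $W:=s^{-1}(U)\wedge t^{-1}(V)=r^{-1}(U\tensor V)$ of $\Opens R$. This is the localic counterpart of the spatial formulas $t(A\times B\cap R)=\up A\cap B$ and $s(A\times B\cap R)=A\cap\down B$.

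Next, by the unit of $s_!\dashv s^{-1}$ we have $W\sqleq s^{-1}s_!(W)$. Since also $W\sqleq t^{-1}(V)$, we get
\[
W\sqleq s^{-1}s_!(W)\wedge t^{-1}(V) \sqleq s^{-1}s_!(W)= s^{-1}(U\wedge\Down V).
\]
Applying the monotone map $t_!$ yields
\[
\Up U\wedge V = t_!(W)\sqleq t_!s^{-1}(U\wedge \Down V)=\Up(U\wedge\Down V),
\]
which is the first inequality. The second follows dually: $W\sqleq t^{-1}t_!(W)=t^{-1}(\Up U\wedge V)$, and applying $s_!$ gives $U\wedge\Down V\sqleq \Down(\Up U\wedge V)$.

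There is no real obstacle here. The only point needing care is the correct direction of the Frobenius identity $f_!(A\wedge f^{-1}B)=f_!A\wedge B$, applied once with $f=t$, $A=s^{-1}U$, $B=V$, and once with $f=s$.
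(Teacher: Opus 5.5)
Your proof is correct and is essentially the paper's argument: the unit of $s_!\dashv s^{-1}$, monotonicity of $t_!$, and Frobenius reciprocity for $t$. The only difference is where you apply the unit. You apply it to all of $s^{-1}(U)\wedge t^{-1}(V)$, which means you also need Frobenius for $s$ to identify $s_!$ of that element with $U\wedge\Down V$. The paper applies it only to $t^{-1}(V)$ and then uses that $s^{-1}$ preserves meets, so each of the two inequalities needs Frobenius for just one of $s,t$.
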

\begin{proof}
	We prove $\Up U\wedge V\sqleq \Up (U\wedge \Down V)$ for all~$U,V\in\Opens X$; the other inclusion is analogous. First note that since~$s_!\dashv s^{-1}$ we get~$O\sqleq s^{-1}s_!(O)$ for all~$O\in\Opens R$,~so:
	\begin{align*}
		t_!\left(s^{-1}(U)\wedge t^{-1}(V)\right)
		&\sqleq
		t_! \left( s^{-1}(U)\wedge s^{-1}s_!t^{-1}(V)\right)
		&& (s_!\dashv s^{-1})
		\\&=
		t_!s^{-1}\left(U\wedge s_! t^{-1}(V)\right)
		&&(\text{$s^{-1}$ preserves $\wedge$})
		\\&=
		\Up(U\wedge\Down V).
	\end{align*}
	Using Frobenius reciprocity for $t$, the left-hand side reduces to
	\[
	t_!\left(s^{-1}(U)\wedge t^{-1}(V)\right)
	=
	t_!s^{-1}(U) \wedge V = \Up U\wedge V,
	\]
	giving the desired inclusion.
\end{proof}

Thus, for any open cone localic relation~$(X,R)$ the tuple $(\Opens X,\Up,\Down)$ is a conic frame by~\cref{proposition:induced cones preserve joins,proposition:induced cones are parallel}. Of course, the adjunction in~\cref{section:adjunction} will reveal that this class of examples really covers all conic frames.	
	
The rest of this section will be dedicated to setting up the required technical background to construct a localic relation out of a conic frame. Thus, we fix a frame $L$ equipped with a pair of cones $\uc,\dc\colon L\to L$. For now, we only assume that $\uc,\dc$ preserve the bottom element. The goal is to construct a relation~$\sim$ on the coproduct~$L\tensor L$ out of $\uc,\dc$, which then on the localic side will induce a sublocale: the induced relation~$R_\ucdc$. The relation~$\sim$ is defined according to the spatial~\cref{intuition:sim}.

\begin{definition}
	\label{definition:sim}
	Define a relation~$\sim$ on the rectangles of $L\tensor L$:
	\[
		a\tensor b\sim c\tensor d
		\quad\iff\quad
		\Uc(a,b)=\Uc(c,d)
		\text{ and }
		\Dc(a,b)=\Dc(c,d).
	\]
\end{definition}

\begin{remark}
	Note that $\sim$ is well-defined as a relation on $L\tensor L$ iff $\uc,\dc$ preserve the bottom element $\bot\in L$. Otherwise, this may lead to contradictions since the bottom element in the coproduct may be represented as $x\tensor \bot$ or $\bot\tensor y$ for any~$x,y\in L$. Since conic frames have join-preserving cones, this assumption is harmless.
\end{remark}

\subsection{Parallelness and $\sim$}
The next point of order is to see how parallelness of~$\uc,\dc$ interacts with~$\sim$. In particular, we show that under parallelness the relation~$\sim$ defines a meet-semilattice congruence.

\begin{lemma}
	\label{lemma:parallel in terms of sim}
	The cones $\uc,\dc$ are parallel iff for all $x,y\in L$:
	\[
	x\tensor y \sim \Dc(x,y)\tensor \Uc(x,y).
	\]
\end{lemma}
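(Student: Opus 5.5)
The plan is to unfold \cref{definition:sim} and reduce the claim to \cref{lemma:parallel iff reduced cones idempotent}. The relation $x\tensor y \sim \Dc(x,y)\tensor \Uc(x,y)$ holds, by definition of~$\sim$, exactly when the reduced cones of the two rectangles agree:
\[
\Uc(x,y)=\Uc\bigl(\Dc(x,y),\Uc(x,y)\bigr)
\quad\text{and}\quad
\Dc(x,y)=\Dc\bigl(\Dc(x,y),\Uc(x,y)\bigr).
\]
These are precisely the two idempotence equations of \cref{lemma:parallel iff reduced cones idempotent}. That lemma states they hold for all $x,y\in L$ iff $\uc,\dc$ are parallel, so the equivalence follows immediately in both directions.

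To make the argument self-contained, I would spell out the computation behind that lemma. Write $\Dc=\Dc(x,y)=x\wedge\dc y$ and $\Uc=\Uc(x,y)=\uc x\wedge y$. Then
\[
\Dc(\Dc,\Uc)=x\wedge \dc y\wedge \dc(\uc x\wedge y)
\quad\text{and}\quad
\Uc(\Dc,\Uc)=\uc(x\wedge\dc y)\wedge \uc x\wedge y.
\]
The first of these equals $\Dc$ iff $x\wedge\dc y\sqleq \dc(\uc x\wedge y)$. The second equals $\Uc$ iff $\uc x\wedge y\sqleq\uc(x\wedge \dc y)$. Together these two inequalities are exactly the parallelness condition of \cref{definition:parallel cones}.

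The only genuine subtlety is that $\sim$ is a relation on elements of $L\tensor L$, not on formal pairs. A rectangle can have several representations, for instance the bottom element $\bot$. So I must check that \cref{definition:sim} is well defined, which by the preceding remark holds since $\uc,\dc$ preserve~$\bot$.

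For the equivalence itself, the representations $x\tensor y$ and $\Dc(x,y)\tensor\Uc(x,y)$ are the ones being compared. Both directions therefore just read off the defining condition of~$\sim$ on these given pairs, and I expect no real obstacle.
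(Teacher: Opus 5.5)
Your proof is correct and matches the paper's: unfolding \cref{definition:sim} for the pair $x\tensor y$ and $\Dc(x,y)\tensor\Uc(x,y)$ gives exactly the two idempotence equations, and \cref{lemma:parallel iff reduced cones idempotent} turns these into parallelness. Your explicit computation of $\Dc(\Dc,\Uc)$ and $\Uc(\Dc,\Uc)$ is accurate, and so is your well-definedness remark via $\bot$-preservation; the paper leaves the first implicit and covers the second in its remark after \cref{definition:sim}.
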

\begin{proof}
	Abbreviate $\Uc(x,y)=\Uc$ and $\Dc(x,y)=\Dc$. Then the relation holds precisely when
	\[
		\Uc = \Uc(\Dc,\Uc)
		\qquad\text{and}\qquad
		\Dc= \Dc(\Dc,\Uc),
	\]
	which is equivalent to parallelness by~\cref{lemma:parallel iff reduced cones idempotent}.
\end{proof}

\begin{remark}
	Hence if $\uc,\dc$ are parallel, so $\Uc,\Dc$ are idempotent, any rectangle $x\tensor y$ has a canonical reduced form~$\Dc(x,y)\tensor \Uc(x,y)$ in its $\sim$-equivalence class. Indeed:
	\[
	\Dc\tensor \Uc \sim \Dc(\Dc,\Uc) \tensor \Uc(\Dc,\Uc)= \Dc\tensor \Uc.
	\]
\end{remark}

\begin{proposition}
	\label{lemma:sim is meet congruence}
	\label{proposition:sim is a meet congruence}
	If the cones $\uc,\dc$ are monotone and parallel, then $\sim$ is a meet-semilattice congruence.
\end{proposition}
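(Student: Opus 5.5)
The plan is to show that $\sim$ is the kernel of a map that is compatible with binary meets of rectangles. By \cref{definition:sim}, $a\tensor b\sim c\tensor d$ iff the pairs $(\Uc(a,b),\Dc(a,b))$ and $(\Uc(c,d),\Dc(c,d))$ are equal. So $\sim$ is the kernel of the assignment $x\tensor y\mapsto (\Uc(x,y),\Dc(x,y))$. This assignment is well defined on rectangles because the cones preserve $\bot$, as in the remark following \cref{definition:sim}. Being the kernel of a function, $\sim$ is reflexive, symmetric and transitive. The rectangles form a sub-meet-semilattice of $L\tensor L$, with $(a\tensor b)\wedge(e\tensor f)=(a\wedge e)\tensor(b\wedge f)$. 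It therefore remains to show the following: if $a\tensor b\sim c\tensor d$ and $e\tensor f\sim g\tensor h$, then $(a\wedge e)\tensor(b\wedge f)\sim(c\wedge g)\tensor(d\wedge h)$.

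The key step is to express the reduced cones of a meet purely in terms of the reduced cones of the factors. I will prove the identity
\[
\Uc(a\wedge e,b\wedge f)=\Uc\bigl(\Dc(a,b)\wedge\Dc(e,f),\ \Uc(a,b)\wedge\Uc(e,f)\bigr),
\]
together with the symmetric identity for $\Dc$. Once these hold, the right-hand sides for $(a,b),(e,f)$ and for $(c,d),(g,h)$ coincide by hypothesis, so compatibility follows at once.

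To prove the identity, unfold the right-hand side. It equals $\uc(a\wedge\dc b\wedge e\wedge\dc f)\wedge\uc a\wedge b\wedge\uc e\wedge f$.

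For "$\sqsupseteq$": monotonicity of $\uc$ gives $\uc(a\wedge\dc b\wedge e\wedge\dc f)\sqleq\uc(a\wedge e)$. Dropping the other conjuncts, the right-hand side is below $\uc(a\wedge e)\wedge b\wedge f$, which is the left-hand side.

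For "$\sqleq$": put $x=a\wedge e$ and $y=b\wedge f$. Parallelness (\cref{definition:parallel cones}) gives $\uc x\wedge y\sqleq\uc(x\wedge\dc y)$. Monotonicity of $\dc$ gives $\dc y\sqleq\dc b\wedge\dc f$, so monotonicity of $\uc$ yields $\uc(x\wedge\dc y)\sqleq\uc(a\wedge\dc b\wedge e\wedge\dc f)$. Monotonicity of $\uc$ also gives $\uc x\sqleq\uc a\wedge\uc e$, and trivially $y\sqleq b\wedge f$. Combining these, the left-hand side is below every conjunct of the right-hand side.

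The $\Dc$ identity is proved the same way, with the roles of $\uc$ and $\dc$ swapped and the other half of parallelness used.

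The only genuine obstacle is spotting this identity, which uses exactly the two hypotheses of the statement: monotonicity and parallelness. Everything else is bookkeeping. One small point needs care: an element of $L\tensor L$ may be represented as a rectangle in more than one way. This is handled by the well-definedness remark, and the argument above is stated for arbitrary representatives anyway.
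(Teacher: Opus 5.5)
Your proof is correct, but it takes a different route from the paper's.

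\textbf{The paper's route.} The paper checks compatibility only with meeting by a fixed rectangle $x\tensor y$; by transitivity this suffices. Writing $a'=a\wedge x$, $b'=b\wedge y$, $c'=c\wedge x$, $d'=d\wedge y$, it first derives the four cross-inclusions $\Dc(a',b')\sqleq c'$, $\Uc(a',b')\sqleq d'$, $\Dc(c',d')\sqleq a'$, $\Uc(c',d')\sqleq b'$ from monotonicity and the hypothesis. It then sandwiches, using the idempotence of the reduced cones (\cref{lemma:parallel iff reduced cones idempotent}) together with their monotonicity, to get $\Dc(a',b')=\Dc(\Dc(a',b'),\Uc(a',b'))\sqleq\Dc(c',d')$, and the same for the other three.

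\textbf{Your route.} You prove a closed-form identity showing that the reduced cones of a meet of rectangles depend only on the reduced cones of the two factors. So $\sim$ is the kernel of a map that intertwines $\wedge$ with an explicit binary operation on pairs, and two-sided compatibility follows at once. Your identity is equivalent in strength to the paper's statement. Given $p\sim\Dc\tensor\Uc$ from \cref{lemma:parallel in terms of sim}, it says precisely that $(a\wedge e)\tensor(b\wedge f)\sim(\Dc(a,b)\wedge\Dc(e,f))\tensor(\Uc(a,b)\wedge\Uc(e,f))$.

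\textbf{Trade-offs.} Your version buys a concrete description of the quotient: each $\sim$-class has a canonical reduced rectangle, and the induced meet is ``meet the reduced forms, then reduce again.'' The paper's sandwich argument is a bit shorter once the idempotence lemma is available, and it avoids unfolding the five-fold conjunction.
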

\begin{proof}
	It is clearly an equivalence relation, so it remains to show it is closed under meets. Thus take $a\tensor b\sim c\tensor d$ and $x\tensor y$. Write $a'= a\wedge x$, $b'=b\wedge y$, $c' = c\wedge x$ and $d'=d\wedge y$. We need to show $a'\tensor b'\sim c'\tensor d'$. 
	
	First, we claim the following inclusions hold:
	\[
	\Dc(a',b')\sqleq c',
	\quad
	\Uc(a',b')\sqleq d',
	\quad
	\Dc(c',d')\sqleq a',
	\quad
	\Uc(c',d')\sqleq b'.
	\]
	To see this, use monotonicity of $\dc$ to get
	\[
	\Dc(a',b') = a\wedge x\wedge \dc(b\wedge y) \sqleq a \wedge x\wedge \dc(b) = \Dc(a,b)\wedge x.
	\]
	From $a\tensor b\sim c\tensor d$ the right hand side is equal to $\Dc(c,d)\wedge x$, which in turn is contained in $c\wedge x= c'$, giving the first inclusion. The other inclusions are proven analogously.
	
	Note that $\Uc,\Dc$ inherit monotonicity from $\uc,\dc$. From the inclusions ${\Dc(a',b')\sqleq c'}$ and $\Uc(a',b')\sqleq d'$ and the idempotence of the reduced cones it thus follows that
	\[
	\Dc(a',b')=\Dc\left(\Dc(a',b'),\Uc(a',b')\right)\sqleq \Dc(c',d').
	\]
	Applying the same argument to $\Dc(c',d')\sqleq a'$ and $\Uc(c',d')\sqleq b'$ we thus get the equality $\Dc(a',b')=\Dc(c',d')$. That $\Uc(a',b')=\Uc(c',d')$ is proved dually, and hence we obtain the desired $a'\tensor b'\sim c'\tensor d'$.
\end{proof}

\begin{remark}
	From \cref{proposition:saturated simplified when meet preserved} we thus get a simplified saturation condition for~$\sim$, making the induced sublocale easier to describe.
\end{remark}

\subsection{Separation}
\label{section:separation}
The localic relation $R_\ucdc$ induced by a conic frame $(L,\uc,\dc)$ will of course be the one defined by the binary relation~$\sim$ on $L\tensor L$. The relation~$R_\ucdc$ again needs to have open cones. Based on the spatial intuition in~\cref{section:relations to cones}, we want to obtain left adjoints $s_!\dashv s_\ucdc^{-1}$ and~$t_!\dashv t_\ucdc^{-1}$ that correctly mirror the spatial intuition
\[
s_!(x\tensor y) = x\wedge \dc y
\quad\text{and}\quad
t_!(x\tensor y) = \uc x\wedge y.
\]
Since $s_!,t_!$ are eventually going to be join-preserving, they should be fully determined by these equations. However, for join-preservation to hold, we need at least that whenever~${x\tensor y \sqleq \bigvee x_i\tensor y_i}$ in~$L\tensor L$:
\[
x\wedge \dc y = s_!(x\tensor y)
\sqleq
s_!\left(\bigvee x_i\tensor y_i\right)
=
\bigvee s_!(x_i\tensor y_i)
= 
\bigvee x_i\wedge \dc y_i,
\]
and an analogous inclusion with~$\uc$ for~$t_!$. It is not immediately obvious how this join-preservation property on~$L\tensor L$ follows from join-preservation of~$\uc,\dc$ on~$L$, so in this section we study cones $\uc,\dc$ that satisfy these inclusions, and call them \emph{separating}.
Nevertheless, we will show that~$\uc,\dc$ are separating iff they are join-preserving, so for conic frames the assumption is again harmless.

\begin{definition}
	\label{definition:separating cones}
	We say a pair of cones $\uc,\dc\colon L\to L$ on a frame are \emph{separating} if for all $x,y,x_i,y_i\in L$:
	\[
	x\tensor y\sqleq \bigvee x_i\tensor y_i
	\quad\implies\quad
	\begin{array}{l}
		\Uc(x,y)\sqleq \displaystyle\bigvee \Uc(x_i,y_i),\\
		\Dc(x,y)\sqleq \displaystyle\bigvee \Dc(x_i,y_i).
	\end{array}
	\]
\end{definition}

When restricted to rectangles, the left adjoints~$s_!,t_!$ should overlap with the (output of the) reduced cones~$\Uc,\Dc$. Motivated by this and~\cref{theorem:adjoint iff preserves meets/joins}, we can try to construct the right adjoints directly.

\begin{definition}
	\label{definition:separating opens}
	Let $\uc,\dc\colon L\to L$ be a pair of cones on a frame. Define the \emph{separating opens} in $L\tensor L$ for $z\in L$ by:
	\[
	O_z^\Uc := \bigvee\{ x\tensor y: \Uc(x,y)\sqleq z \}
	\quad\text{and}\quad
	O_z^\Dc:= \bigvee\{ x\tensor y: \Dc(x,y)\sqleq z \}.
	\]
\end{definition}

\begin{lemma}
	\label{lemma:separating in terms of separating opens}
	The cones $\uc,\dc$ are separating iff for all $x,y,z\in L$:
	\begin{align*}
		x\tensor y\sqleq O_z^\Uc
		&\quad\iff\quad
		\Uc(x,y)\sqleq z,
		\\
		x\tensor y\sqleq O_z^\Dc
		&\quad\iff\quad 
		\Dc(x,y)\sqleq z.
	\end{align*}
\end{lemma}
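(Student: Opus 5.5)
We prove the two biconditionals together with separation; only the $\Uc$ case is written out, since the $\Dc$ case is obtained by replacing $\Uc$ with $\Dc$ throughout. The backward implication $\Uc(x,y)\sqleq z \Rightarrow x\tensor y\sqleq O_z^\Uc$ holds for any cones, because $x\tensor y$ is then one of the rectangles in the join defining $O_z^\Uc$. So the content of the lemma is that the forward implication
\[
x\tensor y\sqleq O_z^\Uc \implies \Uc(x,y)\sqleq z
\qquad(\ast)
\]
holds for all $x,y,z$ exactly when the cones are separating.

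\emph{Separating implies $(\ast)$.} Suppose $x\tensor y\sqleq O_z^\Uc$. Index the rectangles $x_i\tensor y_i$ with $\Uc(x_i,y_i)\sqleq z$, so that $O_z^\Uc=\bigvee_i x_i\tensor y_i$. The separating condition gives $\Uc(x,y)\sqleq\bigvee_i\Uc(x_i,y_i)$. Each term of this join lies below $z$, so $\Uc(x,y)\sqleq z$.

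\emph{$(\ast)$ implies separating.} Suppose $x\tensor y\sqleq\bigvee_i x_i\tensor y_i$, and put $z:=\bigvee_i\Uc(x_i,y_i)$. For each $i$ we have $\Uc(x_i,y_i)\sqleq z$, so by definition $x_i\tensor y_i\sqleq O_z^\Uc$. Hence
\[
x\tensor y\sqleq\bigvee_i x_i\tensor y_i\sqleq O_z^\Uc,
\]
and $(\ast)$ yields $\Uc(x,y)\sqleq z=\bigvee_i\Uc(x_i,y_i)$. This is the $\Uc$-half of the separating condition, and the $\Dc$-half follows in the same way from the $\Dc$-version of $(\ast)$.

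The argument is purely order-theoretic and uses only the definitions of $O_z^\Uc$, $O_z^\Dc$ and of separation. It needs no monotonicity of the cones, and no well-definedness beyond $\Uc,\Dc$ being functions on pairs $(x,y)$, which is how the separating condition is already phrased. The step that needs most care is the choice $z:=\bigvee_i\Uc(x_i,y_i)$ in the converse direction; the rest is immediate.
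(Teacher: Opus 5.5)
Your proof is correct and follows essentially the same route as the paper's: the backward implication holds by definition of $O^\Uc_z$, the forward implication follows from applying separation to the join defining $O^\Uc_z$, and the converse uses the same choice $z:=\bigvee_i\Uc(x_i,y_i)$. Your remark that the backward implication needs no hypothesis on the cones is a harmless refinement of the paper's presentation.
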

\begin{proof}
	Suppose $\uc,\dc$ are separating. If $\Uc(x,y)\sqleq z$ then $x\tensor y$ appears in the join defining $O^\Uc_z$, so $x\tensor y\sqleq O^\Uc_z$. Conversely, if $x\tensor y \sqleq O^\Uc_z$ then by separation
	\[
	\Uc(x,y)\sqleq \bigvee\{\Uc(a,b):\Uc(a,b)\sqleq z\}\sqleq z.
	\]
	
	Now suppose the stated equivalences hold. Let $x\tensor y \sqleq \bigvee x_i\tensor y_i$ and define~$z$ as~$\bigvee \Uc(x_i,y_i)$. Thus $\Uc(x_i,y_i)\sqleq z$ for all~$i$, which now holds iff $x_i\tensor y_i\sqleq O^\Uc_z$ for all~$i$. Taking the join we get $x\tensor y \sqleq \bigvee x_i\tensor y_i \sqleq O^\Uc_z$, and applying the hypothesis once more this gives $\Uc(x,y)\sqleq z := \bigvee \Uc(x_i,y_i)$, as desired. 
\end{proof}

\begin{lemma}
	\label{lemma:separating implies kappa right adjoint}
	If $\uc,\dc$ are separating, the maps $O^\Uc_{(-)}$ and $O^\Dc_{(-)}$ are right adjoints.
\end{lemma}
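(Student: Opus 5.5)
We need to show $O^\Uc_{(-)}\colon L\to L\tensor L$ is a right adjoint, with left adjoint given on elements of $L\tensor L$ by
\[
t_!(w) := \bigvee\{\Uc(x,y) : x\tensor y\sqleq w\},
\]
and analogously $s_!(w):=\bigvee\{\Dc(x,y): x\tensor y\sqleq w\}$ for $O^\Dc_{(-)}$. We only treat the $\Uc$ case, since the $\Dc$ case is symmetric. The plan is to verify the Galois condition $t_!(w)\sqleq z$ iff $w\sqleq O^\Uc_z$ directly from \cref{lemma:separating in terms of separating opens}, using that rectangles form a join-basis of $L\tensor L$. An alternative would be to invoke \cref{theorem:adjoint iff preserves meets/joins} by showing that $O^\Uc_{(-)}$ preserves all meets. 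However, the direct check avoids computing meets of joins of rectangles in the coproduct, so we take the direct route.

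Both $t_!$ and $O^\Uc_{(-)}$ are monotone. For $O^\Uc_{(-)}$ this holds because enlarging $z$ enlarges the indexing set of the join. For $t_!$ it holds because enlarging $w$ enlarges the set of rectangles below it.

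For the forward direction, suppose $t_!(w)\sqleq z$. Then every rectangle $x\tensor y\sqleq w$ satisfies $\Uc(x,y)\sqleq z$. By definition of $O^\Uc_z$ (or by \cref{lemma:separating in terms of separating opens}), each such rectangle lies below $O^\Uc_z$. Since $w$ is the join of the rectangles below it, we get $w\sqleq O^\Uc_z$.

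For the converse, suppose $w\sqleq O^\Uc_z$. Then every rectangle $x\tensor y\sqleq w$ also satisfies $x\tensor y\sqleq O^\Uc_z$. By the separation characterisation in \cref{lemma:separating in terms of separating opens}, this gives $\Uc(x,y)\sqleq z$. Taking the join over all such rectangles yields $t_!(w)\sqleq z$.

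This is the one step that genuinely uses the separating hypothesis: a rectangle lying under a join of rectangles must have its reduced cone under $z$. Everything else is formal. We also note that $t_!$ agrees with $\Uc$ on rectangles: $t_!(x\tensor y)=\Uc(x,y)$, where "$\sqleq$" follows from separation applied to $x'\tensor y'\sqleq x\tensor y$, and "$\sqsupseteq$" is immediate. This agreement is the property needed later when $t_!$ serves as the left adjoint of $t^{-1}_\ucdc$.
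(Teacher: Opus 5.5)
Your proof is correct, but it takes a different route from the paper's. The paper never writes down the left adjoint. It shows that $z\mapsto O^\Uc_z$ preserves arbitrary meets: by \cref{lemma:separating in terms of separating opens}, a rectangle $a\tensor b$ lies below $O^\Uc_{\bigwedge z_i}$ iff $\Uc(a,b)\sqleq z_i$ for all $i$, iff it lies below $\bigwedge O^\Uc_{z_i}$. It concludes equality by \cref{lemma:equality if contain same basics} and then appeals to \cref{theorem:adjoint iff preserves meets/joins}. Your stated reason for avoiding this route is therefore not quite accurate: that argument also works rectangle by rectangle and never computes meets of joins in $L\tensor L$.

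You instead exhibit the left adjoint $w\mapsto\bigvee\{\Uc(x,y): x\tensor y\sqleq w\}$ and check the Galois condition directly. You use the same two ingredients as the paper: the rectangle join-basis, and the separation characterisation, which you need only for the implication $w\sqleq O^\Uc_z\Rightarrow t_!(w)\sqleq z$.

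What your version gains is an explicit formula for the adjoint, from which $t_!(x\tensor y)=\Uc(x,y)$ can be read off at once. The paper only has $\Tau(o)=\bigwedge\{z: o\sqleq O^\Uc_z\}$ and computes its values on rectangles afterwards, in \cref{lemma:separating implies join-preserving,lemma:separating iff Sigma Tau}. Your formula and Galois check also anticipate \cref{definition:induced source and target left adjoints} and the proof of \cref{proposition:induced source and target adjunction on rectangles}. The paper's route is shorter if one only needs bare existence.

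A small notational point: your $t_!$ clashes with the paper's $t_!$ on $\Opens R_\ucdc$; the paper calls your map $\Tau$.
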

\begin{proof}
	The proof works independently for each cone, so take a function $\uc\colon L\to L$ satisfying the condition in~\cref{definition:separating cones}. Denote:
	\[
	\tau \colon L\longrightarrow L\tensor L; \qquad z\longmapsto O_z^\Uc = \bigvee\{x\tensor y: \Uc(x,y)\sqleq z\}.
	\]
	It suffices to prove $\tau$ preserves all meets~(\cref{theorem:adjoint iff preserves meets/joins}). So, take a family $(z_i)_{i\in I}$ in $L$, and calculate:
	\begin{align*}
		a\tensor b\sqleq \tau\left(\bigwedge_{i\in I}z_i\right)
		&\iff
		\Uc(a,b)\sqleq \bigwedge_{i\in I} z_i
		&&(\text{\cref{lemma:separating in terms of separating opens}})
		\\&\iff
		\forall i\in I: \Uc(a,b)\sqleq z_i
		\\&\iff
		\forall i\in I: a\tensor b\sqleq \tau(z_i)
		&&(\text{\cref{lemma:separating in terms of separating opens}})
		\\&\iff
		a\tensor b\sqleq \bigwedge_{i\in I} \tau(z_i).
	\end{align*}
	Thus $\tau\left(\bigwedge_{i\in I}z_i\right)$ and $\bigwedge_{i\in I} \tau(z_i)$ contain the same rectangles, from which it follows by~\cref{lemma:equality if contain same basics} that they must be equal, and hence $\tau$ preserves meets. The proof that $\sigma\colon z\mapsto O^\Dc_z$ preserves meets is dual.
\end{proof}

Since by~\cref{remark:cone from reduced cone} the cones~$\uc,\dc$ are recovered from the reduced ones~$\Uc,\Dc$, we can recover the cones from the left adjoints of the separating opens, showing they are join-preserving.

\begin{lemma}
	\label{proposition:separating implies join-preserving}
	\label{lemma:separating implies join-preserving}
	If $\uc,\dc$ are separating, then they are join-preserving.
\end{lemma}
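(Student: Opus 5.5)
By \cref{lemma:separating implies kappa right adjoint}, the maps $\tau\colon z\mapsto O^\Uc_z$ and $\sigma\colon z\mapsto O^\Dc_z$ are right adjoints. So there are join-preserving left adjoints $t_!\dashv\tau$ and $s_!\dashv\sigma$ from $L\tensor L$ to $L$. The plan is to show that these left adjoints recover the reduced cones on rectangles. Then $\uc$ and $\dc$ become composites of join-preserving maps.

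First I identify $t_!$ on rectangles. For any $z\in L$ we have
\[
t_!(x\tensor y)\sqleq z \iff x\tensor y\sqleq O^\Uc_z \iff \Uc(x,y)\sqleq z,
\]
using the adjunction and then \cref{lemma:separating in terms of separating opens}. Since this holds for all $z$, we get $t_!(x\tensor y)=\Uc(x,y)=\uc x\wedge y$. Dually, $s_!(x\tensor y)=\Dc(x,y)$.

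Next I use \cref{remark:cone from reduced cone}, which gives $\uc x=\Uc(x,\top)=t_!(x\tensor\top)=t_!\iota_1(x)$. The coproduct injection $\iota_1\colon x\mapsto x\tensor\top$ is a frame map by \cref{lemma:projections}, so it preserves all joins. Hence $\uc=t_!\circ\iota_1$ is a composite of join-preserving maps, and so preserves joins. Explicitly, $\uc\bigvee x_i=t_!\bigl(\bigvee x_i\tensor\top\bigr)=\bigvee t_!(x_i\tensor\top)=\bigvee\uc x_i$.

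Symmetrically, $\dc y=\Dc(\top,y)=s_!(\top\tensor y)=s_!\iota_2(y)$, so $\dc$ preserves joins as well.

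There is no real obstacle here. The only point needing care is the empty join: $\uc\bot=t_!(\bot\tensor\top)=t_!(\bot)=\bot$. This uses $\bot\tensor\top=\bot$ in $L\tensor L$, which follows because $\iota_1$ preserves the empty join. It is consistent with the well-definedness remark about $\sim$.
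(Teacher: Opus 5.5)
Your proposal is correct and follows the paper's own argument: take the left adjoints of $z\mapsto O^\Uc_z$ and $z\mapsto O^\Dc_z$, identify them on rectangles via \cref{lemma:separating in terms of separating opens}, and write $\uc$ and $\dc$ as these left adjoints composed with the coproduct injections $\iota_1,\iota_2$. You compute the left adjoint on all rectangles rather than only on $x\tensor\top$, and you make the empty-join case explicit, but otherwise the two proofs coincide.
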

\begin{proof}
	Take the meet-preserving map $\tau\colon z\mapsto O^\Uc_z$ from the proof of~\cref{lemma:separating implies kappa right adjoint}.	It admits a left adjoint $\Tau\dashv \tau$ given through~\cref{theorem:adjoint iff preserves meets/joins} by
	\[
	\Tau\colon L\tensor L\longrightarrow L;
	\qquad
	o\longmapsto  \bigwedge\{z\in L: o\sqleq \tau(z)\}.
	\]
	Using \cref{lemma:separating in terms of separating opens} again, calculate for any $x\in L$ that:
	\[
	\Tau(x\tensor \top)
	=
	\bigwedge \{z: x\tensor \top \sqleq O^\Uc_z\}
	=
	\bigwedge\{z: \Uc(x,\top)\sqleq z\}
	=
	\Uc(x,\top) = \uc x.
	\]
	Finally, recall that the frame coproduct inclusion $\iota_1\colon L\to L\tensor L$ into the first component is given by $\iota_1(x)=x\tensor \top$~(\cref{lemma:projections}). Hence
	\[
	\uc= \Tau \circ \iota_1
	\]
	preserves all joins, since $\iota_1$ is a frame map and $\Tau$ is a left adjoint. Similarly, we get that $\dc = \Sigma\circ \iota_2$ preserves joins, where $\Sigma\dashv \sigma$ is the left adjoint to the map~$\sigma\colon z\mapsto O^\Dc_z$.
\end{proof}

Approaching the situation from the other side, we can characterise separation of~$\uc,\dc$ in terms of these left adjoints~$\Sigma,\Tau$.

\begin{lemma}
	\label{lemma:separating iff Sigma Tau}
	Cones $\uc,\dc$ are separating iff there exist join-preserving functions $\Sigma,\Tau\colon L\tensor L\to L$ such that on rectangles
	\[
	\Sigma(x\tensor y)= x\wedge \dc y
	\quad\text{and}\quad
	\Tau(x\tensor y) = \uc x\wedge y.
	\]
\end{lemma}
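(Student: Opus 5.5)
The plan is to prove the two directions separately, treating $\Tau$ and $\uc$ in detail; the case of $\Sigma$ and $\dc$ is dual.

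For the forward direction, assume $\uc,\dc$ are separating. By \cref{lemma:separating implies kappa right adjoint} the map $\tau\colon z\mapsto O^\Uc_z$ preserves all meets. By \cref{theorem:adjoint iff preserves meets/joins} it therefore has a left adjoint $\Tau\dashv\tau$, given by $\Tau(o)=\bigwedge\{z: o\sqleq \tau(z)\}$. This $\Tau$ preserves joins because it is a left adjoint. To evaluate it on a rectangle, we repeat the computation from the proof of \cref{lemma:separating implies join-preserving}, now with an arbitrary second component $y$ instead of $\top$. By \cref{lemma:separating in terms of separating opens} we have $x\tensor y\sqleq O^\Uc_z$ iff $\Uc(x,y)\sqleq z$. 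Hence
\[
\Tau(x\tensor y)=\bigwedge\{z:\Uc(x,y)\sqleq z\}=\Uc(x,y)=\uc x\wedge y.
\]
Defining $\Sigma$ as the left adjoint of $\sigma\colon z\mapsto O^\Dc_z$ gives $\Sigma(x\tensor y)=x\wedge\dc y$ in the same way.

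For the converse, suppose join-preserving $\Sigma,\Tau$ exist with the stated values on rectangles. Let $x\tensor y\sqleq\bigvee x_i\tensor y_i$ in $L\tensor L$. Every join-preserving map is monotone, so
\[
\Uc(x,y)=\Tau(x\tensor y)\sqleq \Tau\Bigl(\bigvee x_i\tensor y_i\Bigr)=\bigvee\Tau(x_i\tensor y_i)=\bigvee\Uc(x_i,y_i).
\]
The same argument with $\Sigma$ gives the inequality for $\Dc$, which is exactly the separating condition of \cref{definition:separating cones}.

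I do not expect a real obstacle. The only subtle point is well-definedness: $\Sigma$ and $\Tau$ are assumed to be given as functions on $L\tensor L$, so in the converse direction we never need to check consistency on rectangles that are equal in the coproduct. In the forward direction this is handled automatically, because $\Tau$ is constructed abstractly as a left adjoint and only afterwards evaluated on rectangles.
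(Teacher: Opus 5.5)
Your proof is correct. The forward direction is the same as the paper's: take $\Tau$ (resp.\ $\Sigma$) to be the left adjoint of the meet-preserving map $z\mapsto O^\Uc_z$ (resp.\ $z\mapsto O^\Dc_z$), and evaluate it on rectangles using \cref{lemma:separating in terms of separating opens}.

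For the converse, the paper takes a longer route. It forms the right adjoints $\Sigma\dashv\sigma$ and $\Tau\dashv\tau$, computes via the rectangle basis that $\sigma(z)=O^\Dc_z$ and $\tau(z)=O^\Uc_z$, and reads off the equivalences $x\tensor y\sqleq O^\Dc_z\iff\Dc(x,y)\sqleq z$ from the adjunction. It then invokes \cref{lemma:separating in terms of separating opens} to conclude separation. You instead verify \cref{definition:separating cones} directly from monotonicity and join-preservation of $\Tau$ and $\Sigma$. That is shorter and needs neither the adjoints nor the characterisation via separating opens.

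The paper's detour does identify the right adjoints of any such $\Sigma,\Tau$ as $O^\Dc_{(-)}$ and $O^\Uc_{(-)}$, which foreshadows the later identification $s_!=\Sigma$, $t_!=\Tau$. However, a join-preserving map on $L\tensor L$ is already determined by its values on the rectangle join-basis, so this is not needed for the lemma itself. Your remark on well-definedness is also right: assuming that $\Sigma,\Tau$ exist as functions already absorbs any consistency requirement on the rectangle formulas.
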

\begin{proof}
	If $\uc,\dc$ are separating, then we get join-preserving maps $\Sigma,\Tau\colon L\tensor L\to L$ via the proof of~\cref{lemma:separating implies join-preserving}. Note indeed that $\Sigma\dashv \sigma$, where $\sigma\colon z\mapsto O^\Dc_z$, so we get using~\cref{lemma:separating in terms of separating opens} that
	\[
	\Sigma(x\tensor y)
	=
	\bigwedge \{z: x\tensor y \sqleq O^\Dc_z\}
	=
	\bigwedge\{z: \Dc(x,y)\sqleq z\}
	=
	\Dc(x,y)
	=
	x\wedge \dc y,
	\]
	and similarly for $\Tau$.
	
	Conversely, if there exist join-preserving $\Sigma,\Tau\colon L\tensor L\to L$ that restrict correctly on rectangles, we get from~\cref{theorem:adjoint iff preserves meets/joins} right adjoints $\Sigma\dashv \sigma$ and $\Tau\dashv \tau$ that are explicitly calculated as
	\begin{align*}
		\sigma(z)
		&=
		\bigvee\{o\in L\tensor L: \Sigma(o)\sqleq z\}
		&&(\Sigma\dashv \sigma)
		\\&=
		\bigvee\{x\tensor y: \Sigma(x\tensor y)\sqleq z \}
		&&(\text{rectangle basis})
		\\&=
		\bigvee\{x\tensor y: \Dc(x,y)\sqleq z \}
		&&(\text{hypothesis on $\Sigma$})
		\\&=:
		O^\Dc_z,
	\end{align*}
	and similarly $\tau(z)= O^\Uc_z$. Unpacking $\Sigma\dashv \sigma$ on rectangles we thus get
	\[
	x\tensor y\sqleq O^\Dc_z
	\iff
	\Sigma(x\tensor y)\sqleq z
	\iff
	\Dc(x,y)\sqleq z,
	\]
	and similarly for $\Uc$, so separation follows from
	\cref{lemma:separating in terms of separating opens}.
\end{proof}

\begin{remark}
	Of course, these $\Sigma,\Tau$ will be the left adjoints~$s_!,t_!$ corresponding to the open source and target map of the induced localic relation~$R_\ucdc$. This will be made precise in~\cref{section:localic relation induced by cones}.	
\end{remark}

\begin{proposition}
	Cones $\uc,\dc$ are separating iff they are join-preserving.
\end{proposition}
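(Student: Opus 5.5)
One direction is already done: separating implies join-preserving is \cref{lemma:separating implies join-preserving}. So the work is the converse. Assume $\uc,\dc$ preserve all joins. By \cref{lemma:separating iff Sigma Tau} it suffices to construct join-preserving maps $\Sigma,\Tau\colon L\tensor L\to L$ with $\Sigma(x\tensor y)=x\wedge\dc y$ and $\Tau(x\tensor y)=\uc x\wedge y$ on rectangles. The plan is to obtain these as copairs of frame maps, or at least of join-preserving maps, using the universal property of the coproduct.

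For $\Tau$, I would first factor the assignment $x\tensor y\mapsto \uc x\wedge y$ through the codiagonal. Consider $L\tensor L\xrightarrow{\uc\tensor\id}L\tensor L\xrightarrow{\nabla}L$, where $\nabla(a\tensor b)=a\wedge b$ (\cref{lemma:coproduct of frame maps}). The issue is that $\uc$ is only join-preserving, not a frame map, so $\uc\tensor\id$ is not directly available from the frame coproduct.

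To handle this, I would use the description of $L\tensor L$ as the tensor product of suplattices. In the category of suplattices, the frame coproduct $L\tensor M$ coincides with the suplattice tensor product: elements are the down-sets $D\subseteq L\times M$ that are closed under joins in each coordinate separately. This is the standard Joyal--Tierney fact, used in \cite[\S IV.4]{picado2012FramesLocalesTopology}. By the universal property of that tensor product, a map out of $L\tensor L$ is join-preserving as soon as its restriction to rectangles is join-preserving in each variable separately. The map $(x,y)\mapsto\uc x\wedge y$ is join-preserving in $x$ because $\uc$ preserves joins and $L$ satisfies infinite distributivity. It is join-preserving in $y$ by infinite distributivity alone. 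It also sends $(\bot,y)$ and $(x,\bot)$ to $\bot$, since $\uc\bot=\bot$. Hence it extends uniquely to a join-preserving $\Tau$ with the required values on rectangles. $\Sigma$ is obtained symmetrically from $(x,y)\mapsto x\wedge\dc y$.

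The main obstacle is justifying the bimorphism extension without simply quoting the suplattice description of the coproduct. If I want to stay closer to the paper's tools, I would define $\Tau(o):=\bigvee\{\uc x\wedge y: x\tensor y\sqleq o\}$ directly. Monotonicity and join-preservation then reduce exactly to the separation inequality: whenever $x\tensor y\sqleq\bigvee x_i\tensor y_i$, one needs $\uc x\wedge y\sqleq\bigvee\uc x_i\wedge y_i$. I would prove this using the concrete description of $L\tensor L$ as $C$-ideals. The element $\bigvee x_i\tensor y_i$ is the smallest down-set containing the pairs $(x_i,y_i)$ that is closed under the two coordinatewise join rules. I would then show by induction on this closure that every pair $(a,b)$ it contains satisfies $\uc a\wedge b\sqleq\bigvee_i\uc x_i\wedge y_i$. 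The induction works because the set of pairs satisfying this inequality is itself a down-set that is closed under both coordinatewise joins; that closure is precisely where join-preservation of $\uc$ and distributivity are used. This induction step is the crux of the proof.
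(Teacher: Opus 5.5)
Your main argument is the paper's own: you reduce to producing $\Sigma,\Tau$ via \cref{lemma:separating iff Sigma Tau}, identify $L\tensor L$ with the suplattice tensor product, and extend the bimorphisms $\Uc,\Dc$. Your fallback argument is also correct, and needs no real induction, since $\{(a,b):\uc a\wedge b\sqleq z\}$ is itself a C-ideal containing the generators; but it is just the universal property of that tensor product written out in the C-ideal model, so it is the same proof in concrete form.
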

\begin{proof}
	Given~\cref{lemma:separating implies join-preserving}, it remains to show that if $\uc,\dc$ are join-preserving, then they are separating. For this we use the following important fact: if $L,M$ are frames, then their coproduct $L\tensor M$ is equivalently described by their suplattice tensor product. This can be seen from the explicit construction of frame coproducts in~\cite[\S II.2.12]{johnstone1982StoneSpaces}. See also~\cite[\S 1]{johnstone1991PreframePresentationsPresent}.
	
	Then, the cones~$\uc,\dc$ being join-preserving implies immediately that the reduced cones~${\Uc,\Dc\colon L\times L\to L}$ are suplattice bihomomorphisms (preserving joins in each component). By the universal property of suplattice tensor products we hence get join-preserving functions $\Sigma,\Tau\colon L\tensor L\to L$ such that $\Sigma(x\tensor y)= \Dc(x,y)$ and~${\Tau(x\tensor y)=\Uc(x,y)}$, which implies $\uc,\dc$ are separating by~\cref{lemma:separating iff Sigma Tau}.
\end{proof}

\begin{remark}
	Thus separation and join-preservation are equivalent, and in particular the cones of any conic frame are separating. In what follows we temporarily keep using the terminology of separation, since this highlights the properties in~\cref{definition:separating cones,lemma:separating in terms of separating opens} that are needed in the proofs. But, of course, the proofs will work in any conic frame.
\end{remark}

\subsection{The induced quotient map}
Recall from~\cref{section:sublocales} that for any binary relation~$\sim$ on a frame, we get the quotient map~$\mu_\sim$ that takes an element and produces the smallest $\sim$-saturated element containing it. Applying this to the relation~$\sim$ on~$L\tensor L$ from~\cref{definition:sim} we get a frame quotient map:
\[
\mu_\sim\colon L\tensor L\longrightarrow L\tensor L/{\sim};
\qquad
\mu_\sim(o)=\bigwedge\{s\in L\tensor L/{\sim}: o\sqleq s\}.
\]
The point of this section is to show that the frame congruence~$\langle\sim\rangle$ corresponding to the quotient map~$\mu_\sim$ reduces correctly to~$\sim$ on rectangles, and hence does not make any wrong identifications. To do this, we first show that applying~$\mu_\sim$ to rectangles produces precisely the separating opens.

\begin{lemma}
	\label{proposition:separating opens are sim saturated}
	Assume $\uc,\dc$ are a separating parallel pair. Then the separating opens $O^\Uc_z,O^\Dc_z$ are $\sim$-saturated.
\end{lemma}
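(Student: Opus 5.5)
The plan is to use the simplified saturation criterion of \cref{proposition:saturated simplified when meet preserved}. Since $\uc,\dc$ are separating, they are join-preserving and hence monotone, so together with parallelness \cref{proposition:sim is a meet congruence} says that $\sim$ is a meet-semilattice congruence on the rectangles. The rectangles form a join-basis that is closed under binary meets, so for $a\tensor b\sim c\tensor d$ and any basic rectangle $x\tensor y$ we still have $(a\wedge x)\tensor(b\wedge y)\sim(c\wedge x)\tensor(d\wedge y)$. This is exactly the hypothesis of \cref{proposition:saturated simplified when meet preserved}, with $\sim$ regarded as a relation on $L\tensor L$ that only relates rectangles. Hence an element $s\in L\tensor L$ is $\sim$-saturated iff, for all related rectangles $a\tensor b\sim c\tensor d$, we have $a\tensor b\sqleq s$ iff $c\tensor d\sqleq s$.

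It then suffices to check this condition for $s=O^\Uc_z$ and for $s=O^\Dc_z$. By \cref{lemma:separating in terms of separating opens}, which applies because the cones are separating, $a\tensor b\sqleq O^\Uc_z$ holds iff $\Uc(a,b)\sqleq z$. Now $a\tensor b\sim c\tensor d$ means by definition that $\Uc(a,b)=\Uc(c,d)$. So the two conditions $\Uc(a,b)\sqleq z$ and $\Uc(c,d)\sqleq z$ coincide, and $O^\Uc_z$ is saturated. The argument for $O^\Dc_z$ is the same, using the $\Dc$-half of the definition of $\sim$.

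The one point needing care is the bottom element. Rectangles such as $a\tensor\bot$ and $\bot\tensor b$ all equal $\bot$ in $L\tensor L$, so we must make sure that $\sim$ and the criterion are consistent on such degenerate rectangles. This follows because $\uc,\dc$ preserve $\bot$, as they are join-preserving: every representation of $\bot$ has both reduced cones equal to $\bot$, so $\sim$ is well defined on elements. I expect no real obstacle beyond this. In particular, parallelness enters only through \cref{proposition:sim is a meet congruence}, which is what permits the simplified saturation test.
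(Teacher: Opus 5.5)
Your proposal is correct and follows the paper's proof essentially verbatim: you reduce to the simplified saturation test via \cref{proposition:saturated simplified when meet preserved,proposition:sim is a meet congruence}, and then use the characterisation $a\tensor b\sqleq O^\Uc_z$ iff $\Uc(a,b)\sqleq z$ from \cref{lemma:separating in terms of separating opens}. Your remarks that monotonicity comes from join-preservation and that $\bot$ is preserved make explicit what the paper uses implicitly.
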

\begin{proof}
	By \cref{proposition:saturated simplified when meet preserved,proposition:sim is a meet congruence} we get the simplified saturation condition:
	\[
	a\tensor b\sim c\tensor d
	\quad\implies\quad
	\left(a\tensor b\sqleq O^\Uc_z \text{ iff } c\tensor d\sqleq O^\Uc_z\right).
	\]
	The hypothesis gives $\Uc(a,b)=\Uc(c,d)$, so using separation via~\cref{lemma:separating in terms of separating opens}:
	\[
	a\tensor b\sqleq O^\Uc_z \iff \Uc(a,b)\sqleq z \iff \Uc(c,d)\sqleq z \iff c\tensor d \sqleq O^\Uc_z.
	\]
	The proof for $O^\Dc_z$ is dual.
\end{proof}

\begin{lemma}
	\label{lemma:mu recovers separating opens}
	If $\uc,\dc$ are separating parallel, then:
	\[
	\mu_\sim(z\tensor \top) = O^\Dc_z
	\quad\text{and}\quad
	\mu_\sim(\top\tensor z) = O^\Uc_z.
	\]
\end{lemma}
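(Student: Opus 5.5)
We prove $\mu_\sim(z\tensor\top)=O^\Dc_z$ by showing two inequalities, using that $\mu_\sim(o)$ is the least $\sim$-saturated element above $o$. The case $\mu_\sim(\top\tensor z)=O^\Uc_z$ is dual, interchanging the roles of $\uc,\dc$ and of the two tensor factors.

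For $\mu_\sim(z\tensor\top)\sqleq O^\Dc_z$: by \cref{proposition:separating opens are sim saturated} the element $O^\Dc_z$ is $\sim$-saturated. It therefore suffices to check $z\tensor\top\sqleq O^\Dc_z$. This holds because $\Dc(z,\top)=z\wedge\dc\top\sqleq z$, so the rectangle $z\tensor\top$ occurs in the join defining $O^\Dc_z$. Equivalently, apply \cref{lemma:separating in terms of separating opens}.

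For $O^\Dc_z\sqleq\mu_\sim(z\tensor\top)$: since $O^\Dc_z$ is a join of rectangles $x\tensor y$ with $\Dc(x,y)\sqleq z$, it suffices to show each such rectangle lies below $s:=\mu_\sim(z\tensor\top)$. Because $s$ is saturated, we use the simplified saturation condition from \cref{proposition:saturated simplified when meet preserved}, which is available by \cref{proposition:sim is a meet congruence}. By \cref{lemma:parallel in terms of sim}, parallelness gives $x\tensor y\sim\Dc(x,y)\tensor\Uc(x,y)$. Hence $x\tensor y\sqleq s$ iff $\Dc(x,y)\tensor\Uc(x,y)\sqleq s$. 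The latter rectangle lies below $z\tensor\top\sqleq s$, since $\Dc(x,y)\sqleq z$ by assumption and $\Uc(x,y)\sqleq\top$. Thus $x\tensor y\sqleq s$, and taking the join gives $O^\Dc_z\sqleq s$.

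The only delicate point is the use of the simplified saturation criterion. It needs $\sim$ to be compatible with meets by basic rectangles, which \cref{proposition:sim is a meet congruence} supplies under monotonicity and parallelness. Monotonicity follows from separation via \cref{lemma:separating implies join-preserving}, since join-preserving maps are monotone. The rest is a routine verification.
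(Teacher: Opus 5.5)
Your proof is correct and matches the paper's argument: saturation of $O^\Dc_z$ gives one inclusion, and $x\tensor y\sim \Dc(x,y)\tensor\Uc(x,y)\sqleq z\tensor\top$ together with saturation of $\mu_\sim(z\tensor\top)$ gives the other. The step you flag as delicate does not actually need the meet-congruence property, since for a saturated $s$ the implication $a\sim b \implies (a\sqleq s \iff b\sqleq s)$ follows directly from \cref{definition:saturated} by taking $c=\top$; the simplified criterion is only needed in the converse direction, which is used in \cref{proposition:separating opens are sim saturated}.
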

\begin{proof}
	Note that for all $z\in L$ we have $\Dc(z,\top)=z\wedge \dc \top \sqleq z$, so by definition of the separating opens we have $z\tensor \top \sqleq O^\Dc_z$. Thus using that $O^\Dc_z$ is $\sim$-saturated (\cref{proposition:separating opens are sim saturated}) we get $\mu_\sim(z\tensor \top)\sqleq \mu_\sim(O^\Dc_z)=O^\Dc_z$. 
	
	For the converse inclusion, take $x\tensor y$ with $\Dc(x,y)\sqleq z$, which are the rectangles generating the separating open. Together with the trivial inclusion $\Uc(x,y)\sqleq \top$, from \cref{lemma:parallel in terms of sim} we get
	\[
	x\tensor y
	\sim
	\Dc(x,y)\tensor \Uc(x,y)
	\sqleq 
	z\tensor \top
	\sqleq \mu_\sim(z\tensor\top).
	\]
	But $\mu_\sim(z\tensor \top)$ is $\sim$-saturated, so it follows that $x\tensor y\sqleq \mu_\sim(z\tensor\top)$. Taking joins over such $x\tensor y$ we thus get $O^\Dc_z\sqleq \mu_\sim(z\tensor\top)$, as desired.
\end{proof}

Using this lemma, we show that restricting the frame congruence corresponding to~$\mu_\sim$ indeed agrees with~$\sim$. Moreover, this is the case precisely when the cones are separating.

\begin{proposition}
	\label{lemma:mu inclusion implies reduced cone inclusion}
	If $\uc,\dc$ are separating parallel, then:
	\[
	\mu_\sim(c\tensor d)\sqleq \mu_\sim(a\tensor b)
	\quad\implies\quad
	\Uc(c,d)\sqleq \Uc(a,b) \text{ and } \Dc(c,d)\sqleq \Dc(a,b).
	\]
\end{proposition}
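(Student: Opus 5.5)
The plan is to exploit that the separating opens are $\sim$-saturated fixed points of $\mu_\sim$, and to use the adjunction characterisation of \cref{lemma:separating in terms of separating opens}. We treat $\Uc$; the argument for $\Dc$ is dual, with $O^\Dc$ in place of $O^\Uc$.

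First, set $z:=\Uc(a,b)$. Trivially $\Uc(a,b)\sqleq z$, so by \cref{lemma:separating in terms of separating opens} we get $a\tensor b\sqleq O^\Uc_z$. By \cref{proposition:separating opens are sim saturated}, $O^\Uc_z$ is $\sim$-saturated. Since $\mu_\sim(a\tensor b)$ is the smallest saturated element above $a\tensor b$ (\cref{remark:quotient map}), it follows that $\mu_\sim(a\tensor b)\sqleq O^\Uc_z$.

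Next, combine this with the hypothesis and inflationarity of $\mu_\sim$:
\[
c\tensor d\sqleq \mu_\sim(c\tensor d)\sqleq \mu_\sim(a\tensor b)\sqleq O^\Uc_{z}.
\]
Applying \cref{lemma:separating in terms of separating opens} in the other direction now gives $\Uc(c,d)\sqleq z=\Uc(a,b)$, which is the claim for $\Uc$.

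The dual run uses $z':=\Dc(a,b)$ and $O^\Dc_{z'}$, and yields $\Dc(c,d)\sqleq\Dc(a,b)$. Parallelness enters only through \cref{proposition:separating opens are sim saturated}, which relies on $\sim$ being a meet congruence. That proposition also needs monotonicity, but join-preserving (equivalently, separating) cones are monotone. No step here seems a real obstacle; the only point needing care is that saturation of $O^\Uc_z$ is exactly what lets us bound $\mu_\sim(a\tensor b)$ rather than merely $a\tensor b$.
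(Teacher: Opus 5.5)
Your proof is correct and follows essentially the paper's argument. You bound $\mu_\sim(a\tensor b)$ above by a separating open, chain this with the hypothesis and inflationarity of $\mu_\sim$, and conclude by separation; the only difference is that you get $\mu_\sim(a\tensor b)\sqleq O^\Uc_{\Uc(a,b)}$ directly from saturation, whereas the paper first rewrites $\mu_\sim(a\tensor b)=\mu_\sim(\Dc(a,b)\tensor\Uc(a,b))\sqleq\mu_\sim(\Dc(a,b)\tensor\top)$ and then invokes $\mu_\sim(z\tensor\top)=O^\Dc_z$, a small detour your shortcut avoids.
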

\begin{proof}
	The quotient map $\mu_\sim$ equalises~$\sim$, so using \cref{lemma:parallel in terms of sim} we get
	\[
	\mu_\sim(a\tensor b) = \mu_\sim(\Dc(a,b)\tensor \Uc(a,b))
	\sqleq 
	\mu_\sim(\Dc(a,b)\tensor\top).
	\]
	Using \cref{lemma:mu recovers separating opens} to rewrite the right-hand side and using that $\mu_\sim$ is inflationary, assuming $\mu_\sim(c\tensor d)\sqleq \mu_\sim(a\tensor b)$ thus gives:
	\[
	c\tensor d \sqleq \mu_\sim(c\tensor d)\sqleq \mu_\sim(a\tensor b)\sqleq O^\Dc_{\Dc(a,b)}.
	\]
	By separation, this inclusion holds iff $\Dc(c,d)\sqleq \Dc(a,b)$, as desired. The inclusion for~$\Uc$ is proved dually.
\end{proof}

\begin{corollary}
	\label{corollary:mu implies sim}
	A parallel pair~$\uc,\dc$ is separating iff
	\[
	\mu_\sim(c\tensor d)= \mu_\sim(a\tensor b)
	\quad\iff\quad
	a\tensor b\sim c\tensor d.
	\]
\end{corollary}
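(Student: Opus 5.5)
The statement has two directions, and the plan is to handle them separately; parallelness is assumed throughout.

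\emph{Separating implies the biconditional.} The backward implication holds for any relation, since $\mu_\sim$ equalises $\sim$ by \cref{theorem:frame quotient theorem}; separation is not needed here. For the forward implication, I would assume $\mu_\sim(c\tensor d)=\mu_\sim(a\tensor b)$. Then both inclusions $\mu_\sim(c\tensor d)\sqleq\mu_\sim(a\tensor b)$ and $\mu_\sim(a\tensor b)\sqleq\mu_\sim(c\tensor d)$ hold. Applying \cref{lemma:mu inclusion implies reduced cone inclusion} twice gives $\Uc(c,d)=\Uc(a,b)$ and $\Dc(c,d)=\Dc(a,b)$, which is exactly $a\tensor b\sim c\tensor d$ by \cref{definition:sim}.

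\emph{The biconditional implies separating.} Now assume the equivalence holds for all rectangles. I would verify the criterion of \cref{lemma:separating in terms of separating opens}, namely $x\tensor y\sqleq O^\Dc_z$ iff $\Dc(x,y)\sqleq z$, and dually for $\Uc$. The backward direction is immediate from the definition of $O^\Dc_z$. For the forward direction, suppose $x\tensor y\sqleq O^\Dc_z=\bigvee\{a\tensor b:\Dc(a,b)\sqleq z\}$. The plan is to compare $\mu_\sim(x\tensor y)$ with $\mu_\sim(z\tensor\top)$.

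\begin{itemize}
\item Each generator satisfies $a\tensor b\sim\Dc(a,b)\tensor\Uc(a,b)\sqleq z\tensor\top$ by \cref{lemma:parallel in terms of sim}. Hence $\mu_\sim(a\tensor b)\sqleq\mu_\sim(z\tensor\top)$, and taking joins gives $\mu_\sim(x\tensor y)\sqleq\mu_\sim(z\tensor\top)$.
\item To turn this inclusion into an equality of $\mu_\sim$-images of rectangles, I would meet both sides with $\mu_\sim(x\tensor y)$. Since $\mu_\sim$ preserves binary meets, $\mu_\sim(x\tensor y)=\mu_\sim((x\wedge z)\tensor y)$.
\item The hypothesis then gives $x\tensor y\sim(x\wedge z)\tensor y$. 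In particular $\Dc(x,y)=\Dc(x\wedge z,y)=x\wedge z\wedge\dc y\sqleq z$, as required.
\end{itemize}

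The $\Uc$ case is dual, using $\mu_\sim(\top\tensor z)$ instead.

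The main obstacle is the converse direction. In particular, the step passing from $x\tensor y\sqleq O^\Dc_z$ to an inclusion of $\mu_\sim$-images must use only parallelness and the fact that $\mu_\sim$ is a frame map, and must not invoke \cref{lemma:mu recovers separating opens}, since that lemma already assumes separation. The meet trick in the second bullet is what lets the hypothesis, which is stated only for equalities between rectangles, be applied to an inclusion.
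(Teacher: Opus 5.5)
Your proposal is correct and follows the paper's proof essentially step for step. The forward direction goes through \cref{lemma:mu inclusion implies reduced cone inclusion}, and the converse goes through the criterion of \cref{lemma:separating in terms of separating opens}, the parallelness-only inclusion $\mu_\sim(x\tensor y)\sqleq\mu_\sim(z\tensor\top)$, and the meet trick $\mu_\sim(x\tensor y)=\mu_\sim((x\wedge z)\tensor y)$ — your explicit derivation of that inclusion from \cref{lemma:parallel in terms of sim} is precisely the separation-free part of the proof of \cref{lemma:mu recovers separating opens} that the paper cites.
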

\begin{proof}
	Observe that~$\mu_\sim$ equalises~$\sim$~(\cref{theorem:frame quotient theorem}), so the stated equivalence holds iff $\mu_\sim(a\tensor b)=\mu_\sim(c\tensor d)$ implies~$a\tensor b\sim c\tensor d$. The cones~$\uc,\dc$ being separating provides this via~\cref{lemma:mu inclusion implies reduced cone inclusion}.
	
	For the converse direction, assume the stated equivalence holds. By~\cref{lemma:separating in terms of separating opens} and its proof we see it suffices to prove $x\tensor y \sqleq O^\Dc_z$ implies $\Dc(x,y)\sqleq z$, as the other direction holds by construction of the separating opens. Similarly, we saw in the proof of~\cref{lemma:mu recovers separating opens} that the inclusion $O^\Dc_z\sqleq \mu_\sim(z\tensor \top)$ always holds. Thus, using~$\mu_\sim$ is idempotent, we get if~$x\tensor y\sqleq O^\Dc_z$ that ${\mu_\sim(x\tensor y)\sqleq \mu_\sim(z\tensor \top)}$. Equivalently, ${\mu_\sim(x\tensor y) = \mu_\sim((x\wedge z)\tensor y)}$, and by hypothesis this implies we have the relation~${x\tensor y \sim (x\wedge z)\tensor y}$, which in turn by definition unpacks to the desired ${\Dc(x,y)= \Dc(x\wedge z,y) \sqleq z}$. The proof for~$\Uc$ is dual.
\end{proof}

\subsection{Conic morphisms}
To finish this section on conic frames, we define their corresponding category~$\ucdcFrm$. We saw in~\cref{lemma:monotonicity in terms of cones} that the point-wise definition of monotonicity can be translated into a pointfree statement involving cones and preimage maps. We adopt this directly.

\begin{definition}
	\label{definition:conic morphism}
	A \emph{conic morphism} $h\colon (L,\uc,\dc)\to (M,\ucx,\dcx)$ between conic frames is a frame map $h\colon L\to M$ such that
	\[
	\ucx\circ h \sqleq h\circ \uc
	\quad\text{and}\quad
	\dcx\circ h \sqleq h\circ \dc.
	\]
\end{definition}

\begin{definition}
	The category of conic frames and morphisms is denoted~$\ucdcFrm$.
\end{definition}

Analogous to how $\id_X\colon (X,R)\to (X,Q)$ is monotone iff~$R\subseteq Q$~(\cref{remark:monotone inclusion}), we record the following fact characterising when the identity map of frames is a conic morphism. 

\begin{lemma}
	\label{lemma:id ucdc map iff cones inclusion}
	The identity map $\id_L\colon (L,\uc,\dc)\to (L,\ucx ,\dcx )$ defines a conic morphism iff ${\ucx \sqleq \uc}$ and~$\dcx \sqleq \dc$.
\end{lemma}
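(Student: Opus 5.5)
This is a direct unfolding of \cref{definition:conic morphism}, and the plan is simply to substitute $h=\id_L$ and read off what the two defining inequalities become. Here the domain cones are $\uc,\dc$ and the codomain cones are $\ucx,\dcx$.

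First I note that $\id_L$ is trivially a frame map, since it preserves finite meets and all joins. So the only content of being a conic morphism lies in the two cone inequalities. With $h=\id_L$ these read
\[
\ucx\circ\id_L\sqleq \id_L\circ\uc
\quad\text{and}\quad
\dcx\circ\id_L\sqleq\id_L\circ\dc .
\]
Simplifying the compositions with the identity, they say exactly $\ucx\sqleq\uc$ and $\dcx\sqleq\dc$. Here $\sqleq$ between functions $L\to L$ is the pointwise order, which is the meaning used in \cref{definition:conic morphism}.

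Each step in this chain is an equivalence, so both directions of the claimed ``iff'' follow at once. There is no real obstacle. The only point worth stating explicitly is that the inequality in the definition is pointwise, so it compares the cones directly and no further argument is needed.
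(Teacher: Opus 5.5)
Your proposal is correct and matches the paper, which states this lemma without a separate proof because it is immediate. Substituting $h=\id_L$ into \cref{definition:conic morphism} gives exactly the pointwise inequalities $\ucx\sqleq\uc$ and $\dcx\sqleq\dc$.
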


The following important characterisation shows that the conic morphisms are precisely those frame maps that preserve the generating relations~$\sim$. This is the key ingredient that will allow us to define functors between~$\rocLoc$ and~$\ucdcFrm^\op$.

\begin{proposition}
	\label{proposition:conic morphism iff preserves sim}
	A frame map $h\colon (L,\uc,\dc)\to (M,\ucx,\dcx)$ is a conic morphism iff
	\[
	a\tensor b\sim c\tensor d
	\quad\implies\quad
	h(a)\tensor h(b) \sim h(c)\tensor h(d).
	\]
\end{proposition}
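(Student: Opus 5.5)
The forward direction will be a direct calculation, and the converse will test the condition on suitably chosen rectangles. Conic frames have monotone, parallel cones on both sides, so \cref{lemma:parallel in terms of sim} and the idempotence of the reduced cones (\cref{lemma:parallel iff reduced cones idempotent}) are available throughout.

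For the forward direction, suppose $h$ is conic and $a\tensor b\sim c\tensor d$, so $\Dc(a,b)=\Dc(c,d)$ and $\Uc(a,b)=\Uc(c,d)$. Write $\Dcx,\Ucx$ for the reduced cones of $\ucx,\dcx$. First I will show
\[
\Dcx(h(a),h(b)) = h(a)\wedge \dcx h(b)\sqleq h(a)\wedge h(\dc b) = h(\Dc(a,b)) = h(\Dc(c,d))\sqleq h(c),
\]
and similarly $\Ucx(h(a),h(b))\sqleq h(d)$, together with the symmetric inclusions with the roles of $a,b$ and $c,d$ swapped. Then, exactly as in the proof of \cref{proposition:sim is a meet congruence}, idempotence and monotonicity of the reduced cones give
\[
\Dcx(h(a),h(b))=\Dcx\bigl(\Dcx(h(a),h(b)),\Ucx(h(a),h(b))\bigr)\sqleq \Dcx(h(c),h(d)).
\]
Combining this with the symmetric argument yields equality. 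The $\Ucx$ case is dual, so $h(a)\tensor h(b)\sim h(c)\tensor h(d)$.

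For the converse, assume $h$ preserves $\sim$ and fix $x\in L$; I want $\dcx h(x)\sqleq h(\dc x)$. By \cref{lemma:parallel in terms of sim}, $\top\tensor x\sim \dc x\tensor \Uc(\top,x)$. Applying $h$ gives $\top\tensor h(x)\sim h(\dc x)\tensor h(\Uc(\top,x))$. Comparing the $\Dcx$-components yields
\[
\dcx h(x)=\Dcx(\top,h(x)) = h(\dc x)\wedge \dcx h(\Uc(\top,x))\sqleq h(\dc x).
\]
Dually, starting from $x\tensor\top\sim \Dc(x,\top)\tensor \uc x$ and comparing $\Ucx$-components gives $\ucx h(x)\sqleq h(\uc x)$. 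Hence $h$ is a conic morphism.

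The main subtlety is the forward direction. Knowing only inclusions $\ucx h\sqleq h\uc$, one cannot conclude equalities of reduced cones directly, and the step where parallelness (through idempotence) upgrades the four inclusions to equalities is the one that needs care. I expect it to go through verbatim as in \cref{proposition:sim is a meet congruence}.
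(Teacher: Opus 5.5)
Your proof is correct and is essentially the paper's. The converse is identical: you apply the hypothesis to $x\tensor\top\sim\Dc(x,\top)\tensor\uc x$ (and its mirror $\top\tensor x\sim\dc x\tensor\Uc(\top,x)$) and read off one component. The forward direction uses the same two ingredients, namely the conic inequalities to get $\Dcx(h(a),h(b))\sqleq h(\Dc(a,b))$, and parallelness in $M$ to upgrade inclusions to equalities. The difference is only organisational: the paper first shows $\Ucx(h(x),h(y))=\Ucx(h(\Dc(x,y)),h(\Uc(x,y)))$ (and likewise for $\Dcx$) for all $x,y$, whereas you compare $h(a)\tensor h(b)$ and $h(c)\tensor h(d)$ directly via cross-inclusions, as in \cref{proposition:sim is a meet congruence}.
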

\begin{proof}
	Suppose first that $h$ is a conic morphism, fix elements $x,y\in L$, and abbreviate~$\Uc= \Uc(x,y)$ and~$\Dc=\Dc(x,y)$. We claim that
	\[
	\Ucx(h(x),h(y))= \Ucx(h(\Dc),h(\Uc))
	\quad\text{and}\quad
	\Dcx(h(x),h(y))=\Dcx(h(\Dc),h(\Uc)).
	\]
	To see this, observe that $\Dc\sqleq x$ and $\Uc\sqleq y$, so since~$h$ and~$\Ucx$ are monotone we get immediately that~$\Ucx(h(\Dc),h(\Uc))\sqleq \Ucx(h(x),h(y))$, and we are left to prove the converse inclusion. For that, use $\ucx\circ h \sqleq h\circ \uc$ to get
	\[
	\Ucx(h(x),h(y)):= \ucx h(x)\wedge h(y) \sqleq h(\uc x) \wedge h(y) = h(\uc x\wedge y)= h(\Uc).
	\]
	The same argument using~$\dcx\circ h \sqleq h\circ \dc$ gives $\Dcx(h(x),h(y))\sqleq h(\Dc)$, which combined with~\cref{lemma:parallel in terms of reduced cones} gives
	\[
	\Ucx(h(x),h(y)) \sqleq \ucx \Dcx(h(x),h(y)) \sqleq \ucx h(\Dc),
	\]
	which in turn combined with the previous equation gives the desired inclusion:
	\[
	\Ucx(h(x),h(y))\sqleq \ucx h(\Dc)\wedge h(\Uc) = \Ucx(h(\Dc),h(\Uc)).
	\]
	The second equation in the claim is proved analogously. With the claim established, now assume that $a\tensor b \sim c\tensor d$. Explicitly, this means $\Uc(a,b)=\Uc(c,d)$ and $\Dc(a,b)=\Dc(c,d)$, which can then be used in conjunction with the claim to get
	\begin{align*}
	\Ucx(h(a),h(b)) 
	&=
	\Ucx(h(\Dc(a,b)),h(\Uc(a,b)))
	\\&= 
	\Ucx(h(\Dc(c,d)),h(\Uc(c,d)))
	\\&= \Ucx(h(c),h(d)),
	\end{align*}
	and similarly $\Dcx(h(a),h(b))=\Dcx(h(c),h(d))$, as was to be shown.
	
	For the converse direction of the proof, suppose that the stated implication holds. Parallelness gives through~\cref{lemma:parallel in terms of sim} that ${x\tensor \top \sim \Dc(x,\top)\tensor\Uc(x,\top)}$ for every~$x\in L$, so applying $h$ we get
	\[
	h(x)\tensor \top \sim h(\Dc(x,\top))\tensor h(\Uc(x,\top)),
	\]
	which unpacked gives
	\begin{align*}
		\ucx h(x)
		&=: \Ucx(h(x),\top)
		\\&=
		\Ucx(h(\Dc(x,\top)),h(\Uc(x,\top)))
		\\&=
		\ucx h(\Dc(x,\top)) \wedge h(\Uc(x,\top))
		\\&\sqleq 
		h(\Uc(x,\top))
		\\&= 
		h(\uc x),
	\end{align*}
	giving $\ucx\circ h \sqleq h \circ \uc$, as desired. The dual inclusion is analogous.
\end{proof}

\section{Localic relations induced by cones}
\label{section:localic relation induced by cones}
In this section we fix a locale $X$, assume a join-preserving parallel pair~$\uc,\dc$ on its underlying frame $\Opens X$, and construct a localic relation~$R_\ucdc$ on~$X$. We show it is the universal one with cones~$\uc,\dc$.
The relation $R_\ucdc$ will be constructed via the spatial~\cref{intuition:sim} of the binary relation~$\sim$ from~\cref{definition:sim}. Recall that~$\sim$ is defined on the rectangles in~${\Opens X\tensor \Opens X}$~by:
\[
A\tensor B\sim C\tensor D
\quad\iff\quad
\Uc(A,B)=\Uc(C,D)
\text{ and }
\Dc(A,B)=\Dc(C,D).
\]
This relation can of course be defined for any pair of cones $\uc,\dc$ preserving the bottom element of $\Opens X$, and will hence define a localic relation as follows.

\begin{definition}
	\label{definition:induced relation}
	If $\uc,\dc\colon \Opens X\to \Opens X$ are a pair of cones preserving the bottom element, we define the \emph{induced relation} $R_\ucdc\rightarrowtail X\times X$ as the sublocale set
	\[
	\Opens R_\ucdc := \Opens X\tensor \Opens X/{\sim}.
	\]
	The sublocale inclusion $r_\ucdc\colon R_\ucdc\rightarrowtail X\times X$ is represented by the frame map~$\mu_\sim$, so the source and target maps are given by
	\[
	s_\ucdc^{-1}(U) = \mu_\sim(U\tensor \top)
	\quad\text{and}\quad
	t_\ucdc^{-1}(V)=\mu_\sim(\top\tensor V).
	\]
\end{definition}

The results in~\cref{section:conic frames} are then sufficient to prove that the induced relation again has open cones. For this, we directly construct left adjoints of the source and target maps, based on the spatial intuition from~\cref{section:relations to cones}:
\[
s_! \mu_\sim(U\tensor V) = U\wedge \dc V
\qquad\text{and}\qquad
t_! \mu_\sim(U\tensor V) = \uc U\wedge V.
\]
Motivated by this, we make the following definition.

\begin{definition}
	\label{definition:induced source and target left adjoints}
	For a pair of cones $\uc,\dc\colon \Opens X\to \Opens X$ on a locale $X$, define functions $s_!,t_!\colon \Opens R_\ucdc\to \Opens X$ by:
	\begin{align*}
		s_!(O)&:=\bigvee\{\Dc(A,B):\mu_\sim(A\tensor B)\sqleq O\},
		\\
		t_!(O)&:=\bigvee\{\Uc(A,B):\mu_\sim(A\tensor B)\sqleq O\}.
	\end{align*}
\end{definition}

The first point of order is to show this indeed reduces to the correct spatial intuition on rectangles. After that, we show that~$s_!,t_!$ provide the desired left adjoints of~$s_\ucdc^{-1},t_\ucdc^{-1}$, and in fact make~$s_\ucdc,t_\ucdc$ open.

\begin{lemma}
	\label{lemma:induced source and target on rectangles}
	If $\uc,\dc$ are join-preserving parallel, then for all $U,V\in\Opens X$ we have:
	\[
	s_! \mu_\sim(U\tensor V) = U\wedge \dc V
	\qquad\text{and}\qquad
	t_! \mu_\sim(U\tensor V) = \uc U\wedge V.
	\]
\end{lemma}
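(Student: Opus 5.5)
I will prove the statement for $s_!$; the case of $t_!$ is dual. By definition,
\[
s_!\mu_\sim(U\tensor V)=\bigvee\{\Dc(A,B):\mu_\sim(A\tensor B)\sqleq \mu_\sim(U\tensor V)\}.
\]
The plan is to prove the two inclusions separately.

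For $U\wedge\dc V\sqleq s_!\mu_\sim(U\tensor V)$, take $A=U$ and $B=V$. The condition $\mu_\sim(U\tensor V)\sqleq\mu_\sim(U\tensor V)$ holds trivially, so $\Dc(U,V)=U\wedge\dc V$ is one of the joinands.

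For the converse inclusion, take any $A,B$ with $\mu_\sim(A\tensor B)\sqleq\mu_\sim(U\tensor V)$. We need $\Dc(A,B)\sqleq\Dc(U,V)$, and this is exactly the conclusion of \cref{lemma:mu inclusion implies reduced cone inclusion} applied with $c\tensor d=A\tensor B$ and $a\tensor b=U\tensor V$. That lemma requires the cones to be separating and parallel. Parallelness is a hypothesis here, and the cones are separating because join-preserving cones are separating by the proposition at the end of \cref{section:separation}. Hence every joinand lies below $U\wedge\dc V$, and so does their join.

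There is no real obstacle, since the substantive work has already been done in \cref{lemma:mu inclusion implies reduced cone inclusion}, which rests on \cref{lemma:mu recovers separating opens}. The only point to check is that join-preservation supplies separation, because that is the form of the hypothesis the cited results use. The $t_!$ case uses the $\Uc$-half of the same proposition.
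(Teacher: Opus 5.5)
Your proposal is correct and matches the paper's proof: the lower bound comes from the joinand with $A=U$, $B=V$, and the upper bound from \cref{lemma:mu inclusion implies reduced cone inclusion}. Your explicit remark that join-preservation supplies separation, which is the hypothesis that lemma actually uses, is a step the paper leaves implicit.
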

\begin{proof}
	Note that $\mu_\sim(U\tensor V)$ is included in itself, so from the \cref{definition:induced source and target left adjoints} of~$s_!$ we always get $\Dc(U,V)\sqleq s_!\mu_\sim(U\tensor V)$. For the converse inclusion, pick $A,B\in\Opens X$ such that $\mu_\sim(A\tensor B)\sqleq \mu_\sim(U\tensor V)$. By~\cref{lemma:mu inclusion implies reduced cone inclusion} this gives in particular $\Dc(A,B)\sqleq \Dc(U,V)$, from which the result follows.
\end{proof}

\begin{proposition}
	\label{proposition:induced source and target adjunction on rectangles}
	If $\uc,\dc$ are join-preserving parallel then $s_!\dashv s_\ucdc^{-1}$ and $t_! \dashv t_\ucdc^{-1}$.
\end{proposition}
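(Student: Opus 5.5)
We have to show that for all $O\in\Opens R_\ucdc$ and $U\in\Opens X$,
\[
s_!(O)\sqleq U \iff O\sqleq s_\ucdc^{-1}(U)=\mu_\sim(U\tensor\top),
\]
and similarly for $t_!$. We argue only for $s$; the case of $t$ is dual, using $\Uc$, $O^\Uc$ and $\mu_\sim(\top\tensor V)=O^\Uc_V$. The key input is \cref{lemma:mu recovers separating opens}, which identifies $s_\ucdc^{-1}(U)=\mu_\sim(U\tensor\top)$ with the separating open $O^\Dc_U$. By \cref{lemma:separating in terms of separating opens}, and since join-preserving cones are separating, a rectangle satisfies $A\tensor B\sqleq O^\Dc_U$ iff $\Dc(A,B)\sqleq U$.

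We also need that every $O\in\Opens R_\ucdc$ is generated by the images of rectangles below it:
\[
O=\bigvee\{\mu_\sim(A\tensor B):\mu_\sim(A\tensor B)\sqleq O\},
\]
where the join is taken in $\Opens R_\ucdc$. This holds because $O$ is $\sim$-saturated, so $\mu_\sim(O)=O$. Moreover $O$ is a join of rectangles $A\tensor B\sqleq O$, and since $\mu_\sim$ is a frame map it preserves this join. For each such rectangle, $\mu_\sim(A\tensor B)\sqleq\mu_\sim(O)=O$.

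For the direction ($\Leftarrow$), suppose $O\sqleq\mu_\sim(U\tensor\top)=O^\Dc_U$, and take $A,B$ with $\mu_\sim(A\tensor B)\sqleq O$. Since $\mu_\sim$ is inflationary, $A\tensor B\sqleq\mu_\sim(A\tensor B)\sqleq O^\Dc_U$, and hence $\Dc(A,B)\sqleq U$ by separation. Taking the join over all such $A,B$ gives $s_!(O)\sqleq U$.

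For the direction ($\Rightarrow$), suppose $s_!(O)\sqleq U$. Then every generating pair satisfies $\Dc(A,B)\sqleq s_!(O)\sqleq U$, so $A\tensor B\sqleq O^\Dc_U$. Since $O^\Dc_U$ is $\sim$-saturated (\cref{proposition:separating opens are sim saturated}), applying $\mu_\sim$ gives $\mu_\sim(A\tensor B)\sqleq O^\Dc_U$. By the join representation of $O$ above, $O\sqleq O^\Dc_U=s_\ucdc^{-1}(U)$.

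The only delicate step is the join representation of $O$. One must check that joins in the sublocale $\Opens R_\ucdc$ are $\mu_\sim$ applied to joins in $\Opens X\tensor\Opens X$, which is the standard description of joins in a sublocale set. Once this is in place, both directions reduce to \cref{lemma:mu recovers separating opens} together with separation. Monotonicity of $s_!$ is also needed for the adjunction; it is immediate from its definition as a join over a set that grows with $O$.
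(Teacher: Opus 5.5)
Your proof is correct and follows the paper's argument: both identify $s_\ucdc^{-1}(U)=\mu_\sim(U\tensor\top)$ with $O^\Dc_U$ via \cref{lemma:mu recovers separating opens}, then prove the two implications using separation, inflationarity of $\mu_\sim$, and the fact that the elements $\mu_\sim(A\tensor B)$ form a join-basis of $\Opens R_\ucdc$. The differences are minor. You obtain $\Dc(A,B)\sqleq s_!(O)$ directly from the definition of $s_!$ rather than through \cref{lemma:induced source and target on rectangles}, and you spell out the saturation of $O^\Dc_U$ needed to pass from $A\tensor B\sqleq O^\Dc_U$ to $\mu_\sim(A\tensor B)\sqleq O^\Dc_U$, a step the paper leaves implicit.
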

\begin{proof}
	For $U\in \Opens X$ we get by construction that $s_\ucdc^{-1}(U) = \mu_\sim(U\tensor \top)$, which by~\cref{lemma:mu recovers separating opens} equals $O^\Dc_U$. We thus need to show for all $O\in\Opens R_\ucdc$ and~${U\in\Opens X}$ that
	\[
	s_!(O)\sqleq U 
	\qquad\iff\qquad
	O\sqleq O^\Dc_U.
	\]
	
	Suppose the former inclusion holds. Then consider a rectangle $\mu_\sim(A\tensor B)\sqleq O$, so we get using~\cref{lemma:induced source and target on rectangles} that $\Dc(A,B) = s_!\mu_\sim(A\tensor B) \sqleq s_!(O)\sqleq U$. Now separation implies $A\tensor B\sqleq O^\Dc_U$, and since the rectangles $\mu_\sim(A\tensor B)$ form a join-basis for~$\Opens R_\ucdc$, the inclusion $O\sqleq O^\Dc_U$ follows.
	
	For the converse direction, take $O\sqleq s_\ucdc^{-1}(U)=O^\Dc_U$. Now, $s_!(O)$ is defined as the join over the $\Dc(A,B)$ where $\mu_\sim(A\tensor B)\sqleq O$, so it suffices to show those are contained in $U$. Since $\mu_\sim$ is inflationary, for such rectangles we get $A\tensor B\sqleq \mu_\sim(A\tensor B)\sqleq O\sqleq O^\Dc_U$, and by separation this gives $\Dc(A,B)\sqleq U$, as desired.
\end{proof}

\begin{remark}
	Since by~\cref{lemma:mu recovers separating opens} we have that~$s_\ucdc^{-1}=O^\Dc_{(-)}$ and~$t_\ucdc^{-1} = O^\Uc_{(-)}$, it follows from uniqueness of adjoints that~$s_!=\Sigma$ and~$t_!=\Tau$ from~\cref{lemma:separating iff Sigma Tau}.
\end{remark}

\begin{proposition}
	\label{proposition:induced source and target frobenius}
	If $\uc,\dc$ are join-preserving parallel, then  $s_!\dashv s_\ucdc^{-1}$ and $t_! \dashv t_\ucdc^{-1}$ satisfy Frobenius reciprocity.
\end{proposition}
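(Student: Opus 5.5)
We need to show $s_!(O\wedge s_\ucdc^{-1}(U)) = s_!(O)\wedge U$ for all $O\in\Opens R_\ucdc$ and $U\in\Opens X$, and dually for $t_!$. The inclusion $\sqleq$ is automatic for any adjunction $s_!\dashv s_\ucdc^{-1}$: $s_!$ is monotone, so $s_!(O\wedge s^{-1}_\ucdc U)\sqleq s_!(O)$, and $s_!s^{-1}_\ucdc(U)\sqleq U$ by the counit. So only the inclusion $s_!(O)\wedge U\sqleq s_!(O\wedge s_\ucdc^{-1}(U))$ needs work.

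The plan is to reduce to basic elements. The elements $\mu_\sim(A\tensor B)$ form a join-basis of $\Opens R_\ucdc$. Both sides of the Frobenius equation preserve joins in $O$: on the left, $s_!$ is a left adjoint and $\mu_\sim$ preserves binary meets, so frame distributivity applies; on the right, we use distributivity in $\Opens X$. It therefore suffices to treat $O=\mu_\sim(A\tensor B)$.

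For such an $O$, since $\mu_\sim$ is a frame map, we have $O\wedge s^{-1}_\ucdc(U)=\mu_\sim(A\tensor B)\wedge\mu_\sim(U\tensor\top)=\mu_\sim((A\wedge U)\tensor B)$. Applying \cref{lemma:induced source and target on rectangles}, the left side becomes $\Dc(A\wedge U,B)=A\wedge U\wedge\dc B$. The right side is $s_!\mu_\sim(A\tensor B)\wedge U=A\wedge\dc B\wedge U$. These agree, so equality holds on basic elements.

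The $t_!$ case is symmetric. We use $t^{-1}_\ucdc(V)=\mu_\sim(\top\tensor V)$, which gives $t_!\mu_\sim(A\tensor(B\wedge V))=\uc A\wedge B\wedge V$.

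The only step needing care is the reduction to basic elements. Writing $O=\bigvee_i\mu_\sim(A_i\tensor B_i)$, we compute
\[
s_!\Bigl(O\wedge s^{-1}_\ucdc U\Bigr)=\bigvee_i s_!\bigl(\mu_\sim(A_i\tensor B_i)\wedge s^{-1}_\ucdc U\bigr)=\bigvee_i s_!\mu_\sim(A_i\tensor B_i)\wedge U=s_!(O)\wedge U .
\]
This uses infinite distributivity in $\Opens R_\ucdc$ (a frame) and in $\Opens X$, together with join preservation of $s_!$ (\cref{proposition:induced source and target adjunction on rectangles}). I expect no real obstacle here, since all the substantive work was already done in \cref{lemma:induced source and target on rectangles}.
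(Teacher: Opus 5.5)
Your proposal is correct and matches the paper's proof: the paper also reduces to the basic elements $\mu_\sim(A\tensor B)$ using join-preservation of both sides, rewrites $\mu_\sim(A\tensor B)\wedge\mu_\sim(U\tensor\top)=\mu_\sim((A\wedge U)\tensor B)$, and concludes with \cref{lemma:induced source and target on rectangles}. Your separate check that the inclusion $\sqleq$ comes for free from the adjunction is harmless but not needed, since the basic-element computation already gives equality directly.
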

\begin{proof}
	We need to show for all $O\in\Opens R_\ucdc$ and $U,V\in\Opens X$ that
	\[
	s_!\left(O\wedge s_\ucdc^{-1}(U)\right) = s_!(O)\wedge U
	\quad\text{and}\quad
	t_!\left(O\wedge t_\ucdc^{-1}(V)\right) = t_!(O)\wedge V.
	\]
	Both sides preserve joins, so it suffices to check equality in the case that~$O$ is a rectangle~$\mu_\sim(A\tensor B)$. Now, $\mu_\sim$ preserves binary meets, so
	\[
	O\wedge s_\ucdc^{-1}(U) = \mu_\sim(A\tensor B)\wedge \mu_\sim(U\tensor \top) = \mu_\sim((A\wedge U)\tensor B).
	\]
	Applying $s_!$ on both sides and using~\cref{lemma:induced source and target on rectangles} gives the desired equation:
	\[
	s_!\mu_\sim((A\wedge U)\tensor B) = \Dc(A\wedge U,B) = \Dc(A,B)\wedge U = s_!\mu_\sim(A\tensor B)\wedge U.
	\qedhere
	\]
\end{proof}

Putting it all together, we can state one of the central theorems.

\begin{theorem}
	\label{theorem:induced relation has open cones}
	If $\uc,\dc$ are a join-preserving parallel pair of cones, the relation~$R_\ucdc$ has open cones with $\Up = \uc$ and $\Down = \dc$.
\end{theorem}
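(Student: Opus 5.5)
The plan is to assemble the results already established in this section, then compute the induced cones directly. First, openness. By \cref{proposition:induced source and target adjunction on rectangles}, the maps $s_!,t_!$ of \cref{definition:induced source and target left adjoints} are left adjoints $s_!\dashv s_\ucdc^{-1}$ and $t_!\dashv t_\ucdc^{-1}$. By \cref{proposition:induced source and target frobenius}, these adjunctions satisfy Frobenius reciprocity. Together with \cref{definition:open map}, this says exactly that $s_\ucdc$ and $t_\ucdc$ are open locale maps. Hence $R_\ucdc$ has open cones in the sense of \cref{definition:open cones}. These proofs in turn use \cref{lemma:mu recovers separating opens} and \cref{lemma:induced source and target on rectangles}, which need the cones to be separating parallel. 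That hypothesis is supplied by the proposition that join-preserving cones are separating.

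Second, identifying the cones. Since left adjoints are unique, the induced cones of \cref{definition:induced cones} are $\Up = t_! s_\ucdc^{-1}$ and $\Down = s_! t_\ucdc^{-1}$, with $s_!,t_!$ as above. For $U\in\Opens X$ we have $s_\ucdc^{-1}(U)=\mu_\sim(U\tensor\top)$ by \cref{definition:induced relation}, so
\[
\Up U = t_!\mu_\sim(U\tensor \top) = \uc U\wedge \top = \uc U
\]
by \cref{lemma:induced source and target on rectangles}. Symmetrically, $\Down V = s_!\mu_\sim(\top\tensor V) = \top\wedge\dc V = \dc V$. Thus $\Up=\uc$ and $\Down=\dc$.

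The substantive work already sits in the cited propositions. The step that needs care is the rectangle formula of \cref{lemma:induced source and target on rectangles}, which depends on \cref{lemma:mu inclusion implies reduced cone inclusion}: $\mu_\sim$ must not identify rectangles with different reduced cones. Here that formula is simply applied to the degenerate rectangles $U\tensor\top$ and $\top\tensor V$, so no new obstacle arises and the proof is a direct assembly.
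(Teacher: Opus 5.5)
Your proposal is correct and follows the paper's own proof: openness comes from the adjunction and Frobenius reciprocity propositions, and the cones are recovered by evaluating the rectangle formula for $s_!,t_!$ on $U\tensor\top$ and $\top\tensor V$. The only difference is that you spell out the separation hypothesis and the $\Down$ computation, which the paper leaves implicit.
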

\begin{proof}
	That $R_\ucdc$ has open cones follows by~\cref{proposition:induced source and target adjunction on rectangles,proposition:induced source and target frobenius}. That its cones in turn reproduce $\uc,\dc$ follows simply from~\cref{lemma:induced source and target on rectangles}:
	\[
	\Up U := t_! s_\ucdc^{-1}(U) = t_!\mu_\sim(U\tensor\top) = \uc U \wedge \top = \uc U,
	\]
	and similarly for $\Down U=\dc U$.
\end{proof}

\subsection{Relation induced by $\Up,\Down$}
\label{section:relation induced by UpDown}
To finish this section, we prove that an open cone relation~$R$ is always contained in the relation $R_\UpDown$ induced by its own cones~$\Up,\Down$ via the construction in~\cref{definition:induced relation}.

We establish notation for the proofs in the rest of this section. Fix $R$ on~$X$, and write its sublocale inclusion by $r$, with open source and target maps $s,t$. The induced cones $\Up,\Down$ are join-preserving parallel by~\cref{proposition:induced cones preserve joins,proposition:induced cones are parallel}, so they define a conic frame~$(\Opens X,\Up,\Down)$. Let $\sim$ denote their induced meet-semilattice congruence on the rectangles of $\Opens X\tensor \Opens X$~(\cref{definition:sim}), where we denote the reduced cones as~$\UpR,\DownR$~(\cref{definition:reduced cones}). By~\cref{theorem:induced relation has open cones} we get an induced open cone relation~$R_\UpDown$ whose sublocale inclusion is given by the frame quotient map~$\mu_\sim$. The strategy is to prove that the congruence of $r^{-1}$ agrees with~$\sim$ on rectangles, from which the claim~$R\subseteq R_\UpDown$ will follow. 

\begin{lemma}
	\label{lemma:reduced induced cones from s t r}
	We have $s_!r^{-1}(U\tensor V)= \DownR(U,V)$ and $t_!r^{-1}(U\tensor V)= \UpR(U,V)$.
\end{lemma}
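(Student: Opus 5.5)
The plan is to rewrite $r^{-1}(U\tensor V)$ as a meet of preimages and then use Frobenius reciprocity once. By \cref{definition:source and target} we have $r^{-1}(U\tensor V)=s^{-1}(U)\wedge t^{-1}(V)$. For the source map, Frobenius reciprocity for $s$ (\cref{definition:open map}) with $s^{-1}(U)$ in the role of the `pullback' factor gives
\[
s_!\left(t^{-1}(V)\wedge s^{-1}(U)\right) = s_!t^{-1}(V)\wedge U = U\wedge \Down V = \DownR(U,V),
\]
where the middle step is just \cref{definition:induced cones} of $\Down=s_!t^{-1}$. Symmetrically, applying Frobenius reciprocity for $t$ to $s^{-1}(U)\wedge t^{-1}(V)$ gives $t_!s^{-1}(U)\wedge V=\Up U\wedge V=\UpR(U,V)$.

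The steps are therefore: (i) unfold $r^{-1}$ on rectangles via \cref{lemma:pair of locale maps}, as already recorded in \cref{definition:source and target}; (ii) use commutativity of $\wedge$ so that the preimage under the map whose left adjoint is being applied sits in the Frobenius position; (iii) apply Frobenius reciprocity; (iv) recognise the induced cone and the reduced cone from \cref{definition:induced cones}.

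I do not expect a real obstacle. The only point needing care is to apply Frobenius to the correct map: for $s_!$ it is $s^{-1}(U)$ that gets pulled out, not $t^{-1}(V)$. The same computation already appears inside the proof of \cref{proposition:induced cones are parallel}, so this lemma essentially isolates that step.
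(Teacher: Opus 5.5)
Your proposal is correct and matches the paper's proof: unfold $r^{-1}(U\tensor V)=s^{-1}(U)\wedge t^{-1}(V)$, apply Frobenius reciprocity once for $s$ (respectively $t$), and recognise $\DownR(U,V)$ (respectively $\UpR(U,V)$).
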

\begin{proof}
	This follows straightforwardly from Frobenius reciprocity:
	\[
	s_! r^{-1}(U\tensor V) = s_!\left(s^{-1}(U)\wedge t^{-1}(V)\right)
	= U\wedge s_! t^{-1}(V) =\DownR(U,V).
	\qedhere
	\]
\end{proof}

\begin{lemma}
	\label{lemma:r equalises reduced form}
	We have $r^{-1}(U\tensor V) = r^{-1}\left(\DownR(U,V)\tensor \UpR(U,V)\right)$. 
\end{lemma}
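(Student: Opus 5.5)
We want $r^{-1}(U\tensor V) = r^{-1}(\DownR(U,V)\tensor \UpR(U,V))$, where $r^{-1}(U\tensor V)=s^{-1}(U)\wedge t^{-1}(V)$ by \cref{definition:source and target}. The plan is to prove the two inclusions separately. The inclusion $\sqsupseteq$ is immediate: $\DownR(U,V)=U\wedge\Down V\sqleq U$ and $\UpR(U,V)=\Up U\wedge V\sqleq V$, and $r^{-1}$ is monotone on rectangles.

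For $\sqleq$, it suffices to show separately that
\[
s^{-1}(U)\wedge t^{-1}(V)\sqleq s^{-1}(\Down V)
\quad\text{and}\quad
s^{-1}(U)\wedge t^{-1}(V)\sqleq t^{-1}(\Up U).
\]
Indeed, meeting these with $s^{-1}(U)\wedge t^{-1}(V)$ and using that $s^{-1},t^{-1}$ preserve binary meets then gives
\[
r^{-1}(U\tensor V)\sqleq s^{-1}(U\wedge\Down V)\wedge t^{-1}(\Up U\wedge V)=r^{-1}\bigl(\DownR(U,V)\tensor\UpR(U,V)\bigr).
\]

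The first inclusion follows from the unit of the adjunction $s_!\dashv s^{-1}$. We have
\[
t^{-1}(V)\sqleq s^{-1}s_!t^{-1}(V)=s^{-1}(\Down V),
\]
and the left-hand side $s^{-1}(U)\wedge t^{-1}(V)$ is below $t^{-1}(V)$.

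The second inclusion is symmetric, using the unit of $t_!\dashv t^{-1}$:
\[
s^{-1}(U)\sqleq t^{-1}t_!s^{-1}(U)=t^{-1}(\Up U).
\]
This is the same trick already used in the proof of \cref{proposition:induced cones are parallel}.

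No step here looks like a genuine obstacle. The only care needed is in the reduction to the two inclusions, that is, in remembering that $r^{-1}$ on rectangles splits as $s^{-1}(-)\wedge t^{-1}(-)$. Frobenius reciprocity is not even needed for this lemma, only the adjunction units. \cref{lemma:reduced induced cones from s t r} offers an alternative route, but the direct unit argument is simpler.
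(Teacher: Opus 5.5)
Your proof is correct and is essentially the paper's argument. The paper likewise uses only the adjunction units $t^{-1}(V)\sqleq s^{-1}(\Down V)$ and $s^{-1}(U)\sqleq t^{-1}(\Up U)$ together with meet-preservation of $s^{-1},t^{-1}$. It simply phrases this as an equational chain, absorbing $s^{-1}(\Down V)$ into $t^{-1}(V)$ one component at a time, rather than proving two separate inclusions.
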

\begin{proof}
	Note that $s_!\dashv s^{-1}$ and $t_!\dashv t^{-1}$ imply $O\sqleq s^{-1}s_!(O)$ and ${O\sqleq t^{-1}t_!(O)}$ for all $O\in\Opens R$, so we get in particular that~$t^{-1}(V)\sqleq s^{-1}s_!t^{-1}(V) = s^{-1}(\Down V)$, and similarly $s^{-1}(U)\sqleq t^{-1}(\Up U)$. Writing $t^{-1}(V) = t^{-1}(V) \wedge s^{-1}(\Down V)$ we thus get
	\begin{align*}
		r^{-1}(U\tensor V)
		&=
		s^{-1}(U) \wedge t^{-1}(V)
		&&(\text{\cref{lemma:coproduct of frame maps}})
		\\&=
		s^{-1}(U) \wedge t^{-1}(V) \wedge s^{-1}(\Down V)
		\\&= 
		s^{-1}\left(U\wedge \Down V\right)\wedge t^{-1}(V)
		\\&=
		s^{-1}\DownR(U,V)\wedge t^{-1}(V)
		\\&=
		r^{-1}(\DownR(U,V)\tensor V),
		&&(\text{\cref{lemma:coproduct of frame maps}})
	\end{align*}
	and with an analogous calculation in the second component the claim follows.
\end{proof}

\begin{lemma}
	\label{lemma:sim of induced cones is ker r}
	We have
	\[
	r^{-1}(A\tensor B)=r^{-1}(C\tensor D)
	\quad\iff\quad
	A\tensor B\sim C\tensor D.
	\]
\end{lemma}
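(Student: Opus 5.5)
The plan is to prove the two directions separately, using \cref{lemma:reduced induced cones from s t r} for the forward implication and \cref{lemma:r equalises reduced form} for the converse.

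For ($\Rightarrow$), suppose $r^{-1}(A\tensor B)=r^{-1}(C\tensor D)$. Apply $s_!$ to both sides. By \cref{lemma:reduced induced cones from s t r} this gives
\[
\DownR(A,B)=s_!r^{-1}(A\tensor B)=s_!r^{-1}(C\tensor D)=\DownR(C,D).
\]
Applying $t_!$ in the same way gives $\UpR(A,B)=\UpR(C,D)$. These two equalities are exactly the definition of $A\tensor B\sim C\tensor D$ for the cones $\Up,\Down$ (\cref{definition:sim}). This direction is immediate.

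For ($\Leftarrow$), suppose $A\tensor B\sim C\tensor D$, i.e.\ $\DownR(A,B)=\DownR(C,D)$ and $\UpR(A,B)=\UpR(C,D)$. By \cref{lemma:r equalises reduced form},
\[
r^{-1}(A\tensor B)=r^{-1}\bigl(\DownR(A,B)\tensor\UpR(A,B)\bigr)=r^{-1}\bigl(\DownR(C,D)\tensor\UpR(C,D)\bigr)=r^{-1}(C\tensor D).
\]

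No step is a real obstacle: both lemmas have already been established, and the only role of the Frobenius condition and the unit inequalities is inside them. The one point to check is well-definedness on the bottom element, since a rectangle representing $\bot$ may be written as $U\tensor\bot$ or $\bot\tensor V$. This is harmless because $\Up,\Down$ preserve $\bot$ (\cref{proposition:induced cones preserve joins}), so $\sim$ is well-defined, and both sides of the equivalence are stated about representatives $A\tensor B$ and $C\tensor D$ in any case.
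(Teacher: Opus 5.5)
Your proposal is correct and follows essentially the same route as the paper's proof. The forward direction applies $s_!$ and $t_!$ via \cref{lemma:reduced induced cones from s t r}, and the converse rewrites both sides in reduced form via \cref{lemma:r equalises reduced form}.
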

\begin{proof}
	Starting with the left-hand side and applying $s_!,t_!$ to the equation gives the right hand side by~\cref{lemma:reduced induced cones from s t r}.
	
	Conversely, $A\tensor B\sim C\tensor D$ iff $\UpR(A,B)=\UpR(C,D)$ and~${\DownR(A,B)=\DownR(C,D)}$, so we get that $r^{-1}(\DownR(A,B)\tensor \UpR(A,B))=r^{-1}(\DownR(C,D)\tensor \UpR(C,D))$, which gives the desired equality by~\cref{lemma:r equalises reduced form}.
\end{proof}

\begin{proposition}
	\label{proposition:R in Rupdown}
	If $R$ is a localic relation with open cones then $R\subseteq R_\UpDown$.
\end{proposition}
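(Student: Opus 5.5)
The plan is to use the universal property of the frame quotient, \cref{theorem:frame quotient theorem}. The sublocale $R$ is represented by the frame surjection $r^{-1}\colon \Opens X\tensor\Opens X\to \Opens R$. The sublocale $R_\UpDown$ is represented by $\mu_\sim\colon \Opens X\tensor \Opens X\to \Opens X\tensor\Opens X/{\sim}$. Proving $R\subseteq R_\UpDown$ amounts to showing that $r^{-1}$ factors through $\mu_\sim$. In terms of congruences, we need $\langle\sim\rangle\subseteq\ker r^{-1}$.

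By \cref{theorem:frame quotient theorem}, it suffices to check that the frame map $r^{-1}$ equalises $\sim$. So suppose $A\tensor B\sim C\tensor D$. This relation is only defined on rectangles, and the direction ``$\Leftarrow$'' of \cref{lemma:sim of induced cones is ker r} gives exactly $r^{-1}(A\tensor B)=r^{-1}(C\tensor D)$. Hence there is a unique frame map $\bar r\colon \Opens X\tensor\Opens X/{\sim}\to \Opens R$ with $\bar r\circ\mu_\sim=r^{-1}$.

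Dually, this is a locale map $R\to R_\UpDown$ commuting with the inclusions into $X\times X$. Since $r$ is an extremal monomorphism and $r_\UpDown$ is a monomorphism, the map $R\to R_\UpDown$ is itself a (extremal) monomorphism. It is therefore a subobject inclusion $R\subseteq R_\UpDown$. Equivalently, via \cref{remark:monotone inclusion}, the identity $\id_X\colon (X,R)\to(X,R_\UpDown)$ is monotone.

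To phrase the same argument with sublocale sets instead: the image of $r_\ast$ is the sublocale set of $R$, and its elements are $\sim$-saturated. For any $o$, $a\wedge c\sqleq r_\ast(o)$ iff $r^{-1}(a\wedge c)\sqleq o$, and $r^{-1}(a\wedge c)=r^{-1}(a)\wedge r^{-1}(c)$, which is invariant when $a$ is replaced by a $\sim$-related $b$. This again uses only \cref{lemma:sim of induced cones is ker r}.

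The only substantive step is that $r^{-1}$ equalises $\sim$. That was essentially already done in \cref{lemma:r equalises reduced form,lemma:sim of induced cones is ker r}, via Frobenius reciprocity and the unit inequalities $t^{-1}(V)\sqleq s^{-1}(\Down V)$ and $s^{-1}(U)\sqleq t^{-1}(\Up U)$. The main care point is categorical bookkeeping: the containment of sublocales corresponds to reverse containment of congruences, that is, to $r^{-1}$ factoring through $\mu_\sim$ and not the other way round. Also, $\sim$ being defined only on rectangles causes no issue, since \cref{theorem:frame quotient theorem} applies to arbitrary binary relations.
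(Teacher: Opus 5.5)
Your proposal is correct and is essentially the paper's proof: use \cref{lemma:sim of induced cones is ker r} to see that $r^{-1}$ equalises $\sim$, then apply the frame quotient \cref{theorem:frame quotient theorem} to factor $r^{-1}$ through $\mu_\sim$. The extra remarks you add — that the induced map is an extremal monomorphism, and the equivalent check via sublocale sets — are also correct. The paper takes these bookkeeping points for granted.
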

\begin{proof}
	By~\cref{lemma:sim of induced cones is ker r} the frame map $r^{-1}\colon \Opens X\tensor \Opens X\to \Opens R$ equalises the relation~$\sim$, so by the frame quotient~\cref{theorem:frame quotient theorem} there exists a unique frame map $i^{-1}\colon \Opens X\tensor \Opens X/{\sim}\to \Opens R$ such that $i^{-1}\circ \mu_\sim = r^{-1}$, which exhibits the desired inclusion $i\colon R\rightarrowtail R_\UpDown$.
\end{proof}

Despite that the frame congruence of $R$ agrees with~$\sim$ on rectangles (as in \cref{lemma:sim of induced cones is ker r}), it is not necessarily the case that they agree globally. In other words, there are relations $R$ that are not recovered as $R_\UpDown$, as we will see in~\cref{counterexample:relation on Sierpinski}. Nevertheless, we will show that~$R_\UpDown$ cannot deviate from~$R$ too much, since we show in~\cref{proposition:R in Rupdown strongly dense} that the inclusion $R\subseteq R_\UpDown$ is strongly dense.

\section{An adjunction}
\label{section:adjunction}
In this section we put together the constructions~$R\mapsto \Up,\Down$ and~$\uc,\dc\mapsto R_\ucdc$ and establish an adjunction~${\cone\dashv\rel}$ between conic frames and open cone localic relations. The important characterisation in~\cref{proposition:conic morphism iff preserves sim} that morphisms between conic frames are really those that preserve the generating relations~$\sim$ then ensures the notion of monotonicity agrees in both categories. In terms of functoriality there is nothing to check, since on morphisms we are really just taking the opposite category. \cref{theorem:induced relation has open cones} shows that the composition $\uc,\dc\mapsto R_\ucdc\mapsto \Up,\Down$ is the identity map, giving at the level of functors that~$\cone\circ\rel=\id$, so the counit of the adjunction will be the identity natural transformation. From this we deduce that $R_\ucdc$ is the universal open cone relation with the cones~$\uc,\dc$~(\cref{corollary:Rucdc universal with cones}). Fixed points of the adjunction will be discussed in the next~\cref{section:fixed points}.

\begin{proposition}
	\label{proposition:functor ucdcFrm to rocLoc}
	There is a functor
	\begin{align*}
		\rel\colon \ucdcFrm^\op &\longrightarrow \rocLoc;
		\\
		(\Opens X,\uc,\dc)&\longmapsto (X, R_\ucdc);
		\\
		f^{-1} &\longmapsto f.
	\end{align*}
\end{proposition}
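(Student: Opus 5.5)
On objects, the assignment is already justified. By \cref{theorem:induced relation has open cones}, for any conic frame $(\Opens X,\uc,\dc)$ the induced relation $R_\ucdc$ is an open cone localic relation on $X$, so $(X,R_\ucdc)$ is an object of $\rocLoc$. The real content is on morphisms. A morphism $(\Opens X,\uc,\dc)\to(\Opens Y,\ucx,\dcx)$ in $\ucdcFrm^\op$ is a conic morphism $h=f^{-1}\colon \Opens Y\to\Opens X$ in the other direction, going from $(\Opens Y,\ucx,\dcx)$ to $(\Opens X,\uc,\dc)$. We view it as a locale map $f\colon X\to Y$, and we must show that $f\colon (X,R_\ucdc)\to(Y,R_{\ucdcx})$ is monotone. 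Functoriality is then automatic, because identities and composition are computed in $\Loc=\Frm^\op$ and the underlying maps are unchanged.

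For monotonicity we need a frame map $\bar f^{-1}\colon \Opens R_{\ucdcx}\to\Opens R_\ucdc$ satisfying $\bar f^{-1}\circ\mu_{\sim'}=\mu_\sim\circ(f^{-1}\tensor f^{-1})$, where $\sim'$ denotes the generating relation on $\Opens Y\tensor\Opens Y$. The plan is to use the frame quotient \cref{theorem:frame quotient theorem}. We set $k:=\mu_\sim\circ(f^{-1}\tensor f^{-1})\colon \Opens Y\tensor\Opens Y\to\Opens X\tensor\Opens X/{\sim}$, which is a frame map. Once we know that $k$ equalises $\sim'$, the theorem produces the unique factorisation $\bar f^{-1}$ through $\mu_{\sim'}$. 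This factorisation is exactly the commuting square in the definition of a monotone map, since $r_\ucdc^{-1}=\mu_\sim$ and $r_{\ucdcx}^{-1}=\mu_{\sim'}$.

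It remains to check that $k$ equalises $\sim'$, and this is where \cref{proposition:conic morphism iff preserves sim} enters. The relation $\sim'$ is only defined on rectangles. By \cref{lemma:projections} we have $(f^{-1}\tensor f^{-1})(a\tensor b)=f^{-1}(a)\tensor f^{-1}(b)$. So if $a\tensor b\sim' c\tensor d$, that proposition, applied to the conic morphism $f^{-1}$, gives $f^{-1}(a)\tensor f^{-1}(b)\sim f^{-1}(c)\tensor f^{-1}(d)$. Since $\mu_\sim$ equalises $\sim$, we get $k(a\tensor b)=k(c\tensor d)$.

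The only subtlety I expect is bookkeeping of directions, namely keeping track of which of the two relations $\sim$ and $\sim'$ lives on which frame. There is also the well-definedness of $\sim$ on rectangles representing $\bot$, but this is harmless because the cones preserve $\bot$.
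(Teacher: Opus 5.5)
Your proposal is correct and matches the paper's proof essentially step for step. Objects are handled by \cref{theorem:induced relation has open cones}, and on morphisms you apply the frame quotient theorem to $r_\ucdc^{-1}\circ(f\times f)^{-1}=\mu_\sim\circ(f^{-1}\tensor f^{-1})$, which equalises the generating relation on $\Opens Y\tensor\Opens Y$ by \cref{proposition:conic morphism iff preserves sim}; your tracking of directions (the conic morphism runs $\Opens Y\to\Opens X$) and your remark that functoriality is automatic are also exactly as in the paper.
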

\begin{proof}
	That this is well-defined on objects is~\cref{theorem:induced relation has open cones}. Now take a map of locales $f\colon X\to Y$, where $\Opens X$ and $\Opens Y$ are equipped with join-preserving parallel cones such that $f^{-1}$ is a conic morphism, and denote the induced relations by~$R_\ucdc$ and~$Q_\ucdc$, respectively. We need to construct a frame map $\bar f^{-1}\colon \Opens Q_\ucdc\to \Opens R_\ucdc$ such that
	\[
	\bar f^{-1}\circ q_\ucdc^{-1} = r_\ucdc^{-1}\circ (f\times f)^{-1}.
	\]
	That $f^{-1}$ is a conic morphism means equivalently by~\cref{proposition:conic morphism iff preserves sim} that
	\[
	A\tensor B\sim C\tensor D
	\quad\implies\quad 
	f^{-1}(A)\tensor f^{-1}(B) \sim f^{-1}(C)\tensor f^{-1}(D),
	\]
	and the right hand side is in turn equalised by $r_\ucdc^{-1}$. Hence applying the frame quotient~\cref{theorem:frame quotient theorem} to the frame map $r_\ucdc^{-1}\circ (f\times f)^{-1}$, we get a unique frame map~$\bar f^{-1}$ satisfying the desired equality.
\end{proof}

\begin{proposition}
	\label{proposition:functor rocLoc to ucdcFrm}
	There is a functor
	\begin{align*}
		\cone\colon \rocLoc &\longrightarrow \ucdcFrm^\op;
		\\
		(X,R)&\longmapsto (\Opens X,\Up,\Down);
		\\
		f &\longmapsto f^{-1}.
	\end{align*}
\end{proposition}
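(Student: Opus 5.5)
On objects, the functor is well defined by \cref{proposition:induced cones preserve joins,proposition:induced cones are parallel}: for an open cone relation $(X,R)$ the triple $(\Opens X,\Up,\Down)$ is a conic frame. Functoriality is automatic once morphisms are handled, since a monotone map $f$ is sent to its underlying frame map $f^{-1}$, and composition and identities are preserved on the nose. So the only real content is this: if $f\colon (X,R)\to(Y,Q)$ is a monotone map between open cone relations, then $f^{-1}\colon(\Opens Y,\Upsub{Q},\Downsub{Q})\to(\Opens X,\Upsub{R},\Downsub{R})$ is a conic morphism. Concretely, we need
\[
\Upsub{R} f^{-1}(V)\sqleq f^{-1}(\Upsub{Q} V)
\quad\text{and}\quad
\Downsub{R} f^{-1}(V)\sqleq f^{-1}(\Downsub{Q} V),
\]
which is exactly the point-free analogue of \cref{lemma:monotonicity in terms of cones}.

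I would prove this through \cref{proposition:conic morphism iff preserves sim}. Write $\sim_R$ and $\sim_Q$ for the relations induced by the two cone pairs. It suffices to show that $A\tensor B\sim_Q C\tensor D$ implies $f^{-1}(A)\tensor f^{-1}(B)\sim_R f^{-1}(C)\tensor f^{-1}(D)$.

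\cref{lemma:sim of induced cones is ker r} rewrites both relations as kernels on rectangles: $\sim_Q$ is the kernel of $q^{-1}$ and $\sim_R$ is the kernel of $r^{-1}$. Monotonicity gives $\bar f$ with $q\circ\bar f=(f\times f)\circ r$, so on frames
\[
\bar f^{-1}\circ q^{-1}=r^{-1}\circ(f\times f)^{-1}.
\]
Now suppose $q^{-1}(A\tensor B)=q^{-1}(C\tensor D)$. Applying $\bar f^{-1}$ gives $r^{-1}(f^{-1}A\tensor f^{-1}B)=r^{-1}(f^{-1}C\tensor f^{-1}D)$, using $(f\times f)^{-1}(A\tensor B)=f^{-1}(A)\tensor f^{-1}(B)$ from \cref{lemma:coproduct of frame maps}. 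This is precisely the required $\sim_R$-relation, and \cref{proposition:conic morphism iff preserves sim} then concludes.

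The main obstacle is only to make sure the cited results apply to the cones induced by $R$ and $Q$ in this setting; the argument is otherwise a short diagram chase. \cref{proposition:conic morphism iff preserves sim} requires both sides to be conic frames, which we already have, and \cref{lemma:sim of induced cones is ker r} uses only openness of the source and target maps. As a fallback, one could prove the inequality directly. Using $t_R^{-1}\circ f^{-1}=\bar f^{-1}\circ t_Q^{-1}$ (and likewise for $s$), the unit $O\sqleq t_Q^{-1}t_{Q!}(O)$, and the adjunction $t_{R!}\dashv t_R^{-1}$, one gets
\[
s_R^{-1}f^{-1}V=\bar f^{-1}s_Q^{-1}V\sqleq \bar f^{-1}t_Q^{-1}\Upsub{Q}V=t_R^{-1}f^{-1}\Upsub{Q}V,
\]
whose transpose is the desired inequality.
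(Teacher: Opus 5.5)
Your proof is correct and follows essentially the same route as the paper. The paper gets well-definedness on objects from \cref{proposition:induced cones preserve joins,proposition:induced cones are parallel}, then transports $\sim_Q$ through $q^{-1}$, $\bar f^{-1}$ and $r^{-1}$ using \cref{lemma:sim of induced cones is ker r}, and concludes with \cref{proposition:conic morphism iff preserves sim}, exactly as you do. Your fallback argument is also valid and avoids the $\sim$ machinery altogether: it uses $f\circ s_R=s_Q\circ\bar f$, the unit of $t_{Q!}\dashv t_Q^{-1}$, and transposition along $t_{R!}\dashv t_R^{-1}$.
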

\begin{proof}
	That this is well-defined on objects follows from \cref{proposition:induced cones preserve joins,proposition:induced cones are parallel}. We are left to show that if $f\colon (X,R)\to (Y,Q)$ is internally monotone, then its underlying frame map is a conic morphism. Monotonicity gives~${\bar f\colon R\to Q}$ that at the level of frames satisfies $r^{-1} \circ(f\times f)^{-1} = \bar f^{-1} \circ q^{-1}$. If $A,B,C,D\in \Opens Y$ satisfy $A\tensor B\sim C\tensor D$, this implies $q^{-1}(A\tensor B)=q^{-1}(C\tensor D)$, and hence applying $\bar f^{-1}$ on both sides we get
	\[
	r^{-1}\left(f^{-1}(A)\tensor f^{-1}(B)\right) = r^{-1}\left(f^{-1}(C)\tensor f^{-1}(D)\right).
	\]
	Since $R$ has open cones, by~\cref{lemma:sim of induced cones is ker r} this equality holds precisely when we have $f^{-1}(A)\tensor f^{-1}(B)\sim f^{-1}(C)\tensor f^{-1}(D)$, and so we see that~$f^{-1}$ is a conic morphism from~\cref{proposition:conic morphism iff preserves sim}.
\end{proof}

\begin{theorem}
	\label{theorem:adjunction}
	There is an adjunction with identity counit
	\[
	\begin{tikzcd}[cramped,column sep=1cm]
		\rocLoc & {\ucdcFrm^\op}.
		\arrow[""{name=0, anchor=center, inner sep=0}, "{\cone}", shift left=1.6, from=1-1, to=1-2]
		\arrow[""{name=1, anchor=center, inner sep=0}, "{\rel}", shift left=1.6, from=1-2, to=1-1]
		\arrow["\dashv"{anchor=center, rotate=-90}, draw=none, from=0, to=1]
	\end{tikzcd}
	\]
\end{theorem}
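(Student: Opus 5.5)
The plan is to exhibit the adjunction by giving a unit and an identity counit and checking the triangle identities. Since both functors act on underlying maps as the identity ($f\mapsto f^{-1}$ and $f^{-1}\mapsto f$), almost everything reduces to object-level facts already proved. First, by \cref{theorem:induced relation has open cones}, for any conic frame $(\Opens X,\uc,\dc)$ the relation $R_\ucdc$ has cones exactly $\uc,\dc$. Hence $\cone\circ\rel$ is literally the identity functor on $\ucdcFrm^\op$, on objects and on morphisms. We take the counit $\epsilon\colon \cone\circ\rel\Rightarrow\id$ to be the identity natural transformation.

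For the unit we need a natural map $\eta_{(X,R)}\colon (X,R)\to (X,R_\UpDown)=\rel\circ\cone(X,R)$ in $\rocLoc$. We take the identity locale map $\id_X$. This is monotone precisely because $R\subseteq R_\UpDown$ (\cref{proposition:R in Rupdown}, together with \cref{remark:monotone inclusion}). Naturality is immediate: for a monotone $f\colon(X,R)\to(Y,Q)$, the map $\rel\cone(f)$ is the same underlying locale map $f$, so both composites around the square equal $f$ as locale maps. Since morphisms in $\rocLoc$ are determined by their underlying locale maps (the factorisation $\bar f$ is unique), the square commutes. We should note that $\rel\cone(f)$ is a legitimate morphism, by \cref{proposition:functor rocLoc to ucdcFrm,proposition:functor ucdcFrm to rocLoc}.

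Next we check the triangle identities. The composite $\cone \xrightarrow{\cone\eta}\cone\rel\cone\xrightarrow{\epsilon\cone}\cone$ is $\cone(\id_X)=\id_{\Opens X}$ followed by an identity, so it is the identity. The composite $\rel\xrightarrow{\eta\rel}\rel\cone\rel\xrightarrow{\rel\epsilon}\rel$ has first component $\id_X\colon (X,R_\ucdc)\to (X,R_{\ucdc})$. This works because $\cone\rel$ returns the same cones, so $R_\UpDown=R_\ucdc$ on the nose. The second component is $\rel(\id)=\id_X$. Both triangles therefore reduce to identities on underlying locales, and hence are identities.

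The main obstacle is not computational but bookkeeping. We must make sure the objects match on the nose (not just up to isomorphism), so that the counit is genuinely the identity: $R_{\ucdc}$ depends only on the pair $\uc,\dc$, and \cref{theorem:induced relation has open cones} returns exactly that pair. We must also make sure the unit's component is a morphism in $\rocLoc$, and all the real content for that is concentrated in \cref{proposition:R in Rupdown}. As an alternative presentation, we could verify the hom-set bijection directly: a locale map $f\colon X\to Y$ is monotone $(X,R)\to(Y,Q_\ucdc)$ iff $f^{-1}$ is a conic morphism $(\Opens Y,\uc,\dc)\to(\Opens X,\Up,\Down)$. The forward direction follows by applying $\cone$ and the identity counit. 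The converse follows by composing $\eta$ with $\rel(f^{-1})$ via \cref{lemma:monotone map modification}.
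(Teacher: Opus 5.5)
Your proposal is correct and follows essentially the same route as the paper. The paper likewise takes the identity counit from \cref{theorem:induced relation has open cones}, sets the unit $\eta_{(X,R)}=\id_X$ using the inclusion $R\subseteq R_\UpDown$ of \cref{proposition:R in Rupdown}, and reduces both triangle identities to identities via $(R_\ucdc)_\UpDown=R_\ucdc$. Your explicit naturality check and the alternative hom-set formulation only spell out what the paper states more tersely.
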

\begin{proof}
	By \cref{theorem:induced relation has open cones} we have for every conic frame $(\Opens X,\uc,\dc)$ that
	\[
	\cone\circ \rel(\Opens X,\uc,\dc)
	=
	\cone(X,R_\ucdc)
	=
	(\Opens X,\uc,\dc),
	\]
	so there is a strict equality $\cone\circ \rel=\id$, and the counit $\epsilon \colon \cone\circ \rel\to \id$ is thus taken to be the identity natural transformation.
	
	For the unit transformation, if $(X,R)$ is a locale equipped with an open cone relation, then $\rel\circ \cone(X,R) = (X,R_\UpDown)$. By \cref{proposition:R in Rupdown} there is a sublocale inclusion $R\subseteq R_\UpDown$, and so by~\cref{lemma:monotone map modification} the identity map ${\id_X\colon (X,R)\to (X,R_\UpDown)}$ is internally monotone. We can thus define the unit ${\eta\colon \id \to \rel\circ \cone}$ simply by ${\eta_{(X,R)}:=\id_X}$, which is clearly natural.
	
	We are left to show that $\eta,\epsilon$ satisfy the triangle identities:
	\[
	\epsilon_\cone \circ \cone(\eta) = \id_\cone
	\quad\text{and}\quad
	\rel(\epsilon)\circ \eta_\rel = \id_\rel.
	\]
	For a locale equipped with an open cone relation $(X,R)$ we have by definition that~$\epsilon_{\cone(X,R)} = \id_{(\Opens X,\Up,\Down)}$, and~$\cone(\eta_{(X,R)}) = \id_{\Opens X}$ is also the identity conic morphism on $(\Opens X,\Up,\Down)$ since $\cone(X,R_\UpDown) = (\Opens X,\Up,\Down)$ by~\cref{theorem:induced relation has open cones}.
	
	For the second equation, if $(\Opens X,\uc,\dc)$ is a conic frame then~$\epsilon_{(\Opens X,\uc,\dc)}$ is again the identity, which is preserved by the functor $\rel$. Once more applying~\cref{theorem:induced relation has open cones}, we see that $\rel(\Opens X,\uc,\dc)=(X,R_\ucdc)$ induces the cones $\Up=\uc$ and $\Down=\dc$, and so:
	\[
	(R_\ucdc)_\UpDown = R_\ucdc.
	\]
	Hence $\eta_{\rel(\Opens X,\uc,\dc)}= \eta_{(X,R_\ucdc)}$ is simply the identity arrow $\id_{(X,R_\ucdc)}$, and the adjunction $\cone\dashv\rel$ follows.
\end{proof}

\begin{remark}
	That the counit of this adjunction is strictly equal to the identity implies immediately several nice facts about the adjunction, for instance that it is idempotent and monadic~\cite[\S IV.3]{maclane1998CategoriesWorkingMathematician}. In particular, we can think of~$\ucdcFrm^\op$ as a strict reflective subcategory of~$\rocLoc$.
\end{remark}

An immediate consequence of $\cone\dashv\rel$ is that $R_\ucdc$ is the least upper bound of open cone relations whose induced cones are contained in $\uc,\dc$.

\begin{corollary}
	\label{corollary:Rucdc universal with cones}
	We have $R\subseteq R_\ucdc$ iff $\Up\sqleq \uc$ and $\Down\sqleq \dc$.
\end{corollary}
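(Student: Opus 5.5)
The plan is to read this off the adjunction $\cone\dashv\rel$ of~\cref{theorem:adjunction}, applied to the identity map of $X$. Take $R$ an open cone relation on $X$ and $(\Opens X,\uc,\dc)$ a conic frame. By~\cref{remark:monotone inclusion}, $R\subseteq R_\ucdc$ holds iff $\id_X\colon (X,R)\to (X,R_\ucdc) = \rel(\Opens X,\uc,\dc)$ is a morphism in $\rocLoc$. By~\cref{lemma:id ucdc map iff cones inclusion}, $\Up\sqleq\uc$ and $\Down\sqleq\dc$ hold iff $\id_{\Opens X}\colon(\Opens X,\uc,\dc)\to(\Opens X,\Up,\Down)=\cone(X,R)$ is a conic morphism. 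Equivalently, this says that $\id_X$ is a morphism $\cone(X,R)\to(\Opens X,\uc,\dc)$ in $\ucdcFrm^\op$.

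So it suffices to see that the adjunction bijection restricts to identities. Given a morphism $g\colon\cone(X,R)\to(\Opens X,\uc,\dc)$ in $\ucdcFrm^\op$, its transpose is $\rel(g)\circ\eta_{(X,R)}$. Since $\eta_{(X,R)}=\id_X$ and $\rel$ acts as the identity on underlying locale maps (\cref{proposition:functor ucdcFrm to rocLoc}), the transpose of $\id_X$ is $\id_X$. In the other direction, the transpose of $f\colon(X,R)\to\rel(\Opens X,\uc,\dc)$ is $\epsilon\circ\cone(f)$, which is again $f$ on underlying maps because the counit is the identity. Hence $\id_X$ is monotone $(X,R)\to(X,R_\ucdc)$ iff $\id_{\Opens X}$ is a conic morphism in the stated direction, and this is the claim.

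As a sanity check, each direction can also be proved directly. For the forward direction, $R\subseteq R_\ucdc$ makes $\id$ monotone, so $\cone$ of it is a conic morphism $(\Opens X,\uc,\dc)\to(\Opens X,\Up,\Down)$, using $\cone(X,R_\ucdc)=(\Opens X,\uc,\dc)$ from~\cref{theorem:induced relation has open cones}. For the converse, the conic morphism gives $\id\colon(X,R_\UpDown)\to(X,R_\ucdc)$ under $\rel$, that is, $R_\UpDown\subseteq R_\ucdc$. Composing with~\cref{proposition:R in Rupdown} then yields $R\subseteq R_\ucdc$.

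The only point needing care is the direction bookkeeping: keeping track of the reversal in $\ucdcFrm^\op$ against the order convention in~\cref{lemma:id ucdc map iff cones inclusion}. Beyond that there is no real obstacle.
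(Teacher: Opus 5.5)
Your proposal is correct and follows essentially the same route as the paper. Both arguments translate $R\subseteq R_\ucdc$ into monotonicity of $\id_X$, pass across $\cone\dashv\rel$ using that $\eta$ and $\epsilon$ are identities, and finish with \cref{lemma:id ucdc map iff cones inclusion}; your explicit transpose computation and the direct converse via $R\subseteq R_\UpDown\subseteq R_\ucdc$ just spell out what the paper compresses into the phrase ``by $\cone\dashv\rel$''.
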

\begin{proof}
	The inclusion $R\subseteq R_\ucdc$ holds iff $\id_X\colon (X,R)\to (X,R_\ucdc)=\rel(\Opens X,\uc,\dc)$ is monotone, and by $\cone\dashv \rel$ this holds in turn iff $\epsilon_{(\Opens X,\uc,\dc)}\circ \cone(\id_X) = \id_{\Opens X}\colon (\Opens X,\uc,\dc)\to (\Opens X,\Up,\Down)$ is a conic morphism. By~\cref{lemma:id ucdc map iff cones inclusion} this is equivalent to $\Up\sqleq \uc$ and $\Down\sqleq \dc$.
\end{proof}

\section{Fixed points}
\label{section:fixed points}
Recall that the \emph{fixed points} of an adjunction are those objects in the respective categories for which the unit~$\eta$ and counit~$\epsilon$ are isomorphisms. Since the adjunction~$\cone\dashv\rel$ from \cref{theorem:adjunction} has~$\epsilon = \id$, all the conic frames are fixed points. Indeed, all conic frames are recovered from the cones of their induced relation. We start this section by outlining some basic (counter)examples of the fixed points in~$\rocLoc$. After that, in~\cref{section:closed relations,section:strong density} we prove that the inclusion~$R\subseteq R_\UpDown$ from~\cref{proposition:R in Rupdown} is strongly dense. As an immediate consequence, any (weakly) closed localic relation with open cones defines a fixed point, and we obtain as a corollary Kock's Godement theorem on effective localic equivalence relations~\cite{kock1989GodementTheoremLocales}.

Since the functors~$\cone$ and~$\rel$ do not alter the underlying locale, an object~$(X,R)$ in~$\rocLoc$ being fixed point should only depend on~$R$. We can make this precise as follows.

\begin{corollary}
	The function $(-)_\UpDown$ is a closure operator on open cone relations.
\end{corollary}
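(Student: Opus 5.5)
We regard $(-)_\UpDown$ as the assignment $R\mapsto R_\UpDown$ on the poset of open cone relations on a fixed locale $X$, ordered by sublocale inclusion. Being a closure operator (in the sense of \cref{definition:closure operator}) means it is inflationary, subidempotent and monotone. We check the three properties in turn, each directly from results already established.

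\emph{Inflationary.} This is exactly \cref{proposition:R in Rupdown}: for every open cone relation $R$ we have $R\subseteq R_\UpDown$.

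\emph{Subidempotent, in fact idempotent.} By \cref{theorem:induced relation has open cones}, the relation $R_\UpDown$ has open cones and its own induced cones are again $\Up,\Down$. Applying the construction a second time therefore uses the same pair of cones, so
\[
(R_\UpDown)_\UpDown = R_\UpDown .
\]
This is the same observation already used in the triangle identities in the proof of \cref{theorem:adjunction}.

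\emph{Monotone.} Let $R\subseteq Q$ be open cone relations with cones $\Upsub{R},\Downsub{R}$ and $\Upsub{Q},\Downsub{Q}$. Using $Q\subseteq Q_{\UpDown}$ we get $R\subseteq Q_{\UpDown}$. By \cref{corollary:Rucdc universal with cones}, applied to the conic frame $(\Opens X,\Upsub{Q},\Downsub{Q})$, this inclusion gives $\Upsub{R}\sqleq\Upsub{Q}$ and $\Downsub{R}\sqleq\Downsub{Q}$. Applying the same corollary in the reverse direction, now to the conic frame $(\Opens X,\Upsub{Q},\Downsub{Q})$ and the relation $R_{\UpDown}$, whose cones are $\Upsub{R},\Downsub{R}$ by \cref{theorem:induced relation has open cones}, yields $R_\UpDown\subseteq Q_\UpDown$.

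As an alternative route to monotonicity, one can use functoriality of $\rel\circ\cone$. The inclusion $R\subseteq Q$ makes $\id_X\colon(X,R)\to(X,Q)$ monotone, so $\rel\circ\cone(\id_X)$ is monotone $(X,R_\UpDown)\to(X,Q_\UpDown)$, which is again the inclusion by \cref{remark:monotone inclusion}.

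None of these steps is difficult. The only point needing care is in the monotonicity step: \cref{corollary:Rucdc universal with cones} must be applied to $R_\UpDown$, which is legitimate only because this relation has open cones, as guaranteed by \cref{theorem:induced relation has open cones}.
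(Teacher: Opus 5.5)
Your proof is correct and is essentially the paper's argument written out in full. The paper simply identifies $(-)_\UpDown$ with the monad $\rel\circ\cone$ of \cref{theorem:adjunction}: its unit gives inflationarity (\cref{proposition:R in Rupdown}), its identity counit $\cone\circ\rel=\id$ gives idempotence, and its functoriality gives monotonicity, so your ``alternative route'' for monotonicity is precisely the paper's, while your primary route via \cref{corollary:Rucdc universal with cones} is equally valid since that corollary is itself a consequence of the same adjunction.
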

\begin{proof}
	This follows since it can be identified with the monad $\rel\circ \cone$ induced by the adjunction, which sends $(X,R) \mapsto (X,R_\UpDown)$.
\end{proof}

\begin{corollary}
	The fixed points of $\cone\dashv \rel$ are those $(X,R)\in\rocLoc$ such that $R\cong R_\UpDown$. 
\end{corollary}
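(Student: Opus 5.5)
The plan is to unpack the general definition of fixed points for the adjunction $\cone\dashv\rel$ of \cref{theorem:adjunction}, using the explicit description of its unit. An object $(X,R)$ of $\rocLoc$ is a fixed point exactly when the unit component $\eta_{(X,R)}$ is an isomorphism in $\rocLoc$. From the proof of \cref{theorem:adjunction}, this component is the identity locale map
\[
\id_X\colon (X,R)\longrightarrow (X,R_\UpDown).
\]
It is monotone because of the sublocale inclusion $R\subseteq R_\UpDown$ from \cref{proposition:R in Rupdown}. So the task reduces to showing that this identity map is an isomorphism in $\rocLoc$ if and only if $R\cong R_\UpDown$ as sublocales of $X\times X$.

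Forward direction: suppose $\eta_{(X,R)}$ has an inverse $g\colon (X,R_\UpDown)\to (X,R)$ in $\rocLoc$.

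1. The underlying forgetful assignment $(X,R)\mapsto X$ is a functor to $\Loc$. Hence $g$ is inverse to $\id_X$ in $\Loc$, so $g=\id_X$.
2. Therefore $\id_X\colon (X,R_\UpDown)\to (X,R)$ is monotone.
3. By \cref{remark:monotone inclusion}, this monotonicity is equivalent to $R_\UpDown\subseteq R$.
4. Combined with $R\subseteq R_\UpDown$, antisymmetry of inclusion in the coframe $\Sl(X\times X)$ gives $R=R_\UpDown$ as sublocales, i.e.\ the canonical inclusion $i\colon R\rightarrowtail R_\UpDown$ is an isomorphism.

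Converse direction: if $R\cong R_\UpDown$ as sublocales, then $R_\UpDown\subseteq R$. By \cref{remark:monotone inclusion}, $\id_X\colon (X,R_\UpDown)\to(X,R)$ is monotone, and it is inverse to $\eta_{(X,R)}$. Monotonicity is the only condition on morphisms, and the full subcategory $\rocLoc$ inherits this from $\rLoc$, so this map is a genuine inverse in $\rocLoc$.

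The only subtle point is what "$\cong$" means. If it is read as an abstract isomorphism of locales $R\cong R_\UpDown$ not commuting with the inclusions into $X\times X$, the statement could fail. I will therefore state explicitly that $\cong$ means isomorphism of subobjects, equivalently that the canonical inclusion $i$ from \cref{proposition:R in Rupdown} is invertible. Since sublocales are isomorphism classes of extremal monos, this is the same as equality in $\Sl(X\times X)$.

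For completeness I would also remark that the counit side imposes no condition, since $\epsilon=\id$. So the fixed points in $\ucdcFrm^\op$ are all objects, and only the $\rocLoc$ side needs this characterisation.
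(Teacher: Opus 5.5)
Your proposal is correct and is essentially the argument the paper leaves implicit: the unit component is $\id_X\colon (X,R)\to(X,R_\UpDown)$, and since morphisms of $\rocLoc$ are just monotone locale maps, it is invertible exactly when $\id_X\colon(X,R_\UpDown)\to(X,R)$ is monotone, i.e.\ when $R_\UpDown\subseteq R$, so the inclusion of \cref{proposition:R in Rupdown} is an isomorphism of subobjects. Your reading of $\cong$ as isomorphism of sublocales agrees with the paper's usage, for instance in the subsequent proposition, where $R\cong R_\UpDown$ is equated with the inclusion $R_\UpDown\subseteq R$.
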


With that said, we discuss some basic (counter)examples.

\begin{example}
	\label{example:diagonal relation is fixed point}
	We show that any diagonal relation $\Delta$ is a fixed point of the adjunction. Recall from \cref{example:induced cones of diagonal} that the diagonal relation on $X$ induces the cones $\Up = \Down = \id_{\Opens X}$. Let $R_{\id\!\id}$ denote the relation induced by the identity cones. Then by~\cref{proposition:R in Rupdown} we get $\Delta\subseteq R_{\id\!\id}$. Explicitly, this inclusion is witnessed via the frame quotient~\cref{theorem:frame quotient theorem}, noting that $\delta^{-1}$ equalises the relation $\sim$:
	\[
	A\tensor B\sim C\tensor D
	\iff
	A\wedge B= C\wedge D
	\iff 
	\delta^{-1}(A\tensor B)= \delta^{-1}(C\tensor D).
	\]
	Thus we get a map of frames $d^{-1}\colon \Opens R_{\id\!\id}\to \Opens X$ such that $d^{-1}\circ \mu_\sim = \delta^{-1}$. We claim that this is an isomorphism of frames, with inverse given by ${i^{-1}:=\mu_\sim \circ \iota_1}$, where~$\iota_1$ is the first frame coproduct injection~(\cref{lemma:coproduct of frame maps}). First simply calculate:
	\[
	d^{-1} i^{-1}(U) 
	=
	d^{-1}\mu_\sim \iota_1(U)
	=
	\delta^{-1}(U\tensor\top)
	=
	U\wedge \top = U.
	\]
	Second, $\mu_\sim$ is the frame map of a sublocale inclusion, and is hence surjective. To show $i^{-1}\circ d^{-1}$ is the identity it therefore suffices to check on rectangles $\mu_\sim(U\tensor V)$, which gives:
	\[
	i^{-1}d^{-1}\mu_\sim(U\tensor V)
	=
	i^{-1}\delta^{-1}(U\tensor V)
	=
	\mu_\sim\left((U\wedge V)\tensor \top\right).
	\]
	But now it is easy to see $(U\wedge V)\tensor \top \sim U\tensor V$, so the right hand side in fact equals~$\mu_\sim(U\tensor V)$, as desired. Moreover, using the same observation, we see $i^{-1}\circ \delta^{-1}(U\tensor V) = \mu_\sim((U\wedge V)\tensor \top)=\mu_\sim(U\tensor V)$. Thus the corresponding locale map $i$ defines an isomorphism of subobjects $\Delta\cong R_{\id\!\id}$, as desired.
\end{example}

\begin{counterexample}
	\label{counterexample:relation on Sierpinski}
	We construct a localic relation with open cones that is not a fixed point of the adjunction. The base space is the \emph{Sierpi\'{n}ski locale}~$\mathbb{S}$, defined by~$\Opens \mathbb{S} = \{\bot<a < \top\}$. It is a spatial locale. Now the coproduct frame~$\Opens \mathbb{S}\tensor \Opens \mathbb{S}$ has six elements:

\[
\begin{tikzpicture}[
	baseline={(current bounding box.center)},
	every node/.style={inner sep=.4pt, outer sep=0pt}
	]
	\node (bot) at (0,0) {$\bot$};
	\node (aa) at (0,.62) {$a\tensor a$};
	\node (at) at (-1.02,1.22) {$a\tensor \top$};
	\node (ta) at (1.02,1.22) {$\top\tensor a$};
	\node (join) at (0,1.95) {$(a\tensor \top) \vee (\top \tensor a)$};
	\node (top) at (0,2.7) {$\top\tensor \top$};
	
	\node[rotate=90] at (0,.31) {$<$};
	\node[rotate=110] at (-.55,.91) {$<$};
	\node[rotate=70] at (.55,.91) {$<$};
	\node[rotate=85] at (-.90,1.59) {$<$};
	\node[rotate=95] at (.90,1.59) {$<$};
	\node[rotate=90] at (0,2.35) {$<$};
\end{tikzpicture}
\qquad\qquad\qquad
\begin{tikzpicture}[baseline={(current bounding box.center)}]
	
	\def\s{1}        
	\def\m{0.6}        
	\def\r{10pt}        
	
	\def\wa{0.7}
	
	\def\pT{0.7}
	\def\pTT{0.9}
	
	\tikzset{
		blob/.style={
			draw=figcolor!70!black,
			thick,
			rounded corners=\r,
			fill=figcolor,
			fill opacity=.1
		}
	}
	
	\coordinate (p00) at (\m,\m);
	\coordinate (p10) at (\m+\s,\m);
	\coordinate (p01) at (\m,\m+\s);
	\coordinate (p11) at (\m+\s,\m+\s);
	
	\path let \p1=(p00), \p2=(p10), \p3=(p01), \p4=(p11) in
	coordinate (x0) at (\x1,0)
	coordinate (x1) at (\x2,0)
	coordinate (y0) at (0,\y1)
	coordinate (y1) at (0,\y3);
	
	
	\draw[blob]
	($(p00)+(-\pTT,-\pTT)$)
	rectangle
	($(p11)+(\pTT,\pTT)$);
	
	\draw[blob]
	($(p10)+(-\wa,-\pT)$)
	rectangle
	($(p11)+(\wa,\pT)$);
	
	\draw[blob]
	($(p01)+(-\pT,-\wa)$)
	rectangle
	($(p11)+(\pT,\wa)$);
	
	\draw[blob]
	($(p11)+(-\wa,-\wa)$)
	--
	($(p11)+(-\wa,\wa)$)
	--
	($(p11)+(\wa,\wa)$)
	--
	($(p11)+(\wa,-\wa)$)
	--
	($(p11)+(-\wa,-\wa)$);
	
\end{tikzpicture}
\]
	
	\smallskip\noindent
	Define $R\rightarrowtail\mathbb{S}\times\mathbb{S}$ as the open sublocale induced by $u:=(a\tensor \top)\vee (\top\tensor a)$. By~\cref{example:open sublocale in overt X has open cones}, $R$ has open cones. We compute them explicitly.
	
	First recall that the frame of the open sublocale can be presented by the downset $\down u = \{o\in \Opens\mathbb{S}\tensor\Opens\mathbb{S}: o\sqleq u\}$, and the underlying frame map of the sublocale inclusion is just intersection with $u$~\cite[\S III.6]{picado2012FramesLocalesTopology}:
	\[
	r^{-1}\colon \Opens\mathbb{S}\tensor\Opens\mathbb{S}\longrightarrow \down u;
	\qquad
	o\longmapsto o\wedge u.
	\]
	This makes the source and target maps easy to compute: $s^{-1}(x) = (x\tensor \top)\wedge u$ and $t^{-1}(x)=(\top\tensor x)\wedge u$, which results in:
	
	\renewcommand{\arraystretch}{1.2} 
	\hfil
	\begin{tabular}{r|c|c|c}
		$x$ & $\bot$ & $a$ & $	\top$ \\ \hline\hline
		$s^{-1}(x)$ & $\bot$ & $a\tensor \top$ & $u$ \\ \hline
		$t^{-1}(x)$ & $\bot$ & $\top\tensor a$ & $u$ 
	\end{tabular}
	\hfil
	
	\noindent The left adjoints $s_!,t_!$ are also straightforwardly calculated. By $s_!\dashv s^{-1}$, we know that $s_!(o)$ is equal to the smallest element $x\in \Opens\mathbb{S}$ such that $o\sqleq s^{-1}(x)$, so with the help of the previous table we get:
	
	\hfil\begin{tabular}{r|c|c|c|c|c}
		$o$ & $\bot$ & $a \tensor a$ & $a\tensor \top$ & $\top \tensor a$ & $u$ \\ \hline\hline
		$s_!(o)$ & $\bot$ & $a$ & $a$ & $\top$ & $\top$ \\ \hline
		$t_!(o)$ & $\bot$ & $a$ & $\top$ & $a$ & $\top$
	\end{tabular}\hfil
	
	\smallskip
	\noindent Putting it together, the induced (reduced) cones are:
	\smallskip
	
	\hfil\begin{tabular}{r|c|c|c}
		$x$ & $\bot$ & $a$ & $\top$ \\ \hline\hline
		$\Up x$ & $\bot$ & $\top$ & $\top$ \\ \hline
		$\Down x$ & $\bot$ & $\top$ & $\top$ 
	\end{tabular}\hfil
	\begin{tabular}{r|c|c|c|c}
		$(x,y)$ & $(a,a)$ & $(a,\top)$ & $(\top,a)$ & $(\top,\top)$ \\ \hline\hline
		$\UpR(x,y)$ & $a$ & $\top$ & $a$ & $\top$ \\ \hline
		$\DownR(x,y)$ & $a$ & $a$ & $\top$ & $\top$
	\end{tabular}\hfil
	
	\smallskip\noindent In the latter table we have omitted pairs $(x,y)$ where $x$ or $y$ is~$\bot$, which would trivially evaluate to the bottom element. From this table we can then see that ${x\tensor y\sim c\tensor d}$ can only hold if $x\tensor y = c\tensor d$. Hence the frame congruence generated by~$\sim$ is just the equality relation on~$\Opens\mathbb{S}\tensor\Opens\mathbb{S}$, and so the induced relation~$R_\UpDown$ is the trivial relation $\mathbb{S}\times \mathbb{S}$, which is strictly bigger than the relation~$R$ we started with. Since all locales involved here are spatial, this also shows that open cone localic relations coming from topological spaces can fail to be fixed points.
\end{counterexample}

\begin{example}
	Recall from~\cref{example:open sublocale in overt X has open cones} that any open localic relation on an overt locale~$X$ has open cones. We show that open rectangles are always fixed points.
	
	From overtness we get a left adjoint~$\pos_X\dashv \terminal_X^{-1}$ of the terminal map~${\terminal_X\colon X\to 1}$, and similarly denote the left adjoints of the product projection maps by $\exists_i\dashv \pr_i^{-1}$. For an open $E\in\Opens (X\times X)$, seen as a localic relation, we get induced cones
	\[
	\Up U = \exists_2 (E\wedge U\tensor \top)
	\quad\text{and}\quad
	\Down U = \exists_1(E\wedge \top\tensor U).
	\]
	The relation $E$ defines a fixed point precisely when~$E\langle\sim\rangle\top\tensor \top$, which does not readily unpack to a more concrete condition. However, in the particular case that~$E$ is a rectangle $A\tensor B$ for some $A,B\in\Opens X$, we get
	\[
	\Up U = B\wedge \pos_X(A\wedge U)
	\quad\text{and}\quad
	\Down U = A\wedge \pos_X(B\wedge U),
	\]
	and by~\cref{corollary:mu implies sim} this defines a fixed point iff $A\tensor B\sim \top\tensor \top$. Calculating
	\[
	A\wedge \Down B =A\wedge A\wedge \pos_X(B\wedge B)= A\wedge\pos_X(B) =\top\wedge \Down \top,
	\]
	and similarly $\Up A\wedge B = \Up \top\wedge \top$, we see this always holds.
	
	Since the relation in~\cref{counterexample:relation on Sierpinski} is a finite union of open rectangles, this shows that fixed points of~$\cone\dashv\rel$ are not closed under finite joins of sublocales.
\end{example}

A general characterisation of the fixed points of~${\cone\dashv\rel}$ is unknown. Clearly, a necessary and sufficient condition for $R\cong R_\UpDown$ cannot be expressed solely as properties of the cones~$\Up,\Down$, since~$R$ and~$R_\UpDown$ share cones but may not both be fixed points. We record the following simple rewriting of the inclusion~$R_\UpDown\subseteq R$, making the fixed point condition somewhat more explicit.

\begin{proposition}
	We have $R\cong R_\UpDown$ iff for every $\sim$-saturated $o\in\Opens X\tensor \Opens X$:
	\[
	r^{-1}(x\tensor y)\sqleq r^{-1}(o)
	\quad\implies\quad
	x\tensor y\sqleq o.
	\]
\end{proposition}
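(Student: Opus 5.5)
Since $R\subseteq R_\UpDown$ always holds by \cref{proposition:R in Rupdown}, the statement $R\cong R_\UpDown$ amounts to the reverse inclusion $R_\UpDown\subseteq R$ of sublocales of $X\times X$. I would work with sublocale sets. The sublocale set of $R_\UpDown$ is $\Opens X\tensor\Opens X/{\sim}$. The sublocale set of $R$ is the image of the right adjoint $r_\ast$ of $r^{-1}$, i.e.\ the set of elements $r_\ast(p)$ for $p\in\Opens R$. The fixed point condition therefore reduces to the claim that every $\sim$-saturated $o$ lies in the sublocale set of $R$, that is, $o=r_\ast r^{-1}(o)$. Because $r_\ast r^{-1}$ is inflationary, this is the same as $r_\ast r^{-1}(o)\sqleq o$.

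Next I would unpack this inequality on the rectangle join-basis. The element $r_\ast r^{-1}(o)$ is the join of all rectangles $x\tensor y$ with $x\tensor y\sqleq r_\ast r^{-1}(o)$. By the adjunction $r^{-1}\dashv r_\ast$, such rectangles are exactly those with $r^{-1}(x\tensor y)\sqleq r^{-1}(o)$. So $r_\ast r^{-1}(o)\sqleq o$ holds iff every rectangle satisfying $r^{-1}(x\tensor y)\sqleq r^{-1}(o)$ also satisfies $x\tensor y\sqleq o$. This is precisely the displayed implication, and both directions follow at once: if the implication holds for all saturated $o$, then every saturated $o$ is fixed by $r_\ast r^{-1}$, giving $R_\UpDown\subseteq R$; conversely, if $R_\UpDown\subseteq R$, each saturated $o$ equals $r_\ast r^{-1}(o)$ and the implication follows from the adjunction.

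The main point to get right is the translation between the inclusion of sublocale sets and the statement ``$R\cong R_\UpDown$''. Both are subobjects of $X\times X$, and the inclusion $R\subseteq R_\UpDown$ is already known. So the isomorphism of subobjects is equivalent to equality of the sublocale sets, which follows from the two inclusions of sublocale sets. I would also check that the sublocale set associated to $r$ is indeed the fixed points of the nucleus $r_\ast r^{-1}$, as in \cite[\S III]{picado2012FramesLocalesTopology}. Once that identification is in place, the rest of the argument is the basis argument above.
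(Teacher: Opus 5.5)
Your proof is correct and follows the paper's argument. Both reduce the claim to $R_\UpDown\subseteq R$ via \cref{proposition:R in Rupdown} and then unpack this with $r^{-1}\dashv r_\ast$ on the rectangle basis. The only difference is cosmetic: you phrase the inclusion through sublocale sets (every saturated $o$ is fixed by $r_\ast r^{-1}$), while the paper states it as the inclusion of nuclei $r_\ast r^{-1}\sqleq\mu_\sim$. These are equivalent because the saturated elements are exactly the fixed points of $\mu_\sim$.
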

\begin{proof}
	We already get a sublocale inclusion $R\subseteq R_\UpDown$ from~\cref{proposition:R in Rupdown}, so that~$R\cong R_\UpDown$ is equivalent to the inclusion~$R_\UpDown\subseteq R$. Using~$r^{-1}\dashv r_\ast$ together with the fact that $\sim$-saturated opens are precisely the fixed points of~$\mu_\sim$, we see that the stated condition is really equivalent to the inclusion $r_\ast r^{-1}\sqleq \mu_\sim$ of nuclei, which is precisely the desired sublocale inclusion.
\end{proof}

\subsection{Kernel pairs}
Recall from~\cref{example:open kernel pairs} that the kernel pair~$X\times_Q X$ of an open map~${q\colon X\to Q}$ defines an open cone localic equivalence relation. It is always a fixed point. In order to prove this we collect the following important result from the literature.

\begin{lemma}
	\label{lemma:open map pullback square}
	Suppose $Y\times_X^{f_1,f_2}Z$ is the pullback of $f_1,f_2$, with projections~$\pi_1,\pi_2$. If~$f_1$ is open then~$\pi_2$ is open. If~$f_2$ is open then~$\pi_1$ is open. If $f_1,f_2$ are open then
	\[
	f_1^{-1}\circ (f_2)_! = (\pi_1)_!\circ \pi_2^{-1}.
	\]
\end{lemma}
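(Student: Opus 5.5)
The plan is to compute the left adjoints of the pullback projections explicitly on basic rectangles, in the same way the paper builds $s_!,t_!$ for $R_\ucdc$. The Beck--Chevalley identity then falls out by evaluating on a rectangle. I only treat the case where $f_1$ is open, giving $(\pi_2)_!$; the case of $f_2$ open is symmetric. (This is the classical result of Joyal--Tierney~\cite[\S V.4]{joyal1984ExtensionGaloisTheory}, so one could also simply cite it.)

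First I fix a presentation of the pullback. Write $P:=Y\times^{f_1,f_2}_X Z$. Then $\Opens P$ is the frame pushout, namely the quotient of $\Opens Y\tensor\Opens Z$ by the relation
\[
(y\wedge f_1^{-1}(w))\tensor z \;\approx\; y\tensor(z\wedge f_2^{-1}(w)),
\]
equivalently $f_1^{-1}(w)\tensor\top\approx\top\tensor f_2^{-1}(w)$. So $\Opens P=\Opens Y\tensor\Opens Z/{\approx}$, with $\pi_1^{-1}(y)=\mu_\approx(y\tensor\top)$ and $\pi_2^{-1}(z)=\mu_\approx(\top\tensor z)$. Guided by the spatial picture, the candidate left adjoint is
\[
(\pi_2)_!\,\mu_\approx(y\tensor z) \;=\; f_2^{-1}\big((f_1)_!(y)\big)\wedge z .
\]

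To make this rigorous I would mimic \cref{lemma:separating iff Sigma Tau} and \cref{lemma:mu recovers separating opens}. The map $(y,z)\mapsto f_2^{-1}(f_1)_!(y)\wedge z$ preserves joins in each variable. Since the frame coproduct is the suplattice tensor product, it therefore extends to a join-preserving $\Sigma\colon\Opens Y\tensor\Opens Z\to\Opens Z$. Its right adjoint is
\[
\sigma(z)=\bigvee\{y\tensor z': f_2^{-1}(f_1)_!(y)\wedge z'\sqleq z\}.
\]
The key step is to show that each $\sigma(z)$ is $\approx$-saturated. The relation $\approx$ is stable under meets with rectangles, so by \cref{proposition:saturated simplified when meet preserved} it suffices to check that $\Sigma$ takes equal values on the two sides of each generator. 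On the left, Frobenius reciprocity for $f_1$ gives
\[
\Sigma\big((y\wedge f_1^{-1}w)\tensor z\big)=f_2^{-1}\big((f_1)_!(y)\wedge w\big)\wedge z .
\]
On the right, $\Sigma\big(y\tensor(z\wedge f_2^{-1}w)\big)$ evaluates to the same element, because $f_2^{-1}$ preserves meets.

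Once saturation is established, $\sigma$ lands in $\Opens P$, and $\Sigma$ restricted to $\Opens P$ is left adjoint to $\sigma$. I would then identify $\sigma$ with $\pi_2^{-1}$. Since $\Sigma(\top\tensor z)\sqleq z$, we have $\top\tensor z\sqleq\sigma(z)$, and saturation gives $\mu_\approx(\top\tensor z)\sqleq\sigma(z)$. Conversely, each generating rectangle $y\tensor z'$ of $\sigma(z)$ is $\approx$-equivalent to $(y\wedge f_1^{-1}(f_1)_!y)\tensor z'$. Moving the $f_1^{-1}$-factor across via $\approx$ turns this into $y\tensor(z'\wedge f_2^{-1}(f_1)_!y)$, which is below $\top\tensor z$. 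Hence $\sigma=\pi_2^{-1}$ and $(\pi_2)_!:=\Sigma$ is the required left adjoint.

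Frobenius reciprocity for $(\pi_2)_!$ only needs checking on rectangles, because both sides preserve joins. There it reduces to
\[
\big(f_2^{-1}(f_1)_!(y)\wedge z\big)\wedge v=f_2^{-1}(f_1)_!(y)\wedge(z\wedge v),
\]
which is trivial. This shows that $\pi_2$ is open.

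For the displayed formula, assume both $f_1$ and $f_2$ are open, so that both projections are open with the explicit left adjoints above. Then
\[
(\pi_1)_!\pi_2^{-1}(z)=(\pi_1)_!\,\mu_\approx(\top\tensor z)=\top\wedge f_1^{-1}(f_2)_!(z)=f_1^{-1}(f_2)_!(z).
\]

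The main obstacle is the saturation and identification step: the left adjoint must be defined on the quotient frame and not merely on $\Opens Y\tensor\Opens Z$. This is exactly where Frobenius reciprocity of $f_1$ is used.
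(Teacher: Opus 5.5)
Your argument is correct, but it is not what the paper does: the paper gives no proof of its own and simply cites \cite[Proposition~V.4.1]{joyal1984ExtensionGaloisTheory} and the Elephant (C3.1.11).

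\textbf{Your route.} You present $\Opens(Y\times_X Z)$ as $\Opens Y\tensor\Opens Z/{\approx}$. For the pushout relation $\approx$ you then rerun the mechanism the paper itself uses to make the source and target maps of $R_\ucdc$ open (\cref{lemma:separating iff Sigma Tau,proposition:induced source and target adjunction on rectangles,proposition:induced source and target frobenius}):
\begin{itemize}
\item extend a bimorphism to a join-preserving $\Sigma$ via the suplattice tensor product;
\item show its right adjoint $\sigma$ takes $\approx$-saturated values, which is the only place Frobenius reciprocity of $f_1$ is needed;
\item identify $\sigma$ with $\pi_2^{-1}$;
\item check Frobenius reciprocity on rectangles.
\end{itemize}

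\textbf{What each approach buys.} Yours is a self-contained, elementary proof using only tools already set up in the paper. It also gives the explicit formula $(\pi_2)_!\mu_\approx(y\tensor z)=f_2^{-1}(f_1)_!(y)\wedge z$, from which the Beck--Chevalley identity is a one-line evaluation; that evaluation visibly needs only $f_2$ open. The paper's citation buys brevity by deferring to a standard result.

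\textbf{Two implicit steps to spell out.} Each time you evaluate $(\pi_2)_!$ on $\mu_\approx(y\tensor z)$ (in the Frobenius check and in the final computation), you are using $\Sigma\circ\mu_\approx=\Sigma$ on $\Opens Y\tensor\Opens Z$. This follows from your saturation step: $\Sigma(\mu_\approx(o))\sqleq z$ iff $\mu_\approx(o)\sqleq\sigma(z)$ iff $o\sqleq\sigma(z)$ iff $\Sigma(o)\sqleq z$. The middle equivalence holds because $\sigma(z)$ is saturated and $\mu_\approx(o)$ is the least saturated element above $o$. Likewise, in the identification step you need that $\mu_\approx(\top\tensor z)$ is saturated in order to move $y\tensor z'$ below it from its $\approx$-partner $y\tensor(z'\wedge f_2^{-1}(f_1)_!y)$.
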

\begin{proof}
	This is \cite[Proposition~V.4.1]{joyal1984ExtensionGaloisTheory} or \cite[Proposition~C3.1.11]{johnstone2002Elephant2}.
\end{proof}

\begin{proposition}
	\label{proposition:kernel pair is fixed point}
	If~$q\colon X\to Q$ is an open map of locales, $X\times_Q X$ is a fixed~point.
\end{proposition}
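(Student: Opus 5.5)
The plan is to prove the reverse inclusion $R_\UpDown\subseteq X\times_Q X$; the inclusion $X\times_Q X\subseteq R_\UpDown$ is already given by \cref{proposition:R in Rupdown}. For the reverse inclusion I would use the universal property of the kernel pair. Any pair of locale maps $a,b\colon Z\to X$ with $q\circ a=q\circ b$ factors uniquely through $X\times_Q X$. Apply this with $a$ and $b$ the source and target maps $s_\UpDown,t_\UpDown$ of $R_\UpDown$. The resulting factorisation $R_\UpDown\to X\times_Q X$ commutes with the two inclusions into $X\times X$, because a map into $X\times X$ is determined by its two components. Since $R_\UpDown\to X\times X$ is a monomorphism, this exhibits $R_\UpDown$ as a subobject of $X\times_Q X$, which gives the inclusion.

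Everything therefore reduces to proving $q\circ s_\UpDown=q\circ t_\UpDown$. At the frame level this means
\[
\mu_\sim\left(q^{-1}(W)\tensor\top\right)=\mu_\sim\left(\top\tensor q^{-1}(W)\right)
\qquad\text{for all } W\in\Opens Q,
\]
using the formulas for $s_\UpDown^{-1},t_\UpDown^{-1}$ in \cref{definition:induced relation}. Because $\mu_\sim$ equalises $\sim$, it suffices to show
\[
q^{-1}(W)\tensor\top\;\sim\;\top\tensor q^{-1}(W).
\]

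To check this relation I first compute the cones of the kernel pair. Apply \cref{lemma:open map pullback square} with $f_1=f_2=q$ and $\pi_1=s$, $\pi_2=t$. This gives $\Down=s_!t^{-1}=q^{-1}q_!$, and by the symmetric application also $\Up=t_!s^{-1}=q^{-1}q_!$. Writing $p:=q^{-1}q_!(\top)$, I then compute the reduced cones of the two rectangles. For the first one,
\[
\DownR(q^{-1}W,\top)=q^{-1}W\wedge p.
\]
For the second one, Frobenius reciprocity for $q$ gives
\[
\DownR(\top,q^{-1}W)=q^{-1}q_!q^{-1}(W)=q^{-1}\left(q_!(\top)\wedge W\right)=p\wedge q^{-1}W,
\]
so the two $\DownR$-values agree. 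The $\UpR$-components agree by the symmetric computation: $\UpR(q^{-1}W,\top)=q^{-1}q_!q^{-1}W=p\wedge q^{-1}W$ and $\UpR(\top,q^{-1}W)=p\wedge q^{-1}W$. Hence the required $\sim$-relation holds.

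The main point needing care is the correct identification of the cones via \cref{lemma:open map pullback square}, in particular matching which projection plays $\pi_1$ and which plays $\pi_2$. Once that is settled, Frobenius reciprocity makes the comparison routine. Combining $R_\UpDown\subseteq X\times_Q X$ with \cref{proposition:R in Rupdown} gives $X\times_Q X\cong R_\UpDown$, so the kernel pair is a fixed point.
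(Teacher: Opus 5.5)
Your proof is correct and takes essentially the paper's route: $\Up=\Down=q^{-1}q_!$ by \cref{lemma:open map pullback square}, then $q^{-1}(W)\tensor\top\sim\top\tensor q^{-1}(W)$, which gives $q\circ s_\UpDown=q\circ t_\UpDown$ and hence a factorisation of $R_\UpDown$ through the kernel pair. For the other inclusion you cite \cref{proposition:R in Rupdown}, which applies because the kernel pair has open cones and which the paper instead re-derives by a direct computation; note also that your $p=q^{-1}q_!(\top)$ equals $\top$ by the unit of $q_!\dashv q^{-1}$, which makes the comparison of reduced cones immediate.
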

\begin{proof}
	By definition, the kernel pair $k\colon X\times_Q X\rightarrowtail X\times X$ has source and target maps~$s,t$ defined by the pullback square~$q\circ s = q\circ t$, so it follows by~\cref{lemma:open map pullback square} that~${\Up = q^{-1}q_! =\Down}$. Denote by~$r_\UpDown\colon R_\UpDown\rightarrowtail X\times X$ the relation induced by these cones. The adjunction~$q_!\dashv q^{-1}$ gives~$\Up q^{-1}(V) = q^{-1}q_!q^{-1}(V) = q^{-1}(V)$ for any~$V\in\Opens Q$. Similarly, $\Up \top = \top$. It follows that
	\[
	q^{-1}(V)\tensor \top \sim q^{-1}(V)\tensor q^{-1}(V)\sim \top \tensor q^{-1}(V),
	\]
	which gets equalised under $r_\UpDown^{-1}=\mu_\sim$. This means $q\circ \pr_1\circ r_\UpDown = q\circ \pr_2\circ r_\UpDown$, so by universality of the pullback we get a sublocale inclusion~$R_\UpDown\subseteq  X\times_Q X$.
	
	For the converse inclusion, first observe that the cones~$\Up=\Down$ are inflationary via the unit of the adjunction~$q_!\dashv q^{-1}$. Moreover, by the pullback square~$q\circ s = q\circ t$ we get that for every~$U\in\Opens X$:
	\[
	s^{-1}(\Up U) = s^{-1}q^{-1}q_!(U) = t^{-1}q^{-1}q_!(U) = t^{-1}(\Up U).
	\]
	Using this it follows that for every $U,V\in\Opens X$:
	\begin{align*}
		k^{-1}(U\tensor V)
		&=
		s^{-1}(U)\wedge t^{-1}(V)
		&&(\text{\cref{definition:source and target}})
		\\&=
		s^{-1}(U\wedge \Up U) \wedge t^{-1}(V\wedge \Up V)
		&&(\text{$\Up$ inflationary})
		\\&=
		s^{-1}(U)\wedge s^{-1}(\Up U)\wedge t^{-1}(V)\wedge t^{-1}(\Up V)
		&&(\text{$s^{-1},t^{-1}$ preserve $\wedge$})
		\\&=
		s^{-1}(U)\wedge t^{-1}(\Up U)\wedge t^{-1}(V)\wedge s^{-1}(\Up V)
		&&(\text{previous equation})
		\\&=
		s^{-1}(U\wedge \Up V)\wedge t^{-1}(V\wedge \Up U)
		&&(\text{$s^{-1},t^{-1}$ preserve $\wedge$})
		\\&=
		k^{-1}((U\wedge \Up V)\tensor (V\wedge \Up U)).
		&&(\text{\cref{definition:source and target}})
	\end{align*}
	This implies via~\cref{lemma:parallel in terms of sim} that~$k^{-1}$ equalises the generating relation~$\sim$ induced by~$\Up=\Down$, so by the frame quotient~\cref{theorem:frame quotient theorem} we get a sublocale inclusion ${X\times_Q X \subseteq R_\UpDown}$, and the result follows.
\end{proof}

Not only that, but if $R$ is an equivalence relation~(formally defined according to~\cref{definition:localic relation properties}) with open cones, the induced relation~$R_\UpDown$ is actually just its kernel pair~$X\times_{X/R}X$.

\begin{proposition}
	\label{proposition:kernel pair of equivalence relation R is Rupdown}
	For an open cone localic equivalence relation,~${R_\UpDown \cong X\times_{X/R}X}$.
\end{proposition}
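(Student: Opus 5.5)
Let $q\colon X\tworightarrow X/R$ be the coequaliser of $s,t$. The plan is to show that the cones of $R$ coincide with those of its kernel pair $K:=X\times_{X/R}X$. Then $R_\UpDown$ equals $K_\UpDown$, and \cref{proposition:kernel pair is fixed point} gives $K_\UpDown\cong K$. Concretely, I would first prove that $q$ is open and that $\Up=\Down=q^{-1}q_!$. Once this holds, \cref{lemma:open map pullback square} applied to the pullback square defining $K$ shows that the cones of $K$ are also $q^{-1}q_!$. Since $R_\UpDown$ depends only on the pair of cones (\cref{definition:induced relation}), we get $R_\UpDown=K_\UpDown\cong K$.

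For the key step, recall that $\Opens(X/R)$ is the equaliser $\{U\in\Opens X: s^{-1}(U)=t^{-1}(U)\}$, with $q^{-1}$ the inclusion. I would show that for an open cone equivalence relation:
\begin{itemize}
\item the cone $\Up$ is a closure operator with $\Up=\Down$;
\item the fixed points of $\Up$ are exactly the $R$-invariant opens.
\end{itemize}
Once this is known, $q_!:=\Up$, corestricted to the invariant opens, is left adjoint to the inclusion $q^{-1}$, since $\Up$ is a closure operator whose image is the set of invariants. Frobenius reciprocity $\Up(U\wedge W)=\Up U\wedge W$ for invariant $W$ follows from Frobenius reciprocity for $t$. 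Indeed, $\Up(U\wedge W)=t_!(s^{-1}U\wedge s^{-1}W)=t_!(s^{-1}U\wedge t^{-1}W)=\Up U\wedge W$. So $q$ is open and $q^{-1}q_!=\Up$.

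It remains to establish the properties of the cones, which I expect to be the main obstacle. This is where the internal axioms of \cref{definition:localic relation properties} (stated later in the paper) must be used.
\begin{itemize}
\item Reflexivity, via the diagonal factoring through $R$, gives that $\Up$ is inflationary. This is the forward implication the introduction says holds for any open cone relation.
\item Transitivity, via the map $R\times_X R\to R$ together with \cref{lemma:open map pullback square} computing $\Up\Up=t_!\pi_2{}_!\pi_1^{-1}s^{-1}$ on the composite, gives $\Up\Up\sqleq\Up$.
\item Symmetry, via the swap map exchanging $s$ and $t$, gives $\Up=\Down$.
\end{itemize}
For the invariants: if $\Up U=U$, then $s^{-1}U\sqleq t^{-1}t_!s^{-1}U=t^{-1}U$, and symmetrically $t^{-1}U\sqleq s^{-1}U$. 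Conversely, if $s^{-1}U=t^{-1}U$, then $\Up U=t_!t^{-1}U\sqleq U$, and inflationarity gives equality.

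The delicate point is the transitivity computation. If it proves awkward, I would instead cite the paper's later characterisation that relational properties imply the corresponding cone properties, since only this direction is needed here.
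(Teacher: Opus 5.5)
Your proof is correct and has the same overall plan as the paper's. Both identify $\Opens(X/R)$ with the invariant opens $\{U: s^{-1}U=t^{-1}U\}$. Both get from \cref{proposition:relation property implies cone property} that $\Up=\Down$ is a closure operator, and show its fixed points are exactly the invariant opens. Both conclude that $R$ and $X\times_{X/R}X$ have the same cones $q^{-1}q_!$, and finish with \cref{proposition:kernel pair is fixed point}.

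The genuine difference is how you reach $\Up=q^{-1}q_!$. The paper takes openness of $q$ for granted, since it is only asserted in \cref{example:open kernel pairs} following Kock. It then argues that $\Up U$ and $q^{-1}q_!(U)$ are both the least invariant open above $U$, hence equal.

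You instead define $q_!$ as the corestriction of $\Up$ and verify the adjunction and Frobenius reciprocity directly. Your computation $\Up(U\wedge W)=t_!(s^{-1}U\wedge t^{-1}W)=\Up U\wedge W$ for invariant $W$ is correct, and meets in the subframe $\Opens(X/R)$ agree with those in $\Opens X$. So openness of $q$ is proved rather than imported. This fact is needed both to speak of $q_!$ at all and to apply \cref{lemma:open map pullback square} to the kernel pair. In your version $\Up=q^{-1}q_!$ also holds by construction rather than by a uniqueness argument. Your route is therefore more self-contained, while the paper's is shorter once the imported fact is accepted.

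Your sketch of the transitivity step also works, via the factorisation $c\colon R\times_X R\to R$ with $sc=s\pi_1$ and $tc=t\pi_2$. It does not need $c$ to be open: $(t\pi_2)_!\,c^{-1}s^{-1}U\sqleq \Up U$ follows by adjunction from $s^{-1}U\sqleq t^{-1}\Up U$. Citing \cref{proposition:relation property implies cone property}, as the paper does, is equally fine.
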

\begin{proof}
	We saw in the proof of~\cref{proposition:kernel pair is fixed point} that the cones of the kernel pair of $q\colon X\tworightarrow X/R$ are just~$q^{-1}q_!$. Here specifically,~$q$ is defined as the coequaliser of the source and target maps $s,t$ of $R$ itself, so by~\cite[\S IV.3.2]{picado2012FramesLocalesTopology} we can identify the frame of~$X/R$ by the subframe~$\Opens X/R = \{ U\in\Opens X: s^{-1}(U)=t^{-1}(U)\}$ of~$\Opens X$, where~$q^{-1}$ is just the subframe inclusion.
	
	Borrowing the result~\cref{proposition:relation property implies cone property} from below, we find that the cones of~$R$ satisfy~$\Up=\Down$, are inflationary, and are subidempotent. We claim that $\Opens X/R$ is just the image of~$\Up$. Taking~$V\in \Opens X/R$, we get~$s^{-1}(V) = t^{-1}(V)$, so by~${t_!\dashv t^{-1}}$ we get~$\Up V \sqleq V$, which with inflation implies~$V=\Up V$. Moreover, for any $U\in\Opens X$ we get from the inclusion~$\Up^2 U\sqleq \Up U$ and the adjunctions~$s_!\dashv s^{-1}$,~$t_!\dashv t^{-1}$ that~$s^{-1}(\Up U)=t^{-1}(\Up U)$, which shows~$\Up$ lands in~$\Opens X/R$. The claim follows. Moreover, if $U\sqleq V\in \Opens X/R$ then $U\sqleq \Up U \sqleq \Up V = V$, so $\Up U$ is the unique least element in~$\Opens X/R$ that contains~$U$.
	
	On the other hand, $q^{-1}q_!(U)$ is also an element in $\Opens X/R$ containing $U$ via the unit of~$q_!\dashv q^{-1}$. Moreover, if $U\sqleq V\in \Opens X/R$, then~$V$ is actually shorthand for $q^{-1}(V)$, and we get~${q^{-1}q_!(U)\sqleq q^{-1}q_!q^{-1}(V)= q^{-1}(V)}$. By uniqueness, it follows that~$\Up = q^{-1}q_!$. Hence $R$ and its kernel pair $X\times_{X/R}X$ share the same cones, and using~\cref{proposition:kernel pair is fixed point} it follows that $R_\UpDown\cong X\times_{X/R}X$.	
\end{proof}

\begin{remark}
	\label{remark:kernel pair fixed point}
	For any localic equivalence relation $R$ with open cones, the strongly dense inclusion $R\subseteq X\times_{X/R}X$ into its kernel pair from~\cite{kock1989GodementTheoremLocales} therefore agrees with the inclusion $R\subseteq R_\UpDown$ from~\cref{proposition:R in Rupdown}. As mentioned, we will prove in~\cref{proposition:R in Rupdown strongly dense} that the latter inclusion is also strongly dense in the general setting. Any equivalence relation~$R$ that is not effective, for example in~\cite[p.468]{kock1989GodementTheoremLocales}, therefore provides an example of a non-fixed point~${R\not\cong R_\UpDown}$.
\end{remark}

\subsection{Closed relations}
\label{section:closed relations}
Closed preorders are important in topological order theory~\cite{nachbin1965TopologyOrder}, and closed localic partial orders are used in for example the work of Townsend~\cite{townsend1996preframeTechniquesConstructiveLocale,townsend1997LocalicPriestleyDuality} to give a localic Priestley duality. In this section we show that the class of closed localic relations with open cones define fixed points of~$\cone\dashv\rel$.

\begin{remark}
	\label{remark:closed sublocales}
	Recall from~\cite[\S III.6.1.2]{picado2012FramesLocalesTopology} the definition of a \emph{closed sublocale}. A localic relation $R$ on $X$ is called \emph{closed} if it is a closed sublocale of~$X\times X$. 
	Concretely, this means there exists $u\in \Opens X\tensor \Opens X$ so that the frame~$\Opens R$ is of the form ${\up u = \{o\in\Opens X\tensor \Opens X: u\sqleq o\}}$, and the sublocale inclusion can be represented by~${r^{-1}(o)= o\vee u}$. Important here, the corresponding frame congruence is precisely the principal frame congruence generated by the singleton relation $\{(u,\bot)\}\subseteq(\Opens X\tensor \Opens X)\times (\Opens X\tensor\Opens X)$.
\end{remark}

\begin{proposition}
	\label{proposition:R closed is fixed point}
	If $R$ is a closed localic relation with open cones, then $R\cong R_\UpDown$.
\end{proposition}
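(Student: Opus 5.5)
We already have $R\subseteq R_\UpDown$ from \cref{proposition:R in Rupdown}, so the task is the reverse inclusion $R_\UpDown\subseteq R$. By \cref{remark:closed sublocales}, the congruence of $R$ is the principal congruence generated by $(u,\bot)$, where $\Opens R=\up u$ and $r^{-1}(o)=o\vee u$. Since $R_\UpDown$ corresponds to the congruence generated by $\sim$, the inclusion $R_\UpDown\subseteq R$ is equivalent to $\mu_\sim$ identifying $u$ with $\bot$, i.e.\ $\mu_\sim(u)=\mu_\sim(\bot)$. Indeed, if $\mu_\sim$ equalises $(u,\bot)$, then \cref{theorem:frame quotient theorem} applied to the closed quotient gives a factorisation of $\mu_\sim$ through $r^{-1}$, which is exactly the sublocale inclusion $R_\UpDown\subseteq R$.

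To show $\mu_\sim(u)=\mu_\sim(\bot)$, I write $u=\bigvee_i a_i\tensor b_i$ as a join of rectangles. Since $\mu_\sim$ preserves joins, it suffices to show $\mu_\sim(a\tensor b)=\mu_\sim(\bot)$ for every rectangle $a\tensor b\sqleq u$. Equivalently, I need $a\tensor b\sim\bot\tensor\bot$, which means $\UpR(a,b)=\bot$ and $\DownR(a,b)=\bot$. These can be computed on $R$ itself using \cref{lemma:reduced induced cones from s t r}:
\[
\DownR(a,b)=s_!r^{-1}(a\tensor b),
\qquad
\UpR(a,b)=t_!r^{-1}(a\tensor b).
\]
Now $r^{-1}(a\tensor b)=(a\tensor b)\vee u=u$, and $u$ is the bottom element of $\Opens R=\up u$. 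Since $s_!$ and $t_!$ are left adjoints, they preserve bottom, so both reduced cones vanish. Thus $a\tensor b\sim\bot\tensor\bot$, hence $\mu_\sim(u)=\mu_\sim(\bot)$, and the inclusion follows.

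A cleaner route is available via \cref{lemma:sim of induced cones is ker r}: $r^{-1}(a\tensor b)=r^{-1}(\bot\tensor\bot)$ holds directly because both sides equal $u$, and that lemma then gives $a\tensor b\sim\bot\tensor\bot$ with no computation of the cones.

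I expect the main obstacle to be the initial reduction: making precise that the nucleus or congruence of the closed sublocale is generated by the single pair $(u,\bot)$, so that equalising this one pair suffices. This is exactly what \cref{remark:closed sublocales} provides. Once it is in place, the rest is immediate. The paper's mention of "weakly closed" suggests the general case needs strong density instead, but for closed $R$ the argument above is enough.
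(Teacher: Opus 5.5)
Your proof is correct and is essentially the paper's argument. You use \cref{proposition:R in Rupdown} and the principal-congruence description of closed sublocales to reduce to $u\langle\sim\rangle\bot$, then note that every rectangle $a\tensor b\sqleq u$ satisfies $r^{-1}(a\tensor b)=u=r^{-1}(\bot)$, hence $a\tensor b\sim\bot$, and take joins. Your ``cleaner route'' through \cref{lemma:sim of induced cones is ker r} is exactly the paper's proof, and your primary computation with $s_!,t_!$ preserving bottom just unpacks the forward direction of that lemma.
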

\begin{proof}
	Suppose that $R$ is the closed sublocale represented by~$u\in\Opens X\tensor\Opens X$. By~\cref{remark:closed sublocales} the corresponding frame congruence~$\langle(u,\bot)\rangle$ is the one generated by~${\{(u,\bot)\}}$. Given~\cref{proposition:R in Rupdown} it then suffices to show $\langle(u,\bot)\rangle \subseteq\langle\sim\rangle$.
	
	Since $u$ is the bottom element in $\Opens R=\up u$, we get for any rectangle $A\tensor B\sqleq u$ that $r^{-1}(A\tensor B)= u=r^{-1}(\bot)$, so by~\cref{lemma:sim of induced cones is ker r} it follows that~$A\tensor B\sim \bot$. Since $u$ is exactly the join over those rectangles $A\tensor B\sqleq u$, under the generated frame congruence $\langle\sim\rangle$ we get that~$u\langle\sim\rangle \bot$, and the result follows.
\end{proof}

\begin{remark}
	Note that \emph{open} localic relations with open cones are not necessarily fixed points, as the~\cref{counterexample:relation on Sierpinski} shows. Neither is closedness necessary in being a fixed point: take any non-strongly Hausdorff locale (\cite{isbell1972AtomlessPartsSpaces}) with the diagonal relation~(\cref{example:diagonal relation is fixed point}).
\end{remark}

A closed relation on a space does not necessarily induce a closed localic relation via the functor~$\loc\colon \rTop\to \rLoc$ in~\cref{proposition:functor rTop to rLoc}. There is a counterexample for any space~$S$ for which the canonical dense inclusion map $\pi_S$ that embeds the spatial product~$\loc(S\times S)$ into the localic product~$\loc(S)\times \loc(S)$ is not an isomorphism, for example~$S=\mathbb{Q}$. Namely, the top relation~${S\times S}$ is trivially a closed relation on~$S$, but if the induced relation on~$\loc(S)$ were closed then by density this would imply that~$\loc(S\times S)= \loc(S)\times \loc(S)$, a contradiction.

However, since the functor $\loc\colon\Top\to\Loc$ preserves closed maps, it follows straightforwardly that if $\pi_S$ is an isomorphism then any closed relation $R\subseteq S\times S$ with open cones induces a closed localic relation $\loc(R)\rightarrowtail \loc(S)\times \loc(S)$ with open cones, which is hence a fixed point by~\cref{proposition:R closed is fixed point}: $\loc(R)\cong \loc(R)_\UpDown$. 

\begin{example}
	If $S$ is a locally compact space then~$\pi_S$ an isomorphism by~\cite[Proposition~II.2.13]{johnstone1982StoneSpaces}, so any closed relation with open cones defines a fixed point.	In particular, the real line $(\mathbb{R},\leq)$ with Euclidean topology and standard ordering thus induces a closed localic relation with open cones $(\loc(\mathbb{R}),\loc(\leq))$, and is a fixed point:~$\loc(\leq)\cong \loc(\leq)_\UpDown$.
\end{example}

\begin{example}
	It is unknown if the causal relation~$\caus$ of a spacetime $M$ generally induces a fixed point. If $M$ is \emph{causally simple} then~$\caus$ is closed~\cite[Definition~4.112]{minguzzi2019LorentzianCausalityTheory}, and since $M$ is a manifold it is locally compact, so via the previous example~$\loc(\caus)$ is a fixed point. It would be interesting to see if~$\loc(\caus)$ being a fixed point corresponds to some property of~$M$ on the \emph{causal ladder}~\cite[\S 4]{minguzzi2019LorentzianCausalityTheory}.
\end{example}

\subsection{Strongly dense inclusion}
\label{section:strong density}
There is a notion of closedness more suitable constructively, introduced in~\cite{johnstone1989ConstructiveClosedSubgroup} to prove a constructive version of the closed subgroup theorem for localic groups. It is called \emph{weakly closed}, or also sometimes \emph{fibrewise closed}. Classically this notion coincides with that outlined in~\cref{remark:closed sublocales}, but not generally. Accompanying this constructive notion of closedness we have the notion of \emph{strong density}. We prove that the inclusion~$R\subseteq R_\UpDown$ from~\cref{proposition:R in Rupdown} is strongly dense. This allows us to show that all weakly closed localic relations with open cones defined fixed points of~$\cone\dashv\rel$.

In the following, denote the terminal maps in~$\Loc$ by~$!_X\colon X\to 1$. The terminal locale~$1$ has frame~$\Opens 1$, the initial frame of truth values. Classically,~$\Opens 1 \cong \{\bot,\top\}$. 

\begin{definition}
	\label{definition:strongly dense}
	A locale map~$f\colon X\to Y$ is called \emph{strongly dense} if for all~${p\in\Opens 1}$:
	\[
	f_\ast \circ \terminal_X^{-1}(p)\sqleq \terminal_Y^{-1}(p).
	\]
	A sublocale is called strongly dense if its inclusion map is strongly dense.
\end{definition}

\begin{remark}
	In classical locale theory a map of locales~$f\colon X\to Y$ is called \emph{dense} if $f_\ast(\bot)=\bot$~\cite[\S III.8]{picado2012FramesLocalesTopology}. A sublocale is dense if its inclusion is dense, which means equivalently that its corresponding sublocale set contains the bottom element. Clearly strongly dense implies dense, since taking $p\in \Opens 1$ to be the bottom element, strong density says~$f_\ast(\bot)=f_\ast \circ \terminal_X^{-1}(\bot) \sqleq \terminal_Y^{-1}(\bot)=\bot$. The converse is not generally true constructively.
\end{remark}

\begin{definition}
	A sublocale $A\in\Sl(X)$ is called \emph{weakly closed} if every strongly dense inclusion $A\subseteq B$ for $B\in\Sl(X)$ is an isomorphism.
\end{definition}

\begin{remark}
	Any closed sublocale is weakly closed, but again the converse cannot be proved constructively.
\end{remark}

We show that the sublocale inclusion~$R\subseteq R_\UpDown$ is strongly dense, from which it then follows immediately that any weakly closed open cone localic relation~$R$ is a fixed point of~$\cone\dashv\rel$.

\begin{proposition}
	\label{proposition:R in Rupdown strongly dense}
	For any open cone localic relation $R\subseteq R_\UpDown$ is strongly dense.
\end{proposition}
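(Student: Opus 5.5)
The plan is to verify the defining inequality of \cref{definition:strongly dense} directly on a join-basis of $\Opens R_\UpDown$. I use two tools: the open source map $s$ of $R$, which lets me push opens of $R$ down to $X$, and the reduced form of rectangles from \cref{lemma:parallel in terms of sim}.

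Write $i\colon R\rightarrowtail R_\UpDown$ for the inclusion of \cref{proposition:R in Rupdown}. It is characterised by $i^{-1}\circ\mu_\sim = r^{-1}$. Fix $p\in\Opens 1$ and put $P:=\terminal_X^{-1}(p)\in\Opens X$. Since $\terminal_R=\terminal_X\circ s$ and $\terminal_{R_\UpDown}=\terminal_X\circ s_\UpDown$, the target inequality reads
\[
i_\ast\bigl(s^{-1}(P)\bigr)\sqleq s_\UpDown^{-1}(P)=\mu_\sim(P\tensor\top).
\]
The right adjoint is $i_\ast(w)=\bigvee\{o: i^{-1}(o)\sqleq w\}$. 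Every such $o$ is a join of basic elements $\mu_\sim(A\tensor B)\sqleq o$, and each of these again satisfies $i^{-1}\mu_\sim(A\tensor B)\sqleq w$. So it suffices to prove the following: whenever $r^{-1}(A\tensor B)\sqleq s^{-1}(P)$, we have $\mu_\sim(A\tensor B)\sqleq\mu_\sim(P\tensor\top)$.

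First, apply $s_!$ to the hypothesis. \cref{lemma:reduced induced cones from s t r} gives $s_!r^{-1}(A\tensor B)=\DownR(A,B)$. The counit of $s_!\dashv s^{-1}$ gives $s_!s^{-1}(P)\sqleq P$. Together these yield $\DownR(A,B)\sqleq P$. (The analogous bound $\UpR(A,B)\sqleq P$ via $t$ also holds, but it is not needed.)

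Second, \cref{lemma:parallel in terms of sim} shows that $A\tensor B\sim\DownR(A,B)\tensor\UpR(A,B)$, and $\mu_\sim$ equalises $\sim$. Hence
\[
\mu_\sim(A\tensor B)=\mu_\sim\bigl(\DownR(A,B)\tensor\UpR(A,B)\bigr)\sqleq\mu_\sim(P\tensor\top),
\]
by monotonicity of $\mu_\sim$. This is the required inequality, and taking joins completes the argument.

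I expect the main obstacle to be bookkeeping rather than a real difficulty. One must identify $\terminal_{R_\UpDown}^{-1}(p)$ correctly as $\mu_\sim(\terminal_X^{-1}(p)\tensor\top)$, and justify the reduction of $i_\ast$ to basic elements, which uses that the $\mu_\sim(A\tensor B)$ form a join-basis because $\mu_\sim$ is a surjective frame map. The argument uses only openness of $s$, so it stays constructive, which is the whole point of strong density.
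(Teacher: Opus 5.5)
Your proof is correct, and its skeleton matches the paper's. You reduce $i_\ast$ to basic elements $\mu_\sim(A\tensor B)$ with $r^{-1}(A\tensor B)\sqleq s^{-1}(P)$, and show each lies below $\mu_\sim(P\tensor\top)=\terminal_{R_\UpDown}^{-1}(p)$.

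The difference is in the key step. The paper meets the hypothesis with $\terminal_R^{-1}(p)=r^{-1}(P\tensor\top)$ to rewrite $r^{-1}(A\tensor B)=r^{-1}((A\wedge P)\tensor B)$. It then invokes \cref{lemma:sim of induced cones is ker r} (on rectangles, the kernel of $r^{-1}$ is exactly $\sim$) to get $A\tensor B\sim(A\wedge P)\tensor B$, whence $\mu_\sim(A\tensor B)=\mu_\sim((A\wedge P)\tensor B)\sqleq\mu_\sim(P\tensor\top)$. You instead push the inequality down along $s_!$, using \cref{lemma:reduced induced cones from s t r} and the counit to get $\DownR(A,B)\sqleq P$. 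You then pass to the canonical reduced form of \cref{lemma:parallel in terms of sim}, where the target component is simply bounded by $\top$.

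Your version extracts only the information actually needed. You never use the other half of the paper's relation $A\tensor B\sim(A\wedge P)\tensor B$, namely $\UpR(A,B)=\UpR(A\wedge P,B)$. The paper's version has the advantage of reusing verbatim the lemma that produced the inclusion $R\subseteq R_\UpDown$ in \cref{proposition:R in Rupdown}.

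One correction to your closing remark: the argument does not use only openness of $s$. The identity $A\tensor B\sim\DownR(A,B)\tensor\UpR(A,B)$ needs both parallelness inequalities for $\Up,\Down$. Their proofs in \cref{proposition:induced cones are parallel} use Frobenius reciprocity for $t$ and for $s$ respectively, and $\sim$ is itself defined through $\Up=t_!s^{-1}$. Constructivity does not hinge on this either, since the paper's route is equally constructive.
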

\begin{proof}
	Let $r\colon R\rightarrowtail X\times X$ denote the original relation, and $r_\UpDown\colon R_\UpDown\rightarrowtail X\times X$ the induced relation, so that $r_\UpDown^{-1}=\mu_\sim$ as in~\cref{section:relation induced by UpDown}. By~\cref{proposition:R in Rupdown} we get an inclusion $i\colon R\rightarrowtail R_\UpDown$ satisfying~$r=r_\UpDown\circ i$. We show that~$i$ is strongly dense, meaning for all~$p\in\Opens 1$:
	\[
	i_\ast \circ \terminal_R^{-1}(p)\sqleq \terminal_{R_\UpDown}^{-1}(p).
	\]
	Since $i^{-1}\dashv i_\ast$, we get for every~$u\in \Opens R$ that~${i_\ast(u)=\bigvee\{o\in \Opens R_\UpDown: i^{-1}(o)\sqleq u\}}$. Thus it suffices to show that $o\sqleq \terminal_{R_\UpDown}^{-1}(p)$ for every $o\in \Opens R_\UpDown$ with~${i^{-1}(o)\sqleq \terminal_R^{-1}(p)}$.
	Since $\mu_\sim$ is surjective, for each such $o\in\Opens R_\UpDown$ we can find $w\in \Opens X\tensor \Opens X$ such that $o=\mu_\sim(w)$. Then $r^{-1}(w) = i^{-1}\mu_\sim(w) = i^{-1}(o) \sqleq \terminal_R^{-1}(p)$. In particular, for every rectangle $a\tensor b\sqleq w$ we get $r^{-1}(a\tensor b)\sqleq r^{-1}(w)\sqleq \terminal_R^{-1}(p)$, and thus
	\begin{align*}
		r^{-1}(a\tensor b)
		&=
		r^{-1}(a\tensor b)\wedge \terminal_R^{-1}(p)
		\\&=
		r^{-1}(a\tensor b) \wedge r^{-1}(\terminal_X^{-1}(p)\tensor\top)
		&&(\terminal_R = \terminal_X\circ \pr_1\circ r)
		\\&=
		r^{-1}\left(a\tensor b \wedge \terminal_X^{-1}(p)\tensor \top\right)
		&&(\text{$r^{-1}$ preserves $\wedge$})
		\\&=
		r^{-1}\left((a\wedge \terminal_X^{-1}(p))\tensor b)\right).
	\end{align*}
	By~\cref{lemma:sim of induced cones is ker r} we get~$a\tensor b\sim (a\wedge \terminal_X^{-1}(p))\tensor b$, which is equalised by~$\mu_\sim$, and hence using~$\terminal_{R_\UpDown}= \terminal_X\circ \pr_1\circ r_\UpDown$ gives:
	\[
	\mu_\sim(a\tensor b)
	=
	\mu_\sim\left((a\wedge \terminal_X^{-1}(p))\tensor b\right)
	\sqleq
	\mu_\sim(\terminal_X^{-1}(p)\tensor \top) 
	=
	\terminal_{R_\UpDown}^{-1}(p).
	\]
	Taking joins over rectangles $a\tensor b\sqleq w$ gives $o=\mu_\sim(w)\sqleq \terminal_{R_\UpDown}^{-1}(p)$, as desired.
\end{proof}

\begin{corollary}
	\label{corollary:weakly closed R is fixed point}
	If $R$ is weakly closed with open cones, then $R\cong R_\UpDown$.
\end{corollary}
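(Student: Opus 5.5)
This is an immediate consequence of \cref{proposition:R in Rupdown strongly dense} together with the definition of weak closedness. The plan is to verify that the inclusion produced earlier lives in the right lattice of sublocales, and then apply the definition directly.

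By \cref{proposition:R in Rupdown}, the frame quotient \cref{theorem:frame quotient theorem} gives a sublocale inclusion $i\colon R\rightarrowtail R_\UpDown$ with $r = r_\UpDown\circ i$. Both $R$ and $R_\UpDown$ are therefore elements of $\Sl(X\times X)$, and $R\subseteq R_\UpDown$ holds there.

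By \cref{proposition:R in Rupdown strongly dense}, this inclusion map $i$ is strongly dense in the sense of \cref{definition:strongly dense}. Since $R$ is assumed weakly closed in $X\times X$, and $R_\UpDown$ is a sublocale $B\in\Sl(X\times X)$ containing $R$ strongly densely, the definition of weak closedness forces $R\subseteq R_\UpDown$ to be an isomorphism. Hence $R\cong R_\UpDown$ as sublocales of $X\times X$.

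The one point needing a little care is what ``strongly dense inclusion $A\subseteq B$'' means in the definition of weakly closed. It should refer to the inclusion map $A\rightarrowtail B$ being strongly dense, not $B\rightarrowtail X\times X$. This is exactly the map $i$ that \cref{proposition:R in Rupdown strongly dense} treats, since there strong density is checked using $\terminal_R$ and $\terminal_{R_\UpDown}$. Nothing else should be an obstacle. As a consistency check, by \cref{theorem:adjunction} this also says that $(X,R)$ is a fixed point of $\cone\dashv\rel$, with unit $\eta_{(X,R)}$ an isomorphism.
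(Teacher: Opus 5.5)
Your proposal is correct and matches the paper's argument: the inclusion $R\subseteq R_\UpDown$ from \cref{proposition:R in Rupdown} is strongly dense by \cref{proposition:R in Rupdown strongly dense}, so weak closedness of $R$ forces it to be an isomorphism. You also read strong density the same way the paper does, as a property of the map $i\colon R\rightarrowtail R_\UpDown$.
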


In particular, since we saw in~\cref{proposition:kernel pair of equivalence relation R is Rupdown} that for an equivalence relation~$R$ with open cones the induced relation~$R_\UpDown$ is precisely the kernel pair of~$R$, we recover Kock's Theorem A~\cite{kock1989GodementTheoremLocales}.

\begin{corollary}
	\label{corollary:Kock}
	Any weakly closed localic equivalence relation $R$ with open cones is the kernel pair of its quotient map $X\tworightarrow X/R$. 	
\end{corollary}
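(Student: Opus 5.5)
The plan is to chain three results already established: \cref{corollary:weakly closed R is fixed point}, \cref{proposition:kernel pair of equivalence relation R is Rupdown}, and the strong density of \cref{proposition:R in Rupdown strongly dense}. Let $R$ be a weakly closed localic equivalence relation on $X$ with open cones. Let $q\colon X\tworightarrow X/R$ be the coequaliser of its source and target maps.

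First, since $R$ is weakly closed with open cones, \cref{corollary:weakly closed R is fixed point} gives $R\cong R_\UpDown$. Concretely, the canonical inclusion $i\colon R\rightarrowtail R_\UpDown$ of \cref{proposition:R in Rupdown} is strongly dense by \cref{proposition:R in Rupdown strongly dense}. Weak closedness of $R$ then forces this inclusion to be an isomorphism of sublocales of $X\times X$.

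Second, since $R$ is an equivalence relation with open cones, \cref{proposition:kernel pair of equivalence relation R is Rupdown} gives $R_\UpDown\cong X\times_{X/R}X$. Combining the two steps yields $R\cong X\times_{X/R}X$ as sublocales of $X\times X$.

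The only point needing care is that this isomorphism is the canonical comparison $R\rightarrowtail X\times_{X/R}X$ of \cref{example:kernel pair equivalence relation}, so that $R$ is effective in the stated sense. Both $R$ and $X\times_{X/R}X$ are sublocales of $X\times X$, and all maps involved commute with the inclusions into $X\times X$. Since a monomorphism into $X\times X$ admits at most one factorisation through another subobject, the composite isomorphism must coincide with the canonical inclusion. This is the step I would check most carefully, and I expect it to be routine once stated as a subobject argument.
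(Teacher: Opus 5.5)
Your proposal is correct and follows essentially the same route as the paper: the paper identifies the inclusion $R\subseteq R_\UpDown$ with the kernel pair comparison $R\subseteq X\times_{X/R}X$ via \cref{proposition:kernel pair of equivalence relation R is Rupdown}, and then concludes from strong density (\cref{proposition:R in Rupdown strongly dense}) together with weak closedness. Your extra check that the composite isomorphism is the canonical comparison is sound, since the factorisation is unique because the kernel pair's inclusion into $X\times X$ is monic, and it simply makes explicit what the paper leaves implicit.
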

\begin{proof}
	By~\cref{proposition:kernel pair of equivalence relation R is Rupdown} the inclusion~$R\subseteq R_\UpDown$ from~\cref{proposition:R in Rupdown} reduces to the kernel pair inclusion~$R\subseteq X\times_{X/R}X$. This inclusion is strongly dense by~\cref{proposition:R in Rupdown strongly dense}, so if~$R$ is weakly closed it follows~${R\cong X\times_{X/R}X}$. 
\end{proof}

\section{Localic preorders and other properties}
\label{section:preorders}
Preorders are reflexive transitive relations. The axioms of reflexivity and transitivity can be stated in any category with finite limits. Recall the diagonal relation~${\Delta\subseteq X\times X}$ is the relation whose source and target maps are the identity~(\cref{example:diagonal relation}). In general, the composition $R\circ Q$ of two relations $R$ and $Q$ is defined as the image of a certain projection map $R\times_X Q \to X\times X$. See~\cref{remark:relation composition} for the precise construction. The definition of reflexivity and transitivity is then the straightforward internalisation of the set-theoretic conditions. For the purposes of illustration we also include some other basic properties, where we recall that if~$R$ has source and target~$(s,t)$, then~$R^\op$ is the relation defined by~$(t,s)$, which has open cones iff~$R$ has open cones. First, we recall the spatial intuition.

\begin{definition}
	A relation $R$ on a space $S$ is called:
	\begin{itemize}
		\item \emph{reflexive} if $xRx$ for all $x\in S$;
		\item \emph{transitive} if $xRy$ and $yRz$ imply $xRz$;
		\item \emph{symmetric} if $xRy$ implies $yRx$;
		\item \emph{interpolative} if $xRy$ implies there exists $z\in S$ with $xRz$ and $zRy$;
	\end{itemize}
\end{definition}

\begin{lemma}
	\label{lemma:reflexive and transitive in terms of cones}
	A relation $R$ on a space $S$ is:
	\begin{itemize}
		\item reflexive iff $\Delta\subseteq R$ iff $\up,\down$ are inflationary;
		\item transitive iff $R\circ R\subseteq R$ iff $\up,\down$ are subidempotent;
		\item symmetric iff $R= R^\op$ iff $\up = \down$;
		\item interpolative iff $R\subseteq R\circ R$ iff $\up\subseteq \up^2$ and $\down\subseteq\down^2$.
	\end{itemize}
\end{lemma}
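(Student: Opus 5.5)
Each of the four bullets is proved in the same way, as a chain of equivalences. The first equivalence in each line is plain set theory. The diagonal is $\Delta=\{(x,x)\}$. The composite is $R\circ R=\{(x,z):\exists y\colon xRy,\ yRz\}$, which matches the internal definition on spaces. The opposite is $R^\op=\{(y,x):xRy\}$. With these, reflexivity is literally $\Delta\subseteq R$, transitivity is $R\circ R\subseteq R$, symmetry is $R\subseteq R^\op$ (equivalently $R=R^\op$, since $(-)^\op$ is an involution), and interpolation is $R\subseteq R\circ R$. The real content is the second equivalence in each line, relating these inclusions to the cones on $\Powerset(S)$.

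To handle all four uniformly, I would use the correspondence of \cref{example:cones from relation are parallel}: $x\in\down\{y\}$ iff $xRy$ iff $y\in\up\{x\}$. I would also use that the cones preserve unions (\cref{proposition:cones preserve joins in Set}), so each cone is determined by its values on singletons, $\up A=\bigcup_{a\in A}\up\{a\}$. Consequently an inclusion between two union-preserving operators holds as soon as it holds on singletons. The following translations then give the four bullets.
\begin{itemize}
\item Inflationarity on singletons, $x\in\up\{x\}$, is exactly $xRx$; by union-preservation this gives $A\subseteq\up A$ for all $A$. The same holds for $\down$.
\item For composites I would check the identity $\up\up\{x\}=\{z:\exists y\colon xRy,\ yRz\}$, which is the up-cone of $x$ with respect to $R\circ R$. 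Hence $\up^2\subseteq\up$ on singletons is exactly $R\circ R\subseteq R$, and $\up\subseteq\up^2$ is exactly $R\subseteq R\circ R$. The dual identity handles $\down$.
\item The up-cone of $R^\op$ is the down-cone of $R$. Since a relation is determined by its cones, $R=R^\op$ iff $\up=\down$.
\end{itemize}

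One subtlety is that the statement asks for both $\up$ and $\down$ in the reflexive and transitive cases, whereas a single cone already suffices there. For example, $\up$ being inflationary already forces reflexivity, and reflexivity gives both cones inflationary. So the forward direction gives both properties, and the backward direction needs only one of them. This is consistent with the "iff" as stated, and the same remark applies to subidempotence.

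I do not expect a genuine obstacle. The only step needing care is the composite identity $\up^2\{x\}=\up_{R\circ R}\{x\}$, which just unfolds the existential quantifiers. Beyond that, I would need to state explicitly that a union-preserving inclusion of operators on $\Powerset(S)$ may be checked on singletons. Note that topology plays no role here: everything happens on the powerset, and open cones are not needed.
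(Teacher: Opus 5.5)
Your proof is correct. The paper states this lemma without proof, treating it as standard spatial intuition. Your method of reducing each cone condition to singletons via union-preservation, and reading those values off from $x\in\down\{y\}\iff xRy\iff y\in\up\{x\}$, is the same singleton technique the paper uses in \cref{example:cones from relation are parallel} and in the proof of \cref{lemma:monotonicity in terms of cones}. Your remark that a single cone already suffices for the backward implications is also accurate, and it holds in the interpolative case as well.
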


Based on this characterisation, the localic internal analogues of these properties can be defined as follows.

\begin{definition}
	\label{definition:localic relation properties}
	A localic relation $R$ is called:
	\begin{itemize}
		\item \emph{reflexive} if $\Delta\subseteq R$;
		\item \emph{transitive} if $R\circ R \subseteq R$;
		\item \emph{symmetric} if $R= R^\op$;
		\item \emph{interpolative} if $R\subseteq R\circ R$.
	\end{itemize}
\end{definition}

The point of this section is to study the localic analogue of~\cref{lemma:reflexive and transitive in terms of cones}: that a relation is reflexive/transitive iff the cones are inflationary/subidempotent. Generally, any reflexive/transitive localic relation $R$ has inflationary/subidempotent cones $\Up,\Down$, but if the relation is not of the form $R_\ucdc$ then the converse implication may fail~(\cref{counterexample:preorder via cones}).

The main tool in translating these inclusions of relations into inclusions of cones is the following result, which is just a simple consequence of functoriality.

\begin{proposition}
	\label{proposition:relation inclusion implies cone inclusion}
	If $Q\subseteq R$ then $\Upsub{Q}\sqleq \Upsub{R}$ and $\Downsub{Q}\sqleq \Downsub{R}$.
\end{proposition}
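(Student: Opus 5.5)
The plan is to deduce this from the adjunction and the characterisation of conic morphisms, and then to give a direct check as backup. The inclusion $Q\subseteq R$ means, by \cref{remark:monotone inclusion}, that the identity $\id_X\colon (X,Q)\to (X,R)$ is internally monotone. Both relations are implicitly assumed to have open cones, so this is a morphism in $\rocLoc$. Applying the functor $\cone$ of \cref{proposition:functor rocLoc to ucdcFrm} yields a conic morphism
\[
\id_{\Opens X}\colon (\Opens X,\Upsub{R},\Downsub{R})\longrightarrow(\Opens X,\Upsub{Q},\Downsub{Q}).
\]
By \cref{lemma:id ucdc map iff cones inclusion}, this is exactly $\Upsub{Q}\sqleq\Upsub{R}$ and $\Downsub{Q}\sqleq\Downsub{R}$.

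Since this is just functoriality, the substantive step is already contained in \cref{proposition:functor rocLoc to ucdcFrm}. There it was shown that monotone maps induce frame maps preserving $\sim$, and \cref{proposition:conic morphism iff preserves sim} converts this into the cone inequalities.

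For a self-contained check, I would argue directly. Let $j\colon Q\rightarrowtail R$ be the inclusion, so that $s_Q=s_R\circ j$ and $t_Q=t_R\circ j$. Then $s_Q^{-1}=j^{-1}s_R^{-1}$, and $(t_Q)_!$ is the left adjoint of $j^{-1}t_R^{-1}$. Set $O=s_R^{-1}(U)$. It suffices to show $(t_Q)_!\,j^{-1}(O)\sqleq (t_R)_!(O)$. By adjunction this is equivalent to $j^{-1}(O)\sqleq j^{-1}t_R^{-1}(t_R)_!(O)$, which follows from the unit $O\sqleq t_R^{-1}(t_R)_!(O)$ and monotonicity of $j^{-1}$. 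The down cone is handled symmetrically.

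The only minor obstacle is noticing that $j^{-1}$ need not have a left adjoint, since $j$ need not be open. The argument therefore works with the composite adjunction $(t_Q)_!\dashv j^{-1}t_R^{-1}$ rather than factoring $(t_Q)_!$ through $j$.
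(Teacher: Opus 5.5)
Your main argument is exactly the paper's proof: $Q\subseteq R$ makes $\id_X\colon (X,Q)\to(X,R)$ monotone, applying $\cone$ gives a conic morphism $\id_{\Opens X}\colon(\Opens X,\Upsub{R},\Downsub{R})\to(\Opens X,\Upsub{Q},\Downsub{Q})$, and \cref{lemma:id ucdc map iff cones inclusion} converts this into the two inequalities. Your backup direct check is also correct and is more elementary: it uses only the adjunction $(t_Q)_!\dashv j^{-1}t_R^{-1}$ and the unit of $(t_R)_!\dashv t_R^{-1}$ (and likewise for the down cone), so it bypasses Frobenius reciprocity and the $\sim$-machinery behind the functor $\cone$.
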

\begin{proof}
	We have $Q\subseteq R$ iff $\id_X\colon (X,Q)\to (X,R)$ is monotone, so applying the functor $\cone$ we get a conic morphism $\id_{\Opens X}\colon (\Opens X,\Upsub{R},\Downsub{R})\to (\Opens X,\Upsub{Q},\Downsub{Q})$, which by~\cref{lemma:id ucdc map iff cones inclusion} holds iff $\Upsub{Q}\sqleq \Upsub{R}$ and $\Downsub{Q}\sqleq \Downsub{R}$.
\end{proof}

\subsection{Cones of compositions}
\label{section:cones of compositions}
In order to apply the result of~\cref{proposition:relation inclusion implies cone inclusion} to the transitivity axiom~${R\circ R\subseteq R}$, we first need to calculate the induced cones of~$R\circ R$. In this section we calculate the cones for an arbitrary binary composition~${R\circ Q}$ of open cone localic relations, and show that they are equal to~${\Upsub{Q}\circ \Upsub{R}}$ and~$\Downsub{R}\circ \Downsub{Q}$, as may be anticipated. In order to prove this, in addition to the already stated~\cref{lemma:open map pullback square}, we first collect some basic facts about open maps of locales.

\begin{lemma}
	\label{lemma:open descends along epi}
	Let $g\colon Y\to Z$ be a locale map, and $e\colon X\tworightarrow Y$ an epimorphism so that $f= g\circ e$ is open. Then $g$ is open with $g_!=f_!\circ e^{-1}$.
\end{lemma}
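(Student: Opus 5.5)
We must show that $g$ is open with $g_! = f_!\circ e^{-1}$. The plan is to define $g_! := f_!\circ e^{-1}\colon \Opens Y\to\Opens Z$ and verify directly the two requirements of \cref{definition:open map}: the adjunction $g_!\dashv g^{-1}$ and Frobenius reciprocity. The single input about $e$ is that epimorphisms in $\Loc$ are exactly the locale maps whose frame map $e^{-1}$ is injective. Hence $e_\ast e^{-1}=\id_{\Opens Y}$, and $e^{-1}$ reflects the order: $e^{-1}(U)\sqleq e^{-1}(U')$ implies $U\sqleq U'$, since $e^{-1}$ preserves meets and is injective.

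\emph{Adjunction.} For $V\in\Opens Y$ and $W\in\Opens Z$ we compute
\[
f_!e^{-1}(V)\sqleq W \iff e^{-1}(V)\sqleq f^{-1}(W)=e^{-1}g^{-1}(W)\iff V\sqleq g^{-1}(W).
\]
The first step is $f_!\dashv f^{-1}$. The second uses $f^{-1}=e^{-1}g^{-1}$ together with order reflection by $e^{-1}$; the reverse implication there is just monotonicity of $e^{-1}$. This gives $g_!\dashv g^{-1}$.

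\emph{Frobenius.} For $V\in\Opens Y$ and $W\in\Opens Z$,
\[
g_!\bigl(V\wedge g^{-1}(W)\bigr)=f_!\bigl(e^{-1}(V)\wedge e^{-1}g^{-1}(W)\bigr)=f_!\bigl(e^{-1}(V)\wedge f^{-1}(W)\bigr)=f_!e^{-1}(V)\wedge W .
\]
Here we use that $e^{-1}$ preserves binary meets, then rewrite $e^{-1}g^{-1}=f^{-1}$, and finally apply Frobenius reciprocity for $f$. The right-hand side is $g_!(V)\wedge W$, as required.

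The only step needing care is the justification that epis in $\Loc$ are precisely the maps with injective frame map. One direction is immediate: if $e^{-1}$ is injective, then $e^{-1}$ is monic in $\Frm$, so $e$ is epi in $\Loc$. The converse is standard (e.g.\ \cite{picado2012FramesLocalesTopology}). I would argue it via the equaliser of a pair of frame maps out of a free frame, or simply cite it. Note that the argument above only uses this injectivity, so even if one prefers to take "epimorphism" to mean injective frame map, the proof goes through unchanged. Everything else is a routine calculation.
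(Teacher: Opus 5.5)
Your proof is correct and matches the paper's argument essentially step for step. You define $g_! := f_!\circ e^{-1}$, use injectivity of $e^{-1}$ (your order-reflection argument via meet preservation makes the paper's terse ``$e^{-1}$ injective'' step explicit) to get $g_!\dashv g^{-1}$, and derive Frobenius reciprocity from meet preservation of $e^{-1}$, the identity $e^{-1}g^{-1}=f^{-1}$, and Frobenius for $f$, while the paper likewise just cites that epimorphisms in $\Loc$ are exactly the maps with injective frame map.
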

\begin{proof}
	Recall that $e$ is epimorphic in $\Loc$ iff $e^{-1}$ is an injective frame map~\cite[Lemma~IX.4.2]{maclane1994SheavesGeometryLogic}. It suffices to check that $g_!:= f_!\circ e^{-1}$ defines a left adjoint to~$g^{-1}$ that satisfies the Frobenius reciprocity condition. For that, take $V\in\Opens Y$ and~$W\in \Opens Z$, and calculate:
	\begin{align*}
		g_!(V)\sqleq W 
		&\iff
		f_!e^{-1}(V)\sqleq W
		\\&\iff
		e^{-1}(V) \sqleq f^{-1}(W)
		\\&\iff
		e^{-1}(V)\sqleq e^{-1}g^{-1}(W)
		&&(f=g\circ e)		
		\\&\iff
		V\sqleq g^{-1}(W),
		&&(\text{$e^{-1}$ injective})
	\end{align*}
	as desired: $g_!\dashv g^{-1}$. For reciprocity, check similarly:
	\begin{align*}
		g_!\left(V\wedge g^{-1}(W)\right)
		&:=
		f_!e^{-1}\left(V\wedge g^{-1}(W)\right)
		\\&=
		f_!\left(e^{-1}(V)\wedge e^{-1}g^{-1}(W) \right)
		&&(\text{$e^{-1}$ preserves $\wedge$})
		\\&=
		f_!\left(e^{-1}(V)\wedge f^{-1}(W)\right)
		&&(f=g\circ e)
		\\&= 
		f_!e^{-1}(V)\wedge W
		&&(\text{reciprocity $f$})
		\\&=:
		g_!(V)\wedge W.
		&&\qedhere
	\end{align*}
\end{proof}

\begin{lemma}
	\label{lemma:open maps compose}
	If $f,g$ are open then $f\circ g$ is open with $(f\circ g)_! = f_!\circ g_!$.
\end{lemma}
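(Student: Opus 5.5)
The plan is to verify directly that $(f\circ g)_! := f_!\circ g_!$ is a left adjoint of $(f\circ g)^{-1} = g^{-1}\circ f^{-1}$ and that it satisfies Frobenius reciprocity. Write $g\colon X\to Y$ and $f\colon Y\to Z$, with left adjoints $g_!\dashv g^{-1}$ and $f_!\dashv f^{-1}$ coming from \cref{definition:open map}.

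The adjunction is just composition of Galois adjunctions. For $U\in\Opens X$ and $W\in\Opens Z$ we have
\[
f_!g_!(U)\sqleq W
\iff g_!(U)\sqleq f^{-1}(W)
\iff U\sqleq g^{-1}f^{-1}(W).
\]
This gives $f_!\circ g_!\dashv (f\circ g)^{-1}$. By uniqueness of adjoints (\cref{theorem:adjoint iff preserves meets/joins}), any left adjoint of $(f\circ g)^{-1}$ must coincide with this one, which justifies the formula $(f\circ g)_! = f_!\circ g_!$.

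For reciprocity, take $U\in\Opens X$ and $W\in\Opens Z$ and apply Frobenius for $g$ and then for $f$:
\begin{align*}
f_!g_!\left(U\wedge g^{-1}f^{-1}(W)\right)
&= f_!\left(g_!(U)\wedge f^{-1}(W)\right)
&&(\text{reciprocity for $g$, with $V=f^{-1}(W)$})
\\&= f_!g_!(U)\wedge W.
&&(\text{reciprocity for $f$})
\end{align*}

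There is no genuine obstacle here. The only point requiring care is the order of composition on the frame side: $(f\circ g)^{-1}=g^{-1}\circ f^{-1}$, whereas the left adjoints compose covariantly. Once that is kept straight, both steps follow from the definitions.
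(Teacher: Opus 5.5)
Your proposal is correct and follows essentially the same argument as the paper: left adjoints compose to give $f_!\circ g_!\dashv g^{-1}\circ f^{-1}$, and Frobenius reciprocity carries over by applying it first for $g$ with $V=f^{-1}(W)$ and then for $f$, exactly as in your computation.
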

\begin{proof}
	This follows using that adjoints compose~\cite[\S IV.8]{maclane1998CategoriesWorkingMathematician}, and a straightforward check that Frobenius reciprocity carries over:
	\[
	f_!g_!\left(U\wedge g^{-1}f^{-1}(V)\right)
	=
	f_!\left(g_!(U)\wedge f^{-1}(V)\right)
	=
	f_!g_!(U)\wedge V.
	\qedhere
	\]
\end{proof}

\begin{remark}
	\label{remark:relation composition}
	All ingredients to calculate the cones of a composition of two open cone localic relations are now in place.
	We briefly restate the setup (for more details see~\cite[\S D.2]{schaaf2024TowardsPointFreeSpacetimes}), starting with two open cone relations~${r\colon R\rightarrowtail X\times X}$ and~${q\colon Q\rightarrowtail X\times X}$. Denote their source and target maps by~$s_R,t_R$ and~$s_Q,t_Q$, respectively. Next take the pullback~${P:= R\times_X^{t_R,s_Q} Q}$, which has pullback projections~${\pi_R\colon P\to R}$ and~$\pi_Q\colon P\to Q$ satisfying~$t_R\circ \pi_R = s_Q\circ \pi_Q$. Define the maps~$p_1:= s_R\circ \pi_R$ and~$p_2:= t_Q\circ \pi_Q$,
	and consider $p=(p_1,p_2)\colon P\to X\times X$. The composite relation~$R\circ Q$ is the image sublocale of~$p$, and we write its factorisation as
	\[
	\begin{tikzcd}[cramped]
		P \ar[r,two heads,"e"]& R\circ Q  \ar[r,tail,"m"]& X\times X.
	\end{tikzcd}
	\]
	The source and target maps of $R\circ Q$ are denoted $s=\pr_1\circ m$ and $t=\pr_2\circ m$, respectively. These are then related to the old source and target maps of $R$ and $Q$ as follows:
	\[
	s\circ e = p_1 = s_R\circ \pi_R
	\qquad\text{and}\qquad
	t\circ e = p_2 = t_Q\circ \pi_Q.
	\]
\end{remark}

\begin{proposition}
	\label{proposition:cones of RQ}
	If $R$ and $Q$ have open cones, then so does $R\circ Q$, and the induced cones are
	\[
	\Upsub{R\circ Q} = \Upsub{Q}\circ \Upsub{R}
	\qquad\text{and}\qquad
	\Downsub{R\circ Q} = \Downsub{R}\circ \Downsub{Q}.
	\]
\end{proposition}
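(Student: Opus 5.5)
We prove openness of $s,t$ for $R\circ Q$ with \cref{lemma:open descends along epi}, and then compute the cones directly from the relations $s\circ e = s_R\circ\pi_R$ and $t\circ e = t_Q\circ\pi_Q$ of \cref{remark:relation composition}.

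\emph{Openness.} Since $s_Q$ is open, \cref{lemma:open map pullback square} applied to the pullback $t_R\circ\pi_R = s_Q\circ\pi_Q$ makes $\pi_R$ open. Since $t_R$ is open, the same lemma makes $\pi_Q$ open. By \cref{lemma:open maps compose}, $p_1 = s_R\circ\pi_R$ and $p_2 = t_Q\circ\pi_Q$ are open, with
\[
(p_1)_! = (s_R)_!(\pi_R)_!
\quad\text{and}\quad
(p_2)_! = (t_Q)_!(\pi_Q)_!.
\]
The map $e\colon P\tworightarrow R\circ Q$ is an epimorphism, and $s\circ e = p_1$, $t\circ e = p_2$. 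So \cref{lemma:open descends along epi} shows that $s,t$ are open, with $s_! = (p_1)_!\circ e^{-1}$ and $t_! = (p_2)_!\circ e^{-1}$. Hence $R\circ Q$ has open cones.

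\emph{The cones.} Using $e^{-1}s^{-1} = p_1^{-1}$, we get
\[
\Upsub{R\circ Q}
= t_!s^{-1}
= (p_2)_!e^{-1}s^{-1}
= (p_2)_!p_1^{-1}
= (t_Q)_!(\pi_Q)_!\,\pi_R^{-1}s_R^{-1}.
\]
Since both $t_R$ and $s_Q$ are open, the last clause of \cref{lemma:open map pullback square} gives the Beck--Chevalley identity
\[
(\pi_Q)_!\circ\pi_R^{-1} = s_Q^{-1}\circ (t_R)_!.
\]
Substituting yields
\[
\Upsub{R\circ Q} = (t_Q)_!s_Q^{-1}(t_R)_!s_R^{-1} = \Upsub{Q}\circ\Upsub{R}.
\]
The down cone is symmetric. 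We have $\Downsub{R\circ Q} = s_!t^{-1} = (s_R)_!(\pi_R)_!\pi_Q^{-1}t_Q^{-1}$. The Beck--Chevalley identity in the other orientation is $(\pi_R)_!\pi_Q^{-1} = t_R^{-1}(s_Q)_!$, which gives
\[
\Downsub{R\circ Q} = (s_R)_!t_R^{-1}(s_Q)_!t_Q^{-1} = \Downsub{R}\circ\Downsub{Q}.
\]

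\emph{Expected obstacle.} The only delicate point is matching the roles of $f_1,f_2,\pi_1,\pi_2$ in \cref{lemma:open map pullback square} with our square. We take $f_1 = t_R$, $f_2 = s_Q$, $\pi_1 = \pi_R$, $\pi_2 = \pi_Q$. The lemma then reads
\[
t_R^{-1}(s_Q)_! = (\pi_R)_!\pi_Q^{-1},
\]
which is the identity used for the down cone. The identity used for the up cone comes from the same lemma after swapping the two factors of the pullback. We should also confirm that $e$ is a genuine epimorphism of locales, so that $e^{-1}$ is injective as \cref{lemma:open descends along epi} requires. This holds because $e$ comes from the (epi, sublocale inclusion) factorisation.
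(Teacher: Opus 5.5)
Your proof is correct and follows the paper's argument essentially step for step. You get openness of $\pi_R,\pi_Q$ from \cref{lemma:open map pullback square}, of $p_1,p_2$ by composition, and of $s,t$ by descent along the epimorphism $e$, and then identify the cones with the same substitutions $(\pi_R)_!\pi_Q^{-1} = t_R^{-1}(s_Q)_!$ and $(\pi_Q)_!\pi_R^{-1} = s_Q^{-1}(t_R)_!$, with the roles of $f_1,f_2,\pi_1,\pi_2$ matched correctly.
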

\begin{proof}
	If $s_R,t_R,s_Q,t_Q$ are open, by~\cref{lemma:open map pullback square} the pullback projections~$\pi_R,\pi_Q$ are also open. Since open maps compose (\cref{lemma:open maps compose}), the maps $p_1=s_R\circ\pi_R$ and $p_2=t_Q\circ\pi_Q$ are open as well. By~\cref{lemma:open descends along epi} and the equalities $s\circ e=p_1$ and $t\circ e=p_2$, it follows that $s,t$ are open, and hence $R\circ Q$ has open cones. Explicitly, the left adjoints are given by $s_! = (p_1)_!\circ e^{-1}$ and~$t_! = (p_2)_!\circ e^{-1}$.
	
	We can then calculate the down cone $\Downsub{R\circ Q}$ as follows. First, for $U\in\Opens X$ note
	\[
	s_!t^{-1}(U)
	=
	(p_1)_!e^{-1}t^{-1}(U)
	=
	(p_1)_!p_2^{-1}(U),
	\]
	and unpacking $p_1=s_R\circ\pi_R$ and $p_2=t_Q\circ\pi_Q$ this gives using~\cref{lemma:open maps compose}:
	\[
	(p_1)_!p_2^{-1}(U)
	=
	(s_R\circ\pi_R)_!(t_Q\circ\pi_Q)^{-1}(U)
	=
	(s_R)_!(\pi_R)_!\pi_Q^{-1}t_Q^{-1}(U).
	\]
	On the right hand side we use~\cref{lemma:open map pullback square} to rewrite $(\pi_R)_!\circ \pi_Q^{-1} = t_R^{-1}\circ (s_Q)_!$, which gives the desired form:
	\[
	s_!t^{-1}(U)
	=
	(s_R)_! t_R^{-1} (s_Q)_! t_Q^{-1}(U)
	=
	\Downsub{R}\Downsub{Q} U,
	\]
	and so we see $\Downsub{R\circ Q}=\Downsub{R}\circ\Downsub{Q}$.
	
	The computation for the up cone is analogous, starting with
	\[
	t_!s^{-1}(U)
	=
	(p_2)_!p_1^{-1}(U)
	=
	(t_Q)_!(\pi_Q)_!\pi_R^{-1}s_R^{-1}(U)
	\]
	and applying the dual of~\cref{lemma:open map pullback square} to substitute $(\pi_Q)_!\circ\pi_R^{-1}=s_Q^{-1}\circ (t_R)_!$, we get the desired $\Upsub{R\circ Q}=\Upsub{Q}\circ\Upsub{R}$ as follows:
	\[
	t_!s^{-1}(U)
	=
	(t_Q)_! s_Q^{-1} (t_R)_! s_R^{-1}(U)
	=
	\Upsub{Q}\Upsub{R}U.
	\qedhere
	\]
\end{proof}

\subsection{Properties in terms of cones}
We are now ready to characterise the relational properties of~\cref{definition:localic relation properties} in terms of cones.

\begin{proposition}
	\label{proposition:relation property implies cone property}
	If a localic relation $R$ with open cones is:
	\begin{itemize}
		\item reflexive then $\Up,\Down$ are inflationary;
		\item transitive then $\Up,\Down$ are subidempotent;
		\item symmetric then $\Up=\Down$;
		\item interpolative then $\Up\sqleq \Up^2$ and $\Down\sqleq \Down^2$.
	\end{itemize}
\end{proposition}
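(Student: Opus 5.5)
The plan is to reduce every item to \cref{proposition:relation inclusion implies cone inclusion}, using \cref{example:induced cones of diagonal} for the diagonal and \cref{proposition:cones of RQ} for composites. The one thing to verify each time is that the relations being compared have open cones.

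\emph{Reflexivity.} Since $\Delta$ has open cones with $\Upsub{\Delta}=\Downsub{\Delta}=\id_{\Opens X}$, the inclusion $\Delta\subseteq R$ together with \cref{proposition:relation inclusion implies cone inclusion} gives $\id\sqleq\Up$ and $\id\sqleq\Down$. That is, both cones are inflationary.

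\emph{Transitivity and interpolation.} By \cref{proposition:cones of RQ} with $Q=R$, the composite $R\circ R$ has open cones, and its cones are $\Upsub{R\circ R}=\Up\circ\Up$ and $\Downsub{R\circ R}=\Down\circ\Down$. If $R$ is transitive, applying \cref{proposition:relation inclusion implies cone inclusion} to $R\circ R\subseteq R$ gives $\Up^2\sqleq\Up$ and $\Down^2\sqleq\Down$, i.e.\ both cones are subidempotent. If $R$ is interpolative, applying the same proposition to $R\subseteq R\circ R$ gives the reverse inequalities $\Up\sqleq\Up^2$ and $\Down\sqleq\Down^2$.

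\emph{Symmetry.} Here the main step is computing the cones of $R^\op$. The relation $R^\op$ has source $t$ and target $s$, both open, so by \cref{definition:induced cones} its cones are $\Upsub{R^\op}=s_!t^{-1}=\Down$ and $\Downsub{R^\op}=t_!s^{-1}=\Up$. One point needs care: the statement $R=R^\op$ is an equality of sublocales of $X\times X$, where $R^\op$ is the image of $R$ under the twist isomorphism. Equal sublocales have the same source and target maps up to the identifying isomorphism, and hence the same cones. Alternatively, one can apply \cref{proposition:relation inclusion implies cone inclusion} to both inclusions $R\subseteq R^\op$ and $R^\op\subseteq R$. Either way we obtain $\Up=\Upsub{R^\op}=\Down$.

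There is no real obstacle in this argument. The only subtle point is the bookkeeping in the symmetry case: the relation $R^\op$ must be presented as a sublocale via the twist map, so that its induced cones are exactly the swapped pair $(\Down,\Up)$.
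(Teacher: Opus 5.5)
Your proposal is correct and follows the paper's proof essentially verbatim: each item comes from \cref{proposition:relation inclusion implies cone inclusion}, using the identity cones of the diagonal, the formulas $\Upsub{R\circ R}=\Up^2$ and $\Downsub{R\circ R}=\Down^2$ from \cref{proposition:cones of RQ}, and the swap $\Upsub{R^\op}=\Down$, $\Downsub{R^\op}=\Up$. You also check explicitly that $\Delta$, $R\circ R$ and $R^\op$ have open cones, which is needed to apply the functor $\cone$ and which the paper leaves implicit.
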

\begin{proof}
	The proof is a straightforward application of~\cref{proposition:relation inclusion implies cone inclusion}: reflexivity means~$R$ contains the diagonal relation, whose cones are the identity~(\cref{example:induced cones of diagonal}), so the claim follows. If $R$ is transitive or interpolative, we get the desired inclusions after applying~\cref{proposition:cones of RQ} to see the cones of $R\circ R$ are exactly~$\Up^2$ and~$\Down^2$. Lastly, the claim about symmetry follows by observing~$\Upsub{R^\op}=\Downsub{R}$ and~$\Downsub{R^\op}=\Upsub{R}$.
\end{proof}

\begin{counterexample}
	\label{counterexample:preorder via cones}
	Take again the relation $r\colon R\rightarrowtail \mathbb{S}\times\mathbb{S}$ from~\cref{counterexample:relation on Sierpinski}, defined on the Sierpi\'{n}ski locale. We have opens~${\Opens\mathbb{S} = \{\bot<a<\top\}}$, the relation $R$ is the open sublocale defined by $u=(a\tensor \top)\vee (\top\tensor a)$, and the internal cones were calculated as $\Up \bot = \bot$, $\Up a =\top = \Up \top$, and $\Down=\Up$. Thus the cones are inflationary and subidempotent. However, we claim that $R$ is neither reflexive nor transitive.
	
	First, suppose for the sake of contradiction that $R$ is reflexive, so there exists a locale map $i\colon \mathbb{S}\to R$ such that $\delta = r\circ i$. Then we calculate
	\[
	\delta^{-1}(u) = \delta^{-1}(a\tensor \top)\vee \delta^{-1}(\top\tensor a)= a\vee a = a,
	\]
	while on the other hand $r^{-1}(u) = u\wedge u = u$ is the top element in $\Opens R$, so since~$i^{-1}$ is a frame map we get $i^{-1}(u)=\top\neq a$, a contradiction.
	
	To show that $R$ is not transitive, we claim that $R\circ R = \mathbb{S}\times\mathbb{S}$. Recall from~\cref{remark:relation composition} that $R\circ R$ is defined as the image of the map $R\times_{\mathbb{S}}^{t,s}R\to \mathbb{S}\times\mathbb{S}$ given by $p=(s\circ\pi_1,t\circ\pi_2)$, where $\pi_i$ are the pullback projections. Let $c\colon \mathbb{S}\times \mathbb{S}\to \mathbb{S}$ be the map defined by $c^{-1}(a)=\top\tensor \top$, and consider the maps
	\[
	(\pr_1,c),(c,\pr_2)\colon \mathbb{S}\times\mathbb{S}\longrightarrow \mathbb{S}\times\mathbb{S}.
	\]
	Using \cref{lemma:pair of locale maps} we get $(\pr_1,c)^{-1}(u) =\top= (c,\pr_2)^{-1}(u)$, so their images are contained in $R$, giving maps ${m,n\colon \mathbb{S}\times\mathbb{S} \to R}$ satisfying ${t\circ m = c = s\circ n}$. Hence by universality of the pullback there exists a map~$\sigma\colon \mathbb{S}\times \mathbb{S} \to R\times_{\mathbb{S}}^{t,s}R$ with $\pi_1\circ \sigma = m$ and $\pi_2\circ \sigma = n$. This gives $p\circ \sigma = \id$, and hence $\mathbb{S}\times\mathbb{S}\subseteq R\circ R$.
\end{counterexample}

To finish this section, we show that a partial converse of \cref{proposition:relation property implies cone property} holds for the fixed points of the adjunction.

\begin{proposition}
	\label{proposition:property of fixed point iff property of cones}
	For any conic frame $(\Opens X,\uc,\dc)$, the relation $R_\ucdc$ is:
	\begin{itemize}
		\item reflexive iff $\uc,\dc$ are inflationary;
		\item transitive iff $\uc,\dc$ are subidempotent;
		\item symmetric iff $\uc=\dc$.
	\end{itemize}
\end{proposition}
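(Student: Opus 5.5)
The forward directions all follow from \cref{proposition:relation property implies cone property} combined with \cref{theorem:induced relation has open cones}. The latter says $R_\ucdc$ has open cones $\Up=\uc$ and $\Down=\dc$, so reflexivity of $R_\ucdc$ makes $\uc,\dc$ inflationary, transitivity makes them subidempotent, and symmetry gives $\uc=\dc$. The real content is the converses. For each, I would use the universal property of $R_\ucdc$ recorded in \cref{corollary:Rucdc universal with cones}: an open cone relation $Q$ satisfies $Q\subseteq R_\ucdc$ iff $\Upsub{Q}\sqleq\uc$ and $\Downsub{Q}\sqleq\dc$.

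\emph{Reflexivity.} Take $Q=\Delta$. It has open cones with $\Upsub{\Delta}=\Downsub{\Delta}=\id$ by \cref{example:induced cones of diagonal}. If $\uc,\dc$ are inflationary then $\id\sqleq\uc$ and $\id\sqleq\dc$, so the corollary gives $\Delta\subseteq R_\ucdc$.

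\emph{Transitivity.} Take $Q=R_\ucdc\circ R_\ucdc$. By \cref{proposition:cones of RQ} this composite again has open cones, namely $\Upsub{Q}=\uc\circ\uc$ and $\Downsub{Q}=\dc\circ\dc$. Subidempotence says exactly that $\uc^2\sqleq\uc$ and $\dc^2\sqleq\dc$, so the corollary yields $R_\ucdc\circ R_\ucdc\subseteq R_\ucdc$.

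\emph{Symmetry.} Take $Q=R_\ucdc^\op$. It has open cones with $\Upsub{Q}=\dc$ and $\Downsub{Q}=\uc$, as in the proof of \cref{proposition:relation property implies cone property}. If $\uc=\dc$, the corollary gives $R_\ucdc^\op\subseteq R_\ucdc$. Applying $(-)^\op$, which preserves sublocale inclusions because it is just composition with the swap isomorphism of $X\times X$, gives the reverse inclusion, so $R_\ucdc=R_\ucdc^\op$.

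The main point to check carefully is that each $Q$ really lies in $\rocLoc$, so that \cref{corollary:Rucdc universal with cones} applies. For the diagonal this is immediate. For the opposite relation the source and target maps are swapped, so openness is inherited. The transitive case is the substantive one: openness of the composite's source and target requires the pullback and epi-descent arguments behind \cref{proposition:cones of RQ}, which we may take as given. Once these are in place, the whole proof is a formal consequence of the adjunction.
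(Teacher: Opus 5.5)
Your proof is correct and follows the paper's argument. The forward directions come from \cref{proposition:relation property implies cone property} together with \cref{theorem:induced relation has open cones}, and the converses from the universality in \cref{corollary:Rucdc universal with cones}, with \cref{proposition:cones of RQ} handling transitivity. The only differences are minor: for reflexivity you apply the corollary to $\Delta$ directly instead of passing through $R_{\id\!\id}\cong\Delta$ (\cref{example:diagonal relation is fixed point}), and you spell out the symmetry case that the paper simply calls clear.
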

\begin{proof}
	We just need to prove the implications converse to those in~\cref{proposition:relation property implies cone property}. The main mechanism is universality from~\cref{corollary:Rucdc universal with cones}.
	
	First, that $\uc,\dc$ are inflationary immediately gives $R_{\id\!\id}\subseteq R_\ucdc$, and we saw in~\cref{example:diagonal relation is fixed point} that the relation induced by the identity cones is precisely the diagonal, so $R_\ucdc$ is reflexive. Next, if $\uc^2\sqleq \uc$ and $\dc^2\sqleq \dc$ then by~\cref{proposition:cones of RQ} the cones of $R_\ucdc\circ R_\ucdc$ are contained in those of $R_\ucdc$, so by~\cref{corollary:Rucdc universal with cones} we get $R_\ucdc\circ R_\ucdc\subseteq R_\ucdc$. Lastly, the claim about symmetry is clear.
\end{proof}

\begin{remark}
	\label{remark:composition of fixed point is not fixed point?}
	We do not have a proof of an analogous characterisation of $R_\ucdc$ being interpolative. The problem is that~\cref{corollary:Rucdc universal with cones} does not apply to $R_\ucdc\circ R_\ucdc$, for which it is unknown if it is still a fixed point. In general, one might expect a formula of the sort $R_\ucdc \circ R_\ucdcx = R_{\ucx\circ \uc,\dc\circ\dcx}$. From~\cref{corollary:Rucdc universal with cones,proposition:cones of RQ} we indeed get the inclusion~$\subseteq$, but whether the converse inclusion holds (or an explicit counterexample) is an open question.
\end{remark}

\begin{corollary}
	A weakly closed localic relation $R$ with open cones is reflexive/transitive iff the cones $\Up,\Down$ are inflationary/subidempotent.
\end{corollary}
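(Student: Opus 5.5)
The plan is to combine the fixed point result for weakly closed relations with the characterisation of properties for relations of the form $R_\ucdc$. Let $R$ be a weakly closed localic relation on $X$ with open cones $\Up,\Down$. By \cref{corollary:weakly closed R is fixed point} we have $R\cong R_\UpDown$ as sublocales of $X\times X$. Here $R_\UpDown=\rel\circ\cone(X,R)$ is the relation induced by the conic frame $(\Opens X,\Up,\Down)$, which is a conic frame by \cref{proposition:induced cones preserve joins,proposition:induced cones are parallel}.

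For the forward direction nothing beyond earlier results is needed. If $R$ is reflexive, respectively transitive, then \cref{proposition:relation property implies cone property} already gives that $\Up,\Down$ are inflationary, respectively subidempotent. Weak closedness is not used here.

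For the converse, suppose $\Up,\Down$ are inflationary, respectively subidempotent. Apply \cref{proposition:property of fixed point iff property of cones} to the conic frame $(\Opens X,\Up,\Down)$. It gives that $R_\UpDown$ is reflexive, respectively transitive. It then remains to transport these properties along the isomorphism $R\cong R_\UpDown$. Reflexivity $\Delta\subseteq R$ and transitivity $R\circ R\subseteq R$ are inclusions in the lattice $\Sl(X\times X)$. Relational composition is defined in \cref{remark:relation composition} purely from the subobject and its source and target maps, and these are unchanged under isomorphism of subobjects. So both properties are invariant under isomorphism of sublocales, and $R$ inherits them.

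The only point needing any care is this last transport step. In particular, one must check that $R\circ R\cong R_\UpDown\circ R_\UpDown$. This follows because the pullback and image constructions are functorial in isomorphisms of the relation over $X\times X$. I expect this to be routine, so there is no substantial obstacle.
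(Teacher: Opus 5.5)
Your proposal is correct and matches the paper's proof, which simply combines \cref{corollary:weakly closed R is fixed point} (giving $R\cong R_\UpDown$) with \cref{proposition:property of fixed point iff property of cones} applied to the conic frame $(\Opens X,\Up,\Down)$. Your transport step is indeed trivial: an isomorphism of subobjects of $X\times X$ means $R$ and $R_\UpDown$ are literally the same sublocale.
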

\begin{proof}
	Apply \cref{corollary:weakly closed R is fixed point,proposition:property of fixed point iff property of cones}.
\end{proof}

\section{Egli-Milner ordered locales}
\label{section:Egli-Milner ordered locales}
In this standalone section we briefly outline the connection between conic frames and the previously studied notion of \emph{ordered locale} from~\cite{heunen2024OrderedLocales}. We recall only the necessary basics, and refer for more exposition to~\cite[\S 4]{schaaf2024TowardsPointFreeSpacetimes}. In the original paper it was proved that there is an adjunction between a certain subcategory of ordered locales $\LeqLoc$ and a certain subcategory of preordered topological spaces with open cones, lifting the generalised Stone duality~$\loc\dashv\pt$ between~$\Top$ and~$\Loc$. In this section we prove that the full subcategory~$\ucdcFrm^\op_\leq$ consisting of conic frames with inflationary and subidempotent cones is equivalent to a certain full subcategory of~$\LeqLoc$ used in~\cite{heunen2025CausalCoverageOrdered}.

\begin{definition}
	An \emph{$\Leq$-ordered locale} is a pair $(X,\Leq)$ where $X$ is a locale and $\Leq$ is a preorder on $\Opens X$ such that
	\[\tag{$\vee$}\label{vee}
	\forall i: U_i\Leq V_i
	\quad\implies\quad
	\bigvee U_i \Leq \bigvee V_i.
	\]
\end{definition}

\begin{remark}
	The axiom~\eqref{vee} just says that ${\Leq}\subseteq\Opens X\times \Opens X$ is a sub-suplattice.
\end{remark}

\begin{example}
	\label{example:Egli-Milner order}
	The $\Leq$-ordered locales can be seen as a localic axiomatisation of the Egli-Milner order on a space~\cite[\S 11.1]{vickers1989TopologyLogic}. If $(S,\leq)$ is any preordered space, then $\loc(S)$ becomes an $\Leq$-ordered locale by equipping it with the \emph{Egli-Milner order}:
	\[
	U\Leq V 
	\quad\iff\quad
	U\subseteq \down V\text{ and } V\subseteq \up U.
	\]
\end{example}

\begin{example}
	\label{example:Egli-Milner from monads}
	In fact, we can define generalised Egli-Milner orders. Recall that a \emph{monad} (or \emph{closure operator}) on a poset is a monotone, inflationary and subidempotent function~(\cref{definition:closure operator}). For any pair $\uc,\dc$ of monads on $\Opens X$, we can define an $\Leq$-ordered locale~$(X,\Leq_\ucdc)$ via
	\[
	U\Leq_\ucdc V
	\quad\iff\quad
	U\sqleq \dc V
	\text{ and }
	V\sqleq \uc U.
	\]
	This is a preorder iff $\uc,\dc$ are inflationary and idempotent, and it satisfies~\eqref{vee} just because $\uc,\dc$ are monotone.
	We call $\Leq_\ucdc$ the \emph{Egli-Milner order} induced by~$\uc,\dc$.
\end{example}

\begin{example}
	For any locale $X$ the inclusion order $\sqleq$ of its frame defines the structure of an $\Leq$-ordered locale $(X,\sqleq)$. It can be realised as the Egli-Milner order of $\dc=\id_{\Opens X}$ and where $\uc$ is constantly $\top$. This serves as a localic variant of the specialisation order on a topological space, see~\cite[Example~5.54]{schaaf2024TowardsPointFreeSpacetimes}. Note these cones are parallel, but~$\uc$ is not join-preserving.
\end{example}

Thus we see that any pair of monads $\uc,\dc$ induces a $\Leq$-ordered locale. Conversely, the following construction shows how to extract such monads from the preorder~$\Leq$.

\begin{definition}
	If $(X,\Leq)$ is an $\Leq$-ordered locale, its \emph{cones}~${\Up,\Down\colon \Opens X\to \Opens X}$ are
	\[
	\Up U : = \bigvee\{V\in \Opens X: U\Leq V\}
	\quad\text{and}\quad
	\Down U := \bigvee\{W\in \Opens X: W\Leq U\}.
	\]
	We say $(X,\Leq)$ has \emph{join-preserving cones} if~$\Up,\Down$ preserve joins:
	\[\tag{$\Up\mspace{-3mu}\Down$-$\vee$}\label{cone vee}
	\Up\bigvee U_i = \bigvee \Up U_i
	\quad\text{and}\quad
	\Down\bigvee U_i = \bigvee\Down U_i.
	\]
\end{definition}

\begin{lemma}
	\label{lemma:properties of Leq-cones}
	In any $\Leq$-ordered locale $(X,\Leq)$ we have that:
	\begin{itemize}
		\item if $U\Leq V$ then $U\sqleq \Down V$ and $V\sqleq \Up U$;
		
		\item if $U\sqleq V$ then $\Up U \sqleq \Up V$ and $\Down U \sqleq \Down V$;
		
		\item $U\Leq \Up U$ and $\Down U \Leq U$;
		
		\item $U\sqleq \Up U$ and $U\sqleq \Down U$;
		
		\item $\Up\Up U \sqleq \Up U$ and $\Down \Down U \sqleq \Down U$.
	\end{itemize}
\end{lemma}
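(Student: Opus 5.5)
The five items follow directly from the definitions of $\Up,\Down$ as joins, together with the axiom~\eqref{vee} and reflexivity and transitivity of $\Leq$. I will prove them in the order listed, because each later item uses the earlier ones.

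For the first item, suppose $U\Leq V$. Then $V$ belongs to the family $\{V'\colon U\Leq V'\}$, so $V\sqleq \Up U$ by definition of the join. In the same way $U$ belongs to $\{W\colon W\Leq V\}$, so $U\sqleq \Down V$.

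The third item is where~\eqref{vee} is used, and I expect it to be the only step needing care. Index the family $\{V\colon U\Leq V\}$ by itself and set $U_i:=U$ for every index. Then $U_i\Leq V_i$ for all $i$, and~\eqref{vee} gives $\bigvee U_i\Leq \bigvee V_i=\Up U$. If the family is nonempty, $\bigvee U_i=U$ and we obtain $U\Leq\Up U$. The family is nonempty because reflexivity of $\Leq$ gives $U\Leq U$, so we do not need to treat the empty-index case, where the join would be $\bot$, separately. The dual argument, with $V_i:=U$ and $U_i$ ranging over $\{W\colon W\Leq U\}$, gives $\Down U\Leq U$.

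The fourth item follows by combining the first and third. From $U\Leq \Up U$, the first item gives $U\sqleq \Down\Up U$ and $\Up U\sqleq \Up U$, neither of which is what we want. So instead I use reflexivity directly: $U\Leq U$ together with the first item gives $U\sqleq \Up U$ and $U\sqleq\Down U$.

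For monotonicity (the second item), take $U\sqleq V$ and any $W$ with $U\Leq W$. I want to show $W\sqleq \Up V$. Apply~\eqref{vee} to the two-element family consisting of $U\Leq W$ and $V\Leq \Up V$ (the latter from the third item). This gives $U\vee V\Leq W\vee \Up V$, and since $U\vee V=V$ we have $V\Leq W\vee\Up V$. The first item then yields $W\vee \Up V\sqleq \Up V$, hence $W\sqleq\Up V$. Taking the join over all such $W$ gives $\Up U\sqleq \Up V$. The argument for $\Down$ is dual: join $W\Leq U$ with $\Down V\Leq V$.

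Finally, for subidempotence (the fifth item), take $W$ with $\Up U\Leq W$. Since $U\Leq\Up U$ by the third item, transitivity of $\Leq$ gives $U\Leq W$, so $W\sqleq \Up U$. Taking the join over all such $W$ gives $\Up\Up U\sqleq\Up U$. The argument for $\Down$ is dual, using $\Down U\Leq U$ and transitivity.
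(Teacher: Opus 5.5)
Your proof is correct. Every step checks out, including the care you take in item 3 that the index set $\{V\colon U\Leq V\}$ is inhabited (by reflexivity), so that the constant join is $U$ and not $\bot$.

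The paper gives no argument of its own here; it simply cites \cite[Lemma~3.8]{heunen2024OrderedLocales}. Your version has the advantage of being self-contained and of showing exactly which axioms each item uses:
\begin{itemize}
\item item 1 is immediate from the definition of the cones;
\item item 4 adds only reflexivity;
\item items 2 and 3 are where~\eqref{vee} enters;
\item item 5 is the only place transitivity is needed.
\end{itemize}

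Two presentational points. First, you say you prove the items in the listed order, but you actually proceed 1, 3, 4, 2, 5. That is the correct dependency order, so the sentence should be changed, not the proof. Second, the false start at the beginning of item 4 should simply be deleted.

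Optionally, items 2 and 5 can be done without quantifying over $W$:
\begin{itemize}
\item For item 2, from $U\Leq\Up U$ and $V\Leq\Up V$, \eqref{vee} gives $V=U\vee V\Leq\Up U\vee\Up V$. Item 1 then gives $\Up U\vee\Up V\sqleq\Up V$.
\item For item 5, $U\Leq\Up U\Leq\Up\Up U$ gives $U\Leq\Up\Up U$ by transitivity. Item 1 then gives $\Up\Up U\sqleq\Up U$.
\end{itemize}
The arguments for $\Down$ are dual.
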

\begin{proof}
	This is~\cite[Lemma~3.8]{heunen2024OrderedLocales}.
\end{proof}

\begin{example}
	If $(S,\leq)$ is a preordered space, then its induced $\Leq$-ordered locale from~\cref{example:Egli-Milner order} has the cones
	\[
	\Up U = (\up U)^\circ
	\quad\text{and}\quad
	\Down U = (\down U)^\circ.
	\]
	From this we can see that $\Up,\Down$ will not generally preserve joins, essentially due to the interior operator not preserving (even finite) unions. However, in the case that~$(S,\leq)$ has open cones (in the same sense as~\cref{definition:open cones on space}), then the induced~$\Up,\Down$ will preserve arbitrary joins.
\end{example}

\begin{example}
	\label{example:Leq-cones from monads}
	If $\uc,\dc$ is any pair of monads on $\Opens X$ then the corresponding Egli-Milner order~$(X,\Leq_\ucdc)$ has cones~${\Up = \uc}$ and~$\Down = \dc$. This is easy to see. First observe that $U\Leq_\ucdc \uc U$, since $U\sqleq \dc\uc U$ and~$\uc U \sqleq \uc U$. Thus~$\uc U$ appears in the join defining $\Up U$, and hence $\uc U\sqleq \Up U$. On the other hand, for any~$V$ such that~$U\Leq_\ucdc V$ we get by construction that $V\sqleq \uc U$, and since by~\cref{lemma:properties of Leq-cones} it holds~$U\Leq_\ucdc \Up U$, we get~$\Up U \sqleq \uc U$, and hence equality.
\end{example}

Using these cones, we can say what it means for a locale map between \mbox{$\Leq$-ordered} locales to be monotone, analogous to the definition of conic morphisms.

\begin{definition}
	A locale map $f\colon (X,\Leq_X)\to (Y,\Leq_Y)$ between $\Leq$-ordered locales is called \emph{monotone} if
	\[
	\Upsub{X} \circ f^{-1}\sqleq f^{-1}\circ \Upsub{Y}
	\quad\text{and}\quad
	\Downsub{X} \circ f^{-1}\sqleq f^{-1}\circ \Downsub{Y}.
	\]
\end{definition}

\begin{definition}
	Let $\LeqLoc$ denote the category of $\Leq$-ordered locales and monotone locale maps between them.
\end{definition}

\begin{remark}
	It is possible to state the definition of a monotone map between \mbox{$\Leq$-ordered} locales purely in terms of the preorders $\Leq$, but this is not needed here, and in fact is equivalent to the stated definition in terms of cones. See~\cite[\S 4.3]{schaaf2024TowardsPointFreeSpacetimes} for more details.
	
	However, the fact that monotonicity is fully determined by the cones has the following consequence. We say that $(X,\Leq)$ has \emph{order determined by cones} if $\Leq$ is recovered as the Egli-Milner order~$\Leq_\UpDown$ of its own cones~$\Up,\Down$. It is unknown if there exist $\Leq$-ordered locales that are not determined by cones, however from this characterisation of monotonicity and the fact that~$\Leq$ and~$\Leq_\UpDown$ share the same cones, it follows that the identity locale map defines an isomorphism ${\id_X\colon (X,\Leq)\cong (X,\Leq_\UpDown)}$ in~$\LeqLoc$. Thus, since $\Leq_\UpDown$ is itself determined by cones, the category~$\LeqLoc$ is in fact equivalent to its full subcategory of~$\Leq$-ordered locales determined by cones. See also the remark after~\cite[Proposition~4.34]{schaaf2024TowardsPointFreeSpacetimes}.
\end{remark}

Just as join-preservation of $\Up,\Down$ is not automatic, neither is parallelness. To fix this, we introduce the following condition.

\begin{definition}
	We say an $\Leq$-ordered locale \emph{respects meets} if we have:
		\[
		\begin{tikzcd}[every label/.append style = {font = \normalsize},column sep=0.25cm, row sep=0.2cm]
			{U} & {\exists U'} \\
			{V} & {V'}
			\arrow["\sqleq"{anchor=center, rotate=-90}, draw=none, from=2-1, to=1-1]
			\arrow["\sqleq"{anchor=center, rotate=-90}, draw=none, from=2-2, to=1-2]
			\arrow["{}"{description}, "\Leq"{anchor=center}, draw=none, from=1-1, to=1-2]
			\arrow["{}"{description}, "\Leq"{anchor=center}, draw=none, from=2-1, to=2-2]
		\end{tikzcd}
		\qquad\text{and}\qquad
		\begin{tikzcd}[every label/.append style = {font = \normalsize},column sep=0.25cm, row sep=0.2cm]
			{\exists U} & {U'} \\
			{V} & {V'.}
			\arrow["\sqleq"{anchor=center, rotate=-90}, draw=none, from=2-1, to=1-1]
			\arrow["\sqleq"{anchor=center, rotate=-90}, draw=none, from=2-2, to=1-2]
			\arrow["{}"{description}, "\Leq"{anchor=center}, draw=none, from=1-1, to=1-2]
			\arrow["{}"{description}, "\Leq"{anchor=center}, draw=none, from=2-1, to=2-2]
		\end{tikzcd}
		\]
\end{definition}

\begin{remark}
	The first diagram is read as: if $U\sqleq V\Leq V'$ then there exists $U'$ such that $U\Leq U'\sqleq V'$. The second diagram is parsed similarly. Note that this is just the notion of `bisimulation', see for example~\cite[\S 2.2]{blackburn2001ModalLogic}.
\end{remark}

\begin{lemma}
	\label{lemma:respects meets iff parallel}
	An $\Leq$-ordered locale respects meets iff order is determined by cones and~$\Up,\Down$ are parallel.
\end{lemma}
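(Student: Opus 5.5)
I will prove the two directions separately, using throughout the first and third items of \cref{lemma:properties of Leq-cones}: $U\Leq V$ implies $U\sqleq\Down V$ and $V\sqleq\Up U$, and $U\Leq\Up U$, $\Down U\Leq U$.

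\emph{Respecting meets implies order determined by cones.} Since $U\Leq V$ always implies $U\Leq_\UpDown V$, it suffices to show the converse. Suppose $U\sqleq\Down V$ and $V\sqleq\Up U$. From $V\sqleq\Up U$ and $U\Leq\Up U$, the second square (with $V\sqleq\Up U$ on the right and $U\Leq\Up U$ on top) gives some $W\sqleq U$ with $W\Leq V$. Symmetrically, from $U\sqleq\Down V$ and $\Down V\Leq V$, the first square gives some $W'\sqleq V$ with $U\Leq W'$. Applying \eqref{vee} to the pair $W\Leq V$ and $U\Leq W'$ yields $W\vee U\Leq V\vee W'$, that is, $U\Leq V$. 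This join trick is the key idea.

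\emph{Respecting meets implies parallelness.} I will show $\Up U\wedge V\sqleq\Up(U\wedge\Down V)$. Write $V'=\Up U\wedge V$. From $V'\sqleq\Up U$ and $U\Leq\Up U$, the second square gives $W\sqleq U$ with $W\Leq V'$. Then $W\sqleq\Down V'\sqleq\Down V$, so $W\sqleq U\wedge\Down V$. Monotonicity of $\Up$ then gives $V'\sqleq\Up W\sqleq\Up(U\wedge\Down V)$. The other inequality is dual, using the first square.

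\emph{Converse.} Assume $\Leq=\Leq_\UpDown$ and parallelness. For the first square, suppose $U\sqleq V\Leq V'$, and set $U':=\Up U\wedge V'$. Then $U'\sqleq V'$, and $U'\sqleq\Up U$. It remains to check $U\sqleq\Down U'$. Parallelness gives $U\wedge\Down V'\sqleq\Down(\Up U\wedge V')=\Down U'$. Since $U\sqleq V\sqleq\Down V'$, we have $U=U\wedge\Down V'$, so $U\sqleq\Down U'$ and hence $U\Leq_\UpDown U'$, i.e.\ $U\Leq U'$.

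The second square is dual. Given $V\sqleq U'$ with $U\Leq U'$ (here $U$ is the one to be found), set $U:=U_0\wedge\Down V$, where $U_0$ is the given element with $U_0\Leq U'$. Then $U\sqleq U_0$ and $U\sqleq\Down V$. For the remaining condition, $V=V\wedge\Up U_0\sqleq\Up(U_0\wedge\Down V)=\Up U$ by parallelness, using $V\sqleq U'\sqleq\Up U_0$. Hence $U\Leq V$.

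The main subtle step is the first direction, specifically seeing that \eqref{vee} lets us glue the two partial witnesses $W$ and $W'$ back into $U\Leq V$. Everything else is routine bookkeeping with the listed cone properties.
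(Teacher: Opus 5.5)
Your proof is correct. The paper gives no argument of its own for this lemma, only the citation \cite[Lemma~4.45]{schaaf2024TowardsPointFreeSpacetimes}. Your proposal is therefore a self-contained replacement that uses nothing beyond \eqref{vee} and \cref{lemma:properties of Leq-cones}, which makes the paper independent of the thesis at the cost of a few lines of diagram chasing.

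I checked each step:
\begin{itemize}
\item The one non-routine point is gluing the witnesses $W\sqleq U$, $W\Leq V$ and $U\Leq W'\sqleq V$ via \eqref{vee} into $W\vee U\Leq V\vee W'$, i.e.\ $U\Leq V$. This is right.
\item The two parallelness inequalities follow correctly from the squares applied to $U\Leq\Up U$ and $\Down V\Leq V$.
\item In the converse, the witnesses $\Up U\wedge V'$ and $U_0\wedge\Down V$ work exactly as you say, by parallelness.
\end{itemize}
A side benefit of your route is that it shows parallelness follows from respecting meets without using that the order is determined by cones.

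There are two cosmetic slips, neither affecting the logic. In the first direction, $U\Leq\Up U$ plays the role of the bottom row $V\Leq V'$ of the second square (the given $\Leq$-relation), not the top row. In your last paragraph, ``with $U\Leq U'$'' should read ``with $U_0\Leq U'$''.
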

\begin{proof}
	This is~\cite[Lemma~4.45]{schaaf2024TowardsPointFreeSpacetimes}.
\end{proof}

We are now ready to prove the claimed isomorphism of categories. First we introduce their notation and construct functors between them, from which the isomorphism follows straightforwardly.

\begin{definition}
	Let $\pLeqLoc$ denote the full subcategory of $\LeqLoc$ containing $\Leq$-ordered locales respecting meets and satisfying~\eqref{cone vee}.
\end{definition}

\begin{remark}
	The objects in this category are used in~\cite{heunen2025CausalCoverageOrdered} to study a notion of~`causal coverage'. Note also that any $\Leq$-ordered locale coming from an ordered space with open cones lives in this category.
\end{remark}

\begin{definition}
	Let $\pucdcFrm$ denote the full subcategory of $\ucdcFrm$ containing conic frames $(L,\uc,\dc)$ so that $\uc,\dc$ are inflationary and subidempotent.
\end{definition}

\begin{proposition}
	There is a functor
	\begin{align*}
		\mathsf{EM}\colon \pucdcFrm^\op &\longrightarrow \pLeqLoc
		\\
		(\Opens X,\uc,\dc)&\longmapsto (X,\Leq_\ucdc);
		\\
		f^{-1} &\longmapsto f.
	\end{align*}
\end{proposition}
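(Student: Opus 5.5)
We need to show two things: that $\mathsf{EM}$ is well-defined on objects, landing in $\pLeqLoc$, and that conic morphisms go to monotone maps. Functoriality is then automatic, since on morphisms we only pass to the opposite category.

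For objects, take $(\Opens X,\uc,\dc)$ in $\pucdcFrm$. Join-preservation makes $\uc,\dc$ monotone, so together with inflation and subidempotence they are monads. Hence by \cref{example:Egli-Milner from monads}, $\Leq_\ucdc$ is a preorder satisfying~\eqref{vee}. One point needs care: that example asks for idempotence, but inflationary plus subidempotent gives idempotence ($\uc U\sqleq \uc\uc U\sqleq \uc U$), so this is fine.

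Next, by \cref{example:Leq-cones from monads} the cones of $(X,\Leq_\ucdc)$ are exactly $\Up=\uc$ and $\Down=\dc$. Two conditions then follow:
\begin{itemize}
\item \eqref{cone vee} holds immediately, because $\uc,\dc$ preserve joins.
\item For respecting meets, apply \cref{lemma:respects meets iff parallel}. Parallelness of $\Up,\Down$ is parallelness of $\uc,\dc$, which holds by assumption. The order is determined by cones because $\Leq_\UpDown=\Leq_\ucdc$ by definition, since the cones are $\uc,\dc$.
\end{itemize}
So $(X,\Leq_\ucdc)$ is an object of $\pLeqLoc$.

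For morphisms, let $f^{-1}\colon(\Opens Y,\uc_Y,\dc_Y)\to(\Opens X,\uc_X,\dc_X)$ be a conic morphism. By \cref{definition:conic morphism} this means $\uc_X\circ f^{-1}\sqleq f^{-1}\circ\uc_Y$ and $\dc_X\circ f^{-1}\sqleq f^{-1}\circ\dc_Y$. Rewriting with the identifications $\Upsub{X}=\uc_X$, $\Upsub{Y}=\uc_Y$ (and likewise for the down cones), these are literally the two inequalities defining monotonicity of $f\colon(X,\Leq_{X})\to(Y,\Leq_{Y})$. Identities and composites are preserved trivially.

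The main obstacle is the respects-meets condition. It relies entirely on \cref{lemma:respects meets iff parallel} together with the identification of the cones from \cref{example:Leq-cones from monads}. If that lemma's notion of "determined by cones" needed anything beyond $\Leq=\Leq_\UpDown$, I would instead verify the bisimulation diagrams directly. For example, given $U\sqleq V\Leq V'$, I would take $U':=\uc U\wedge V'$ and use parallelness ($U\sqleq V\sqleq\dc V'$ gives $U\sqleq \dc(\uc U\wedge V')$) to obtain $U\Leq U'\sqleq V'$, and handle the second diagram dually.
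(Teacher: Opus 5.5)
Your proposal is correct and follows the same route as the paper's proof. The paper likewise uses \cref{example:Egli-Milner from monads} for the preorder, \cref{lemma:respects meets iff parallel} for respecting meets, and join-preservation of $\uc,\dc$ for the cone axiom, and it treats monotonicity of morphisms as true by definition. You additionally make explicit two points the paper's one-line proof leaves implicit: the identification $\Up=\uc$, $\Down=\dc$ via \cref{example:Leq-cones from monads}, and the passage from subidempotence to idempotence.
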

\begin{proof}
	That the Egli-Milner order $\Leq_\ucdc$ defines an~$\Leq$-ordered locale follows by~\cref{example:Egli-Milner from monads}, that it respects meets follows by~\cref{lemma:respects meets iff parallel}, and that its cones preserve joins follow since $\uc,\dc$ are join-preserving. That conic morphisms induce monotone maps of \mbox{$\Leq$-ordered} locales is true by definition.
\end{proof}

\begin{proposition}
	There is a functor
	\begin{align*}
		\mathsf{Cone}\colon \pLeqLoc &\longrightarrow \pucdcFrm^\op
		\\
		(X,\Leq)&\longmapsto (\Opens X,\Up,\Down);
		\\
		f &\longmapsto f^{-1}.
	\end{align*}
\end{proposition}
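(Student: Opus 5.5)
We must check three things: that $(\Opens X,\Up,\Down)$ is an object of $\pucdcFrm$, that monotone maps in $\pLeqLoc$ give conic morphisms, and that identities and composition are preserved.

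For objects, take $(X,\Leq)$ in $\pLeqLoc$. Axiom~\eqref{cone vee} says exactly that $\Up,\Down$ preserve all joins. Respecting meets gives, by \cref{lemma:respects meets iff parallel}, that $\Up,\Down$ are parallel, so $(\Opens X,\Up,\Down)$ is a conic frame. By \cref{lemma:properties of Leq-cones} we have $U\sqleq\Up U$, $U\sqleq \Down U$, $\Up\Up U\sqleq \Up U$ and $\Down\Down U\sqleq\Down U$. Hence the cones are inflationary and subidempotent, and the conic frame lies in $\pucdcFrm$.

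For morphisms, let $f\colon (X,\Leq_X)\to(Y,\Leq_Y)$ be monotone. By definition this means $\Upsub{X}\circ f^{-1}\sqleq f^{-1}\circ\Upsub{Y}$ and $\Downsub{X}\circ f^{-1}\sqleq f^{-1}\circ \Downsub{Y}$. This is verbatim the condition of \cref{definition:conic morphism} for $f^{-1}\colon(\Opens Y,\Upsub{Y},\Downsub{Y})\to(\Opens X,\Upsub{X},\Downsub{X})$. Since $f^{-1}$ is a frame map, it is a conic morphism, and so a morphism $(\Opens X,\Upsub X,\Downsub X)\to(\Opens Y,\Upsub Y,\Downsub Y)$ in $\pucdcFrm^\op$.

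Functoriality is inherited from $(-)^{-1}\colon\Loc\to\Frm^\op$, which preserves identities and reverses composition. Since $\pucdcFrm$ is a full subcategory and composition there is composition of frame maps, nothing further needs checking.

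Every step is a direct citation, so there is no real obstacle. The only point to watch is that \cref{lemma:respects meets iff parallel} indeed yields parallelness of the cones as defined in \cref{definition:parallel cones}, which is how that lemma is stated.
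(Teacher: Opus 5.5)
Your proposal is correct and follows the paper's proof exactly: join-preservation from \eqref{cone vee}, parallelness from \cref{lemma:respects meets iff parallel}, closure-operator properties from \cref{lemma:properties of Leq-cones}, and the morphism condition holding by definition. The paper leaves functoriality implicit, and your explicit check of the direction of the conic morphism condition and of functoriality is accurate.
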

\begin{proof}
	By~\cref{lemma:properties of Leq-cones} we see $\Up,\Down$ are closure operators. That $\Up,\Down$ are join-preserving follows from~\eqref{cone vee}, and that they are parallel follows by~\cref{lemma:respects meets iff parallel}. Again, that this assignment is well-defined on morphisms holds by definition.
\end{proof}

\begin{theorem}
	There is an isomorphism of categories $\pucdcFrm^\op \cong \pLeqLoc$.
\end{theorem}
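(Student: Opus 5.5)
We will show that the functors $\mathsf{EM}$ and $\mathsf{Cone}$ constructed just above are mutually inverse on the nose. Since both are the identity on underlying locales and frame maps (sending $f^{-1}\mapsto f$ and $f\mapsto f^{-1}$), the composites are automatically the identity on morphisms. It therefore suffices to check the two composites on objects. Fullness is not an issue here: in both categories a morphism is a locale map whose frame map satisfies the same cone inclusions. These are $\ucx\circ h\sqleq h\circ\uc$ and $\dcx\circ h\sqleq h\circ\dc$ for conic morphisms, and the corresponding inclusions for monotone maps of $\Leq$-ordered locales. Once we know the cones match on objects, the hom-sets therefore coincide.

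The first composite is $\mathsf{Cone}\circ\mathsf{EM}$. Starting from a conic frame $(\Opens X,\uc,\dc)$ with inflationary subidempotent cones, its cones are monotone (being join-preserving), so they are monads. By \cref{example:Leq-cones from monads}, the Egli-Milner order $\Leq_\ucdc$ has cones exactly $\Up=\uc$ and $\Down=\dc$. Hence $\mathsf{Cone}\circ\mathsf{EM}(\Opens X,\uc,\dc)=(\Opens X,\uc,\dc)$.

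The second composite is $\mathsf{EM}\circ\mathsf{Cone}$. Starting from $(X,\Leq)$ in $\pLeqLoc$, this produces $(X,\Leq_\UpDown)$. Since $(X,\Leq)$ respects meets, \cref{lemma:respects meets iff parallel} says its order is determined by cones, i.e.\ ${\Leq}={\Leq_\UpDown}$ as relations on $\Opens X$. So the composite is literally the identity on objects. Checking this step is the only real content: one must make sure that ``order determined by cones'' in \cref{lemma:respects meets iff parallel} means equality of preorders and not merely isomorphism in $\LeqLoc$, which is how the paper defines the phrase.

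The main (mild) obstacle is the subtlety that, in \cref{example:Egli-Milner from monads}, $\Leq_\ucdc$ is a preorder only when the cones are monads. Here we only assume the cones are subidempotent and inflationary, but monotonicity comes free from join-preservation, so the example applies. Combining the two object-level identities with the identity action on morphisms gives $\mathsf{Cone}\circ\mathsf{EM}=\id$ and $\mathsf{EM}\circ\mathsf{Cone}=\id$, hence the isomorphism $\pucdcFrm^\op\cong\pLeqLoc$.
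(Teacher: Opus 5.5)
Your proposal is correct and is essentially the paper's proof. Both reduce the claim to checking the two composites on objects, using \cref{example:Leq-cones from monads} for $\mathsf{Cone}\circ\mathsf{EM}$ and the ``order determined by cones'' half of \cref{lemma:respects meets iff parallel} for $\mathsf{EM}\circ\mathsf{Cone}$. Your extra remarks are correct details the paper leaves implicit: that join-preservation gives monotonicity, so the cones are monads, and that the morphism conditions in the two categories literally coincide.
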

\begin{proof}
	It suffices to check $\mathsf{Cone}$ and $\mathsf{EM}$ are mutually inverse on objects. That
	\[
	(\Opens X,\uc,\dc)
	\longmapsto 
	(X, \Leq_\ucdc)
	\longmapsto
	(\Opens X, \Up,\Down)
	=
	(\Opens X,\uc,\dc)
	\]
	just follows since $\Leq_\ucdc$ has cones $\Up=\uc$ and $\Down = \dc$~(\cref{example:Leq-cones from monads}), and that
	\[
	(X,\Leq)
	\longmapsto
	(\Opens X, \Up,\Down)
	\longmapsto
	(X,\Leq_\UpDown)
	=
	(X,\Leq)
	\]
	follows since~$\Leq$ respects meets, so by~\cref{lemma:respects meets iff parallel} it is determined by cones.
\end{proof}

\begin{remark}
	\label{remark:adjunction with spaces}
	As briefly remarked, the main result of~\cite{heunen2024OrderedLocales} is an adjunction between certain full subcategories of $\LeqLoc$ and $\rocTop$. However, these subcategories are unsatisfactory from a point-free perspective, since they are defined in terms of points. The isomorphism $\pucdcFrm^\op \cong \pLeqLoc$ suggests that obtaining an adjunction between ordered spaces and conic frames will likewise be unsuccessful.
	
	The main obstacle in the assignment from $\LeqLoc$ to $\rocTop$ is that the preorder~$\leq$ derived from~$\Leq$ needs to have open cones. From a structural view, one might attempt a construction via the route $\ucdcFrm^\op\xrightarrow{\rel}\rocLoc\xrightarrow{\pt}\rocTop$, but since the functor~$\pt$ does not preserve open maps, it is not guaranteed that~$\pt(R_\ucdc)$ is a relation with open cones.
	
	While the adjunction~$\loc\dashv\pt$ between $\Top$ and $\Loc$ lifts straightforwardly to an adjunction between~$\rTop$ and~$\rLoc$ (using the functor in~\cref{proposition:functor rTop to rLoc}), the same cannot be said for the categories~$\oTop$ and~$\oLoc$ of \emph{preordered} spaces and locales. The bottleneck is the fact that the left adjoint $\loc$ does not preserve the transitivity axiom, as demonstrated by a counterexample in~\cite[Remark~C5.3.5]{johnstone2002Elephant2}. This seems to suggest that, while localic relations are point-free duals to relations on spaces, localic preorders are not the point-free duals of ordered spaces.
\end{remark}

\section{Discussion}
\label{section:discussion}
We finish the paper with some general comments, discussion, and directions for future research.

\subsection{Non-open cones.}
A natural next step would be to attempt a similar reconstruction for arbitrary localic relations $R$ on $X$ in terms of their generalised internal cones ${\up,\down\colon \Sl(X)\to \Sl(X)}$. At face value, these cones will capture more of the structure of the initial relation. It is yet unknown if there is a counterexample similar to~\ref{counterexample:relation on Sierpinski}, where distinct localic relations produce the same internal cones. Moreover, since the proofs in the present work rely deeply on openness and the frame theoretic techniques of generating sublocales, the construction would likely look quite different.

\subsection{Internal cones}
\label{section:internal cones}
Continuing that thought, as briefly mentioned at the start of~\cref{section:localic relations}, since epimorphisms in $\Loc$ are not preserved under pullback~\cite{pitts1983AmalgamationInterpolation,banaschewski1990PushingOutFrames}, the general theory of relations with respect to orthogonal factorisation systems~\cite{klein1970RelationsCategories} shows that composition of localic relations is not associative. Picking a non-pullback-stable epimorphism $e$, an explicit non-associative triple of relations can be constructed from its graph. As a consequence, there exists a transitive relation $R$ whose internal cones~$\up,\down$ are not subidempotent (cf.~\cref{counterexample:preorder via cones}). In fact, this counterexample works generally for any category with an orthogonal factorisation system~$(\calE,\calM)$ where~$\calE$ is not pullback stable.

An explicit example in the spatial world may be illustrative. Equip $\Top$ with the orthogonal factorisation system $(\text{dense image},\text{closed embedding})$~\cite[\S 2]{klein1970RelationsCategories}, which is not pullback stable. Subobjects are just closed subsets, and so internal relations $R$ on a space $S$ are just closed subsets $R\subseteq S\times S$. It can be shown that internal transitivity corresponds to set-theoretic transitivity. On the other hand, if~$\up,\down$ denote the set-theoretic cones of $R$, then the internal cones are obtained via their closure: $\overline{\up A}$ and $\overline{\down A}$, for any closed subset~$A\subseteq S$. Thus subidempotence of the internal cones comes down to the inclusions $\up \overline{\up A} \subseteq \overline{\up A}$ and $\down \overline{\down A} \subseteq \overline{\down A}$. This fails for example on the space
\[
S = \underbrace{\overbrace{\{1,2,3,\ldots\}}^{\mathbb{N}} \cup\{\infty\}}_{\text{discrete}}
\cup
\underbrace{\left\{
	\frac{1}{n}:n\in\mathbb{N}
	\right\}\cup \{0\}}_{\text{subspace of $\mathbb{R}$}}
\]
with the transitive closed relation
\[
R:= \Delta_S \cup \left\{\left(n,\frac{1}{n}\right):n\in\mathbb{N} \right\} \cup \left\{(0,\infty) \right\},
\]
where we get for $A=\mathbb{N}$ that $\up \mathbb{N} = \mathbb{N}\cup \left\{\frac{1}{n}:n\in\mathbb{N}\right\}$, so $\overline{\up \mathbb{N}}=\up \mathbb{N}\cup \{0\}$, and finally $\up \overline{\up \mathbb{N}} = \overline{\up \mathbb{N}}\cup \{\infty\} \not\subseteq \overline{\up \mathbb{N}}$. This is analogous to Example~2 in~\cite[\S 22]{munkres2000Topology}, showing not all product projection maps are closed.

Moreover, mirroring properties of the topological closure, these internal cones will preserve finite joins, but not generally infinite ones. Take for example the family of closed subsets $A_n = \{1/n\}$.  The internal join $\bigvee_{n\in\mathbb{N}} A_n$ is just the closure of their set-theoretic union $\overline{\bigcup_{n\in\mathbb{N}} A_n}= \{1/n:n\in\mathbb{N}\}\cup\{0\}$, so
\[
\overline{\up \bigvee_{n\in\mathbb{N}} A_n}
=
\overline{
\up \left\{\frac{1}{n}:n\in\mathbb{N}\right\} \cup \up \{0\}
}
=
\left\{\frac{1}{n}:n\in\mathbb{N}\right\} \cup  \{0,\infty\}.
\]
On the other hand $\overline{\up A_n} = A_n$, and hence
\[
\bigvee_{n\in\mathbb{N}}\overline{\up A_n}
=
\overline{\bigcup_{n\in\mathbb{N}} A_n}
=
\overline{
\left\{\frac{1}{n}:n\in\mathbb{N}\right\}
}
=
\left\{\frac{1}{n}:n\in\mathbb{N}\right\}\cup\{0\},
\]
missing the point~$\infty$.

Lastly, the internal cones are not parallel. Take for instance the closed subsets~$A=\mathbb{N}$ and~$B=\{0\}$. Then $\overline{\up A}\cap B= \{0\}$, but $A\cap \overline{\down B}= A\cap B$, which in the present convention is empty. Thus parallelness fails:
\[
\{0\}=\overline{\up A}\cap B \not\subseteq \overline{\up\left( A\cap \overline{\down B}\right)} = \overline{\up \varnothing} = \varnothing.
\]
This shows that in a more general theory of internal relations the usual set-theoretic intuition of cones breaks down. It also suggests that parallelness of the localic cones $\Up,\Down$ is more so an artifact of Frobenius reciprocity, and hence a consequence of the open cone assumption, than a fundamental property of cones that can be used to do build a theory of abstract order theory based on cones.

\subsection{One-sided openness.}
In some important order-theoretic examples, such as the specialisation order of a topological space, the relation has open cones in only one direction. Thus we might want a more general theory of one-sided conic frames~$(L,\dc)$ corresponding to localic relation whose source map is open, but whose target map may not be. Observe that for any~$\dc$ the pair $(\top,\dc)$ is actually parallel. Though, of course, the constant map~$\top$ does not preserve joins. In particular it does not preserve the bottom element, which is needed to make the generating relation~$\sim$ well defined. Nevertheless, motivated by~\cref{lemma:parallel in terms of sim} we can replace~$\sim$ simply by
\[
x\tensor y \sim
\Dc(x,y)\tensor \Uc(x,y),
\]
which works even if~$\uc,\dc$ do not preserve the bottom. In particular for~$\uc=\top$ this gives~$x\tensor y\sim \Dc(x,y)\tensor y$. We obtain a localic relation~$R_{\dc}$ in the same way as~\cref{definition:induced relation}, and the proofs in~\cref{section:separation} allow us to construct an open source map. The resulting adjunction with localic relation with open source maps is explained further in~\cite[\S 4]{schaaf2026LocalicEsakiaDuality}.

\subsection{Non-parallelness.}
The altered version of~$\sim$ of the previous section also raises the question of whether the present adjunction~$\cone\dashv\rel$ could be generalised by dropping the parallelness assumption on~$\uc,\dc$. Note that by~\cref{lemma:parallel in terms of sim} the relation~$x\tensor y\sim \Dc(x,y)\tensor \Uc(x,y)$ agrees with the original definition of~$\sim$ whenever~$\uc,\dc$ are parallel. This altered version also has the additional benefit that it can be defined without assuming~$\uc,\dc$ preserve the bottom element. If we then still obtain an adjunction of the form $\cone\dashv\rel$, the counit will no longer be the identity, but instead would define a comonad that takes a non-parallel conic frame and produces its `parallel interior'.

\subsection{Relations between different locales}
\label{section:relations between different locales}
In this work we have only considered localic relations~$R$ defined on a single locale~$X$. More generally, we might want to consider localic relations between locales~$X$ and~$Y$. For such~$R\rightarrowtail X\times Y$ with open source and target maps, we get cones
\[
\Up:=t_!s^{-1}\colon \Opens X \longrightarrow \Opens Y
\quad\text{and}\quad
\Down := s_!t^{-1}\colon \Opens Y\longrightarrow \Opens X.
\]
This suggests the more general definition of conic frame being a tuple $(L,M,\uc,\dc)$ where $L,M$ are frames, and $\uc\colon L\to M$ and $\dc\colon M\to L$ are join-preserving maps that are parallel. Note indeed that parallelness still makes sense, for~$x\in L$ and~$y\in M$:
\[
\uc x\wedge y \sqleq \uc (x\wedge \dc y)
\qquad\text{and}\qquad
x \wedge \dc y \sqleq \dc (\uc x\wedge y),
\]
where the first inclusion takes place in~$M$ and the second in~$L$. Clearly any relation~${R\rightarrowtail X\times Y}$ with open cones then induces such a conic frame~$(\Opens X,\Opens Y,\Up,\Down)$. Conversely, from $(L,M,\uc,\dc)$ we define the generating relation~$\sim$ on rectangles of the coproduct frame~$L\tensor M$ by~$x\tensor y\sim \Dc(x,y)\tensor \Uc(x,y)$, where we note that now that the reduced cones have types~${\Uc\colon L\times M \to M}$ and~${\Dc\colon L\times M\to L}$. It seems that most proofs above will go through for this more general setting. The problem in~\cref{remark:composition of fixed point is not fixed point?} of whether fixed points $R_\ucdc$ are closed under relational composition persists here, so it is not clear if the conic frame perspective would be of any help in studying, for example, the general (non-associative) calculus of localic relations.

\subsection{Topological over frames?}
Like the category of relations and monotone functions is topological over $\Set$ (\cite[\S 21]{adamek1990AbstractConcreteCategories}), we might ask if~$\ucdcFrm$ is topological over~$\Frm$. We show this is not the case. Denote by $U\colon \ucdcFrm\to\Frm$ the forgetful functor, sending $(L,\uc,\dc)\mapsto L$.

Consider the frames~$\Opens \mathbb{R}$ and~$\Powerset\mathbb{R}$ and the subframe inclusion $i\colon \Opens \mathbb{R}\hookrightarrow \Powerset\mathbb{R}$. The powerset has a conic frame structure $(\Powerset\mathbb{R},\{0\}\cap -,\{0\}\cap-)$ (recall~\cref{example:conic frames basic examples}). If~$U$ were a topological functor, there would be an initial conic frame structure~$\uc,\dc$ on~$\Opens \mathbb{R}$ that makes~$i$ a conic morphism. In particular, that would entail
\[
\{0\}=\{0\}\cap i(\mathbb{R}) \subseteq i(\uc\mathbb{R}) = \uc\mathbb{R},
\]
and similarly $\{0\}\subseteq \dc\mathbb{R}$. Pick a strict open neighbourhood ${0\in W\subsetneq \uc\mathbb{R}\cap \dc\mathbb{R}}$, for which we get a conic frame~$(\Opens\mathbb{R},W\cap -,W\cap -)$ and making the frame inclusion~$i$ a conic morphism:
\[
i\colon (\Opens\mathbb{R},W\cap - ,W\cap -)\longrightarrow (\Powerset\mathbb{R},\{0\}\cap-,\{0\}\cap-).
\]
But since the conic frame structure~$\uc,\dc$ is initial, this would make the identity map
\[
\id_{\Opens\mathbb{R}}\colon (\Opens\mathbb{R},W\cap -,W\cap -)\longrightarrow (\Opens \mathbb{R},\uc,\dc)
\]
conic, which by~\cref{lemma:id ucdc map iff cones inclusion} implies $\uc \mathbb{R}\cap \dc\mathbb{R}\subseteq W$, contradicting the choice of~$W$.

Nevertheless, assuming classical logic, there is a ${\text{discrete}\dashv\text{forget}\dashv\text{codiscrete}}$ adjunction. Concretely, define a functor~$D\colon \Frm\to\ucdcFrm$ by equipping each frame~$L$ with the conic structure~$(L,\tau,\tau)$, where $\tau\bot = \bot$ and $\tau x = \top$ for any~${x\neq \bot}$. On the other hand, define~$C\colon \Frm\to \ucdcFrm$ by equipping a frame~$L$ with the conic structure~$(L,\bot,\bot)$~(\cref{example:conic frames basic examples}). This gives $D\dashv U\dashv C$. Classicality is used to define $\tau$, which of course represents the top relation~$X\times X$ on a locale~$X$ (recall~\cref{example:open sublocale in overt X has open cones}).

\subsection{Connections to logic?}
In~\cref{remark:parallelness in the literature} we noted that the parallelness condition on~$\uc,\dc$ is related to the theory of Boolean algebras with operators~\cite{jonsson1951BooleanAlgebrasOperators}, and connects to modal logic~\cite[\S 3.3]{goldblatt2003MathematicalModalLogic}. The connections between localic relations and conic frames to modal logic could be investigated. Generally, an object~$(X,R)\in\rLoc$ might be interpreted as a point-free Kripke frame, and if~$R$ has open cones then the induced cones~$\Up,\Down$ could be interpreted as future and past diamond operators, with their right adjoints as the corresponding box operators.

Analogous to the remarks in~\cite[\S 9.5]{schaaf2024TowardsPointFreeSpacetimes}, note that for a conic frame $(L,\uc,\dc)$ the reduced cone~$x\wedge \dc(-)$ preserves arbitrary joins, and so admits a right adjoint~${x\wedge \dc(-) \dashv x\to_{\dc}-}$, characterised by
\[
x\wedge \dc y\sqleq z
\iff
\dc y \sqleq x\to z
\iff
y\sqleq x\to_{\dc}z,
\]
where~$\to$ is just the usual Heyting implication of~$L$. This is analogous to the `causal implications' from~\cite{akbartabatabai2021ImplicationSpacetime}. Note for example that the separating opens from~\cref{definition:separating opens} can then be expressed as
\[
O^\Uc_z = \bigvee_{x\in L} (x\to_{\dc} z)\tensor x
\quad\text{and}\quad
O^\Dc_z = \bigvee_{x\in L} x\tensor (x\to_{\dc}z).
\]

As mentioned, a one-sided notion of conic frame~$(L,\dc)$ is used in~\cite{schaaf2026LocalicEsakiaDuality} to describe a constructive, localic Esakia duality for Heyting algebras, based on the localic Priestley duality of Townsend~\cite{townsend1997LocalicPriestleyDuality} and the recently introduced Heyting frames~\cite{bezhanishvili2023FrametheoreticPerspectiveEsakia}.

Lastly, connections of conic frames to biframes~\cite{banaschewski1983BiframesBispaces}, d-frames~\cite{jung2010BitopologicalNatureStone} and the recently introduced ad-frames~\cite{goubault2026StoneDualityPreordered} could be investigated.

\subsection*{Acknowledgments} This work has been partially funded by the French National Research Agency (ANR) within the framework of ``Plan France 2030'', under the research projects EPIQ ANR-22-PETQ-0007, HQI-Acquisition ANR-22-PNCQ-0001 and HQI-R\&D ANR-22-PNCQ-0002.

\subsection*{AI disclosure}
AI tools were used in this work for checking proofs, literature search, proofreading, and typesetting diagrams. All final text and proofs were written manually by the author, who has verified the mathematical claims and references, and takes full responsibility for the contents of the paper.

\printbibliography
\end{document}